\newtheorem{theorem}{Theorem}[section] 
\newtheorem{corly}[theorem]{Corollary}
\newtheorem{lemma}[theorem]{Lemma}
\theoremstyle{definition}
\theoremstyle{remark}
\newtheorem{remark}[theorem]{Remark}
\newtheorem{example}[theorem]{Example}
\newcommand{\F}{\mathsf{F}}
\newcommand{\Ss}{\mathcal{S}}
\newcommand{\Hm}{\mathcal{H}}
\newcommand{\RR}{\mathbb{R}}
\newcommand{\mbf}{\mathbf}
\newcommand{\mcl}{\mathcal}
\numberwithin{equation}{section}
\begin{document}

\title[S.P.\ Ellis, Approximation of Functions]{On the Approximation of a Function Continuous off a Closed Set by One Continuous Off a Polyhedron}
       
\author[S.~P.~Ellis]{Steven p.\ Ellis}
\date{May 3, 2011}

\address{Unit 42, NYSPI \\
1051 Riverside Dr. \\
New York, NY 10032 \\
U.S.A.}

\email{spe4@columbia.edu}

\keywords{Geometric Measure Theory, Deformation Theorem}

\thanks{This paper is dedicated to the memory of my mother.}
\thanks{\emph{2000 AMS Subject Classification.}  Primary: 28A75; Secondary:  51M20.}

\thanks{This research is supported in part by United States PHS grant MH62185.}

\begin{abstract}  
Let $P$ be a finite simplicial complex (i.e., a finite collection of simplices that fit together nicely) and denote its underlying space (the union of the simplices in $P$) by $|P|$. Let $Q$ be a subcomplex of $P$ (e.g., $Q = P$). Let $a \geq 0$.  Then there exists $K < \infty$, \emph{depending only on $a$ and $Q$,} with the following property.  Let $\Ss \subset |P|$ be closed and suppose $\Phi$ is a continuous map of $|P| \setminus \Ss$ into some topological space $\F$. (``$\setminus$'' indicates set-theoretic subtraction.) Suppose $\dim (\tilde{\Ss} \cap |Q|) \leq a$, where ``$\dim$'' indicates Hausdorff dimension.  Then there exists $\tilde{\Ss} \subset |P|$ such that $\tilde{\Ss} \cap |Q|$ is the underlying space of a subcomplex of $Q$ and there is a continuous map $\tilde{\Phi}$ of $|P| \setminus \tilde{\Ss}$ into $\F$ such that
	\begin{itemize}
		\item $\Hm^{a} \bigl( \tilde{\Ss} \cap |Q| \bigr) \leq K \Hm^{a} \bigl( \Ss \cap |Q| \bigr)$, 
		        where $\Hm^{a}$ denotes 
		            $a$-dimensional Hausdorff measure;
		\item  if $x  \in \tilde{\Ss}$ then $x$ belongs to a simplex in $P$ intersecting $\Ss$;
		\item  if $x \in |P| \setminus \Ss$, $x \in \sigma \in P$, and $\sigma$ does not intersect 
		       any simplex in $Q$ whose simplicial interior intersects $\Ss$, then $\tilde{\Phi}(x)$ is 
		       defined and equals $\Phi(x)$; 
		\item  if $\sigma \in P$ then 
		       $\tilde{\Phi}(\sigma \setminus \tilde{\Ss}) \subset \Phi(\sigma \setminus \Ss)$;
		\item if $\F$ is a metric space and $\Phi$ is locally Lipschitz on $|P| \setminus \Ss$ then 
		            $\tilde{\Phi}$ is locally Lipschitz on $|P| \setminus \tilde{\Ss}$; and
		    \item $\dim (\tilde{\Ss} \cap |Q|)  \leq \dim (\Ss \cap |Q|)$ 
		                and $\dim \tilde{\Ss} \leq \dim \Ss$.
	\end{itemize}

Moreover, $P$ can be replaced by an arbitrarily fine subdivision without changing the constant $K$.  Consequently, modulo subdivision, if $\epsilon > 0$, we may assume $\tilde{\Phi}(x) = \Phi(x)$ if $dist(x, \Ss)$, the distance from $x$ to $\Ss$, exceeds $\epsilon$ and we may assume $\max \bigl\{ dist ( y, \Ss) : y \in \tilde{\Ss}  \bigr\} < \epsilon$.

Note that $\Ss$ can be any compact subset of $|P|$.  For example, no rectifiability assumptions on $\Ss$ are required. 
	But $\tilde{\Ss}$ is rectifiable and if $M$ is a compact $C^{1}$ manifold, $\mcl{T} \subset M$ is closed, and $\Phi : M \setminus \mcl{T} \to \F$ is continuous, then it is immediate from the preceding that $\Phi$ can be approximated by a continuous map $\tilde{\Phi} : M \setminus \tilde{\mcl{T}} \to \F$ where $\tilde{\mcl{T}}$ is closed and $a$-rectifiable.
\end{abstract}

\maketitle

\section{Introduction and main result} \label{S:intro}
Let $P$ be a finite simplicial complex of dimension $p$ and denote its underlying space by $|P|$.  (See appendix \ref{S:basics.of.simp.comps} for definitions and basic properties related to simplices and simplicial complexes.)  Suppose $\Ss \subset |P|$ is compact and $\Phi$ maps $|P| \setminus \Ss$ continuously into a topological space $\F$.  (``$\setminus$'' denotes set-theoretic subtraction)  One might be interested in properties of $\Ss$.  (E.g., see remark \ref{R:application} below.)  However, being a \emph{prima facie} arbitrary compact subset of $|P|$, $\Ss$ may be hard to analyze.  In this paper, we study the problem of deforming $\Ss$ and $\Phi$ a little so that $\Ss$ becomes a rather regular set, $\tilde{\Ss}$, specifically, the underlying space of a finite simplicial complex.  By deforming ``a little'' I mean that the region of $|P|$ in which the deformation takes place can be confined to an arbitrary open neighborhood of $\Ss$ and the volume of $\tilde{\Ss}$ is controlled as well.  (\cite{spE.PolyhedralApprox} is a less detailed version of this paper.)

The idea of deforming a set into a union of cells is reminiscent of the ``Deformation Theorem'' in geometric measure theory (Federer \cite[pp.\ 401 -- 408]{hF69}, 
Simon \cite[29.1, p.\ 163 and 29.4, p.\ 166]{lS83.GMT}, 
Hardt and Simon \cite[Hardt's Lecture 3, pp.\ 83--93]{rHlS86.GMT}
and Giaquinta \emph{et al} 
\cite[Lemma 2, p.\ 495; Theorem 1, p.\ 498; and Theorem 2, p.\ 503, Volume I]{mGgMjS98.cart.currents}).  In our case it is not just $\Ss$ that gets deformed.  $\Phi$ does as well.  As explained in remark
 \ref{E:compare.def.thm.w/.my.thm} below, it seems that our main theorem, 
 theorem \ref{T:main.theorem}, does not easily follow from the Deformation Theorem nor will the method of proof of the Deformation Theorem work for the problem considered here.

If $a \geq 0$, let $\Hm^{a}$ denote $a$-dimensional Hausdorff measure.  (See subsection \ref{S:Lip.Haus.meas.dim} in the appendix for background on Hausdorff measure and dimension.)  
Let $\lfloor a \rfloor$ denote the integer part of $a$, i.e., $\lfloor a \rfloor$ is the largest integer $\leq a$.  The (nonconstructive) proof of the following can be found in section \ref{S:Proof.of.main.thm} (with the most technical aspects of the proof relegated to appendix \ref{S:misc.proofs}).  If $X$ is a metric space with metric $d$, $x \in X$, and $A \subset X$, let 
	\[
			dist(x, A) = \inf \bigl\{ d(x, y) :  y \in A \bigr\}.
	\]
Also recall that the ``diameter'' of $A$ is defined to be
	\[
		diam(A) = \sup \bigl\{ d(x,y) \geq 0: x, y \in A \bigr\}.
	\]
 (See Munroe \cite[p.\ 12]{meM71.meas.thy} .)  Given $A$, the function $x \mapsto dist(x,A)$ is obviously continuous, in fact, Lipschitz 
(appendix \ref{S:Lip.Haus.meas.dim}), in $x \in  X$. (See Munroe \cite[p.\ 12]{meM71.meas.thy}.) 

In the following ``$\dim$'' denotes Hausdorff dimension and, for $a \geq 0$,  $\Hm^{a}$ denotes $a$-dimensional Hausdorff measure. (See appendix \ref{S:Lip.Haus.meas.dim}.)
   \begin{theorem}   \label{T:main.theorem}
     Let $P$ be a finite  
     simplicial complex lying in a Euclidean space.  
  	Let $|P|$ be the polytope or underlying space of $P$.  Use the metric on $|P|$ that it inherits 
	from the ambient Euclidean space.  Let $\Ss \subset |P|$ be closed.
     Let $\F$  be a topological space and suppose $\Phi : |P| \setminus \Ss \to \F$ is continuous.
        Let $Q$ be a subcomplex of $P$ (e.g., $Q = P$), let $a \geq 0$, and suppose 
          $\dim \bigl( \Ss \cap |Q| \bigr) \leq a$.  Then there is a closed set, $\tilde{\Ss} \subset |P|$ 
          and a continuous map $\tilde{\Phi} : |P| \setminus \tilde{\Ss} \to \F$ such that
            \begin{enumerate} 
              \item If $\F$ is a metric space and $\Phi$ is locally Lipschitz off $\Ss$ then $\tilde{\Phi}$ 
              is locally Lipschitz off $\tilde{\Ss}$.                    \label{I:Phi.tilde.locally.Lip.if.Phi.is}      
              \item $\dim (\tilde{\Ss} \cap |Q|)  \leq \dim (\Ss \cap |Q|)$ and $\dim \tilde{\Ss} \leq \dim \Ss$.
              	            \label{I:dim.Stilde.no.bggr.thn.dim.S}
              \item $\tilde{\Ss} \cap |Q|$ is either empty or the underlying space of a subcomplex 
              of the $\lfloor a \rfloor$-skeleton of $Q$.                \label{I:S.tilde.subcomp}
              \item Suppose $\tau \in P$ has the following property.  If $\rho \in Q$ and 
               $(\text{Int} \, \rho) \cap \Ss \ne \varnothing$ then $\tau \cap \rho = \varnothing$.  
              Then $\tilde{\Ss} \cap \tau = \Ss \cap \tau$ and $\tilde{\Phi}$ and $\Phi$ agree 
              on $\tau \setminus \Ss$.   
                              \label{I:no.change.off.nbhd.of.S.in.Q}  
              \item Let $\rho \in P \setminus Q$. (But $\rho \cap |Q| \neq \varnothing$ is possible.)  
              Then for every $s \geq 0$, if $\Hm^{s} \bigl( \Ss \cap (\text{Int} \, \rho) \bigr) = 0$ then 
              $\Hm^{s} \bigl( \tilde{\Ss} \cap (\text{Int} \, \rho) \bigr)  = 0$.  
              In particular, $\dim \bigl( \tilde{\Ss} \cap (\text{Int} \, \rho) \bigr) 
                       \leq \dim \bigl( \Ss \cap (\text{Int} \, \rho) \bigr)$.   
                               \label{I:no.change.off.Q}
              \item If $\tau \in Q$ and $\Hm^{\lfloor a \rfloor} \bigl( \tilde{\Ss} \cap (\text{Int} \, \tau) \bigr) > 0$, 
              then $\tau$ is an $\lfloor a \rfloor$-simplex and 
                 $\Hm^{\lfloor a \rfloor} \bigl[ \Ss \cap (\text{Int} \, \sigma) \bigr] > 0$ for some simplex
                 $\sigma$ of $Q$ having $\tau$ as a face.  ($\sigma = \tau$ is possible.)  
                    \label{I:Ha.tau.=0.if.Ha.all.nbrs.0}
              \item 
              If $y \in \tilde{\Ss}$ then there exists $\sigma \in P$ such that $y \in \sigma$ 
              and $\sigma \cap \Ss \ne \varnothing$.  Thus, $dist(y, \Ss)$ is not greater than the largest  of 
              the diameters of the simplices in $P$.   
	              \label{I:dist.tween.S.tilde.and.S}
	    \item If $\sigma \in P$ then 
	          $\tilde{\Phi}(\sigma \setminus \tilde{\Ss}) \subset \Phi(\sigma \setminus \Ss)$.
	              \label{I:Phi.tilde.sigma.subset.Phi.sigma}
              \item There is a constant $K < \infty$ depending only on $a$ and $Q$ such that
	                 \begin{equation}  \label{E:polyhedral.volume.magnification.factor}
	                      \Hm^{a} \bigl( \tilde{\Ss} \cap |Q| \bigr) \leq K \Hm^{a} \bigl( \Ss \cap |Q| \bigr).
	                 \end{equation}    \label{I:bound.on.S.tilde.vol}
             \item There is a constant $K < \infty$ depending only on $a$ and $P$ with the following property. For 
             every $\epsilon > 0$ there is a subdivision, $P'$, of $P$ such that $diam(\zeta) < \epsilon$ 
             for every $\zeta \in P'$ and parts 
      (\ref{I:Phi.tilde.locally.Lip.if.Phi.is}) through (\ref{I:Phi.tilde.sigma.subset.Phi.sigma}) above and 
         \eqref{E:polyhedral.volume.magnification.factor} 
             hold when $P$ is replaced 
      by $P'$ and $Q$ is replaced by the corresponding subcomplex of $P'$ (subdivision of $Q$).
                \label{I:arb.fine.subdivision}
            \end{enumerate}             
      \end{theorem}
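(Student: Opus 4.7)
The plan is to deform $\Ss \cap |Q|$ onto the $\lfloor a \rfloor$-skeleton of $Q$ by a downward induction on simplex dimension. For $d$ decreasing from $\dim Q$ to $\lfloor a \rfloor + 1$, and for each $d$-simplex $\sigma \in Q$ whose interior meets the current $\Ss$, choose a center $c_\sigma \in (\text{Int}\,\sigma) \setminus \Ss$ (which exists because $\dim(\Ss \cap \sigma) \leq a < d$), and use radial projection from $c_\sigma$ to push $\Ss \cap \sigma$ onto $\partial \sigma$, simultaneously transporting $\Phi$ by a compatible pull-back so that continuity is preserved. The deformation is implemented as a continuous self-map $f_\sigma$ of $|P|$, equal to the identity on $|P| \setminus \sigma$ (legitimate because it fixes $\partial \sigma$), mapping an inner neighborhood of $c_\sigma$ onto $\partial \sigma$ by radial projection, and interpolating radially in between. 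An associated pull-back map $g_\sigma : |P| \setminus f_\sigma(\Ss) \to |P| \setminus \Ss$, the identity off $\sigma$ and inside $\sigma$ sending each point to an off-$\Ss$ representative on its $c_\sigma$-ray, is used to update $\Phi \leftarrow \Phi \circ g_\sigma$ and $\Ss \leftarrow f_\sigma(\Ss)$. This is structurally parallel to the inductive step of the Deformation Theorem, but requires realizing the deformation as a genuine continuous self-map of $|P|$ (not just a push-forward of measures) so that $\Phi$ can be transported.

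The main obstacle is the $\Hm^{a}$-volume bound (\ref{E:polyhedral.volume.magnification.factor}): a center $c_\sigma$ sitting close to $\Ss$ can inflate $\Hm^{a}(f_\sigma(\Ss \cap \sigma))$ arbitrarily. The remedy is the classical averaging argument (as in Federer or Simon): integrate $\Hm^{a}(\pi_c(\Ss \cap \sigma))$ over candidate centers $c$ in a positive-measure subset of $\text{Int}\,\sigma$, where $\pi_c$ denotes radial projection from $c$, and use a Fubini / co-area estimate to bound the average by $k_{d,a}\, \Hm^{a}(\Ss \cap \sigma)$, with $k_{d,a}$ depending only on $d$, $a$, and the similarity class of $\sigma$. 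Pick any $c_\sigma$ attaining at most the average (automatically off $\Ss$ since $\Ss$ is $\Hm^{d}$-null in $\sigma$). Accumulating per-step factors gives (\ref{I:bound.on.S.tilde.vol}), with $K$ depending only on $a$ and $Q$; per-simplex bookkeeping gives (\ref{I:dim.Stilde.no.bggr.thn.dim.S}) and, after a cleanup step that enlarges $\tilde{\Ss} \cap |Q|$ to the smallest subcomplex of the $\lfloor a \rfloor$-skeleton of $Q$ containing it (vacuous for $\Hm^{a}$-purposes when $a \notin \mathbb{Z}$, and with each added $\lfloor a \rfloor$-simplex witnessed by a higher-dimensional simplex whose deformation hit it when $a \in \mathbb{Z}$), also (\ref{I:S.tilde.subcomp}) and (\ref{I:Ha.tau.=0.if.Ha.all.nbrs.0}). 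Because each $k_{d,a}$ is homothety-invariant and a standard subdivision produces only finitely many similarity classes of simplices, running the construction on an arbitrarily fine subdivision $P'$ of $P$ (chosen first with $\text{diam}\,\zeta < \epsilon$ for every $\zeta \in P'$), with the induced subdivision $Q'$ of $Q$, preserves $K$, giving (\ref{I:arb.fine.subdivision}).

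The remaining conclusions fall out as follows. Localization of the construction to simplices whose interior meets $\Ss$ gives (\ref{I:no.change.off.nbhd.of.S.in.Q}), (\ref{I:no.change.off.Q}) (the latter because $(\text{Int}\,\rho) \cap |Q| = \varnothing$ for $\rho \in P \setminus Q$, so no deformation ever enters $\text{Int}\,\rho$), and (\ref{I:Phi.tilde.sigma.subset.Phi.sigma}). The Lipschitz character of $f_\sigma$ and $g_\sigma$ away from $c_\sigma$ yields (\ref{I:Phi.tilde.locally.Lip.if.Phi.is}), and (\ref{I:dist.tween.S.tilde.and.S}) is immediate since $f_\sigma$ preserves $\sigma$ setwise. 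Two subsidiary points require care: sequencing, handled by fixing all centers at a common dimension before any deformation and then applying the $f_\sigma$'s in any order (each fixes $|P| \setminus \sigma$, so they commute off their respective interiors); and the continuous construction of $g_\sigma$ at and near $c_\sigma$, for which one shrinks the inner ball on which $f_\sigma = \pi_\sigma$ to a set contained entirely in $(\text{Int}\,\sigma) \setminus \Ss$ and uses a continuous selection along rays so that $\Phi \circ g_\sigma$ extends continuously through the ball. The hardest part of the whole program is this last selection together with the Fubini volume estimate; everything else is combinatorial bookkeeping.
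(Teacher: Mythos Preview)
Your overall architecture (downward induction on dimension, radial projection from a well-chosen center, Fubini averaging to control $\Hm^a$) matches the paper's. But there is a genuine gap in the transport of $\Phi$.

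You declare $g_\sigma$ (and $f_\sigma$) to be the identity on $|P|\setminus\sigma$, justifying this by ``it fixes $\partial\sigma$''. That makes $g_\sigma$ continuous, but it does \emph{not} make $\Phi'=\Phi\circ g_\sigma$ continuous on $|P|\setminus\Ss'$. Take $\sigma$ a partial simplex of maximal dimension and $\rho\in P$ with $\sigma\subsetneq\rho$; by maximality $(\text{Int}\,\rho)\cap\Ss=\varnothing$, so $\Phi$ is defined on all of $\text{Int}\,\rho$. Pick $y_0\in\Ss\cap\text{Int}\,\sigma$. After pushing, $y_0\notin\Ss'$, so $\Phi'$ must be continuous at $y_0$. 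But approaching $y_0$ along a sequence $y_n\in\text{Int}\,\rho$ you get $\Phi'(y_n)=\Phi(g_\sigma(y_n))=\Phi(y_n)$, and $\Phi(y_n)$ need have no limit because $y_0\in\Ss$. (Concretely: let $\rho$ be a $2$-simplex, $\tau$ an edge, $\Ss=\{p\}\subset\text{Int}\,\tau$, $\Phi(x)=\sin(1/|x-p|)$.) So the pull-back cannot be the identity on cofaces of $\sigma$.

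The paper's repair is exactly this missing ingredient: the map $g$ is defined on all of $\overline{\text{St}}\,\sigma$, not just on $\sigma$. One writes $y\in\overline{\text{St}}\,\sigma$ as $y=\mu(y)\sigma(y)+(1-\mu(y))w(y)$ with $\sigma(y)\in\sigma$, $w(y)\in\text{Lk}\,\sigma$, and sets $g(y)=\mu(y)\,s\bigl(\sigma(y),1-\mu(y)\bigr)+(1-\mu(y))w(y)$, where $s(\cdot,t)$ interpolates between the radial collapse ($t=0$, on $\sigma$) and the identity ($t=1$, on $\text{Lk}\,\sigma$). On each coface $\rho\supsetneq\sigma$ this $g$ restricts to a self-homeomorphism of $\text{Int}\,\rho$ with locally Lipschitz inverse, which is what makes $\Phi'$ continuous across $\text{Int}\,\sigma$ and simultaneously yields part~(\ref{I:no.change.off.Q}) (via preservation of Hausdorff measure under bi-Lipschitz maps), not the argument you give that ``no deformation ever enters $\text{Int}\,\rho$''. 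Once you build this star extension, the rest of your outline is essentially the paper's proof; without it, parts (\ref{I:Phi.tilde.locally.Lip.if.Phi.is}), (\ref{I:no.change.off.Q}), and the basic continuity of $\tilde\Phi$ all fail.

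A smaller point: your ``cleanup step'' enlarging $\tilde\Ss\cap|Q|$ to a subcomplex is unnecessary and potentially harmful to the volume bound when $a\in\mathbb{Z}$. The paper instead continues pushing out of partial simplices of dimension $\le a$ as well; these pushes land in the $(a{-}1)$-skeleton and so are $\Hm^a$-harmless, and the process terminates with $\tilde\Ss\cap|Q|$ already a subcomplex.
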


The assumption that $P$ is finite can be replaced by a regularity property trivially satisfied by finite complexes.  The proof of the following is given in appendix \ref{S:misc.proofs}. (See appendix \ref{S:basics.of.simp.comps} for discussion of the topology of polytopes.)

\begin{corly}  \label{C:drop.finiteness.of.P}
Let $P$ be a, not necessarily finite, simplicial complex s.t.\ $|P| \subset \RR^{N}$, where $N < \infty$.  Suppose 
	\begin{multline}  \label{E:intersect.finitely.many}
		\text{Every } x \in |P| \text{ has a neighborhood, open in } \RR^{N}, \\
		            \text{ intersecting only finitely many simplices in } P. 
	\end{multline}
Then $|P|$ is a locally compact subspace of $\RR^{N}$.  (I.e., $|P|$'s polytope topology and relative topologies coincide and are locally compact.)  Put on $|P|$ the restriction of the usual metric on $\RR^{N}$. Let $\Ss \subset |P|$ be closed. Let $\F$  be a topological space and suppose $\Phi : |P| \setminus \Ss \to \F$ is continuous. Let $Q$ be a finite subcomplex of $P$, let $a \geq 0$, and suppose $\dim \bigl( \Ss \cap |Q| \bigr) \leq a$.  Then there is a closed set, $\tilde{\Ss} \subset |P|$ and a continuous map $\tilde{\Phi} : |P| \setminus \tilde{\Ss} \to \F$ such that parts (\ref{I:Phi.tilde.locally.Lip.if.Phi.is}) through (\ref{I:bound.on.S.tilde.vol}) of theorem \ref{T:main.theorem} hold. 

Let $P_{Q}$ be the simplicial complex consisting of all simplices in $P$ that intersect $|Q|$ and the faces of all such simplices.  
Then $P_{Q}$ is finite. The following replacement for part (\ref{I:arb.fine.subdivision}) of the theorem holds.

\begin{enumerate}
\item[(\ref{I:arb.fine.subdivision}')] There is a constant $K < \infty$ depending only on $a$ and $P_{Q}$ with the following property. For 
             every $\epsilon > 0$ there is a subdivision, $P'$, of $P$ such that $diam(\zeta) < \epsilon$ 
             for every $\zeta \in P'$ with $\zeta \subset |P_{Q}|$ and parts 
      (\ref{I:Phi.tilde.locally.Lip.if.Phi.is}) through (\ref{I:Phi.tilde.sigma.subset.Phi.sigma}) of theorem \ref{T:main.theorem} and \eqref{E:polyhedral.volume.magnification.factor} hold when $P$ is replaced 
      by $P'$ and $Q$ is replaced by the corresponding subcomplex of $P'$ (subdivision of $Q$).
\end{enumerate}
\end{corly}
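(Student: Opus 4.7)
The plan is to reduce to the finite case by applying theorem \ref{T:main.theorem} to the finite subcomplex $P_{Q}$ and then gluing the output to an unchanged copy of $\Ss$ and $\Phi$ on $|P|\setminus|P_{Q}|$. The main obstacle will be verifying that the glued pair is closed, continuous, and inherits the conclusions of the theorem, which rests on the observation that along $\partial|P_{Q}|$ the output of theorem \ref{T:main.theorem} applied to $P_{Q}$ coincides with its input.

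I would first dispose of the preliminary topological claims. Given $x\in|P|$, assumption \eqref{E:intersect.finitely.many} supplies an $\RR^{N}$-open neighborhood $U$ of $x$ meeting only finitely many simplices in $P$, so $U\cap|P|$ sits in a finite union of compact simplices and has compact closure; this gives local compactness, and since both the polytope topology and the subspace topology restrict to the usual topology on each simplex and agree on every finite subcomplex, the two topologies on $|P|$ coincide. To see $P_{Q}$ is finite, cover the compact set $|Q|$ by finitely many such $U$: only finitely many simplices of $P$ can meet $|Q|$, and with their finitely many faces they form the finite complex $P_{Q}$.

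Next, I would apply theorem \ref{T:main.theorem} to $P_{Q}$ with subcomplex $Q$, closed set $\Ss\cap|P_{Q}|$, dimension bound $a$, and map $\Phi$ restricted to $|P_{Q}|\setminus\Ss$, obtaining a closed $\tilde{\Ss}^{\ast}\subset|P_{Q}|$ and a continuous $\tilde{\Phi}^{\ast}:|P_{Q}|\setminus\tilde{\Ss}^{\ast}\to\F$. Set
\[
\tilde{\Ss}:=\tilde{\Ss}^{\ast}\cup\bigl(\Ss\setminus|P_{Q}|\bigr),
\]
and let $\tilde{\Phi}$ agree with $\tilde{\Phi}^{\ast}$ on $|P_{Q}|\setminus\tilde{\Ss}^{\ast}$ and with $\Phi$ on $(|P|\setminus|P_{Q}|)\setminus\Ss$. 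The crucial observation is that every simplex $\tau\in P_{Q}$ that is a face of some $\sigma\in P\setminus P_{Q}$ is disjoint from $|Q|$ (since $\sigma$ is, by definition of $P_{Q}$), so $\tau\cap\rho=\varnothing$ for every $\rho\in Q$; part \ref{I:no.change.off.nbhd.of.S.in.Q} of theorem \ref{T:main.theorem} applied to $P_{Q}$ then delivers $\tilde{\Ss}^{\ast}\cap\tau=\Ss\cap\tau$ and $\tilde{\Phi}^{\ast}=\Phi$ on $\tau\setminus\Ss$. Using assumption \eqref{E:intersect.finitely.many} to push any approach to a point of $\partial|P_{Q}|$ from outside into a single $\sigma\in P\setminus P_{Q}$ and hence into such a face $\tau$, one checks that $\tilde{\Ss}$ is closed in $|P|$ and that $\tilde{\Phi}$ is continuous on $|P|\setminus\tilde{\Ss}$. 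Properties (\ref{I:Phi.tilde.locally.Lip.if.Phi.is})--(\ref{I:bound.on.S.tilde.vol}) of the theorem then transfer essentially verbatim, as each is either local to a single simplex of $P$ or refers only to $|Q|\subset|P_{Q}|$.

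For part (\ref{I:arb.fine.subdivision}'), I would apply part \ref{I:arb.fine.subdivision} of theorem \ref{T:main.theorem} to $P_{Q}$ to obtain a subdivision $P_{Q}'$ of $P_{Q}$ with $diam(\zeta)<\epsilon$ for every $\zeta\in P_{Q}'$ and with constant $K$ depending only on $a$ and $P_{Q}$. Extending $P_{Q}'$ to a subdivision $P'$ of all of $P$ by induction on dimension (for example, by coning each $\sigma\in P\setminus P_{Q}$ from an interior vertex compatibly with the already-subdivided $\partial\sigma$) yields the required $P'$: its simplices inside $|P_{Q}|$ meet the diameter bound, while rerunning the gluing construction above with $P'$ and $P_{Q}'$ in place of $P$ and $P_{Q}$ produces the desired $\tilde{\Ss}$ and $\tilde{\Phi}$.
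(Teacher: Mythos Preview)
Your proposal is correct and follows essentially the same route as the paper: apply theorem \ref{T:main.theorem} to the finite complex $P_{Q}$, glue the result to the untouched data on $|P|\setminus|P_{Q}|$, and use part \ref{I:no.change.off.nbhd.of.S.in.Q} on the interface simplices to show the pieces match; for (\ref{I:arb.fine.subdivision}') the paper likewise subdivides $P_{Q}$ and extends via a standard extension (Munkres), which is your coning argument. The only place the paper works harder than you indicate is part \ref{I:Phi.tilde.locally.Lip.if.Phi.is}: it introduces the complex $P^{Q}$ of faces of simplices in $P\setminus P_{Q}$ and invokes corollary \ref{C:reverse.triangle.ineq.in.simp.cmplxs} to verify the Lipschitz bound at points lying in both $|P_{Q}|$ and $|P^{Q}|$, so your ``transfer essentially verbatim'' there hides a small but genuine gluing argument.
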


\begin{remark}  \label{R:when.a.rectble}
Suppose $a \geq 0$ is an integer. The theorem tells us that we may assume that $\Phi$ is continuous off an $\Hm^{a}$-measurable countably $a$-rectifiable set (Giaquinta \emph{et al} 
\cite[pp.\ 90--91, Volume I]{mGgMjS98.cart.currents}, Hardt and Simon \cite[p.\ 20]{rHlS86.GMT}), and still have some control over its volume.  In fact, by \eqref{E:Lip.magnification.of.Hm} 
in appendix \ref{S:Lip.Haus.meas.dim}, trivially the same thing holds if $|P|$ is replaced by any compact space with a bi-Lipschitz triangulation (i.e., a triangulation that is Lipschitz and has a Lipschitz inverse), e.g., a compact $C^{1}$ manifold (Munkres \cite[Theorem 10.6, pp.\ 103--104]{jrM66}). (By Munkres \cite[Lemma 2.5, p.\ 10]{jrM84}, any simplicial complex $P$ with $|P|$ compact must be finite.)
\end{remark}

\begin{remark}  \label{R:miscellaneous.observations}
We make the following simple observations.
\begin{enumerate}
\item From appendix \ref{S:Lip.Haus.meas.dim}, we see that if we rescale $P$ by multiplying by a constant $\lambda > 0$, then the constant $K$ in \eqref{E:polyhedral.volume.magnification.factor} is multiplied by $\lambda^{a}$.
\item Part (\ref{I:Phi.tilde.sigma.subset.Phi.sigma}) of the theorem does not imply that $\tilde{\Phi}(x)$ and $\Phi(x)$ are close (when defined) because if $\sigma \in P$, $\Phi(\sigma \setminus \Ss)$ may be big.
\item Suppose $a$ is an integer.  If $\Hm^{a} \bigl( \Ss \cap |Q| \bigr)$ is very small we must have $\dim ( \tilde{\Ss} \cap |Q| \bigr) < a$.  For suppose 
$\dim ( \tilde{\Ss} \cap |Q| \bigr) = a$ then from theorem \ref{T:main.theorem} parts (\ref{I:S.tilde.subcomp} and \ref{I:bound.on.S.tilde.vol}), one can conclude
		\begin{equation}  \label{E:sing.set.vol.ineq}
			 \Hm^{a}\bigl( \Ss \cap |Q| \bigr) 	
			    \geq K^{-1} \Hm^{a}\bigl( \tilde{\Ss} \cap |Q| \bigr) 
				\geq K^{-1} \Hm^{a}( \text{smallest $a$-simplex in } Q) > 0.
		  \end{equation}  
	        \label{I:S.small.then.dim.S.tilde.<a}
\item Let $P$ be a finite simplicial complex and let $\Ss \subset |P|$ be compact and have empty interior (in particular $\Ss \ne |P|$).  One can easily construct a continuous function 
$\Phi : |P| \setminus \Ss \to \RR$ such that $\Phi$ cannot be continuously extended to any set larger than 
$|P| \setminus \Ss$.  Just take $\F = \RR$ and
	\[
		\Phi(x) = \sin \left( \frac{1}{dist(x, \Ss)} \right), \quad x \in P \setminus \Ss. 
	\]
(\emph{Proof:} It is easy to see that, since $|P|$ is locally arcwise connected (appendix \ref{S:basics.of.simp.comps}), arbitrarily close to $x \in \Ss$ there are points $y, y' \in |P| \setminus \Ss$ such that (s.t.) $\sin \bigl[ 1/dist(y, \Ss) \bigr] = +1$ and $\sin \bigl[ 1/dist(y', \Ss) \bigr] = -1$.)
\end{enumerate}
\end{remark}

A potentially useful corollary is the following.  See appendix \ref{S:misc.proofs} for the proof.

\begin{corly}  \label{C:poly.sing.set.near}
Let $P$ be a finite simplicial complex and let $a \geq 0$.  Then there exists $K < \infty$ (depending only on $a$ and $P$) with the following property.  Let $\Ss \subset |P|$ be compact and suppose $\Phi$ is a continuous map of $|P| \setminus \Ss$ into some topological space $\F$.  Suppose $\dim \Ss \leq a$.  Let $\epsilon > 0$.  Then there exists $\tilde{\Ss} \subset |P|$ and there is a continuous map 
$\tilde{\Phi}$ of $|P| \setminus \tilde{\Ss}$ into $\F$ such that  
	\begin{enumerate}
	           \item $\tilde{\Ss}$ is the underlying space of a subcomplex of a subdivision of $P$.
	                \label{I:S.tilde.is.subcmplx.of.subdiv}
		\item $\max \bigl\{ dist ( x, \Ss) : x \in \tilde{\Ss}  \bigr\} < \epsilon$, 
		     \label{I:Stilde.within.eps.ofS}
		\item if $x \in  |P|$ and $dist ( x, \Ss) \geq \epsilon$ then $\tilde{\Phi}(x) = \Phi(x)$, 
		     \label{I:same.far.away}
		     \item If $\sigma \in P$ then 
	                $\tilde{\Phi}(\sigma \setminus \tilde{\Ss}) \subset \Phi(\sigma \setminus \Ss)$, and
		     \label{I:Phi.tilde.sigma.subset.Phi.sigma.in.corly}
		\item $\Hm^{a}(\tilde{\Ss}) < K \Hm^{a}(\Ss)$.
		     \label{I:Hma.meas.Stilde.<.K.x.that.of.S}
	\end{enumerate}
\end{corly}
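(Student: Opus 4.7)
The plan is to reduce directly to Theorem \ref{T:main.theorem} applied with $Q = P$ on a sufficiently fine subdivision. Concretely, let $K$ be the constant supplied by part (\ref{I:arb.fine.subdivision}) of the theorem for $a$ and $P$, then use that same part to pick a subdivision $P'$ of $P$ with $diam(\zeta) < \epsilon/2$ for every $\zeta \in P'$. Applying the theorem to $P'$ (with subcomplex taken to be all of $P'$) yields $\tilde{\Ss} \subset |P| = |P'|$ and $\tilde{\Phi} : |P| \setminus \tilde{\Ss} \to \F$; the remaining work is to verify the five conclusions of the corollary from the conclusions of the theorem.

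Four of the five conclusions are essentially bookkeeping. Conclusion (\ref{I:S.tilde.is.subcmplx.of.subdiv}) is part (\ref{I:S.tilde.subcomp}) of the theorem: $\tilde{\Ss} = \tilde{\Ss} \cap |P'|$ is either empty or the underlying space of a subcomplex of the $\lfloor a \rfloor$-skeleton of $P'$. Conclusion (\ref{I:Stilde.within.eps.ofS}) follows from part (\ref{I:dist.tween.S.tilde.and.S}): for $y \in \tilde{\Ss}$ there is $\sigma' \in P'$ with $y \in \sigma'$ and $\sigma' \cap \Ss \ne \varnothing$, so $dist(y,\Ss) \leq diam(\sigma') < \epsilon/2$, and compactness of $\tilde{\Ss}$ upgrades this to a maximum still strictly less than $\epsilon$. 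Conclusion (\ref{I:Phi.tilde.sigma.subset.Phi.sigma.in.corly}) comes from part (\ref{I:Phi.tilde.sigma.subset.Phi.sigma}) applied simplex by simplex in $P'$, since each $\sigma \in P$ is the union of those $\sigma' \in P'$ with $\sigma' \subset \sigma$. Conclusion (\ref{I:Hma.meas.Stilde.<.K.x.that.of.S}) is just \eqref{E:polyhedral.volume.magnification.factor} with $Q = P'$, modulo a harmless inflation of $K$ if one insists on the strict inequality (which cannot hold when $\Hm^{a}(\Ss) = 0$, in which case the theorem forces $\Hm^{a}(\tilde{\Ss}) = 0$ as well).

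The one conclusion requiring a small geometric argument is (\ref{I:same.far.away}), and this step is what dictates the choice of subdivision mesh. Suppose $dist(x,\Ss) \geq \epsilon$; then by (\ref{I:Stilde.within.eps.ofS}) already verified, $x \notin \tilde{\Ss}$, so $\tilde{\Phi}(x)$ is defined. Pick $\tau \in P'$ with $x \in \tau$. If some $\rho \in P'$ satisfied $(\text{Int}\,\rho) \cap \Ss \ne \varnothing$ and $\tau \cap \rho \ne \varnothing$, then choosing $y \in \tau \cap \rho$ and $z \in (\text{Int}\,\rho) \cap \Ss$ would yield $d(x,z) \leq diam(\tau) + diam(\rho) < \epsilon$, contradicting $dist(x,\Ss) \geq \epsilon$. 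Hence $\tau$ meets the hypothesis of part (\ref{I:no.change.off.nbhd.of.S.in.Q}) of the theorem, which gives $\tilde{\Phi}(x) = \Phi(x)$. The only thing to be careful about is that a single mesh bound, $\epsilon/2$, is what simultaneously drives (\ref{I:Stilde.within.eps.ofS}) and (\ref{I:same.far.away}); a mesh of $\epsilon$ alone would not suffice for the latter because the triangle inequality absorbs both $diam(\tau)$ and $diam(\rho)$.
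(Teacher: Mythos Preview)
Your proof is correct and follows essentially the same line as the paper's: subdivide $P$ to mesh $< \epsilon/2$ via part~(\ref{I:arb.fine.subdivision}), apply the theorem, and read off the five conclusions from parts (\ref{I:S.tilde.subcomp}), (\ref{I:dist.tween.S.tilde.and.S}), (\ref{I:no.change.off.nbhd.of.S.in.Q}), (\ref{I:Phi.tilde.sigma.subset.Phi.sigma}), and \eqref{E:polyhedral.volume.magnification.factor}. The only difference is cosmetic: the paper applies the theorem with $Q$ equal to the subcomplex of simplices meeting $\Ss$ (together with their faces), whereas you take $Q = P'$; your choice is slightly cleaner since it makes the volume bound $\Hm^{a}(\tilde{\Ss}) \leq K\,\Hm^{a}(\Ss)$ immediate without having to observe that $\tilde{\Ss} \subset |Q|$.
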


\begin{remark}
A ``cell'' is a closed, bounded region of some Euclidean space defined by finitely many linear equalities and inequalities.  Theorem \ref{T:main.theorem} probably can be generalized to general finite ``cell complexes'' (Munkres \cite[Definition 7.6, p.\ 74]{jrM66}), i.e., complexes consisting of cells that fit together nicely.  A ``cubical set'' (Kaczynski \emph{et al} \cite[Definition 2.9, p.\ 43]{tKkMmM04.CompHomol}) is an example.  Since any cell complex has a finite simplicial subdivision 
(Munkres \cite[Lemma 7.8, p.\ 75]{jrM66}), corollary \ref{C:poly.sing.set.near} certainly extends immediately to finite cell complexes.
\end{remark}

\begin{remark}  \label{R:application}
The set of applications of theorem \ref{T:main.theorem} is nonempty.  It turns out that multivariate statistical procedures often have ``singularities'', i.e., data sets at which the procedure, regarded as a function, does not have a limit.  
(See e.g., \cite{spE91.top.direct.axis,spE.3.or.4,spE02.nonlin,spE.fact.anal,
spE.SingsOfDataMaps}.)  Let $\Ss' \subset |P|$ be the singular set (set of singularities) of a data analytic procedure $\Psi : |P| \setminus \Ss' \to \F$.  $\Ss'$ may not be closed, so we cannot apply the theorem with $\Ss = \Ss'$.  However, it turns out that $\Psi$ can often be replaced by another procedure $\Phi : |P| \setminus \Ss \to \F$, where $\Ss$ is a closed subset of $\Ss'$ consisting of the most ``severe'', and therefore most interesting, singularities of $\Phi$.  It frequently turns out that $\dim \Ss$ is bounded below by some integer $a$ depending on general features of the statistical problem.  If $\dim \Ss > a$, then $\Hm^{a}(\Ss') \geq \Hm^{a}(\Ss) = \infty$.  Assume $\dim \Ss = a$ and apply theorem \ref{T:main.theorem} to $\Phi$.  It turns out that the set $\tilde{\Ss}$ must also have Hausdorff dimension $a$.  Hence, by remark \ref{R:miscellaneous.observations}(\ref{I:S.small.then.dim.S.tilde.<a}), the $\Hm^{a}$-volume of $\tilde{\Ss}$, and, hence, of $\Ss$ and $\Ss'$, is bounded below. The paper \cite{spE.SingsOfDataMaps} (in preparation) will develop and refine this idea.
\end{remark}

\begin{remark}
We make no effort to compute the best, or even a good, constant $K$ in \eqref{E:polyhedral.volume.magnification.factor}.  In principle, for a given $P$ and $a$ one could follow the proof and compute some value of $K$, but it would probably be very large. Let $K_{P}(a)$ denote the best, i.e., smallest, possible value of $K$ in \eqref{E:polyhedral.volume.magnification.factor}.  It would be helpful and interesting to know something about the relationship between $K_{P}$ and the structure of $P$.
\end{remark}

Now we briefly discuss the key ideas of the proof.  Let $\sigma \in Q$ be a simplex of dimension 
$> a$.  Let $\mcl{A} = \Ss \cap \sigma$.  If $\text{Int} \, \sigma \subset \mcl{A}$ or 
$(\text{Int} \, \sigma) \cap \mcl{A} = \varnothing$, then nothing much has to be done in $\text{Int} \, \sigma$.  So suppose $\varnothing \ne (\text{Int} \, \sigma) \cap \mcl{A} \ne \text{Int} \, \sigma$.  Call such a $\sigma \in Q$ a ``partial simplex'' (of $\Ss$).  The process of deforming $\Ss$ so that it becomes a subcomplex involves ``pushing $\mcl{A}$ out'' of $\text{Int} \, \sigma$ 
from a point $z \in (\text{Int} \, \sigma) \setminus \mcl{A}$.  Figure \ref{F:SimplexPush} illustrates this.  Given a point $z \in (\text{Int} \, \sigma) \setminus \mcl{A}$, move every point, $y \in \mcl{A}$ along the ray emanating from $z$ out to a point $\bar{h}_{z, \sigma}(y) \in \text{Bd} \, \sigma$.  This results in a new $\Ss$, call it $\Ss'$.  If the dimension of $\sigma$ is maximal among all partial simplices in $Q$, we have
	\begin{equation}  \label{E:description.of.S'}
		\Ss' = (\Ss \setminus \sigma) \cup \bar{h}_{z, \sigma}(\mcl{A})
	\end{equation}
(lemma \ref{L:big.lemma}(\ref{I:S'.cap.sigma.=.C.cap.Bd}, \ref{I:simps.in.S.are.also.in.S'}, \ref{I:dont.change.sings.on.othr.n.simps}, \ref{I:dont.increase.sing.dim.in.big.simps})).

The $\Hm^{a}$-volume of the image of $(\text{Int} \, \sigma) \cap \mcl{A}$ as it is flattened against the sides of the simplex can be larger or smaller than $\Hm^{a}((\text{Int} \, \sigma) \cap \mcl{A})$ (figure \ref{F:SimplexPush}).  One can easily imagine a subset $\mcl{A}$ of the simplex, $\sigma$, in figure \ref{F:SimplexPush} with the property that the image of $\mcl{A}$ after pushing out from \emph{any} interior point $z$ of $\sigma$ would have large 1-dimensional Hausdorff measure.  But in order to have this property the set $\mcl{A}$ itself must have large 1-dimensional Hausdorff measure.  So that observation does not lead to a counter example to the theorem.

Call the ratio of image volume, $\Hm^{a} \bigl[ \bar{h}_{z, \sigma}( \mcl{A}) \bigr]$, to input volume, 
$\Hm^{a}(\mcl{A})$, the ``$(\sigma, \mcl{A}, z)$-magnification factor.''  In order to prove \eqref{E:polyhedral.volume.magnification.factor} we need to always choose 
$z = z_{0}(\sigma, \Ss) \in \text{Int} \, \sigma$ so that the magnification factor is bounded independently of $\mcl{A}$.  The existence of such an ($\mcl{A}$-dependent) $z$ is shown by demonstrating that, averaged over $z \in \text{Int} \, \sigma$, the magnification factor is bounded independently of $\mcl{A}$.  (Actually, we average over $z$ in a concentric simplex sitting in the interior of $\sigma$.) The bound turns out to not depend on the size of $\sigma$, but only on its shape, specifically, its ``thickness'' (appendix \ref{S:basics.of.simp.comps}).  That is important because one can arbitrarily finely subdivide a finite simplicial complex all the while controlling the thickness of the simplices.

The operation of pushing $\Ss$ out of simplices is performed recursively. 
Let $\Ss^{0} = \Ss$ and $\Phi_{0} = \Phi$. Push $\Ss^{0}$ out of a partial simplex $\sigma \in Q$ of highest dimension.  The pushing out operation results in a new set, \emph{viz.}, $\Ss'$ as defined by \eqref{E:description.of.S'} (with $\Ss^{0}$ in place of $\Ss$).  A new function, call it $\Phi'$, must be defined that is continuous off $\Ss'$.  

Let $\rho \in P$ and suppose $\sigma$ is a face of $\rho$ (e.g., $\rho = \sigma$).  If $\dim \rho > \dim \sigma$, then by maximality of $\dim \sigma$, the simplex $\rho$ will not be a partial simplex of $\Ss^{0}$.  Thus, either $(\text{Int} \, \rho) \cap \Ss^{0} = \varnothing$ or $\text{Int} \, \rho \subset \Ss^{0}$. 
If $\text{Int} \, \rho \subset \Ss^{0}$ then $\rho \subset \Ss^{0}$, since $\Ss^{0}$ is closed.  Therefore, $\sigma \subset \Ss^{0}$, since $\sigma \subset \rho$.  But $\sigma$ is a partial simplex of $\Ss$. Therefore, $(\text{Int} \, \rho) \cap \Ss^{0} = \varnothing$.  The map $\Phi$ has to be deformed in $\rho$ (by composing it on the right with a locally Lipschitz map $g : \rho \setminus \bar{h}_{z, \sigma}(\mcl{A}) \to \rho$) so that the resulting map, call it $\Phi'$, is still defined and continuous in $\text{Int} \, \rho$.   Now let $\Ss^{0} = \Ss'$ and $\Phi_{0} = \Phi'$. Repeat until no partial simplices remain.

\begin{remark} 
$\tilde{\Phi}$ and $\tilde{\Ss}$ are probably not constructible unless the set $\Ss$ is ``decidable'' (Blum \emph{et al} \cite[Definition 2, p.\ 47]{lBfCmSsS98.realcompute}).
\end{remark}
	
	\begin{remark}[``Tomography'']  \label{R:tomography}
	By part (\ref{I:bound.on.S.tilde.vol}) of theorem \ref{T:main.theorem},  $\Hm^{a} \bigl( \tilde{\Ss} \cap |Q| \bigr)$ gives some information about $\Hm^{a} \bigl( \Ss \cap |Q| \bigr)$. Here we show that  $\tilde{\Ss}$ is a deformation retract of a set determinted by $\Ss$.

	Recall that $\tilde{\Ss}$ is constructed by recursively pushing the compact sets $\Ss^{0}$ from the simplicial interiors of its partial simplices of maximal dimension.
	Track the recursive construction of $\tilde{\Ss}$ by letting $\Ss_{i}$ denote the set obtained after $i$ pushing operations.  Thus, $\Ss_{0} = \Ss$.  For some value $i = m$, the set $\Ss_{i} \cap |Q|$ will be the underlying space of a subcomplex of $Q$.  Then $\tilde{\Ss} = \Ss_{m}$.  The recursive construction thus stops after $m$ steps.  Unless otherwise specified let $i = 0, \ldots, m$.  

	If $\sigma \in Q$ then for some values of $i$, the simplex $\sigma$ may be a partial simplex of $\Ss_{i}$ but for $i$ sufficiently large, $\sigma$ will not be a partial simplex (lemma \ref{L:big.lemma}, points (\ref{I:S'.cap.sigma.=.C.cap.Bd}, \ref{I:simps.in.S.are.also.in.S'}, \ref{I:dont.increase.sing.dim.in.big.simps})).  Let $\sigma_{i}$ be the partial simplex of $\Ss_{i}$ from which $\Ss_{i}$ will be pushed to produce $\Ss_{i+1}$.  Note that for any $\sigma \in P$, there is at most one $i$ s.t.\ $\sigma = \sigma_{i}$.  There is a point 
		\begin{equation}   \label{E:z0i.notin.Si}
			z_{0, i} \in \text{Int} \, \sigma_{i} \setminus \Ss_{i}
		\end{equation}
	from which $\Ss_{i}$ will be pushed.  If $0 \leq i < m$ and $y \in \text{Int} \, \sigma_{i}$, let $h_{i}(y) = \bar{h}_{z_{0,i}, \sigma_{i}}(y)$.  If $y \in |Q| \setminus (\text{Int} \, \sigma_{i})$ or if $y \in |Q|$ and $i \geq m$, let $h_{i}(y) = y$.  Note that, by \eqref{E:description.of.S'},
		\begin{equation}    \label{E:hi.maps.Si.onto.Si+1}
			h_{i}(\Ss_{i}) = \Ss_{i+1}.
		\end{equation}

	For each $i = 0, \ldots, m$, the function $h_{i}$ is continuous on $|Q| \setminus \{ z_{0,i} \}$.  This is because, first, $h_{i}$ is trivially continuous on $|Q| \setminus (\text{Int} \, \sigma_{i})$.  Second, by lemma \ref{L:props.of.s} part (\ref{I:hbar.locally.Lip.b.Lip}), $h_{i}$ is continuous on $\sigma_{i} \setminus \{ z_{0,i} \}$.  Finally, by \eqref{E:hbar.is.idendt.on.Bd.sigma}, $\bar{h}_{z_{0,i}}(y) = y = h_{i}(y)$ for $y \in  \text{Bd} \, \sigma_{i}$.

	For $i = 0, \ldots, m-1$, let $L_{i}(y)$ be the, possibly trivial, closed line segment joining $y \in \Ss_{i}$ to $h_{i}(y) \in \Ss_{i+1}$.  So $L_{i}(y) \subset |Q|$.
	Notice that for $y \in \Ss_{i}$, 
		\begin{equation}   \label{E:Li.cap.Bd.sigma.i}
			L_{i}(y) \cap (\text{Bd} \, \sigma_{i}) = \{ h_{i}(y) \} 
				\text{ or } L_{i}(y) \cap (\text{Bd} \, \sigma_{i}) = \varnothing.
		\end{equation}
	Observe also 
		\begin{align} \label{E:geometry.of.Li.y}
			&\text{If } y \in \Ss_{i} \text{ then } z_{0,i} \notin L_{i}(y).  \\
			&\text{If } y \in \Ss_{i} \text{ and } y' \in L_{i}(y) \text{ then } h_{i}(y') = h_{i}(y). \notag
		\end{align}	

		Now let
		\[
				\mcl{D}_{i} := \bigcup_{y \in \Ss_{i}} L_{i}(y) \text{ and }
				\mcl{E}_{i} := \bigcup_{j=i}^{m} \mcl{D}_{j}, \quad i = 0, \ldots, m-1.
				\text{ Let } \mcl{E}_{m} = \Ss_{m} = \tilde{\Ss}.
		\]
	Note that $\Ss \subset \mcl{E}_{0}$.
	\emph{Claim:} For $i = 0, \ldots, m-1$, the set $\mcl{E}_{i+1}$ is a deformation retract of $\mcl{E}_{i}$.  Let $F_{i} : \mcl{E}_{i} \times [0,1] \to \mcl{E}_{i}$ be defined by
		\[
			F_{i}(y,t) = 
				\begin{cases}
					y, &\text{ if } y \in \mcl{E}_{i+1},  \\
					(1-t) y + t \, h_{i}(y), &\text{ if } y \in \mcl{E}_{i} \setminus \mcl{E}_{i+1} \; .
				\end{cases}
		\]
	We show that $F_{i}$ is a deformation retraction. First, we show that 
	$F_{i} \bigl( \mcl{E}_{i} \times [0,1] \bigr) \subset \mcl{E}_{i}$.  It suffices to show that 
		\begin{equation}   \label{E:Fi.maps.into.Ei}
			F_{i} \Bigl( \bigl[  \mcl{E}_{i} \setminus \mcl{E}_{i+1} \bigr] \times [0,1] \Bigr) 
			   \subset \mcl{E}_{i}.
		\end{equation}
	Let $t \in [0,1]$ and $y \in \mcl{E}_{i} \setminus \mcl{E}_{i+1}$.  Then there exists $y' \in \Ss_{i}$ s.t.\ $y \in L_{i}(y')$.  By \eqref{E:geometry.of.Li.y}, we have $h_{i}(y) = h_{i}(y')$.  Therefore, $(1-t) y + t \, h_{i}(y) \in L_{i}(y') \subset \mcl{E}_{i+1}$.  The equation $h_{i}(y) = h_{i}(y')$ and \eqref{E:hi.maps.Si.onto.Si+1} also implies that 
		\begin{equation}   \label{E:hi.Ei.in.Ei+1}
			h_{i}(\mcl{E}_{i}) \subset \mcl{E}_{i+1}.
		\end{equation} 
	This completes the proof of \eqref{E:Fi.maps.into.Ei}. 
	Next, observe that $F_{i}$ is continuous.  By \eqref{E:hi.Ei.in.Ei+1}, we have $F_{i}(y,1) \in \mcl{E}_{i+1}$ for every $y \in \mcl{E}_{i}$. Finally, obviously $F_{i}(y,0) = y$ and $F_{i}(y, t) = y$ for every $y \in \mcl{E}_{i+1}$.

	Obviously, by first retracting $\mcl{E}_{0}$ onto $\mcl{E}_{1}$ then $\mcl{E}_{1}$ onto $\mcl{E}_{2}$ and so forth, finally retracting $\mcl{E}_{m-1}$ onto $\mcl{E}_{m}$, the net result is a deformation retraction of $\mcl{E}_{0}$ onto $\mcl{E}_{m} = \tilde{\Ss}$.  Hence, the homology of $\tilde{\Ss}$ is the same as that of $\mcl{E}_{1}$. Thus, if one is willing to accept $\mcl{E}_{1}$ as some sort of approximation to $\Ss$ then the homology of $\tilde{\Ss}$ should be considered an approximation to the homology of $\Ss$.  However, in fact, the relationship between the homologies of $\Ss$ and $\tilde{\Ss}$ is very loose at best.
	
	It follows from the preceding that $\Ss$ is deformable in $|P|$ into (onto, actually) $\tilde{\Ss}$ (Spanier \cite[p.\ 29]{ehS66}).
	
	Morever, note that the approximation $\mcl{E}_{1}$ is not natural since it depends on the somewhat arbitrary choices of the $z_{0,i}$'s.
	\end{remark}

             \begin{figure}  
      \includegraphics[width=6.2in, height = 5in, angle = -90]{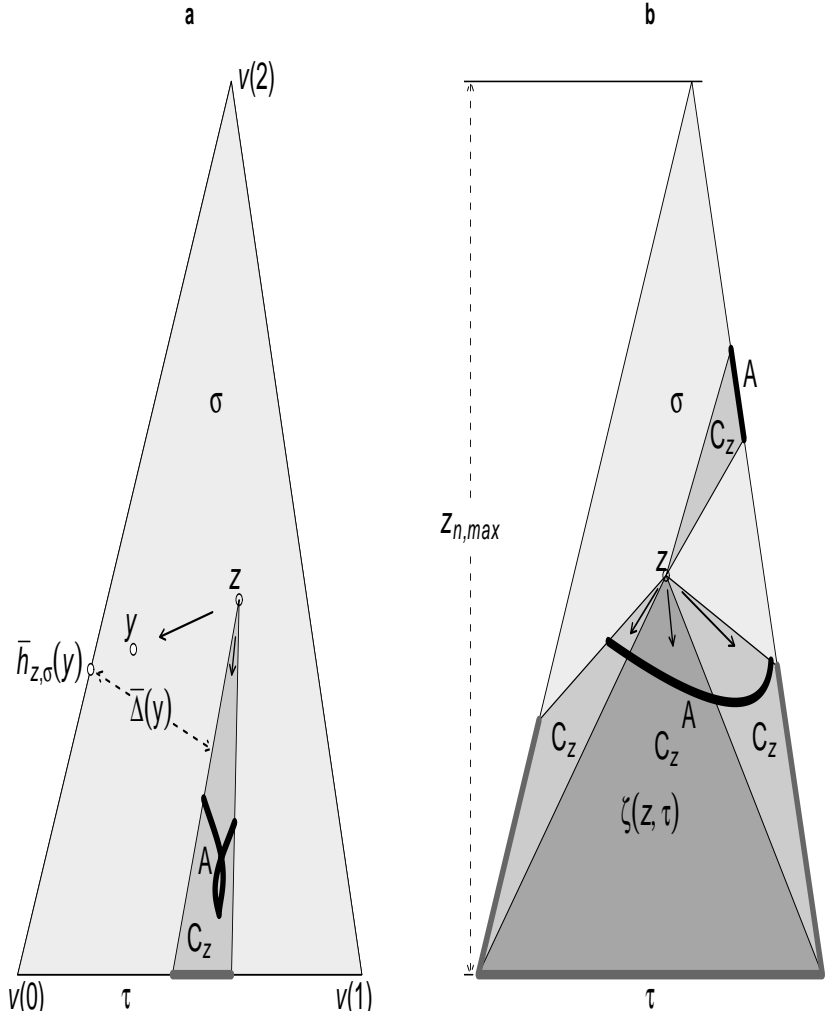}  %
       \caption{Pushing $\Ss$ out of a simplex, $\sigma$ (light grey triangles).  Heavy black curves constitute possible $\mcl{A}$'s.  (But $\mcl{A}$ can be any compact subset of $\sigma$.)  Heavy dark grey lines are possible images, $\bar{h}_{z, \sigma}(\mcl{A})$, of $\mcl{A}$ after pushing it out of $\sigma$ from $z$.  (a) Sometimes pushing reduces volume.  Medium grey triangle is the set $\mcl{C}_{z}$.  $v(0)$, $v(1)$, and $v(2)$ are the vertices of $\sigma$.  $\tau$ is the 1-simplex that is the ``bottom'' face of $\sigma$.  $y$ is a generic point of $\sigma$.  Its image under $\bar{h}_{z, \sigma}$ is obtained by pushing it along the ray emanating from $z$ out to the boundary.  $\bar{\Delta}(y)$ is the distance from that image to $\mcl{C}_{z}$.  (b) The worrisome case is when pushing increases volume.  $\zeta(z, \tau)$ is the medium dark grey triangle. $\mcl{C}_{z}$ is the union of the medium grey regions and $\zeta(z, \tau)$.  $z_{n,max}$ is the height of $\sigma$ from the plane spanned by $\tau$. Points of $\mcl{A}$ on $\text{Bd} \, \sigma$ are unaffected by pushing.}  
       \label{F:SimplexPush}
       \end{figure}

\begin{remark}  \label{E:compare.def.thm.w/.my.thm}
Theorem \ref{T:main.theorem} is reminiscent of the Deformation Theorem 
(Federer \cite[pp.\ 401 -- 408]{hF69}, 
Simon \cite[29.1, p.\ 163 and 29.4, p.\ 166]{lS83.GMT}, 
Hardt and Simon \cite[Hardt's Lecture 3, pp.\ 83--93]{rHlS86.GMT}
and Giaquinta \emph{et al} \cite[Lemma 2, p.\ 495; Theorem 1, p.\ 498; and Theorem 2, p.\ 503, Volume I]{mGgMjS98.cart.currents}) in geometric measure theory. E.g., in the proof of the Deformation Theorem as in the proof of theorem \ref{T:main.theorem}, a set, $\Ss$, also gets deformed by pushing it out of the interior of cells onto the boundaries.  In theorem \ref{T:main.theorem} the cells are simplicial while in the Deformation Theorem the cells are cubical, but that is not important. 

Might theorem \ref{T:main.theorem} follow from the Deformation Theorem?  Here I argue that deducing theorem \ref{T:main.theorem} from the Deformation Theorem would not be straight forward.  The Deformation Theorem shows that a ``normal current'' can be approximated by one supported by a cubical complex.  (See preceding references for relevant terminology from geometric measure theory.) How might we show that, at least, a ``cubical'' version of theorem \ref{T:main.theorem} follows from the Deformation Theorem?  Consider the case where $a \in (0,p)$ is an integer.  
We may assume that $P$ sits in a finite dimensional Euclidean space, $\RR^{N}$. 

In order to apply the Deformation Theorem to $T$ we must assume $T$ is a normal current, i.e., $\mathbf{M}(T) + \mathbf{M}(\partial T) < \infty$, where $\mathbf{M}$ denotes ``mass'' (Giaquinta \emph{et al} \cite[p.\ 125, Volume I]{mGgMjS98.cart.currents}). A natural way to make $\Ss$ into a normal current, $T$, is to let $T$ be a current of the form
	\[
		T(\omega) 
		    = \int_{\Ss} \bigl\langle \omega(x) , \vec{T}(x) \bigr\rangle \, d\mu(x) 
			\quad \bigl( \omega \in \mathcal{D}^{a}(\RR^{N} \bigr),
	\]
where 
$\mu$ is a Radon measure on $\RR^{N}$ (e.g., $\mu = \Hm^{a}$), $\vec{T}$ is an $\Hm^{a}$-measurable function taking values in the space of  $a$-vectors of length 1, $\mathbb{D}^{a}(\RR^{n})$ is the space of $C^{\infty}$ differential $a$-forms on $\RR^{N}$ with compact support, and ``$\langle \cdot , \cdot \rangle$'' indicates evaluation of the first argument at the second.  (See Giaquinta \emph{et al} \cite[Theorem 1, p.\ 126, Volume I]{mGgMjS98.cart.currents} 
and Hardt and Simon \cite[p.\ 67]{rHlS86.GMT}.) 
To avoid trivialities assume $\mu(\Ss) > 0$.  Then
	\[
		T \text{ is not 0}.
	\]

For concreteness, suppose $P$ is a 2-simplex, $\sigma$, (and its faces and vertices) sitting in $\RR^{2}$  and $a = 1$.  Suppose $\Ss \subset \text{Int} \, \sigma$ with $\dim \Ss = a = 1$.  (In particular, $\Ss$ has empty interior as a subset of $P$.) 
Since $\mathbf{M}(T) + \mathbf{M}(\partial T) < \infty$, 
by Giaquinta \emph{et al} \cite[Theorem 2, p.\ 129, Volume I]{mGgMjS98.cart.currents}, 
Simon \cite[Lemma 26.29, pp.\ 143]{lS83.GMT}, 
or Hardt and Simon \cite[Theorem 2.4, p.\ 78]{rHlS86.GMT}, if $L$ and $L^{\perp}$ are two perpendicular one-dimensional subspaces of $\RR^{2}$, then, since $T \ne 0$, the projections 
of $\Ss$ onto $L$ and $L^{\perp}$ cannot both have 0 Lebesgue measure.  

But suppose $\Ss \subset \RR^{2}$ is a compact ``Cantor dust'' of dimension 1 and having positive $\Hm^{1}$ measure (Falconer \cite[pp.\ xvi, 31]{kF90}).  But there exists perpendicular lines $L$ and $L^{\perp}$ s.t.\ the projections of $\Ss$ onto them each have Lebesgue measure 0. (Falconer \cite[Example 6.7, p.\ 87]{kF90}).  So the Deformation Theorem does not apply to $T$. 
  
If theorem \ref{T:main.theorem} is not a consequence of the Deformation Theorem itself, perhaps the method of proof of the Deformation Theorem might be used to prove 
theorem \ref{T:main.theorem}.  But at least some proofs of the Deformation Theorem rely on ``slicing'' (Giaquinta \emph{et al} \cite[pp.\ 151--152, Volume I]{mGgMjS98.cart.currents}).  It is not clear how to create a useful analogue of slicing that applies to general compact sets.  
\end{remark}

\section{Proof of theorem \ref{T:main.theorem}}   \label{S:Proof.of.main.thm}
\subsection{Pushing out} \label{SS:pushing.out}
We may assume that $P$ sits in a finite dimensional Euclidean space, $\RR^{N}$.  The metric on $|P|$ is the one it inherits from the Euclidean space. Let $\sigma \in Q$ be an $n$-simplex with $n > 0$.   By definition of simplex (appendix \ref{S:basics.of.simp.comps}), $\sigma$ does not lie in any $(n-1)$-dimensional affine plane. Let $\Ss \subset |P|$ be closed and suppose 
$\dim \bigl( \Ss \cap |Q| \bigr) \leq a$. Suppose 
	\begin{equation}  \label{E:sigma.intrscts.S}
		\text{Int} \, \sigma \nsubseteq \Ss \text{ but } (\text{Int} \, \sigma) \cap \Ss \neq \varnothing.
	\end{equation}
I.e., $\sigma$ is a ``partial simplex'' of $\Ss$. We wish to redefine $\Phi$ so that it is continuous in $\text{Int} \, \sigma$. (Of course, it may be possible to continuously extend the \emph{restriction}, $\Phi \vert _{ \text{Int} \, \sigma}$, of $\Phi$ to all of $\text{Int} \, \sigma$.  But it may not be possible to extend $\Phi$ itself to be continuous on $\text{Int} \, \sigma$ because of the behavior of $\Phi$ on $|P| \setminus \sigma$.)  Consider the following construction.  Let 
   \begin{equation}  \label{E:defn.of.set.A}
      \mcl{A} = \Ss \cap \sigma.
   \end{equation}
So by \eqref{E:sigma.intrscts.S} and compactness of $\Ss$,
	\begin{equation}  \label{E:A.intrscts.not.contains.sigma.int}
		\mcl{A} \text{ is compact and } 
		\varnothing \ne \mcl{A} \cap (\text{Int} \, \sigma) \ne \text{Int} \, \sigma.
	\end{equation}
Let  
	\begin{equation}  \label{E:z.notin.A}
		z \in (\text{Int} \, \sigma) \setminus \mcl{A} = (\text{Int} \, \sigma) \setminus \Ss
	\end{equation} 
be fixed.   Let $\mcl{C} \subset \sigma$ be  the set
	\begin{equation}  \label{E:defn.of.cone.C}
	   \mcl{C} := \mcl{C}_{z}  := \mcl{C}_{z}(\sigma) := \mcl{C}_{z}(\sigma, \Ss) 
	        := \{ \alpha x + (1 - \alpha) z \in \sigma : x \in \mcl{A} 
	      \text{ and } \alpha \geq 0 \}.
	\end{equation}
(Thus, $\alpha > 1$ is possible. See figure \ref{F:SimplexPush}.)  $\mcl{C}$ consists of the intersection of $\sigma$ with the union of all rays emanating from $z$ and passing through some point of $\mcl{A}$.  \emph{Claim:} 
	\begin{equation}  \label{E:C.is.compact.contains.A}
		\mcl{C} \text{ is compact, } \Ss \cap \sigma = \mcl{A} \subset \mcl{C}, \text{ and }
			z \in \mcl{C} \setminus \mcl{A}.
	\end{equation}
It suffices to show that $\mcl{C}$ is closed.  Let $\{ y_{m} \} \subset \mcl{C}$ and suppose $y_{m} \to y \in \sigma$.  Write
	\begin{equation}   \label{E:ym.xm.alpham}
		y_{m} = \alpha_{m} x_{m} + (1 - \alpha_{m}) z = \alpha_{m} (x_{m} - z) + z, 
		\quad \alpha_{m} \geq 0, \;  x_{m} \in \mcl{A},   \text{ and } m = 1, 2, \ldots.
	\end{equation}
Now, $z \notin \mcl{A}$ and $\mcl{A}$ is compact.  Therefore, there exists $\delta > 0$ s.t.\ $|x_{m} - z| \geq \delta$.  Hence, by \eqref{E:ym.xm.alpham},  
	\[
		0 \leq \alpha_{m} \leq \frac{diam(\sigma)}{\delta} < \infty.
	\]
Therefore, by compactness, extracting a subsequence if necessary, we have $\alpha_{m} \to \alpha \geq 0$ and $x_{m} \to x \in \mcl{A}$ so
	\[
		y_{m} = \alpha_{m} x_{m} + (1 - \alpha_{m}) z \to \alpha x + (1 - \alpha) z \in \mcl{C}.
	\]
This proves the claim \eqref{E:C.is.compact.contains.A}.

If  $x \in \sigma$, choose $b(x) = b_{z, \sigma}(x) \in [0,1]$ and 
$\bar{h}(x) = \bar{h}_{\sigma}(x) = \bar{h}_{z, \sigma}(x) \in \text{Bd} \, \sigma$ s.t.\ 
   \begin{equation}  \label{E:b.hbar.defn}
      \bar{h}(x) + \bigl( 1-b(x) \bigr) \bigl( z - \bar{h}(x) \bigr) = b(x) \bigl( \bar{h}(x) - z \bigr) + z 
          = b(x) \bar{h}(x) + \bigl( 1 - b(x) \bigr) z = x.
   \end{equation}
(See figure \ref{F:SimplexPush}.) If $x \neq z$ then $b(x)$ and $\bar{h}(x)$ are unique (lemma \ref{L:rays.intersect.bndry.in.one.pt};
define $\bar{h}(z) \in \text{Bd} \, \sigma$ arbitrarily).  If $x = z$, then $b(x)$ is still unique because 
$z \in  \text{Int} \, \sigma$.  Note that 
   \begin{equation}  \label{E:hbar.is.idendt.on.Bd.sigma}
      \bar{h}(x) = x \text{ if and only if } x \in \text{Bd} \, \sigma.
   \end{equation}
and
	\begin{equation}  \label{E:when.bx.=1.or.0}
		b(x) = 1 \text{ if and only if } x \in \text{Bd} \, \sigma  
			\text{ and } b(x) = 0 \text{ if and only if } x = z.
	\end{equation}
	
\emph{Claim:} 
   \begin{equation}   \label{E:hbar.maps.A.to.C.cap.Bd.sigma}
      \mcl{C} \cap (\text{Bd} \, \sigma) = \bar{h}(\mcl{A}).
   \end{equation}
Suppose $x \in \sigma \setminus \{ z \}$.  Then 
	\[
		b(x) \bar{h}(x) + \bigl( 1 - b(x) \bigr) z = x \text{ and } b(x) \in (0,1].
	\]
Then
	\begin{align*}
		\bar{h}(x) &= b(x)^{-1} x + b(x)^{-1} \bigl( b(x) - 1\bigr) z \\
		    &= b(x)^{-1} x + \bigl( 1 - b(x)^{-1} \bigr) z.
	\end{align*}
Hence, if $x \in \mcl{A}$, then $\bar{h}(x) \in \mcl{C} \cap (\text{Bd} \, \sigma)$ by definition.  

Conversely, suppose $x \in \mcl{C} \cap (\text{Bd} \, \sigma)$.  Then there exists $\alpha > 0$ and $y \in \mcl{A}$ (so by \eqref{E:z.notin.A} $y \ne z$) s.t.\ 
	\begin{equation}  \label{E:x.in.terms.of.alpha.y.z}
		x = \alpha y + (1-\alpha) z.
	\end{equation}
Consider the ray
	\[
		R(t) := t y + (1-t) z = y + (t-1)(y-z) , \quad t \geq 0.
	\]
Then $R(0) = z$ and $R(1) = y \in \sigma$ and for $t > 1$ sufficiently large we have 
$R(t) \notin \sigma$.  By lemma \ref{L:rays.intersect.bndry.in.one.pt}
there exists a unique $t_{0} > 0$ s.t.\ $R(t_{0}) \in \text{Bd} \, \sigma$.  But if $0 < t < 1$, 
then $R(t) \in \text{Int} \, \sigma$.  
Therefore, we must have $\alpha = t_{0} \geq 1$.  From \eqref{E:x.in.terms.of.alpha.y.z} we get
	\[
		y = \alpha^{-1} x + (1 - \alpha^{-1}) z.  
	\]
I.e., $x = \bar{h}(y)$ and $b(y) = \alpha^{-1} \in [0,1]$.  
This proves the claim \eqref{E:hbar.maps.A.to.C.cap.Bd.sigma}.

Moreover, we \emph{claim}
   \begin{equation}  \label{E:y.in.C.iff.hbar.y.in.C}
      \text{For } y \in \sigma \setminus \{ z \} \text{ we have } y \in \mcl{C} \text{ if and only if } 
           \bar{h}(y) \in \mcl{C}.
   \end{equation}
To see this, first assume $\bar{h}(y) \in \mcl{C}$.  Then, by \eqref{E:hbar.maps.A.to.C.cap.Bd.sigma},  
$\bar{h}(y) = \bar{h}(x)$ for some $x \in \mcl{A}$.  Now, $x \ne z$ (by \eqref{E:z.notin.A}) so $b(x) > 0$ by \eqref{E:when.bx.=1.or.0}. Hence, letting $\alpha = 1/b(x)$, we have
	\[
	x = \alpha^{-1} \bar{h}(y) + (1 - \alpha^{-1}) z \text{ and } y = b(y) \bar{h}(y) + \bigl[ 1 - b(y) \bigr] z.
	\]
Thus, $\bar{h}(y) = \alpha x + (1 - \alpha) z$ so 
	\[
		y = b(y) \alpha x + b(y) (1 - \alpha) z + \bigl[ 1 - b(y) \bigr] z = b(y) \alpha x 
		     + \bigl[ 1 - b(y) \alpha \bigr] z.
	\]
Thus, $y \in \mcl{C}$.

Conversely, suppose $y \in \mcl{C} \setminus \{ z \}$.  Then $b(y) > 0$ by \eqref{E:when.bx.=1.or.0} and there exists $x \in \mcl{A}$ and $\alpha \geq 0$ s.t.\ 
	\[
		b(y) \bar{h}(y) + \bigl[ 1 - b(y) \bigr] z = y = \alpha x + (1 - \alpha) z.
	\]
Thus, 
	\[
		\bar{h}(y) = b(y)^{-1} \alpha x + \bigl( 1 - b(y)^{-1} \alpha  \bigr) z.
	\]
Thus, $\bar{h}(y) \in \mcl{C}$.  This completes the proof of the claim \eqref{E:y.in.C.iff.hbar.y.in.C}.

Here and below we find it convenient to restrict ourselves to the situation when $\bar{h}(x)$ belongs to a specified proper face of $\sigma$. Let $\tau$ be a face of $\sigma$ of dimension $n-1$.    Without loss of generality (WLOG) we may 
assume 
	\begin{equation}  \label{E:sigma.in.R.n.tau.in.R.n-1}
		\sigma \subset \RR^{n} \text{ and } \tau \subset \RR^{n-1}, 
	\end{equation}
where we identify $\RR^{n-1}$ 
with $\bigl\{ (y_{1}, \ldots, y_{n-1}, 0) \in \RR^{n} : (y_{1}, \ldots, y_{n-1}) \in \RR^{n-1} \bigr\}$.  (Recall $n = \dim \sigma$.) Thus, the $n^{th}$ coordinate of any $y \in \sigma \setminus \tau$ is non-zero.  
If $y \in \sigma \setminus \{ z \}$ let
   \begin{multline}  \label{E:b.h.tau.defns}
      b = b(y) = b(y; z, \tau) \text{ and } h = h_{z, \tau}(y) = h(y; z, \tau)
         \text{ be the solution of } b h + ( 1 - b ) z = y    \\
          \text{ with } b \in (0,1] \text{ and } h \in \tau, \text{ if such $b(y)$ and $h_{z, \tau}(y)$ exist.}
   \end{multline}
Otherwise, $b(y; z, \tau)$ and $h(y; z, \tau)$ are undefined.  Thus, 
	\begin{equation}  \label{E:b.sigma.b.tau.h.hbar}
		b(y; z,\tau) = b_{z, \sigma}(y) \text{  and } h(y; z,\tau) = \bar{h}_{z, \sigma}(y), 
			\text{  if  } y \in \sigma \setminus \{z\} \text{ and } \bar{h}_{z, \tau}(y) \in \tau.
	\end{equation}

The set of $y \in \sigma$ for which $b(y)$ and $h_{z, \tau}(y)$ exist uniquely is obviously precisely the set
   \begin{equation}  \label{E:zeta.z.defn}
      \zeta(z) = \zeta(z; \tau) = \{ b x + (1 - b) z \in \sigma : x \in \tau, \; 0 < b \leq 1 \}.
   \end{equation}
(See figure \ref{F:SimplexPush}.) We have
	\begin{equation}  \label{E:barh.and.h.agree.on.zetaz}
		\bar{h}_{z,\sigma}(y) = h_{z,\tau}(y), \quad \text{ for } y \in  \zeta(z).
	\end{equation}
In particular, 
	\begin{equation}  \label{E:when.hbar.and.h.agree}
		\bar{h}_{z, \sigma} \bigl( \mcl{A} \cap (\text{Int} \, \sigma) \bigr) \cap \tau 
			= h_{z,\tau} \Bigl( \mcl{A} \cap \bigl[ (\text{Int} \, \sigma) \cap \zeta(z; \tau) \bigr] \Bigr) 
				\text{ for } z \in (\text{Int} \, \sigma) \setminus \mcl{A}.
	\end{equation}
In particular, if $\tau'$ is another $(n-1)$-face of $\sigma$ that intersects $\tau$ 
then \eqref{E:when.hbar.and.h.agree} implies
	\begin{equation}  \label{E:h.indep.of.tau.face}
	     \zeta(z; \tau) \cap \zeta(z; \tau') \ne \varnothing \text{ and } 
		h_{z,\tau}(y) = h_{z,\tau'}(y) \in \tau \cap \tau' \text{ for } y \in \zeta(z; \tau) \cap \zeta(z; \tau').
	\end{equation}  
Notice further that
	\begin{equation}  \label{E:h.is.identity.on.tau}
	      h_{z, \tau}(y) = y, \text{ if } y \in \tau.
	\end{equation}

Note that $z \notin \zeta(z; \tau)$.  We \emph{claim}
	\begin{equation}  \label{E:sigma.less.z.in.zeta.union}
	      \sigma \setminus \{ z \} = \bigcup_{\omega} \zeta(z; \omega),
	\end{equation}
where the union is taken over all $(n-1)$-faces, $\omega$, of $\sigma$.  For let $y \in \sigma \setminus \{z\}$ and write
	\[
		y = \sum_{i=0}^{n} \beta_{i} v(i) \text{ and } z = \sum_{i=0}^{n} \alpha_{i} v(i),
	\]
where $v(0), \ldots, v(n) \in \RR^{n}$ are the vertices of $\sigma$.  Then all the $\beta_{i}$'s 
($\alpha_{i}$'s) are nonnegative and sum to 1.  Since $z \in \text{Int} \, \sigma$ the coordinates $\alpha_{0}, \ldots, \alpha_{n}$ are all strictly positive.  
Let $i = j$ maximize $(\alpha_{i} - \beta_{i})/\alpha_{i}$.  
(Define $(\alpha_{i} - \beta_{i})/\alpha_{i} = - \infty$ if $\alpha_{i} = 0$.) Since 
$\sum_{i} \alpha_{i} = 1 = \sum_{i} \beta_{i}$, but 
$(\alpha_{0}, \ldots, \alpha_{n}) \neq (\beta_{0}, \ldots, \beta_{n})$ we have $\alpha_{i} > \beta_{i}$ for at least one $i$.  Therefore, we have $\alpha_{j} > \beta_{j} > 0$ and
$b := (\alpha_{j} - \beta_{j})/\alpha_{j} > 0$.  For $i=0, \ldots, n$, let
	\[
		\gamma_{i} = \frac{ \alpha_{j} \beta_{i} - \alpha_{i} \beta_{j} }{ \alpha_{j} - \beta_{j} }.
	\]
Now for $i=0, \ldots, n$,
	\[
		(\alpha_{j} - \beta_{j}) \gamma_{i} = \alpha_{i} (\alpha_{j} - \beta_{j}) 
			- \alpha_{j} (\alpha_{i} - \beta_{i})  \geq 0
	\]
by choice of $j$.  I.e., since $\alpha_{j} > \beta_{j}$, we have $\gamma_{i} \geq 0$ for $i = 0, \ldots, n$.  Moreover, $\sum_{i} \gamma_{i} = 1$.  Hence, if $x = \sum_{i=0}^{n} \gamma_{i} v(i)$, then $x \in \sigma$.  But $\gamma_{j} = 0$, so if $\omega$ is the $(n-1)$-face of $\sigma$ opposite $v(j)$, then actually $x \in \omega$.  But $b \in (0, 1]$ and  it is easy to see that $y = b x + (1-b) z$.  I.e., $y \in \zeta(z; \omega)$.  This proves the claim \eqref{E:sigma.less.z.in.zeta.union}.  

(Thus, $b(y) = (\alpha_{j} - \beta_{j})/\alpha_{j} > 0$ if $y \ne z$.  But $b(y) = (\alpha_{j} - \beta_{j})/\alpha_{j} = 0$ still works if $y = z$.  Hence, it follows from lemma \ref{L:bary.coords.are.Lip}, \eqref{E:comp.of.Lips.is.Lip}, and the fact that ``$\max$'' is Lipschitz that $b(y)$ is Lipschitz in $y \in \sigma$.)

If $y \in \zeta(z)$ then obviously
   \begin{equation}  \label{E:hx.in.terms.of.b.y.z}
      h_{z, \tau}(y) = b(y)^{-1} (y - z) + z.
   \end{equation}
If $y \in \RR^{n}$, let $y_{j}$ be the $j^{th}$ coordinate of $y$ 
($j=1, \ldots, n$).  Since $\tau \subset \RR^{n-1}$, $x_{n} = 0$ if $x \in \tau$. If $v(n)$ is the vertex 
of $\sigma$ opposite $\tau$ then its $n^{th}$ coordinate, $v_{n}(n)$, is not 0.  WLOG we may assume $v_{n}(n) > 0$.  Then $y \in \sigma$ implies $y_{n} \geq 0$.  Since $z \in \text{Int} \, \sigma$, we have $z_{n} > 0$.  Therefore,
	\begin{equation}  \label{E:yn.<.zn.if.y.in.zeta}
		y \in \zeta(z) \text{ implies } 0 \leq y_{n} < z_{n}. 
	\end{equation}
Then \eqref{E:hx.in.terms.of.b.y.z} implies
   \[
      0 = b(y)^{-1} (y_{n} - z_{n}) + z_{n}.
   \]
Thus, 
  \begin{equation}   \label{E:b.in.terms.of.y.z}
     b(y) = z_{n}^{-1} (z_{n} - y_{n}).
  \end{equation}
In particular, by \eqref{E:b.h.tau.defns},
   \begin{equation}  \label{E:z.is.vertex.of.zeta.z}
      y_{n} \uparrow z_{n} \text{ with } y \in \zeta(z) \text{ implies } b(y) \to 0 \text{, which implies }  y \to z.
   \end{equation}
Substituting \eqref{E:b.in.terms.of.y.z} into \eqref{E:hx.in.terms.of.b.y.z} we get
   \begin{equation}   \label{E:h.in.terms.of.y.z}
      h_{z, \tau}(y) = \frac{z_{n}}{z_{n} - y_{n}} (y - z) + z 
         = \frac{z_{n}}{z_{n} - y_{n}} \left( y - \frac{y_{n}}{z_{n}} z \right), \quad y \in \zeta(z).
   \end{equation}
Thus, $h_{z, \tau}$ is continuous on $\zeta(z)$.  Conversely, we have
	\begin{equation}  \label{E:condition.for.y.in.zeta}
		\text{If } y \in \sigma, \, y_{n} < z_{n}, \text{ and } \frac{z_{n}}{z_{n} - y_{n}} (y - z) + z \in \tau,
			\text{ then } y \in \zeta(z).
	\end{equation}

Since $\mcl{A}$ is compact and $z \notin \mcl{A}$, by \eqref{E:z.notin.A} and \eqref{E:z.is.vertex.of.zeta.z}, the difference $z_{n} - y_{n} > 0$, $y \in \zeta(z) \cap \mcl{A}$, is bounded away from 0.  It follows from corollary \ref{C:cont.diff.=.loc.Lip} in appendix \ref{S:Lip.Haus.meas.dim} that
	\begin{equation}  \label{E:h.z.tau.is.loc.Lip.on.zeta}
		h_{z, \tau} \text{ is locally Lipschitz on } \zeta(z) 
		       \text{ and Lipschitz on } \mcl{A} \cap \zeta(z).
	\end{equation}
(Lemma \ref{L:props.of.s}, part (\ref{I:hbar.locally.Lip.b.Lip}), will tell us that $\bar{h} : \sigma \setminus \{ z \} \to \text{Bd} \, \sigma$ is  locally Lipschitz on $\sigma \setminus \{ z \}$ and Lipschitz on $\mcl{A}$.)

For $y \in \sigma \setminus \{ z \}$, let
	\begin{equation}   \label{E:Delta.bar.defn}
	      \bar{\Delta}(y) := \text{dist} \bigl( \bar{h}(y), \mcl{C} \bigr).
	\end{equation}
(See figure \ref{F:SimplexPush}.)  Note that $\bar{\Delta}(y) \in \bigl[ 0, diam(\sigma) \bigr]$ for all $y \in \sigma \setminus \{ z \}$. 
Define.  
	\begin{equation}    \label{E:Delta.bar.z.=diam.sig}
		\bar{\Delta}(z) := diam(\sigma) > 0
	\end{equation}
and define $s : \sigma \times [0,1] \to \sigma$ as follows
 
Let $k$ be the function
   \begin{equation}   \label{E:defn.of.k}
      k(\beta, \delta, t) := 
	      \begin{cases} 
		      \exp \left\{ - \frac{\delta + t}{1 - \beta} + \delta + t \right\}
		       = \exp \left\{ - \frac{\beta (\delta + t )}{1 - \beta} \right\}, 
		              &\text{ if } \delta, t \in \RR, \beta < 1, \\
		      0,  &\text{ if } \delta + t > 0 \text{ and } \beta = 1.      
	      \end{cases}  
   \end{equation}
Let $y \in \sigma, \; 0 \leq t \leq 1$, write $b(y) = b_{z,\sigma}(y)$, and let
	\begin{equation}  \label{E:f.defn}
		f(y,t) := k \bigl[ b(y), \bar{\Delta}(y), t \bigr], 
		  \text{ for } (y,t) \in B_{z} := \bigl( \sigma \times [0,1] \bigr)
		   \setminus \Bigl[ \bigl( \mcl{C}_{z} \cap [\text{Bd} \, \sigma] \bigr)  \times \{0\} \Bigr].
	\end{equation}
Finally, let
   \begin{equation}  \label{E:syt.defn}
      s(y,t) :=  s_{z}(y,t) :=  
         \begin{cases}
            y, &\text{ if } b(y) = 1, \text{ i.e.\ } y \in \text{Bd} \, \sigma, \\
                  \bigl(1 - f(y,t) \bigr) \bar{h}(y)  
                   + f(y,t) \, z, 
                &\text{ if } 0 \leq b(y) < 1, \text{ i.e., } y \in \text{Int} \, \sigma. 
         \end{cases}.
   \end{equation}
   
If $y \in \overline{\text{St}} \, \sigma$ (see appendix \ref{S:basics.of.simp.comps}), we can write
	\[
		y = \mu(y) \sigma(y) + \bigl( 1- \mu(y) \bigr) w(y),
	\]
where $\mu(y) \in [0,1]$, $\sigma(y) \in \sigma$, and $w(y) \in \text{Lk} \, \sigma$.  
(See proof of lemma \ref{L:big.lemma} below for details.)  

Next, we define $g : |P| \setminus  \bigl[ \mcl{C} \cap (\text{Bd} \, \sigma) \bigr] \to |P|$.  
If $\overline{\text{St}} \, \sigma \ne \sigma$ define
	\begin{multline}  \label{E:g.defn.when.St.sigma.ne.sigma}
		g(y) := g_{z}(y) :=  \\
			\begin{cases}
				\mu(y) \, s_{z} \bigl( \sigma(y), 1 - \mu(y) \bigr) + \bigl( 1 - \mu(y) \bigr) w(y) 
				                  \in \overline{\text{St}} \, \sigma, 
				   &\text{ if } y \in \overline{\text{St}} \, \sigma 
				             \setminus \bigl( \mcl{C} \cap (\text{Bd} \, \sigma) \bigr), \\
				y, &\text{ if } y \in |P| \setminus (\overline{\text{St}} \, \sigma).
			\end{cases}
	\end{multline}
If  $\overline{\text{St}} \, \sigma = \sigma$ define
	\begin{equation}  \label{E:g.defn.when.St.=.sigma}
	g(y) := g_{z}(y) := 
		\begin{cases}
			s(y,0), &\text{ if } y \in \sigma \setminus \bigl[ \mcl{C} \cap (\text{Bd} \, \sigma) \bigr], \\
			y, &\text{ if } y \in |P| \setminus \sigma.
		\end{cases}
	\end{equation}

We will see (\eqref{E:g.is.cont.off.C.meet.Bd.sigma}) that $g$ is a continuous map 
of $|P| \setminus \bigl[ \mcl{C} \cap (\text{Bd} \, \sigma) \bigr]$ into itself. Define 
	   \begin{equation}   \label{E:Phi'.defn}
		  \Phi'(y) := \Phi_{z}'(y) := \Phi \circ g_{z}(y) \in \F, \quad y \in |P|,
	    \end{equation}
whenever the right hand side is defined.  Define also
	\begin{equation} \label{E:S'.defn}
		\Ss' =\Ss'_{z} =  g_{z}^{-1} \bigl[ \Ss \setminus \sigma \bigr] 
		   \cup \bigl[ \mcl{C}_{z} \cap (\text{Bd} \, \sigma) \bigr].
	\end{equation}

The following gives some properties of $\Phi'$ and $\Ss'$.  Its proof, in subsection \ref{S:misc.proofs} in the appendix, develops the properties of $\sigma(\cdot)$ and $g$.

   \begin{lemma}  \label{L:big.lemma}
      Let $\Ss$ be a nonempty closed subset of $|P|$.  Let $\sigma \in Qù$ be an $n$-simplex ($n > 0$), let $\mcl{A} = \Ss \cap \sigma$, and suppose \eqref{E:A.intrscts.not.contains.sigma.int}
      holds.
      Let $z \in (\text{Int} \, \sigma) \setminus \Ss$.  Then $\Phi'$ and $\Ss'$, defined by \eqref{E:Phi'.defn} and \eqref{E:S'.defn} have the following properties.
         \begin{enumerate}
           \renewcommand{\theenumi}{\alph{enumi}}
           \item $\Ss' \cap \sigma = \mcl{C}_{z} \cap (\text{Bd} \, \sigma)$.  
           In particular, by \eqref{E:C.is.compact.contains.A}, 
           $\Ss \cap (\text{Bd} \, \sigma) \subset \Ss' \cap (\text{Bd} \, \sigma) $ 
           and $\Ss' \cap (\text{Int} \, \sigma) = \varnothing$.  Moreover, 
	           \[
		           \text{for every $s \geq 0$, if }  \Hm^{s}(\mcl{A}) = 0 
			           \text{ then } \Hm^{s}\bigl[ \mcl{C}_{z} \cap (\text{Bd} \, \sigma) \bigr] = 0.
	           \]
           In particular, 
         	       \begin{equation}   \label{E:dim.S'.cap.sigma.le.dim.S.cap.sigma}
	           \dim (\Ss' \cap \sigma) = \dim \bigl[ \mcl{C}_{z} \cap (\text{Bd} \, \sigma) \bigr] 
	                 \leq \dim \mcl{A} = \dim (\Ss \cap \sigma).
	       \end{equation}
                 			\label{I:S'.cap.sigma.=.C.cap.Bd}
	 \item If $\rho \in P$ is not a face of $\sigma$ then $g$ is one-to-one 
	 	on $\text{Int} \, \rho$, $g(\text{Int} \, \rho) = \text{Int} \, \rho = g^{-1}(\text{Int} \, \rho)$, and
		the restriction, $g \vert _{ \text{Int} \, \rho}$,  has a locally Lipschitz inverse on
		  $\text{Int} \, \rho$.
			\label{I:g.has.locally.Lip.invrs} 
           \item $\Ss'$ is closed and $\Phi'$ is defined and continuous at{E:z.notin.A} every point 
              of $|P| \setminus \Ss'$.  If $\F$ is a metric space and $\Phi$ is locally Lipschitz 
              on $|P| \setminus \Ss$ then $\Phi'$ is locally Lipschitz on $|P| \setminus \Ss'$.
                 \label{I:S'.closed.Phi'.cont.off.S'}
           \item If $\rho \in P$ and $\text{Int} \, \rho \subset \Ss$ 
               then $\text{Int} \, \rho \subset \Ss'$.  
              \label{I:simps.in.S.are.also.in.S'}
           \item If $\tau \in P$ and $\tau \cap \sigma = \varnothing$ then $\Ss' \cap \tau = \Ss \cap \tau$ 
           and if $y \in \tau \setminus \Ss$, then $\Phi'(y) = \Phi(y)$.
              In particular, $\Ss' \cap (\text{Lk} \, \sigma) = \Ss \cap (\text{Lk} \, \sigma)$. 
                \label{I:Phi'.and.Phi.agree.on.Lk.sigma}
           \item If $\tau \neq \sigma$ is a simplex in $P$ of dimension no greater than $n := \dim \sigma$, 
           then  $\Ss' \cap (\tau \setminus \sigma) = \Ss \cap (\tau \setminus \sigma)$ and $\Phi' = \Phi$ on $\tau \setminus (\sigma \cup \Ss)$.
                 \label{I:dont.change.sings.on.othr.n.simps}
           \item Suppose $\tau \in P$ and $\dim \tau < n := \dim \sigma$.  
           Then 
	           \[
		           (\text{Int} \, \tau) \cap \Ss' 
		           = (\text{Int} \, \tau) \cap 
		                      \Bigl( \Ss \cup \bigl[ \mcl{C}_{z} \cap (\text{Bd} \, \sigma) \bigr] \Bigr)
		           = (\text{Int} \, \tau) \cap \bigl[ \Ss \cup \bar{h}_{z, \sigma}(\Ss \cap \sigma) \bigr].
	           \]
                 \label{I:S'.bggr.S.on.n-1.simps}
           \item If $\rho \in P$ is a simplex of dimension at least $n$ then for every $s \geq 0$, 
           if $\Hm^{s} \bigl[ \Ss \cap (\text{Int} \, \rho) \bigr] = 0$ 
                  then $\Hm^{s} \bigl[ \Ss' \cap (\text{Int} \, \rho) \bigr] = 0$. In particular, 
           $\dim \bigl[ \Ss' \cap (\text{Int} \, \rho) \bigr]) \leq \dim \bigl[ \Ss \cap (\text{Int} \, \rho) \bigr]$.  
           In particular as well, by \eqref{E:A.empty.iff.H0.A.=.0}, 
           if $(\text{Int} \, \rho) \cap \Ss = \varnothing$, then $(\text{Int} \, \rho) \cap \Ss' = \varnothing$. 
                \label{I:dont.increase.sing.dim.in.big.simps}
           \item If $s \geq 0$, $\tau \in P$, and $\Hm^{s} \bigl[ \Ss' \cap (\text{Int} \, \tau) \bigr] > 0$, then either $\Hm^{s} \bigl[ \Ss \cap (\text{Int} \, \tau) \bigr] > 0$ or $\tau$ is a proper face of $\sigma$ 
           and $\Hm^{s} \bigl[ \Ss \cap (\text{Int} \, \sigma) \bigr] > 0$.  Taking $s = 0$, we conclude: 
           If $\tau \in P$ and $(\text{Int} \, \tau) \cap \Ss' \ne \varnothing$ then either 
           $(\text{Int} \, \tau) \cap \Ss \ne \varnothing$ or $\tau$ is a proper face of $\sigma$.  
                                    \label{I:Int.rho.cut.S'.then.cuts.S.or.rho.is.sigma.face}
           \item We have
	           \[
		           \dim (\Ss' \cap |Q|)  \leq \dim (\Ss \cap |Q|)  \text{ and }  \dim \Ss' \leq \dim \Ss.
	           \]
	       \label{I:dim.S'.leq.dim.S}
	  \item If $\rho \in P$ then 
	  \begin{equation}  \label{E:Phi'.rho.subset.Phi.rho}
		   \Phi'(\rho \setminus \Ss') \subset \Phi(\rho \setminus \Ss).
	  \end{equation}
	       \label{I:Phi'.rho.subset.Phi.rho}
           \item If $\overline{\text{St}} \, \sigma = \sigma$ 
           then $\Ss' \setminus \sigma = \Ss \setminus \sigma$, 
           $|P| \setminus \bigl[ (\text{Int} \, \sigma) \cup \Ss' \bigr] 
              \subset |P| \setminus \bigl[ (\text{Int} \, \sigma) \cup \Ss \bigr]$, and $\Phi' = \Phi$ 
           off $(\text{Int} \, \sigma) \cup \Ss'$.    
              \label{I:Phi'.Phi.can.agree.off.sigma}
        \end{enumerate}
   \end{lemma}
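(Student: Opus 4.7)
My plan is to reduce the twelve parts to a compact list of structural properties of the push-out map $g_{z}$, whose detailed verification is the content of the upcoming lemma~\ref{L:props.of.s}. I would isolate four properties: (S1) $g_{z}$ is continuous, in fact locally Lipschitz, on its natural domain $|P| \setminus \bigl[ \mcl{C}_{z} \cap (\text{Bd} \, \sigma) \bigr]$; (S2) $g_{z}$ equals the identity on $|P| \setminus \overline{\text{St}} \, \sigma$ and sends each simplex $\rho \in P$ into itself; (S3) whenever $\rho \in P$ is not a proper face of $\sigma$, $g_{z}$ restricts to a bijection of $\text{Int} \, \rho$ onto itself with locally Lipschitz inverse; and (S4) $g_{z}$ collapses $\mcl{C}_{z} \cap (\text{Int} \, \sigma)$ to the point $z$, while the restriction of $\bar{h}_{z, \sigma}$ to $\mcl{A}$ is Lipschitz with image $\mcl{C}_{z} \cap (\text{Bd} \, \sigma)$.

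Granting (S1)--(S4), the individual parts fall out. For part~(\ref{I:S'.cap.sigma.=.C.cap.Bd}), (S2) gives $g_{z}(\sigma) \subset \sigma$, so $g_{z}^{-1}(\Ss \setminus \sigma) \cap \sigma = \varnothing$ and thus $\Ss' \cap \sigma = \mcl{C}_{z} \cap (\text{Bd} \, \sigma)$; the Hausdorff-measure transfer is Lipschitz invariance of $\Hm^{s}$ applied to (S4). Part~(\ref{I:g.has.locally.Lip.invrs}) is (S3) directly. For~(\ref{I:S'.closed.Phi'.cont.off.S'}), closedness of $\Ss'$ follows from (S1) by a case split on whether the limit point lies in $\sigma$ or in $|P| \setminus \sigma$; the continuity and local Lipschitz claims for $\Phi'$ come from $\Phi' = \Phi \circ g_{z}$. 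Parts~(\ref{I:simps.in.S.are.also.in.S'})--(\ref{I:dont.change.sings.on.othr.n.simps}) use (S2) to observe that $g_{z}$ acts as the identity on simplices not interacting with $\sigma$ in the relevant way, leaving $\Ss$ and $\Phi$ unchanged there. Part~(\ref{I:S'.bggr.S.on.n-1.simps}) is an explicit computation of $\Ss' \cap (\text{Int} \, \tau)$ from the definition of $\Ss'$ together with (S2) and (S4). Part~(\ref{I:dont.increase.sing.dim.in.big.simps}) transfers $\Hm^{s}$-null sets via the locally Lipschitz bijection in (S3). Part~(\ref{I:Int.rho.cut.S'.then.cuts.S.or.rho.is.sigma.face}) combines (\ref{I:S'.bggr.S.on.n-1.simps}), (\ref{I:dont.increase.sing.dim.in.big.simps}), and (\ref{I:S'.cap.sigma.=.C.cap.Bd}), while (\ref{I:dim.S'.leq.dim.S}) sums (\ref{I:S'.cap.sigma.=.C.cap.Bd}) and (\ref{I:dont.increase.sing.dim.in.big.simps}) over all simplices. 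For part~(\ref{I:Phi'.rho.subset.Phi.rho}), $g_{z}(\rho) \subset \rho$ by (S2), and if $y \in \rho \setminus \Ss'$ had $g_{z}(y) \in \Ss$, then either $g_{z}(y) \in \Ss \setminus \sigma$, forcing $y \in g_{z}^{-1}(\Ss \setminus \sigma) \subset \Ss'$ (contradiction), or $g_{z}(y) \in \Ss \cap \sigma = \mcl{A} \subset \mcl{C}_{z}$; in that case $g_{z}(y)$ lies on the ray from $z$ through $\bar{h}_{z, \sigma}(y)$, so $\bar{h}_{z, \sigma}(y) \in \mcl{C}_{z}$, whence $y \in \mcl{C}_{z}$ by \eqref{E:y.in.C.iff.hbar.y.in.C} and ultimately $y \in \mcl{C}_{z} \cap (\text{Bd} \, \sigma) \subset \Ss'$, again a contradiction. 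Finally, part~(\ref{I:Phi'.Phi.can.agree.off.sigma}) is immediate from (S2), since $\overline{\text{St}} \, \sigma = \sigma$ makes $g_{z}$ the identity off $\sigma$.

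The main obstacle is the verification of (S1) and (S3), which comes down to the careful design of the weight $k$ in \eqref{E:defn.of.k}. The exponential $k(\beta, \delta, t) = \exp\bigl[ -\beta(\delta+t)/(1-\beta) \bigr]$ is engineered to produce two opposing limits: as $b(y) \to 1^{-}$ with $\bar{\Delta}(y) + t > 0$ one has $f(y,t) \to 0$, so $s(y,t) \to \bar{h}(y) \to y$, matching the identity on $\text{Bd} \, \sigma$; and as $\bar{\Delta}(y) \to 0$ with $b(y)$ bounded away from $0$ and $1$ one has $f(y,t) \to 1$, so $s(y,t) \to z$, matching the collapse in (S4). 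These regimes meet continuously, and the only failure of continuity, at $\mcl{C}_{z} \cap (\text{Bd} \, \sigma)$, is precisely where both $b \to 1$ and $\bar{\Delta} \to 0$ simultaneously. The local Lipschitz estimates and the bijection with locally Lipschitz inverse in (S3) then follow by inverting $s(x,t) = (1 - f(x,t)) \bar{h}(x) + f(x,t) z$ along each ray from $z$: along such a ray $\bar{h}$ and $\bar{\Delta}$ are constant, and $\beta \mapsto \beta/(1-\beta)$ is a strictly increasing bijection $[0,1) \to [0, \infty)$, so $f$ is strictly monotone in $b$; combined with the local Lipschitz control of $\bar{h}$ on $\sigma \setminus \{z\}$ (extending \eqref{E:h.z.tau.is.loc.Lip.on.zeta} via \eqref{E:sigma.less.z.in.zeta.union}), this yields all required Lipschitz estimates on the inverse.
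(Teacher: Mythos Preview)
Your overall plan matches the paper's: isolate structural facts about $g_{z}$ (the paper packages these as \eqref{E:when.g(y)=y}, \eqref{E:off.sig.g.maps.simps.into.thmslvs}, \eqref{E:g.maps.sigma.into.itself}, \eqref{E:g.is.cont.off.C.meet.Bd.sigma}, Lemma~\ref{L:g.is.loc.Lip}, and \eqref{E:rstrction.of.g.to.tau.props}) and then read off the twelve parts. But two of your four properties are misstated in ways that would block the derivations.

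First, (S3) is false as written. You allow $\rho=\sigma$ (since $\sigma$ is not a \emph{proper} face of itself), but $g_{z}$ is \emph{not} injective on $\text{Int}\,\sigma$: by your own (S4) it collapses all of $\mcl{C}_{z}\cap(\text{Int}\,\sigma)$ to $z$. The correct hypothesis, as in part~(\ref{I:g.has.locally.Lip.invrs}), is that $\rho$ is not a face of $\sigma$ at all. The case $\rho=\sigma$ in parts~(\ref{I:dont.increase.sing.dim.in.big.simps}) and~(\ref{I:Phi'.rho.subset.Phi.rho}) must be handled separately using $\Ss'\cap(\text{Int}\,\sigma)=\varnothing$ from part~(\ref{I:S'.cap.sigma.=.C.cap.Bd}).

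Second, (S2) is too weak for parts~(\ref{I:simps.in.S.are.also.in.S'}), (\ref{I:Phi'.and.Phi.agree.on.Lk.sigma}), (\ref{I:dont.change.sings.on.othr.n.simps}), and (\ref{I:Phi'.Phi.can.agree.off.sigma}). Knowing only that $g_{z}$ is the identity on $|P|\setminus\overline{\text{St}}\,\sigma$ and preserves each simplex does not give $\Phi'=\Phi$ or $\Ss'\cap\tau=\Ss\cap\tau$ on simplices $\tau\subset\overline{\text{St}}\,\sigma$ with $\dim\tau\le n$, $\tau\ne\sigma$; such $\tau$ can lie entirely inside $\overline{\text{St}}\,\sigma$. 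What you actually need is the finer identity criterion \eqref{E:when.g(y)=y}: $g_{z}(y)=y$ whenever $\sigma(y)\in\text{Bd}\,\sigma$, which in turn comes from \eqref{E:when.sigma.x.in.Bd.sigma} (if $\sigma$ is not a face of the simplex carrying $y$, then $\sigma(y)\in\text{Bd}\,\sigma$). This is exactly how the paper handles (\ref{I:dont.change.sings.on.othr.n.simps}).

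Finally, in your argument for part~(\ref{I:Phi'.rho.subset.Phi.rho}) the step ``ultimately $y\in\mcl{C}_{z}\cap(\text{Bd}\,\sigma)$'' is missing a line: having shown $y\in\mcl{C}_{z}\cap\sigma$, you must rule out $y\in\text{Int}\,\sigma$ by observing that (S4) would then give $g_{z}(y)=z\notin\Ss$, contradicting $g_{z}(y)\in\Ss$.
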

(Point \ref{I:Phi'.Phi.can.agree.off.sigma} of the lemma does not seem to be used anywhere.) By point (\ref{I:S'.cap.sigma.=.C.cap.Bd}) of the lemma $\Ss' \cap (\text{Int} \, \sigma) = \varnothing$.  This is the main goal of the pushing out operation.  (See figure \ref{F:SimplexPush}.) Call the operation of replacing $\Phi$ by $\Phi'$ and $\Ss$ by $\Ss'$  ``pushing $\Ss$ out of $\text{Int} \, \sigma$ (from $z$)''.  Note that $\Ss$ might not only be pushed onto $\text{Int} \, \tau$ for $(n-1)$-faces, $\tau$, of $\sigma$, but also possibly onto faces of $\sigma$ of any dimension $< n$.

\subsection{Constructing $\tilde{\Ss}$ and $\tilde{\Phi}$}  \label{SS:constructing.tilde.S.Phi}
Say that a simplex $\sigma \in P$ is a ``partial simplex of $\Ss$'' (or just ``partial simplex'' if $\Ss$ is understood) if $(\text{Int} \, \sigma) \cap \Ss \ne \varnothing$ and $(\text{Int} \, \sigma) \setminus \Ss \ne \varnothing$.  Thus, it is for partial simplices that the pushing out operation was defined.  (See \eqref{E:sigma.intrscts.S}.)  Note that if $\sigma$ is a partial simplex of $\Ss$ and $\sigma$ is a proper face of a simplex $\rho$, then, since $\Ss$ is closed, either $(\text{Int} \, \rho) \cap \Ss = \varnothing$ or $\rho$ is also a partial simplex of $\Ss$.  

Let $q = \dim Q$ and, for $j = 1, \ldots, q$, let $M_{j}$ be the number of  $j$-dimensional simplices in $Q$ and let $\mbf{M}$ be the set of all $q$-tuples $(m_{q}, \ldots, m_{1})$ with $0 \leq m_{j} \leq M_{j}$ 
($j = 1, \ldots, q$).  Order $\mbf{M}$ by lexicographic ordering.  I.e., define 
$(m_{q1}, \ldots, m_{11}) >_{\mbf{M}} (m_{q2}, \ldots, m_{12})$ if and only if the following holds.  
$(m_{q1}, \ldots, m_{11}) \ne (m_{q2}, \ldots, m_{12})$ and if $j = i$ is the largest $i = 1, \ldots, q$ s.t.\ $m_{i1} \ne m_{i2}$ then $m_{j1} > m_{j2}$. The relation $>_{\mbf{M}}$ total orders $\mbf{M}$.

Describe $\Ss$'s by elements of $\mbf{M}$ as follows.  Note that there is no such thing as a partial 0-dimensional simplex.  Let $\boldsymbol{\mu}(\Ss) = \bigl( m_{q}(\Ss), \ldots, m_{1}(\Ss) \bigr) \in \mbf{M}$, where, for $j = 1, \ldots, q$, $m_{j}(\Ss)$ is the number of $j$-dimensional partial simplices of $\Ss$ in $Q$.   Note that $\boldsymbol{\mu}$ maps the collection of compact subsets of $|P|$ onto $\mbf{M}$.  To see this, let $B \subset Q$ (but $B$ does not have to be a subcomplex).  For every $\sigma \in B$, pick a point $x_{\sigma} \in \text{Int} \, \sigma$.  Then $\Ss = \{ x_{\sigma} \in |Q| : \sigma \in B \}$ is a closed set whose set of partial simplices is $B$. So $\boldsymbol{\mu}$ is surjective, but it is not injective.  E.g., $\boldsymbol{\mu}$ maps both $|P|$ and $\varnothing$ to $(0, \ldots, 0)$.  Since $>_{\mbf{M}}$ is a total ordering of $\mbf{M}$, each $\mbf{m} \in \mbf{M}$ has a unique rank.  Thus, $(0, \ldots, 0)$ is the unique element of $\mbf{M}$ with rank 1, etc.  If $\Ss \subset |P|$ is closed, let $\mbf{rank}(\Ss)$ be the rank of $\boldsymbol{\mu}(\Ss)$.

If there are no partial simplices of $\Ss$ in $Q$ we are done:  Just take $\tilde{\Ss} = \Ss$ and 
$\tilde{\Phi} = \Phi$. So assume there is at least one partial simplex in $Q$.  Thus, $\mbf{rank}(\Ss) > 1$. Since $Q$ is a finite complex, there is at least one partial simplex in $Q$ of maximal dimension.  If $j = 1, \ldots, q$ is that maximal dimension, then $m_{q}(\Ss) = \cdots m_{j+1}(\Ss) = 0$. Suppose $\sigma \in Q$ is a $j$-dimensional partial simplex and let $\tau \in Q$ have dimension at least $j$ and \emph{not} be a partial simplex of $\Ss$. By  
lemma \ref{L:big.lemma}(\ref{I:S'.cap.sigma.=.C.cap.Bd},\ref{I:simps.in.S.are.also.in.S'},\ref{I:dont.change.sings.on.othr.n.simps},\ref{I:dont.increase.sing.dim.in.big.simps}), the simplex $\sigma$ will not be a partial simplex of $\Ss'$, but neither will $\tau$.  I.e., pushing $\Ss$ out of a partial simplex in $Q$ of highest dimension among the partial simplices of $\Ss$ in $Q$ always results in a $\Ss' \subset |P|$ s.t.\  
	\begin{equation}  \label{E:pushing.reduces.rank}
		\mbf{rank}(\Ss') < \mbf{rank}(\Ss).
	\end{equation} 

Let $\Ss \subset |P|$ be a closed set and $\Phi : |P| \setminus \Ss \to \F$ ($\F$ an arbitrary topological space) be continuous.  Suppose $\dim (\Ss \cap |Q|) \leq a$, where $a \geq 0$. The set $\tilde{\Ss}$ posited in theorem \ref{T:main.theorem} is obtained from $\Ss$ as follows.  If $\Ss$ has no partial simplices in $Q$ then take $\tilde{\Ss} = \Ss$.  Otherwise, begin with a partial simplex, $\sigma \in Q$, of $\Ss$ of maximal dimension and push $\Ss$ out of it from an appropriate $z \in \text{Int} \, \sigma$.  (Defining ``appropriate'' and showing that an appropriate $z$ exists is the business of subsection \ref{SS:bound.magnification.fact}.)  Then by lemma \ref{L:big.lemma}(\ref{I:S'.closed.Phi'.cont.off.S'})
we obtain a closed subset, $\Ss'$ of $|P|$. Moreover, also by point (\ref{I:S'.closed.Phi'.cont.off.S'}) of the lemma, the new map $\Phi'$ is continuous off $\Ss'$ and, by point (\ref{I:dim.S'.leq.dim.S}) of lemma \ref{L:big.lemma}, $\dim (\Ss' \cap |Q|)  \leq \dim (\Ss \cap |Q|) \leq a$.  Now replace $\Ss$ by $\Ss'$ and $\Phi$ by $\Phi'$ and repeat:  Find a partial simplex $\sigma \in Q$ of $\Ss$ of maximal dimension, etc. By \eqref{E:pushing.reduces.rank} this procedure terminates after a finite number of steps resulting in a pair $(\tilde{\Ss}, \tilde{\Phi})$ s.t.\  $\tilde{\Ss}$ has no partial simplices.  Thus, $\mbf{rank}(\tilde{\Ss}) = 1$.  Hence, $\tilde{\Ss} \cap |Q|$ is either empty or is a subcomplex of $|Q|$.  

We prove all but parts (\ref{I:bound.on.S.tilde.vol} and \ref{I:arb.fine.subdivision}) of the theorem by induction on $\mbf{rank}(\Ss)$.  If $\mbf{rank}(\Ss) = 1$, i.e., there are no partial simplices of $\Ss$ 
in $Q$ then the theorem holds with $\tilde{\Ss} = \Ss$ and $\tilde{\Phi} = \Phi$.  Let $r \geq 1$ and assume parts (\ref{I:Phi.tilde.locally.Lip.if.Phi.is}  through \ref{I:Phi.tilde.sigma.subset.Phi.sigma}) of the theorem hold whenever $\mbf{rank}(\Ss) \leq r$.  Suppose $\mbf{rank}(\Ss) = r+1$.  Choose a partial simplex, $\sigma \in Q$, of $\Ss$ having maximal dimension among all partial simplices in $Q$ and push $\Ss$ out 
of $\sigma$ to obtain a new pair ($\Ss'$, $\Phi'$) as in \eqref{E:Phi'.defn} and \eqref{E:S'.defn}.  
By points (\ref{I:S'.closed.Phi'.cont.off.S'} and \ref{I:dim.S'.leq.dim.S}) of lemma \ref{L:big.lemma}, $\Ss = \Ss'$ and $\Phi = \Phi'$ satisfy the hypotheses of theorem \ref{T:main.theorem}.  But by \eqref{E:pushing.reduces.rank}, parts (\ref{I:Phi.tilde.locally.Lip.if.Phi.is}  through \ref{I:Phi.tilde.sigma.subset.Phi.sigma}) of the theorem hold for $\Ss = \Ss'$ and $\Phi = \Phi'$.  Let $\tilde{\Ss}$ and $\tilde{\Phi}$ in the theorem be the corresponding set and map.   

By point (\ref{I:S'.closed.Phi'.cont.off.S'}) of lemma \ref{L:big.lemma}, $\Phi'$ is locally Lipschitz off $\Ss'$ if $\Phi$ is locally Lipschitz off $\Ss$.  Part (\ref{I:Phi.tilde.locally.Lip.if.Phi.is}) of the theorem follows by induction.

Then part (\ref{I:dim.Stilde.no.bggr.thn.dim.S}) of the theorem is immediate from  point (\ref{I:dim.S'.leq.dim.S}) 
of lemma \ref{L:big.lemma} and the induction hypothesis.  Part (\ref{I:S.tilde.subcomp}) is immediate from the induction hypothesis.

Next, we prove part (\ref{I:no.change.off.nbhd.of.S.in.Q}) of theorem \ref{T:main.theorem}.  Let $\sigma \in Q$ be a partial simplex of $\Ss$ of maximal dimension in $Q$ that will be the simplex in $Q$ from which $\Ss$ will be pushed to produce $(\Ss', \Phi')$.  First, we prove that part (\ref{I:no.change.off.nbhd.of.S.in.Q}) holds with $(\Ss', \Phi')$ in place of $(\tilde{\Ss}, \tilde{\Phi})$.  Suppose $\tau \in P$ is s.t.\ if $\rho \in Q$ and $(\text{Int} \, \rho) \cap \Ss \ne \varnothing$ then $\tau \cap \rho = \varnothing$. Now $(\text{Int} \, \sigma) \cap \Ss \ne \varnothing$ because $\sigma$ is a partial simplex of $\Ss$. Therefore, in particular, $\tau \cap \sigma = \varnothing$.  So by point (\ref{I:Phi'.and.Phi.agree.on.Lk.sigma}) of lemma \ref{L:big.lemma}, part (\ref{I:no.change.off.nbhd.of.S.in.Q}) holds with $\tilde{\Phi} = \Phi'$ and $\tilde{\Ss} = \Ss'$.  

Next, observe that if $\rho \in Q$ and 
$(\text{Int} \, \rho) \cap \Ss' \ne \varnothing$ then $\tau \cap \rho = \varnothing$.  For by point (\ref{I:Int.rho.cut.S'.then.cuts.S.or.rho.is.sigma.face}) of the lemma, either $(\text{Int} \, \rho) \cap \Ss \ne \varnothing$ or $\rho$ is a proper face of $\sigma$.  In the former case, $\tau \cap \rho = \varnothing$ by assumption on $\tau$.  In the latter case, 
if $\tau \cap \rho \ne \varnothing$ then $\tau \cap \sigma \ne \varnothing$, which again contradicts the assumption on $\tau$.  Part (\ref{I:no.change.off.nbhd.of.S.in.Q}) of the theorem now follows by induction.
  
Let $\rho \in P \setminus Q$, let $s \geq 0$, and suppose 
$\Hm^{s}\bigl[ \Ss \cap (\text{Int} \, \rho) \bigr] = 0$.  Note that, by \eqref{E:Int.rho.cuts.sigma.then.rho.in.sigma}, $(\text{Int} \, \rho) \cap |Q| = \varnothing$.  Hence, by points (\ref{I:dont.change.sings.on.othr.n.simps} and \ref{I:dont.increase.sing.dim.in.big.simps}) 
of lemma \ref{L:big.lemma} (and \eqref{E:Int.rho.cuts.sigma.then.rho.in.sigma} again), $\Hm^{s}\bigl[ \Ss' \cap (\text{Int} \, \rho) \bigr] = 0$.  Part (\ref{I:no.change.off.Q}) of theorem \ref{T:main.theorem} is immediate from the induction hypothesis.

Notice that if $\dim \tau = a$, then 
$\Hm^{\lfloor a \rfloor}(\Ss' \cap \tau) = \Hm^{\lfloor a \rfloor} \bigl[ \Ss' \cap (\text{Int} \, \tau) \bigr]$.  Part (\ref{I:Ha.tau.=0.if.Ha.all.nbrs.0}) of the theorem is immediate from point (\ref{I:Int.rho.cut.S'.then.cuts.S.or.rho.is.sigma.face}) of the lemma, the induction hypothesis, and the fact that a face of a face of a simplex $\tau$ is a face of $\tau$ itself.  Similarly, part (\ref{I:dist.tween.S.tilde.and.S}) of the theorem is immediate from point (\ref{I:Int.rho.cut.S'.then.cuts.S.or.rho.is.sigma.face}) of lemma \ref{L:big.lemma}.

By the induction hypothesis, part (\ref{I:Phi.tilde.sigma.subset.Phi.sigma}) of the theorem, and point (\ref{I:Phi'.rho.subset.Phi.rho}) of lemma \ref{L:big.lemma}, if $\rho \in P$, we have 
	\[
		\tilde{\Phi}(\sigma \setminus \tilde{\Ss}) \subset \Phi(\sigma \setminus \Ss')  
			\subset \Phi(\rho \setminus \Ss).
	\]
This proves part (\ref{I:Phi.tilde.sigma.subset.Phi.sigma}) of the theorem.
 
In summary of the preceding discussion, parts (\ref{I:Phi.tilde.locally.Lip.if.Phi.is}  through \ref{I:Phi.tilde.sigma.subset.Phi.sigma}) of the theorem are proved.

\subsection{Magnification in one simplex}  \label{SS:magnification.in.1.simp}
The remainder of the proof is taken up with  proving parts (\ref{I:bound.on.S.tilde.vol} 
and \ref{I:arb.fine.subdivision}) of the theorem.  Let $q = \dim Q$.  If $q < a$ then $\Hm^{a}(|Q|) = 0$, so \eqref{E:polyhedral.volume.magnification.factor} holds with any nonnegative $K$.  So suppose 
$0 \leq a \leq q$. By assumption, $\dim (\Ss \cap |Q|) \leq a$.  Suppose $a = 0$.  
Then if $\Hm^{a}(\Ss \cap |Q|) = 0$, then $\Ss \cap |Q|$ is empty, by \eqref{E:A.empty.iff.H0.A.=.0}, and we are done: By subsection \ref{SS:constructing.tilde.S.Phi}, $\tilde{\Phi} = \Phi$ and 
$\tilde{\Ss} = \Ss$.  

Assume then that $\Ss \cap |Q| \ne \varnothing$.  With $a = 0$ we have $\dim (\Ss \cap |Q|) = 0$.  
Let $\sigma \in P$ be a partial simplex of $\Ss$ of maximal dimension, $n$, say. By 
lemma \ref{L:big.lemma}(\ref{I:S'.cap.sigma.=.C.cap.Bd}), 
\eqref{E:hbar.maps.A.to.C.cap.Bd.sigma}, lemma \ref{L:props.of.s} 
(part \ref{I:hbar.locally.Lip.b.Lip}), and lemma \ref{L:loc.Lip.image.of.null.set.is.null}, we have 
$\Hm^{0}(\Ss' \cap \sigma) \leq \Hm^{0}(\Ss' \cap \sigma)$.  If $\tau \in P$ and $\dim \tau > n$, then, by maximality of $n$, $(\text{Int} \, \tau ) \cap \Ss = \varnothing$.  Hence, by lemma \ref{L:big.lemma}(\ref{I:dont.increase.sing.dim.in.big.simps}), we have $(\text{Int} \, \tau ) \cap \Ss = \varnothing$.
If $\tau \in P$ and $\dim \tau \leq n$, then by lemma \ref{L:big.lemma}(\ref{I:dont.change.sings.on.othr.n.simps}), we have 
$\Hm^{0} \bigl[ \Ss' \cap (\tau \setminus \sigma) \bigr] 
= \Hm^{0} \bigl[ \Ss \cap (\tau \setminus \sigma) \bigr]$.  Summing up, we have 
$\Hm^{0} (\Ss') \leq \Hm^{0} (\Ss)$.  Therefore, as we recursively construct $\tilde{\Ss}$ from $\Ss$ as described in subsection \ref{SS:constructing.tilde.S.Phi}, at each stage the $\Hm^{a}$-measure 
(= $\Hm^{0}$-measure) of $\Ss'$ is not increased, so \eqref{E:polyhedral.volume.magnification.factor} holds with any $K \geq 1$. So assume 
	\begin{equation}  \label{E:a.is.positive}
		a > 0.
	\end{equation}

Suppose $a = q = \dim Q$.  
First, suppose that some $q$-simplex of $Q$ lies in $\Ss$.  
Then \eqref{E:polyhedral.volume.magnification.factor} holds with 
   \begin{equation}  \label{E:K.for.a=q}
      K = K_{1} 
         = \frac{\Hm^{q}(|Q|)}{\min \bigl\{ \Hm^{q}(\omega) : 
             \omega \text{ is a $q$-simplex of $Q$} \bigr\} }
	      \geq 1.
   \end{equation}
Suppose no $q$-simplex of $Q$ lies in $\Ss$.  Then when we recursively push $\Ss$ out of all its partial simplices as described in subsection \ref{SS:constructing.tilde.S.Phi}, the intersection of the resulting $\tilde{\Ss}$ with $|Q|$ will lie 
in $|Q^{(a-1)}|$.  Then $\Hm^{q}(\tilde{\Ss} \cap |Q|) = \Hm^{q}(\tilde{\Ss} \cap |Q^{(a-1)}|) = 0$ so we can still use any nonnegative $K$.  Thus, we may assume 
	\begin{equation}  \label{E:0.<.a.<.q}
		0 < a < q.
	\end{equation}

We may assume $a$ is an integer: Recall that $\lfloor a \rfloor$ is the largest integer less than or equal to $a$.  Suppose $\lfloor a \rfloor < a$.  Then if $\sigma \in Q$ and $\dim \sigma >  \lfloor a \rfloor$ then either $\Ss \cap \sigma = \varnothing$ or $\sigma$ is a partial simplex of $\Ss$.  Hence, by recursively applying the pushing out operation we eventually get $\tilde{\Ss}$ in the $\lfloor a \rfloor$-skeleton of $P$.  But the $\Hm^{a}$ measure of this skeleton is  0 so \eqref{E:polyhedral.volume.magnification.factor} holds for any $K \geq 0$.  So assume $a$ is an integer.

If $\Hm^{a}(\Ss \cap |Q|) = \infty$ then \eqref{E:polyhedral.volume.magnification.factor} holds 
for any $K > 0$.  So assume $\Hm^{a}(\Ss \cap |Q|) < \infty$.  Recall (see \eqref{E:defn.of.set.A}) 
$\mcl{A} = \Ss \cap \sigma$. Since $\Hm^{a}(\Ss \cap |Q|) < \infty$ we have
	\begin{equation}  \label{E:A.has.finite.Ha.measure}
		\Hm^{a}(\mcl{A}) < \infty.
	\end{equation}

\emph{Claim:} If $\Hm^{a}\bigl[ \Ss \cap (\text{Int} \, \sigma) \bigr] = 0$, 
then $\Hm^{a} \bigl( \Ss' \cap |Q| \bigr) \leq \Hm^{a} \bigl( \Ss \cap |Q| \bigr)$.  To see this, assume 
$\Hm^{a}\bigl[ \Ss \cap (\text{Int} \, \sigma) \bigr] = 0$ and note that, by points (\ref{I:simps.in.S.are.also.in.S'},\ref{I:dont.increase.sing.dim.in.big.simps}) of lemma \ref{L:big.lemma} and the fact that $\sigma$ has maximal dimension ($n$) among all partial simplices, we have
	\[
		\Hm^{a} \Bigl( \bigl[ \Ss' \cap |Q| \bigr] \setminus |Q^{(n)}| \Bigr) 
			= \Hm^{a} \Bigl( \bigl[ \Ss \cap |Q| \bigr] \setminus |Q^{(n)}| \Bigr).
	\]
Hence, it suffices to show $\Hm^{a}\bigl( \Ss' \cap |Q^{(n)}| \bigr) \leq \Hm^{a}\bigl( \Ss' \cap |Q^{(n)}| \bigr)$.  But point (\ref{I:dont.change.sings.on.othr.n.simps}) of lemma \ref{L:big.lemma} then implies that it suffices to show $\Hm^{a}( \Ss' \cap \sigma) \leq \Hm^{a}(\Ss \cap \sigma)$. By point (\ref{I:S'.cap.sigma.=.C.cap.Bd}) of lemma \ref{L:big.lemma}, \eqref{E:hbar.maps.A.to.C.cap.Bd.sigma}, \eqref{E:hbar.is.idendt.on.Bd.sigma}, 
lemma \ref{L:props.of.s} part (\ref{I:hbar.locally.Lip.b.Lip}), and lemma \ref{L:loc.Lip.image.of.null.set.is.null} we have
	\begin{align*}
		\Hm^{a} ( \Ss' \cap \sigma) 
		  &\leq \Hm^{a} \bigl[ \Ss \cap (\text{Bd} \, \sigma) \bigr]  
			+ \Hm^{a} \Bigl( \bar{h} \bigl[ \Ss \cap (\text{Int} \, \sigma) \bigr] \Bigr) \\
		  &= \Hm^{a} \bigl[ \Ss \cap (\text{Bd} \, \sigma) \bigr] \\
		  &\leq \Hm^{a} ( \Ss \cap \sigma). 
	\end{align*}
The claim follows. Therefore, assume
	\begin{equation}  \label{E:Ha.S.cap.Int.sigma.poz}
		\Hm^{a}\bigl[ \Ss \cap (\text{Int} \, \sigma) \bigr] > 0; 
		   \text{ in particular, } n \geq a.
	\end{equation}

In our construction of $\tilde{\Phi}$, $\tilde{\Ss}$ in subsection \ref{SS:constructing.tilde.S.Phi}, we only applied the pushing operation in partial simplices of maximal dimension.  Assume, therefore, that $n = \dim \sigma$ is no smaller than the dimension of any partial simplex of $\Ss$ in $Q$. 
Let $x \in |Q| \setminus \sigma$. Then by \eqref{E:x.in.exctly.1.simplex.intrr}, there exists $\rho \in Q$ s.t.\ $x \in \text{Int} \, \rho$.    
If $\dim \rho > n$, then, by maximality of $n$, either $(\text{Int} \, \rho) \cap \Ss = \varnothing$ or 
$(\text{Int} \, \rho) \cap \Ss = \text{Int} \, \rho$.  Since $x \notin \sigma$, $\rho$ is not a face of $\sigma$. Hence, by \eqref{E:Int.rho.cuts.sigma.then.rho.in.sigma}, $\text{Int} \, \rho \subset \rho \setminus \sigma$. 
But by lemma \ref{L:big.lemma}(\ref{I:simps.in.S.are.also.in.S'},\ref{I:dont.change.sings.on.othr.n.simps},\ref{I:dont.increase.sing.dim.in.big.simps}),  we have
	\[
		\text{ If } \rho \in Q \setminus \{ \sigma \} \text{ is not a face of } \sigma  
		         \text{ then } (\text{Int} \, \rho) \cap \Ss' = (\text{Int} \, \rho) \cap \Ss.
	\]  
Therefore, $(\Ss' \setminus \sigma) \cap |Q| = (\Ss \setminus \sigma) \cap |Q|$. So to compare 
$\Hm^{a} \bigl(\Ss' \cap |Q| \bigr)$ to $\Hm^{a} \bigl( \Ss \cap |Q| \bigr)$ it suffices to compare
$\Hm^{a} \bigl( \Ss' \cap \sigma \bigr)$ to $\Hm^{a} \bigl( \Ss \cap \sigma \bigr)$.
But $\Hm^{a} \bigl[ \Ss' \cap (\text{Int} \, \sigma) \bigr] = 0$ by lemma \ref{L:big.lemma}(\ref{I:S'.cap.sigma.=.C.cap.Bd}).  So we only need consider the impact that pushing $\Ss$ out 
of $\sigma$ has on $\text{Bd} \, \sigma$. 

If $n=a$ then for any $z_{0} \in \text{Int} \, \sigma$ we have 
	\[
		\Ss' \cap (\text{Bd} \, \sigma)
			\subset |Q|^{(a-1)}
	\]
so $\Hm^{a} \bigl[ \Ss' \cap  (\text{Bd} \, \sigma ) \bigr] = 0$.  Hence, after pushing, 
$\Hm^{a} \bigl( \Ss' \cap |Q| \bigr) \leq \Hm^{a} \bigl( \Ss \cap |Q| \bigr)$.  
So assume $n > a$. Thus, by \eqref{E:a.is.positive} 
   \begin{equation}  \label{E:n.>.a.>.0}
      a \text{ is an integer and } q \geq n > a > 0.
   \end{equation} 

We will find a number $\phi < \infty$, that depends only on $a$ and $Q$ but not on $\Ss$ s.t.\ we can always find a $z_{0} = z_{0}(\sigma, \Ss)  \in \text{Int} \, \sigma$ satisfying
	\begin{multline}  \label{E:meas.of.push.out.from.z0.is.bdd.above}
		\Hm^{a} \Bigl( \bar{h}_{z_{0}, \sigma} \bigl[ \mcl{A} \cap (\text{Int} \, \sigma) \bigr] 
			\cap \tau \Bigr)
		   = \Hm^{a} \Bigl( h \bigl[\mcl{A} \cap \zeta(z;\tau) \cap (\text{Int} \, \sigma); 
		                   z, \tau \bigr] \Bigr)  \\
		     \leq \phi \, \Hm^{a} \bigl[ \mcl{A} \cap (\text{Int} \, \sigma) \bigr]
		       \text{ for every $(n-1)$-face, $\tau$, of } \sigma.
	\end{multline}
(See \eqref{E:b.hbar.defn}, \eqref{E:b.h.tau.defns},  \eqref{E:zeta.z.defn}, and \eqref{E:when.hbar.and.h.agree}.)  
Observe that \eqref{E:meas.of.push.out.from.z0.is.bdd.above} implies that 
	\[
		\Hm^{a} \Bigl( \bar{h}_{z_{0}, \sigma} \bigl[ \mcl{A} \cap (\text{Int} \, \sigma) \bigr] 
			\cap \tau \Bigr)
			     \leq \phi \, \Hm^{a} \bigl[ \mcl{A} \cap (\text{Int} \, \sigma) \bigr]
	\]
holds for any proper face, $\tau$, of  $\sigma$.  However, since every proper face of $\sigma$ lies in an $(n-1)$-face, we need only consider $(n-1)$-faces $\tau$.

First, we bound above $\Hm^{a} \Bigl( h \bigl[\mcl{A} \cap \zeta(z;\tau); z, \tau \bigr] \Bigr)$ 
for $z \in \text{Int} \, \sigma$. Since by \eqref{E:h.z.tau.is.loc.Lip.on.zeta} $h_{z,\tau}$ is Lipschitz on 
$\mcl{A} \cap \zeta(z)$, by lemma \ref{L:loc.Lip.image.of.null.set.is.null}, we have
$\Hm^{a} \Bigl( h \bigl[\mcl{A} \cap \zeta(z;\tau) \cap (\text{Int} \, \sigma); \; z, \tau \bigr] \Bigr) = 0$ 
if $\Hm^{a} \bigl[ \mcl{A} \cap (\text{Int} \, \sigma) \bigr]  = 0$.  Thus, by \eqref{E:h.is.identity.on.tau}
   \begin{equation}   \label{E:when.Ha.hA.=.Ha.A}
      \Hm^{a} \Bigl( h \bigl[\mcl{A} \cap \zeta(z;\tau); \; z, \tau \bigr] \Bigr) 
         = \Hm^{a}(\mcl{A}), \quad \text{ if } \Hm^{a} \bigl[ \mcl{A} \cap (\text{Int} \, \sigma) \bigr] = 0.
   \end{equation}
I.e., if $\Hm^{a} \bigl[ \mcl{A} \cap (\text{Int} \, \sigma) \bigr]  = 0$ then \eqref{E:meas.of.push.out.from.z0.is.bdd.above} holds for any $\phi > 0$. So assume $\Hm^{a} \bigl[ \mcl{A} \cap (\text{Int} \, \sigma) \bigr]  > 0$.  In summary, by \eqref{E:A.has.finite.Ha.measure}, we may assume
   \begin{equation}   \label{E:Ha.A.cap.sig.circ.poz.not.infte}
      0 < \Hm^{a} \bigl[ \mcl{A} \cap (\text{Int} \, \sigma) \bigr]  < \infty.
   \end{equation}
(See \eqref{E:Ha.S.cap.Int.sigma.poz}.)

We apply lemma \ref{L:bound.on.Haus.meas.h.A} to $h = h_{z,\tau}$.  Note that $n > n-1 \geq a > 0$ by \eqref{E:n.>.a.>.0}.  (In particular, $n \geq 2$.) If $y = (y_{1}, \ldots, y_{n})$, write $y^{n} = (y_{1}, \ldots, y_{n-1})$, the $(n-1)$-dimensional row vector obtained from $y$ by dropping the last coordinate. Moreover, by \eqref{E:yn.<.zn.if.y.in.zeta} if $y \in \zeta(z)$ then $y_{n} < z_{n}$.  
Interpreting $h_{z,\tau}$ as a map into $\RR^{n-1}$ (see \eqref{E:sigma.in.R.n.tau.in.R.n-1}), 
\eqref{E:h.in.terms.of.y.z} becomes
   \begin{equation}   \label{E:exprssn.for.pushed.y.in.R.n-1}
      h_{z,\tau}(y) = \frac{z_{n}}{z_{n} - y_{n}} (y^{n} - z^{n}) + z^{n}.
   \end{equation}
The formula \eqref{E:exprssn.for.pushed.y.in.R.n-1} defines a point of $\RR^{n-1} \subset \RR^{n}$ for $y$ in the open superset, $\mcl{U} := \{ w \in \text{Int} \, \sigma : w_{n} < z_{n} \}$, 
of $\zeta(z) \cap (\text{Int} \, \sigma)$. On $\mcl{U}$ we have
   \[ñ
      Dh_{z,\tau}(y) = \left( \frac{z_{n}}{z_{n} - y_{n}} I_{n-1}, 
         \quad \frac{z_{n}}{(z_{n} - y_{n})^{2}} (y^{n} - z^{n})^{T} \right)^{(n-1) \times n},
   \]
where $I_{m}$ is the $m \times m$ identity matrix ($m = 1, 2, \ldots$) and ``${}^{T}$'' indicates matrix transposition.    Therefore,
	\begin{align*}
	Dh_{z,\tau}(y)^{T} Dh_{z,\tau}(y) &=
		\begin{pmatrix}
		\frac{z_{n}^{2}}{(z_{n} - y_{n})^{2}} I_{n-1} & \frac{z_{n}^{2}}{(z_{n} - y_{n})^{3}} 
		          (y^{n} - z^{n})^{T} \\
		 \frac{z_{n}^{2}}{(z_{n} - y_{n})^{3}} (y^{n} - z^{n}) &  \frac{z_{n}^{2}}{(z_{n} - y_{n})^{4}} 
		                                                                                                                  |y^{n} - z^{n}|^{2}
		\end{pmatrix}   \\
		&= \frac{z_{n}^{2}}{(z_{n} - y_{n})^{2}}
		\begin{pmatrix}
		 I_{n-1} & (z_{n} - y_{n})^{-1} (y^{n} - z^{n})^{T} \\
		 (z_{n} - y_{n})^{-1} (y^{n} - z^{n}) & (z_{n} - y_{n})^{-2} |y^{n} - z^{n}|^{2}
		\end{pmatrix}
	.
	\end{align*}
The vector $\bigl(-(y^{n} - z^{n}), (z_{n} - y_{n}) \bigr)^{T}$ is an eigenvector 
of $Dh_{z,\tau}(y)^{T} Dh_{z,\tau}(y)$ with eigenvalue 0.  
The vector $\bigl( y^{n} - z^{n}, (z_{n} - y_{n})^{-1} |y^{n} - z^{n}|^{2} \bigr)^{T} \in \RR^{n}$ is also an eigenvector with 
eigenvalue 
   \[
      \lambda(y;z)^{2} := \frac{|z - y|^{2} z_{n}^{2}}{ (z_{n} - y_{n})^{4}}.
   \]

If $n=2$ then we have accounted for all eigenvalues of $Dh_{z,\tau}(y)^{T} Dh_{z,\tau}(y)$.  If $n > 2$ and $v \in \RR^{n-1}$ is any non-zero row vector in $\RR^{n-1}$ that is perpendicular 
to $y^{n} - z^{n}$, then $(v, 0)^{T}$ is an eigenvector of $Dh(y;z,\tau)^{T} \, Dh(y;z,\tau)$ with eigenvalue $z_{n}^{2}/ (z_{n} - y_{n})^{2}$. Since the space of all $(n-1)$-vectors that are perpendicular to $y^{n} - z^{n}$ is $(n-2)$-dimensional, we have again accounted for all eigenvalues of $Dh_{z,\tau}(y)^{T} Dh_{z,\tau}(y)$.

If $y \in \zeta(z)$, then, by \eqref{E:zeta.z.defn}, $y = b x + (1 - b) z$ for some 
$x \in \tau, \; 0 < b \leq 1$.  Since $\tau \subset \RR^{n-1}$, we have $x_{n} = 0$.  Therefore, 
   \[
      z - y = b \, (z-x) = b \, (z^{n} - x^{n}, z_{n}).
   \]
Hence, $z_{n} - y_{n} = b z_{n}$.  Moreover, $|z-x|^{2} = |z^{n} - x^{n}|^{2} + z_{n}^{2} \geq z_{n}^{2}$.  Thus, whether $n > 2$ or not, if $y \in \zeta(z)$
   \begin{equation}  \label{E:lambda.yz.facts}
      \lambda(y;z)^{2} = \frac{|z - y|^{2} z_{n}^{2}}{ (z_{n} - y_{n})^{4}}
      	     = \frac{ b^{2} |z - x|^{2} z_{n}^{2} }{ (z_{n} - y_{n})^{2} (z_{n} - y_{n})^{2} } 
            = \frac{b^{2} |z - x|^{2} z_{n}^{2}}{ b^{2} z_{n}^{2} (z_{n} - y_{n})^{2}}
            = \frac{ |z - x|^{2} }{(z_{n} - y_{n})^{2}}
            \geq \frac{ z_{n}^{2} }{(z_{n} - y_{n})^{2}}.
   \end{equation}
Thus, the largest eigenvalue of $Dh(y;z,\tau)^{T} \, Dh(y;z,\tau)$ for $y \in \zeta(z)$ is 
$\lambda(y;z)^{2}$.   
From \eqref{E:lambda.yz.facts} we also conclude
   \begin{equation}   \label{E:bnd.on.bggst.eigval.of.h}
      \lambda(y;z) = \frac{ |z - x| }{z_{n} - y_{n}}
         \leq \frac{diam(\sigma)}{ z_{n} - y_{n}}, \quad \quad y \in \zeta(z; \tau).
   \end{equation}

Recall that the domain of $h_{z,\tau} = h( \cdot; z, \tau)$ is $\zeta(z; \tau)$.  
(See \eqref{E:b.h.tau.defns} and \eqref{E:zeta.z.defn}.)  Therefore, by \eqref{E:Ha.A.cap.sig.circ.poz.not.infte}, lemma \ref{L:bound.on.Haus.meas.h.A}, and \eqref{E:bnd.on.bggst.eigval.of.h}
	\begin{equation}  \label{E:integral.bnd.on.Hm.h.A}
	     \Hm^{a} \Bigl( h \bigl[ \mcl{A} \cap (\text{Int} \, \sigma) \cap \zeta(z; \tau) ; z, \tau \bigr] \Bigr)   
		     \leq \int_{\mcl{A} \cap (\text{Int} \, \sigma) \cap \zeta(z; \tau)}
		        \left( \frac{diam(\sigma)}{ z_{n} - y_{n}} \right)^{a} \, \Hm^{a}(dy),
		            \quad z \in \text{Int} \, \sigma.
	\end{equation}
(The integral in the preceding and other integrals we will encounter in subsection \ref{SS:bound.magnification.fact} look potential theoretic (Falconer \cite[section 4.3]{kF90} and Hayman and Kennedy \cite[Section 5.4.1, pp.\ 225--229]{wkHpbK76.SubharmFns}), but we only need elementary methods.)
 
\subsection{Bound on average magnification factor and existence of good point from which to push}
	\label{SS:bound.magnification.fact}
Let $v(0), \ldots, v(n) \in \RR^{n}$ be the vertices of $\sigma$.  By \eqref{E:barycenter.of.sigma}
	\begin{equation}  \label{E:barycent.of.sig}
      		\hat{\sigma} = \frac{1}{n+1} \sum_{j=0}^{n} v(j)
	\end{equation}
is the barycenter of $\sigma$.  By \eqref{E:n.>.a.>.0} we have $n+1 \geq 3$.
Let 
   \begin{equation}  \label{E:intrvl.of.defn.of.gamma}
      \gamma \in \left( \tfrac{1}{2(n+1)}, 1 \right)
   \end{equation}
be a constant (to be determined later) and define a simplex lying inside and ``concentric'' 
with $\sigma$ as follows.
	\begin{equation}  \label{E:formla.for.sigma.gamma}
	      \sigma_{\gamma} = \bigl\{ \gamma x + (1 - \gamma) \hat{\sigma} : x \in \sigma \bigr\}
	          \subset \text{Int} \, \sigma,
	\end{equation}
by \eqref{E:criterion.for.int.simp}. The vertices of $\sigma_{\gamma}$ are just 
$\gamma v(j) + (1 - \gamma) \hat{\sigma}$ ($j = 0, \ldots, n$).  If
$y = \sum_{j=0}^{n} \beta_{j}(y) v(j) \in \sigma_{\gamma}$, where the $\beta_{j}(y)$'s 
are nonnegative and sum to 1, then for some nonnegative $\hat{\beta}_{0}, \ldots, \hat{\beta}_{n}$ summing to 1 we have 
	\[
		\beta_{j} = \gamma \hat{\beta}_{j} + \frac{1-\gamma}{n+1}, \quad (j = 0, \ldots, n).
	\]
Therefore, 
   \begin{equation}    \label{E:beta.bnds.for.sigma.gamma}
      \frac{1 - \gamma}{n+1} \leq \beta_{j}(y) \leq  \gamma + \frac{1-\gamma}{n+1}, 
         \quad j = 0, \ldots, n.  \quad \text{ (For each $j$ these inequalities are tight.)}
   \end{equation}

We compute the average, over $z \in \sigma_{\gamma}$, of the right hand side (RHS)
in \eqref{E:integral.bnd.on.Hm.h.A}.  Let $\mcl{L}^{k}$ denote $k$-dimensional Lebesgue measure $(k = 1, 2, \ldots)$.  In the following calculation we employ the product measure theorem and Fubini's theorem (Ash \cite[Section 2.6]{rbA72}).  This is justified because we employ either Lebesgue measure or the product measure 
$\bigl( \Hm^{a} \vert_{\mcl{A} \cap (\text{Int} \, \sigma)} \bigr) \times \mcl{L}^{n}$ and the restriction 
$ \Hm^{a} \vert_{\mcl{A} \cap (\text{Int} \, \sigma)}$ is a finite measure by \eqref{E:A.has.finite.Ha.measure}.  

First, we show that certain subsets of $\RR^{2n}$ are Borel measurable. 
Recall that if $z \in \RR^{n}$ then we write 
$z = (z^{n}, z_{n})$ with $z^{n} \in \RR^{n-1}$ and $z_{n} \in \RR$.  
Let $G : (\text{Int} \, \sigma) \times \sigma \to \sigma \times \RR \times \RR^{n} \times \sigma$ be defined by
	\begin{equation}  \label{E:Gzzy.defn}
	     G(z,y) = G(z^{n}, z_{n}, y) = \left( (z^{n}, z_{n}), \frac{z_{n} - y_{n}}{z_{n}}, 
	        \frac{z_{n}}{z_{n} - y_{n}} \left( y - \frac{y_{n}}{z_{n}} (z^{n}, z_{n}) \right), y \right).
	\end{equation}
(Recall that, by \eqref{E:yn.<.zn.if.y.in.zeta}, $z \in  \text{Int} \, \sigma$ implies $z_{n} > 0$.  
Define $\frac{z_{n}}{z_{n} - y_{n}} \left( y - \frac{y_{n}}{z_{n}} z \right) = 0 \in \RR^{n}$ 
if $z_{n} - y_{n} = 0$.)  Then $G$ is Borel measurable. Hence, the set
	\[
		\bigl\{ (z, y) \in \sigma_{\gamma} \times ( (\text{Int} \, \sigma) \cap \mcl{A}) 
		    : y \in \zeta(z; \tau) \bigr\} 
			= G^{-1} \bigl[ \sigma_{\gamma} \times (0,1) \times \tau \times \mcl{A} \bigr]
	\]
is Borel measurable.  (See \eqref{E:yn.<.zn.if.y.in.zeta}, \eqref{E:zeta.z.defn}, \eqref{E:b.in.terms.of.y.z}, \eqref{E:h.in.terms.of.y.z}, and \eqref{E:condition.for.y.in.zeta}.) If $S$ is a set define the ``indicator'', $1_{S}$, to be the function
	\[
		1_{S}(x) =
			\begin{cases}
				1, \text{ if } x \in S, \\
				0, \text{ otherwise.}
			\end{cases}
	\]
(An indicator function is often called a ``characteristic function''.)
Thus, 
	\begin{equation}  \label{E:Borel.indicator}
		\text{the indicator function } 
			1_{\bigl\{ (z, y) \in \sigma_{\gamma} \times ( (\text{Int} \, \sigma) \cap \mcl{A}) 
				    : y \in \zeta(z; \tau) \bigr\} } \text{ is Borel measurable.}
	\end{equation}

Similarly, if $y \in \sigma$ and $z_{n} > 0$, the set 
   \begin{equation}  \label{E:measurability.of.2.zeta.sets}
         Z(y,z_{n}) := \bigl\{ z^{n} \in \RR^{n-1}: 
         (z^{n}, z_{n}) \in \sigma_{\gamma} \text{ and } y \in \zeta(z^{n}, z_{n}) \bigr\} 
   \end{equation}
is just the $z_{n}$-section (followed by projection onto the first factor) of the Borel set
	\[
		G^{-1} \bigl[ \sigma_{\gamma} \times (0,1] \times \tau \times \{ y \} \bigr].
	\]
(For some choices of $y, z_{n}$ we have $Z(y,z_{n}) = \varnothing$.) Therefore, for $y \in \text{Int} \, \sigma$ fixed, $\mcl{L}^{n-1}  \bigl[ Z(y,z_{n}) \bigr]$ is Borel measurable in $z_{n} \in \RR$.

Let $z \in \text{Int} \, \sigma$. Recall that $v(n)$ is the vertex of $\sigma$ opposite $\tau$.  Let 
   \begin{equation}   \label{E:zn,max.defn}
      z_{n,max} = z_{n,max}(\sigma, \tau) = \max \{ z_{n} \geq 0: z \in \sigma \}.
   \end{equation} 
(See figure \ref{F:SimplexPush}.) This is just the $n^{th}$ coordinate of $v(n)$.  
I.e., $z_{n,max} = v_{n}(n)$.
Since $\tau \subset \RR^{n-1} \subset \RR^{n}$, the $n^{th}$ coordinates of $v(0), \ldots, v(n-1)$ are 0. Therefore, $z_{n} = \beta_{n}(z) z_{n,max}$.  Let $z \in \sigma_{\gamma}$. Then multiplying \eqref{E:beta.bnds.for.sigma.gamma}, with $y = z$ and $j=n$, through by $z_{n,max}$ we get the following tight inequalities.
   \begin{equation}   \label{E:lwr.uppr.bnds.on.zn}
      \frac{1 - \gamma}{n+1} z_{n,max} \leq z_{n} \leq \left( \gamma + \frac{1-\gamma}{n+1} \right) 
          z_{n,max} = \frac{n \gamma + 1}{n+1} z_{n,max},
          \text{ for every } z \in \sigma_{\gamma}.
   \end{equation}
   
Recalling \eqref{E:Ha.A.cap.sig.circ.poz.not.infte}, \eqref{E:lwr.uppr.bnds.on.zn}, and \eqref{E:yn.<.zn.if.y.in.zeta}, 
we can bound the integrated right hand side of \eqref{E:integral.bnd.on.Hm.h.A} as follows.
   \begin{multline}  \label{E:int.Ha.h.A.bnd.ovr.sigma.gamma}
      \int_{\sigma_{\gamma}} \int_{\mcl{A} \cap (\text{Int} \, \sigma) \cap \zeta(z; \tau)}
		        \left( \frac{diam(\sigma)}{ z_{n} - y_{n}} \right)^{a} \, \Hm^{a}(dy) \, dz  \\
      \begin{aligned}
         {} 
            &= \int_{\mcl{A} \cap (\text{Int} \, \sigma)} \int_{ \{ z \in \sigma_{\gamma} : y \in \zeta(z) \} } 
               \left( \frac{diam(\sigma)}{ z_{n} - y_{n} } \right)^{a} \, dz \, \Hm^{a}(dy)  \\
            &= diam(\sigma)^{a} 
                \int_{\mcl{A} \cap (\text{Int} \, \sigma)} \int_{ \{ z \in \sigma_{\gamma} : y \in \zeta(z) \} } 
                       (z_{n} - y_{n})^{-a} \, dz \, \Hm^{a}(dy)  \\
            &= diam(\sigma)^{a} \int_{\mcl{A} \cap (\text{Int} \, \sigma)} 
               \int_{ \max \left\{ \frac{1 - \gamma}{n+1} z_{n,max} , y_{n} \right\} }
                   ^{\frac{n \gamma + 1}{n+1} z_{n,max} } 
                  \int_{ Z(y,z_{n}) } (z_{n} - y_{n})^{-a} \, dz^{n} \, dz_{n} \, \Hm^{a}(dy)  \\
            &\leq diam(\sigma)^{a} \int_{\mcl{A} \cap (\text{Int} \, \sigma)} 
               \int_{y_{n}}^{z_{n,max}} 
                  \int_{ Z(y,z_{n}) } (z_{n} - y_{n})^{-a} \, dz^{n} \, dz_{n} \, \Hm^{a}(dy)  \\
            &= diam(\sigma)^{a} \int_{\mcl{A} \cap (\text{Int} \, \sigma)} \int_{y_{n}}^{z_{n,max}} 
                \mcl{L}^{n-1} \bigl[ Z(y,z_{n}) \bigr] (z_{n} - y_{n})^{-a} \, dz_{n} \, \Hm^{a}(dy).
      \end{aligned}
   \end{multline}

If $z = (z^{n}, z_{n}) \in \RR^{n}$ with $z_{n} > 0$, let
	\begin{equation}  \label{E:zeta.star.defn}
		\zeta^{\ast}(z) = \left\{ y \in \RR^{n} : 0 \leq y_{n} < z_{n} 
		     \text{ and } \frac{z_{n}}{z_{n} - y_{n}} \left( y - \frac{y_{n}}{z_{n}} z \right) \in \tau \right\}.
	\end{equation}
Notice that definition \eqref{E:zeta.z.defn} makes sense for any $z = (z^{n}, z_{n}) \in \RR^{n}$ with $z_{n} > 0$.  Therefore, by \eqref{E:h.in.terms.of.y.z}, \eqref{E:condition.for.y.in.zeta}, and \eqref{E:yn.<.zn.if.y.in.zeta}, 
	\[
		\zeta(z) = \zeta^{\ast}(z) \cap \sigma.
	\]
Let $y \in \text{Int} \, \sigma$ and $z_{n} \in (y_{n}, z_{n,max})$.  For $z^{n}$ to be in $Z(y,z_{n})$ two things must happen, \emph{viz.} $y \in \zeta(z^{n}, z_{n})$ and $(z^{n}, z_{n}) \in \sigma_{\gamma}$.  Thus, we can bound $\mcl{L}^{n-1} \bigl[ Z(y,z_{n}) \bigr]$ above by the volume 
of the set $\bigl\{ z^{n} \in \RR^{n-1} : y \in  \zeta^{\ast}(z^{n}, z_{n}) \bigr\}$. We can also bound it above by the volume of 
$\bigl\{ z^{n} \in \RR^{n-1} : (z^{n}, z_{n}) \in \sigma_{\gamma} \bigr\}$.  In summary,
	\begin{multline}  \label{E:L.n-1.Z.leq.min}
		\mcl{L}^{n-1} \bigl[ Z(y,z_{n}) \bigr]   \\
			\leq \min \biggl\{ \mcl{L}^{n-1} \Bigl( \bigl\{ z^{n} \in \RR^{n-1} : 
			     y \in  \zeta^{\ast}(z^{n}, z_{n}) \bigr\} \Bigr), 
			\mcl{L}^{n-1} \Bigl(  \bigl\{ z^{n} \in \RR^{n-1} : 
			    (z^{n}, z_{n}) \in \sigma_{\gamma} \bigr\} \Bigr) \biggr\}.
	\end{multline}
 
From \eqref{E:zeta.star.defn} and interpreting $\tau$ as a subset of $\RR^{n-1}$, we see that 
if $y \in \zeta^{\ast}(z^{n}, z_{n}) \cap (\text{Int} \, \sigma)$ then 
	\[
		z_{n} > y_{n} > 0 \text{ and } 
		     \frac{z_{n}}{z_{n} - y_{n}} \left( y^{n} - \frac{y_{n}}{z_{n}} z^{n} \right) \in \tau .
	\]
I.e., 
   \begin{equation}   \label{E:for.what.z.is.x.in.zeta.z}
      z_{n} > y_{n} > 0 \text{ and } z^{n} \in 
          \frac{z_{n}}{y_{n}} y^{n} - \frac{z_{n} - y_{n}}{y_{n}} \tau .
   \end{equation}
By \eqref{E:Leb.meas.of.affine.trans} in appendix \ref{S:Lip.Haus.meas.dim} the volume of the set defined by the right end of the above is
   \begin{equation}  \label{E:H.n-1.mult.shift.tau}
	      \mcl{L}^{n-1} \left( 
	                               \frac{z_{n}}{y_{n}} y^{n} - \frac{z_{n} - y_{n}}{y_{n}} \tau
                 \right)  
        = \left( \frac{z_{n} - y_{n}}{y_{n}} \right)^{n-1} \mcl{L}^{n-1}( \tau ).
   \end{equation}
Thus, if $y \in \text{Int} \, \sigma$ and $z_{n} \in (y_{n}, z_{n,max})$ we have 
by \eqref{E:L.n-1.Z.leq.min}
	\begin{equation}  \label{E:L.n-1.Z.leq.zn.yn.frac}
		\mcl{L}^{n-1} \bigl[ Z(y,z_{n}) \bigr] 
			\leq \left( \frac{z_{n} - y_{n}}{y_{n}} \right)^{n-1} \mcl{L}^{n-1}( \tau ).
	\end{equation}

By \eqref{E:formla.for.sigma.gamma}, the $(n-1)$-dimensional ``base'' of $\sigma_{\gamma}$ (i.e., the $(n-1)$-face of $\sigma_{\gamma}$ corresponding to $z_{n} = (1-\gamma) z_{n,max}/(n+1)$) 
in \eqref{E:lwr.uppr.bnds.on.zn} is $\gamma \tau + (1-\gamma) \hat{\sigma}$. 
Call this face $\tau_{\gamma}$. Then, by \eqref{E:Leb.meas.of.affine.trans} again, 
$\mcl{L}^{n-1} (\tau_{\gamma}) = \gamma^{n-1} \mcl{L}^{n-1}( \tau )$. 
If $z_{n}$ satisfies \eqref{E:lwr.uppr.bnds.on.zn} the corresponding cross-section, 
$\bigl\{ w \in \RR^{n-1} : (w, z_{n}) \in \sigma_{\gamma} \bigr\}$, 
of $\sigma_{\gamma}$ is a convex combination of the base, $\tau_{\gamma}$, 
of $\sigma_{\gamma}$ and the ``top'' vertex of $\sigma_{\gamma}$, 
which is $\gamma v(n) + (1-\gamma) \hat{\sigma}$.  The cross section volume is given by 
   \begin{align}  \label{E:H.n-1.cross.sect.sigma.gamma}
      \mcl{L}^{n-1} \Bigl( \bigl\{ w \in \RR^{n-1} : (w, z_{n}) \in \sigma_{\gamma} \bigr\} \Bigr) 
         &= \left[ \frac{\frac{n \gamma + 1}{n+1} z_{n,max} - z_{n}}
          {\left( \gamma + \frac{1-\gamma}{n+1} - \frac{1-\gamma}{n+1} \right) z_{n,max}} \right]^{n-1}
          \gamma^{n-1} \mcl{L}^{n-1}( \tau )  \\
        &= \left( \frac{ \frac{n \gamma + 1}{n+1} z_{n,max} - z_{n} } { z_{n,max}} \right)^{n-1}
            \, \mcl{L}^{n-1}( \tau ).  \notag
   \end{align}
Thus, if $y \in \text{Int} \, \sigma$, $y_{n} < z_{n}$, and \eqref{E:lwr.uppr.bnds.on.zn} holds, we have by \eqref{E:L.n-1.Z.leq.min}
	\begin{equation}  \label{E:Ln-1.Z.bnd}
		\mcl{L}^{n-1} \bigl[ Z(y,z_{n}) \bigr] \leq 
		    \left( \frac{ \frac{n \gamma + 1}{n+1} z_{n,max} - z_{n} } { z_{n,max}} \right)^{n-1}
		            \, \mcl{L}^{n-1}( \tau ).
	\end{equation}

Suppose $z \in \sigma_{\gamma}$ and $y \in \text{Int} \, \sigma$ satisfy 
$z_{n} > y_{n} \geq \tfrac{(1 - \gamma) z_{n,max}}{2(n+1)}$.  
Then by \eqref{E:lwr.uppr.bnds.on.zn} and \eqref{E:intrvl.of.defn.of.gamma} we have 
	\begin{align*}
		z_{n} - y_{n} &\leq \left[ \gamma + \frac{1-\gamma}{n+1} - \frac{1-\gamma}{2(n+1)} \right] 
		                    z_{n,max}   \\
			  &= \left[ \gamma + \frac{1-\gamma}{2(n+1)} \right] z_{n,max}  \\
			  &= \gamma \left[ 1 + \frac{1-\gamma}{\gamma} \frac{1}{2(n+1)} \right] z_{n,max} \\
			  &< \gamma \left[ 1 + \frac{1 - \frac{1}{2(n+1)}}{\frac{1}{2(n+1)}} \frac{1}{2(n+1)} \right] 
			        z_{n,max} \\
			  & = \gamma \left( 2 - \frac{1}{2(n+1)} \right) z_{n,max} \\
			  &< 2 \gamma \, z_{n,max}.
	\end{align*}
Therefore, by \eqref{E:n.>.a.>.0} and \eqref{E:L.n-1.Z.leq.zn.yn.frac}, if
$z \in \sigma_{\gamma}$ and $y \in \text{Int} \, \sigma$ satisfy 
$z_{n} > y_{n} \geq \tfrac{(1 - \gamma) z_{n,max}}{2(n+1)}$, then
   \begin{align}  \label{E:H.n-1.bnd.for.large.yn}
      \mcl{L}^{n-1} \bigl[ Z(y,z_{n}) \bigr] (z_{n} - y_{n})^{-a}   
               &\leq \left( \frac{z_{n} -y_{n}}{y_{n}} \right)^{n-1} 
                          (z_{n} - y_{n})^{-a} \, \mcl{L}^{n-1}( \tau )   \notag \\
               &\leq \left( \frac{z_{n} -y_{n}}{\frac{(1 - \gamma) z_{n,max}}{2(n+1)}} \right)^{n-1} 
                          (z_{n} - y_{n})^{-a} \, \mcl{L}^{n-1}( \tau )   \notag \\
               &\leq \frac{(2 \gamma z_{n,max})^{n-a-1}}
                   { \left( \frac{(1 - \gamma) z_{n,max}}{2(n+1)} \right)^{n-1}} \mcl{L}^{n-1}( \tau) \\
               &= \frac{2^{2n-a-2} (n+1)^{n-1} \gamma^{n-a-1} (z_{n,max})^{-a}}{(1-\gamma)^{n-1}} \, \mcl{L}^{n-1}( \tau).    \notag
   \end{align}  

Now suppose $z \in \sigma_{\gamma}$ and 
$0 < y_{n} < \tfrac{(1 - \gamma) z_{n,max}}{2(n+1)} < \tfrac{(1 - \gamma) z_{n,max}}{n+1}$. Then we have by \eqref{E:Ln-1.Z.bnd}, \eqref{E:n.>.a.>.0}, and \eqref{E:lwr.uppr.bnds.on.zn}, 
   \begin{multline}  \label{E:H.n-1.bnd.for.small.yn}
      \mcl{L}^{n-1} \bigl[ Z(y,z_{n}) \bigr] (z_{n} - y_{n})^{-a}  \\
         \begin{aligned}
             {}  &\leq \frac{\left( \frac{n \gamma + 1}{n+1} z_{n,max} - z_{n} \right)^{n-1}}{(z_{n,max})^{n-1}} 
                    \left[ z_{n} - \frac{(1 - \gamma) z_{n,max}}{2(n+1)} \right]^{-a} \,\mcl{L}^{n-1}( \tau )  \\
                  &\leq \frac{\left( \gamma + \frac{1-\gamma}{n+1} -  \frac{1-\gamma}{n+1} \right)^{n-1}
                    (z_{n,max})^{n-1}}{(z_{n,max})^{n-1}} 
                      \left[ \frac{1-\gamma}{n+1} - \frac{1-\gamma}{2(n+1)} \right]^{-a} (z_{n,max})^{-a} 
                      \mcl{L}^{n-1}( \tau )      \\
                 &= \frac{2^{a} (n+1)^{a} \gamma^{n-1}}{(1-\gamma)^{a}} (z_{n,max})^{-a} 
                      \mcl{L}^{n-1}( \tau ).
         \end{aligned}
   \end{multline}
By \eqref{E:n.>.a.>.0} and \eqref{E:intrvl.of.defn.of.gamma}, for any $y \in \text{Int} \, \sigma$ with 
$y_{n} <  z_{n}$,  the final value in \eqref{E:H.n-1.bnd.for.large.yn} is no smaller than that 
in \eqref{E:H.n-1.bnd.for.small.yn}.  Thus, if \eqref{E:lwr.uppr.bnds.on.zn} holds then 
for every $y \in \text{Int} \, \sigma$ with $0 < y_{n} < z_{n}$ we have the following regardless 
if $y_{n}$ is larger or smaller than $\tfrac{(1 - \gamma) z_{n,max}}{2(n+1)}$.
   \begin{equation}   \label{E:smple.bnd.on.H.n-1.elgble.zn}
        \mcl{L}^{n-1} \bigl[ Z(y,z_{n}) \bigr] (z_{n} - y_{n})^{-a}   
                   \leq \frac{2^{2n-a-2} (n+1)^{n-1} \gamma^{n-a-1} (z_{n,max})^{-a}}{(1-\gamma)^{n-1}} \, \mcl{L}^{n-1}( \tau).
   \end{equation}
   
Then from \eqref{E:int.Ha.h.A.bnd.ovr.sigma.gamma} and \eqref{E:smple.bnd.on.H.n-1.elgble.zn} we get 
   \begin{multline}     \label{E:int.Ha.h.A.ovr.sigma.gamma.bound}
       \int_{\sigma_{\gamma}} \int_{\mcl{A} \cap (\text{Int} \, \sigma) \cap \zeta(z; \tau)}
		        \left( \frac{diam(\sigma)}{ z_{n} - y_{n}} \right)^{a} \, \Hm^{a}(dy) dz   \\
             \leq \left( \frac{diam(\sigma)}{z_{n,max}} \right)^{a} z_{n,max} 
                \frac{2^{2n-a-2} (n+1)^{n-1} \gamma^{n} }{(1-\gamma)^{n-1} \gamma^{a+1}} 
                    \mcl{L}^{n-1}(\tau) \, \Hm^{a} \bigl[ \mcl{A} \cap (\text{Int} \, \sigma) \bigr] .
   \end{multline}

Let $r(\sigma)$ be the radius and $t(\sigma)$ the thickness of $\sigma$ (appendix \ref{S:basics.of.simp.comps}).  \emph{Claim:} $r(\sigma) \leq z_{n, max}/(n+1)$. 
By \eqref{E:sigma.in.R.n.tau.in.R.n-1}, \eqref{E:barycent.of.sig},
and \eqref{E:zn,max.defn}, $z_{n, max}/(n+1)$ is the perpendicular distance from the barycenter, $\hat{\sigma}$, 
to $\RR^{n-1}$, thought of as the subspace of $\RR^{n}$ containing $\tau$.  A perpendicular dropped from $\hat{\sigma}$ to $\RR^{n-1}$ intersects $\RR^{n-1}$ at a point $\hat{\sigma}_{0}$, say.  
If $\hat{\sigma}_{0} \in \tau$, then $z_{n, max}/(n+1)$ is the distance from $\hat{\sigma}$ to $\tau$.  If $\hat{\sigma} \notin \tau$ then the line perpendicular to $\RR^{n-1}$ and joining $\hat{\sigma}$ to $\hat{\sigma}_{0}$ must intersect some other face, $\tau'$, of $\sigma$.  The distance from $\hat{\sigma}$ to $\tau'$ 
will be $< z_{n, max}/(n+1)$.  That proves the claim.

It follows that
   \begin{equation}   \label{E:diam.dn.thickness.bnd}
      \frac{ diam(\sigma) }{z_{n,max}} \leq \frac{1}{(n+1) \, t(\sigma)}.
   \end{equation}
Combining \eqref{E:int.Ha.h.A.ovr.sigma.gamma.bound} and \eqref{E:diam.dn.thickness.bnd} we get
   \begin{multline}  \label{E:nicer.int.Ha.h.A.ovr.sigma.gamma.bound}
       \int_{\sigma_{\gamma}} \int_{\mcl{A} \cap (\text{Int} \, \sigma) \cap \zeta(z; \tau)}
		        \left( \frac{diam(\sigma)}{ z_{n} - y_{n}} \right)^{a} \, \Hm^{a}(dy) dz   \\
         \leq \frac{2^{2n-a-2} z_{n,max} (n+1)^{n-a-1} \gamma^{n} }
                   {t(\sigma)^{a} (1-\gamma)^{n-1} \gamma^{a+1}}   
                      \mcl{L}^{n-1}(\tau) \Hm^{a} \bigl[ \mcl{A} \cap (\text{Int} \, \sigma) \bigr] .
   \end{multline}

Let
   \begin{align*}
      X = X(\tau) &= X(\tau, \sigma)    \\
         &:= \biggl\{ z \in \sigma_{\gamma} :  
                \int_{\mcl{A} \cap (\text{Int} \, \sigma) \cap \zeta(z; \tau)} 
		        \left( \frac{diam(\sigma)}{ z_{n} - y_{n}} \right)^{a} \, \Hm^{a}(dy)  \\
		        & \qquad \qquad \qquad \leq 
	              \frac{2^{2n-a-2} n (n+1)^{n-a-1} (n+2)}{(1-\gamma)^{n-1} \gamma^{a+1} \, t(\sigma)^{a}}
	                        \Hm^{a} \bigl[ \mcl{A} \cap (\text{Int} \, \sigma) \bigr]  \biggr\}.
   \end{align*} 
By \eqref{E:Borel.indicator} and \eqref{E:A.has.finite.Ha.measure}, Fubini 
(Ash \cite[Theorem 2.6.4, p.\ 101]{rbA72}) tells us $X$ is Borel measurable.  Suppose
   \begin{equation}   \label{E:Ln.X.<.Ln.sigma}
      \mcl{L}^{n}(X) < \frac{n+1}{n+2} \mcl{L}^{n} ( \sigma_{\gamma} ).
   \end{equation}
   
Now, by \eqref{E:lwr.uppr.bnds.on.zn} and \eqref{E:H.n-1.cross.sect.sigma.gamma},
   \begin{multline}  \label{E:volume.of.sigma.gamma}
      \mcl{L}^{n} ( \sigma_{\gamma} ) 
      = \int_{\tfrac{1-\gamma}{n+1} z_{n, max}}^{ \frac{n \gamma + 1}{n+1} z_{n, max}} 
        \left( \frac{ \frac{n \gamma + 1}{n+1} z_{n,max} - z_{n} } { z_{n,max}} \right)^{n-1}
            \, \mcl{L}^{n-1}( \tau ) \, dz_{n}   \\
      = \gamma^{n} z_{n,max} \, \mcl{L}^{n-1}( \tau )/n > 0.
   \end{multline}
Hence,
	\begin{equation}  \label{E:Leb.meas.sigma.gamma.frac}
		\frac{\gamma^{n} z_{n,max} \, \mcl{L}^{n-1}( \tau )}{n \, \mcl{L}^{n} ( \sigma_{\gamma} )} 
		            = 1.
	\end{equation}
Then by \eqref{E:nicer.int.Ha.h.A.ovr.sigma.gamma.bound}, \eqref{E:Leb.meas.sigma.gamma.frac}, 
\eqref{E:Ln.X.<.Ln.sigma},
\eqref{E:Ha.A.cap.sig.circ.poz.not.infte}, and the definition of $X$
   \begin{multline*}
      \frac{2^{2n-a-2} z_{n,max} (n+1)^{n-a-1} \gamma^{n}}{(1-\gamma)^{n-1} \gamma^{a+1} \, 
                t(\sigma)^{a}}  \mcl{L}^{n-1}(\tau)  \Hm^{a} \bigl[ \mcl{A} \cap (\text{Int} \, \sigma) \bigr]   \\
         \begin{aligned} 
            {} \qquad &\geq \int_{\sigma_{\gamma}} 
              \int_{\mcl{A} \cap (\text{Int} \, \sigma) \cap \zeta(z; \tau)}
		        \left( \frac{diam(\sigma)}{ z_{n} - y_{n}} \right)^{a} \, \Hm^{a}(dy) \, dz    \\
             &\geq \mcl{L}^{n} (\sigma_{\gamma} \setminus X) 
                     \times  \frac{2^{2n-a-2} n (n+1)^{n-a-1} (n+2)}{(1-\gamma)^{n-1} \gamma^{a+1} \, 
                                   t(\sigma)^{a}} \Hm^{a} \bigl[ \mcl{A} \cap (\text{Int} \, \sigma) \bigr]   \\
             &= \mcl{L}^{n} (\sigma_{\gamma} \setminus X) \times 
                (n+2) \frac{2^{2n-a-2} n (n+1)^{n-a-1} }{(1-\gamma)^{n-1} \gamma^{a+1} \, t(\sigma)^{a}}
                   \Hm^{a} \bigl[ \mcl{A} \cap (\text{Int} \, \sigma) \bigr]   \\
             &= \frac{\gamma^{n} z_{n,max} \, \mcl{L}^{n-1}( \tau )}{n \, \mcl{L}^{n} ( \sigma_{\gamma} )}
                \times \mcl{L}^{n} (\sigma_{\gamma} \setminus X) \times 
                   (n+2)  \frac{2^{2n-a-2} n (n+1)^{n-a-1} }{(1-\gamma)^{n-1} \gamma^{a+1} \, t(\sigma)^{a}} 
                     \Hm^{a} \bigl[ \mcl{A} \cap (\text{Int} \, \sigma) \bigr]   \\
             &= \mcl{L}^{n} (\sigma_{\gamma} \setminus X) \times (n+2) \mcl{L}^{n} 
                    ( \sigma_{\gamma} )^{-1} 
                        \gamma^{n} z_{n,max}  \\ 
               & \qquad \qquad \qquad \qquad \qquad \qquad              
                     \frac{2^{2n-a-2} (n+1)^{n-a-1} }{(1-\gamma)^{n-1} \gamma^{a+1} \, t(\sigma)^{a}}
                                 \mcl{L}^{n-1}( \tau ) \Hm^{a} \bigl[ \mcl{A} \cap (\text{Int} \, \sigma) \bigr]   \\
               &> 
             \left( 1 - \frac{n+1}{n+2} \right) \mcl{L}^{n} ( \sigma_{\gamma} ) \times (n+2)
               \mcl{L}^{n} ( \sigma_{\gamma} )^{-1} 
               \gamma^{n} z_{n,max}   \\
               & \qquad \qquad \qquad \qquad  
                    \times  \frac{2^{2n-a-2} (n+1)^{n-a-1} }
                                               {(1-\gamma)^{n-1} \gamma^{a+1} \, t(\sigma)^{a}} \mcl{L}^{n-1}( \tau ) 
                          \Hm^{a} \bigl[ \mcl{A} \cap (\text{Int} \, \sigma) \bigr]   \\
             &= \gamma^{n} z_{n,max} 
                \frac{2^{2n-a-2} (n+1)^{n-a-1} }{(1-\gamma)^{n-1} \gamma^{a+1} \, t(\sigma)^{a}} 
                    \mcl{L}^{n-1}( \tau ) \Hm^{a} \bigl[ \mcl{A} \cap (\text{Int} \, \sigma) \bigr] .
         \end{aligned}
   \end{multline*}
So the extremes of the preceding strict inequality are equal.  Contradiction.  We conclude that \eqref{E:Ln.X.<.Ln.sigma} is false.  I.e., 
   \begin{equation}  \label{E:vol.X.geq.mult.vol.sigma}
      \mcl{L}^{n} \bigl( X(\tau) \bigr) \geq \frac{n+1}{n+2} \mcl{L}^{n} ( \sigma_{\gamma} ) > 0.
   \end{equation}

For $r = a+1, a+2, \ldots$ and $t > 0$, let
   \begin{equation}     \label{E:phi.PR.defn}
      \tilde{\phi}(a,r,t,\gamma) 
         = \frac{2^{2r-a-2} r (r+1)^{r-a-1} (r+2)}{(1-\gamma)^{r-1} \gamma^{a+1} \, t^{a}}.
   \end{equation}
Thus,
   \begin{multline}   \label{E:X.tau.in.terms.of.phi.tilde}
      X(\tau) 
         = \biggl\{ z \in \sigma_{\gamma} :  
                \int_{\mcl{A} \cap (\text{Int} \, \sigma) \cap \zeta(z; \tau)} 
		        \left( \frac{diam(\sigma)}{ z_{n} - y_{n}} \right)^{a} \, \Hm^{a}(dy)  \\
	              \leq \tilde{\phi}(a, n, t(\sigma), \gamma)
	                        \Hm^{a} \bigl[ \mcl{A} \cap (\text{Int} \, \sigma) \bigr]  \biggr\}.
   \end{multline} 

Define $g: \gamma \mapsto (1 - \gamma)^{r-1} \gamma^{a+1} \; $ ($0 \leq \gamma\leq 1$).
Suppose $a$ and $n=r$ satisfy \eqref{E:n.>.a.>.0}.  I.e., suppose 
	\begin{equation}  \label{E:r>a>0}
		r \geq a+1 \text{ and } a \geq 1.
	\end{equation}
Now, $g$ is nonnegative on $[0,1]$ and $g(0) = g(1) = 0$.  Therefore, $g$ must have a local maximum in $(0,1)$. The maximum value of $g$ is achieved at
$\gamma = \tfrac{a+1}{r+a} \in \left( \tfrac{1}{2(r+1)}, 1 \right)$. The maximum value\footnote{
\emph{Claim:}  We have $c_{ra} \geq 2^{-a-r}$. 
The claim is true when $r = 2$ (which means $a = 1$ by \eqref{E:r>a>0}),  the smallest possible value of $r$.  Next, suppose $r \geq 3$ and $r$ and $a$ satisfy \eqref{E:r>a>0}.  Take the logarithm 
of $c_{ra}/2^{-a-r}$ and  divide by $a+r$.
   \[
      \frac{r-1}{a+r} \log(r-1) +  \frac{a+1}{a+r} \log(a+1) - \log (a+r) + \log 2.
   \]
Differentiating this expression w.r.t.\ $r$ we find that it is minimized when $r = a+2$.  The claim is proved.} 
of $g$ is
   \begin{equation*}  
      c_{ra} := g \left( \tfrac{a+1}{r+a} \right) = \frac{(r-1)^{r-1} (a+1)^{a+1}}{(r+a)^{r+a}}.
   \end{equation*} 
Let 
	\begin{equation}   \label{E:phi.a.r.t.defn}
		\phi(a,r,t) = \tilde{\phi} \left( a,r,t, \tfrac{a+1}{r+a} \right).
	  \end{equation}
So 
	\begin{equation}  \label{E:phi.leq.phi.tilde}
		\phi(a,r,t) \leq \tilde{\phi}(a,r,t,\gamma) 
		           \text{ for all } \gamma \in \left( \tfrac{1}{2(n+1)}, 1 \right).  
	\end{equation}
Moreover, it is easy to see that $c_{ra}$ is decreasing in $r$ and, hence,  
	\begin{equation}  \label{E:phi.increases.in.r}
		\phi(a,r,t) \text{ is increasing in } r > 1.
	\end{equation}
  
Let $q = \dim Q$, let 
	\begin{equation}   \label{E:t.min.Q.defn}
		t_{min}(Q) =  \min \bigl\{ t(\sigma') :  
		          \sigma' \text{ is an $m$-simplex of $Q$ with } m > a \bigr\}
	\end{equation}
and let
	\begin{equation}  \label{E:phi.Q.defn}
		\phi = \phi(Q) = \phi \bigl( a, q, t_{min}(Q) \bigr).
	\end{equation}
Note that by \eqref{E:phi.PR.defn} 
	\begin{multline}  \label{E:phi.depends.on.a.q.t}
		\phi \text{ depends on } a, \text{ it only depends on } \sigma 
		\text{ through } Q,  \\
		           \text{ and only depends on $Q$ through $q$ and $t_{min}(Q)$.}
	\end{multline}

If $\tau'$ is an $(n-1)$-face of $\sigma$, let 
      \[
      Y(\tau') = Y(\tau', \sigma) = 
         \left\{ z \in \sigma_{\gamma} : 
        \int_{\mcl{A} \cap (\text{Int} \, \sigma) \cap \zeta(z; \tau')}
		        \left( \frac{diam(\sigma)}{ z_{n} - y_{n}} \right)^{a} \, \Hm^{a}(dy) 
         \leq \phi \, \Hm^{a} \bigl[ \mcl{A} \cap (\text{Int} \, \sigma) \bigr]  \right\}.
   \]
As was the case with the set $X$, the set $Y(\tau')$ is Borel. Now, by \eqref{E:X.tau.in.terms.of.phi.tilde}, 
\eqref{E:phi.leq.phi.tilde}, and \eqref{E:phi.increases.in.r}, 
$X(\tau') \subset Y(\tau')$ so, by \eqref{E:vol.X.geq.mult.vol.sigma}, for every $(n-1)$-face, $\tau'$, of $\sigma$, 
   \begin{equation}  \label{E:Ln.z.for.whch.Ha.pushed.is.smll}
      \mcl{L}^{n} \bigl( Y(\tau') \bigr) 
            \geq \frac{n+1}{n+2} \mcl{L}^{n} ( \sigma_{\gamma} ) > 0.
   \end{equation}
Let $F_{n-1}(\sigma)$ denote the collection of $(n-1)$-faces of $\sigma$.  Then $F_{n-1}(\sigma)$ has $n+1$ elements.  It follows from \eqref{E:Ln.z.for.whch.Ha.pushed.is.smll}
   \begin{align}
      \mcl{L}^{n} \left( \bigcap_{\tau' \in F_{n-1}(\sigma)} Y(\tau') \right) 
               &= \mcl{L}^{n} ( \sigma_{\gamma} ) - \mcl{L}^{n} 
                  \left( \bigcup_{\tau' \in F_{n-1}(\sigma)} \bigl[ \sigma_{\gamma} \setminus Y(\tau') \bigr] 
                   \right)  \notag \\
               &\geq \mcl{L}^{n} ( \sigma_{\gamma} )
                 -  \sum_{\tau' \in F_{n-1}(\sigma)} 
                     \mcl{L}^{n} \bigl[ \sigma_{\gamma} \setminus Y(\tau') \bigr]  \\
               &\geq \mcl{L}^{n} ( \sigma_{\gamma} ) \left[1 - (n+1) \left( 1 - \frac{n+1}{n+2} \right) \right]   
                      \notag \\
               &= \frac{1}{n+2} \mcl{L}^{n} ( \sigma_{\gamma} )  > 0.   \notag
   \end{align}
In particular 
   \[
      \bigcap_{\tau' \in F_{n-1}(\sigma)} Y(\tau') \neq \varnothing.
   \]
Let
	\begin{equation}  \label{E:z0.choice}
	      z_{0} \in \bigcap_{\tau' \in F_{n-1}(\sigma)} Y(\tau').
	\end{equation}
Then by the definition of the $Y(\tau)$'s and \eqref{E:integral.bnd.on.Hm.h.A}
we have that \eqref{E:meas.of.push.out.from.z0.is.bdd.above} holds.
Thus, no matter what $\mcl{A}$ is, so long as it has properties 
\eqref{E:A.intrscts.not.contains.sigma.int} and \eqref{E:Ha.A.cap.sig.circ.poz.not.infte}, there exists $z_{0} \in \text{Int} \, \sigma$ s.t.\ the magnification factor of $z_{0}$ and $\mcl{A}$ is not greater than $(n+1) \phi$ and $\phi$ does not depend on $\mcl{A}$.  (This is what is meant by ``appropriate'' in subsection \ref{SS:constructing.tilde.S.Phi}.)

\subsection{Recursion and totaling}  \label{SS:recur.and.total}
Let $\Ss \subset |P|$ be closed and suppose $\dim \bigl( \Ss \cap |Q| \bigr) \leq a$. 
Let $\sigma \in Q$ be a partial simplex of $\Ss$.  Define $\Phi' = \Phi'_{z}$ ($\Ss' = \Ss'_{z}$) as 
in \eqref{E:Phi'.defn} (resp.\ \eqref{E:S'.defn}) with $z = z_{0}$ as in \eqref{E:z0.choice}.
Let $n = \dim \sigma$.  Let $\tau$ be a proper face of $\sigma$.  
By lemma \ref{L:big.lemma} (point \ref{I:S'.cap.sigma.=.C.cap.Bd}), 
\eqref{E:hbar.maps.A.to.C.cap.Bd.sigma}, \eqref{E:hbar.is.idendt.on.Bd.sigma}, 
and \eqref{E:meas.of.push.out.from.z0.is.bdd.above} and recalling that, by \eqref{E:defn.of.set.A},  $\mcl{A} = \Ss \cap \sigma$, we have
	\begin{align}  \label{E:bound.on.Hm.a.S'.cap.sigma.face}
	   \Hm^{a} \bigl[ \Ss' \cap (\text{Int} \, \tau) \bigr] 
	          &= \Hm^{a} \bigl[ \mcl{C} \cap (\text{Bd} \, \sigma) \cap (\text{Int} \, \tau) \bigr] \notag  \\
	          &= \Hm^{a} \Bigl( \bigl[ \bar{h}( \mcl{A} \cap (\text{Bd} \, \sigma) ) \cup 
		         \bar{h}( \mcl{A} \cap (\text{Int} \, \sigma)) \bigr] \cap (\text{Int} \, \tau) \Bigr)  
		             \notag  \\
	         &\leq \Hm^{a} \Bigl( \bigl[ \mcl{A} \cap (\text{Int} \, \tau) \bigr] \cup 
		         \bigl[ \bar{h}( \mcl{A} \cap (\text{Int} \, \sigma)) \cap (\text{Int} \, \tau) \bigr] \Bigr) \\
	         &\leq \Hm^{a} ( \mcl{A} \cap (\text{Int} \, \tau)) 
	                  +  \Hm^{a} 
	                    \bigl[  \bar{h}( \mcl{A} \cap (\text{Int} \, \sigma)) \cap (\text{Int} \, \tau) \bigr] 
	                          \notag  \\
	         &\leq \Hm^{a} ( \mcl{A} \cap (\text{Int} \, \tau)) 
	              +  \phi \Hm^{a} ( \mcl{A} \cap (\text{Int} \, \sigma) ).
					         \notag  
	\end{align}
Summing the inequality \eqref{E:bound.on.Hm.a.S'.cap.sigma.face} over all $2^{n+1} - 2$ (nonempty) proper faces, $\tau$, of $\sigma$, applying \eqref{E:x.in.exctly.1.simplex.intrr}, and recalling that, by lemma \ref{L:big.lemma}(\ref{I:S'.cap.sigma.=.C.cap.Bd}), 
$\Ss' \cap (\text{Int} \, \sigma) = \varnothing$, we get
	\begin{equation}  \label{E:bound.on.Hm.a.S'.cap.sigma}
		\Hm^{a}(\Ss' \cap \sigma) 
		      \leq \Hm^{a} \bigl[ \mcl{A} \cap (\text{Bd} \, \sigma) \bigr]
		            +  (2^{n+1} - 2) \phi \Hm^{a} \bigl[ \mcl{A} \cap (\text{Int} \, \sigma) \bigr].
	\end{equation}
Next, we develop two similar approaches to proving part (\ref{I:bound.on.S.tilde.vol}) of theorem \ref{T:main.theorem}.  The first approach uses \eqref{E:bound.on.Hm.a.S'.cap.sigma} to quickly shows the existence of a $K < \infty$ s.t.\ \eqref{E:polyhedral.volume.magnification.factor} holds.  The second approach computes a (probably wildly too big) expression for such a $K$.  The second approach is useful for proving part (\ref{I:arb.fine.subdivision}) of the theorem.

\subsubsection{Induction on $\mbf{rank}$}
To prove part (\ref{I:bound.on.S.tilde.vol}) of theorem \ref{T:main.theorem} we use induction as in subsection \ref{SS:constructing.tilde.S.Phi}. If $\mbf{rank}(\Ss) = 1$, i.e., there are no partial simplices of $\Ss$ in $Q$ then theorem \ref{T:main.theorem} holds with $\tilde{\Ss} = \Ss$, $\tilde{\Phi} = \Phi$, and any $K \geq 1$.  Let $r \geq 1$ and assume parts (\ref{I:Phi.tilde.locally.Lip.if.Phi.is} through \ref{I:bound.on.S.tilde.vol}) of theorem \ref{T:main.theorem} hold whenever $\mbf{rank}(\Ss) \leq r$.  In particular, whenever $\mbf{rank}(\Ss) \leq r$ there is a constant $K_{r} < \infty$ s.t.\  \eqref{E:polyhedral.volume.magnification.factor} holds.  Suppose $\mbf{rank}(\Ss) = r+1$.  

Choose a partial simplex, $\sigma \in Q$, of $\Ss$ having maximal dimension.  
Let $n = \dim \sigma$. Then there are no partial simplices of $\Ss$ in $Q$ of dimension $> n$.  
Hence, if $\rho \in Q$ has dimension greater than $n$, then either $\text{Int} \,  \rho = \varnothing$ or $\text{Int} \, \rho \subset \Ss$.  Push $\Ss$ out of $\sigma$ obtaining a new pair ($\Ss'$, $\Phi'$) as in lemma \ref{L:big.lemma} so that \eqref{E:bound.on.Hm.a.S'.cap.sigma} holds.  Then by \eqref{E:pushing.reduces.rank} $\mbf{rank}(\Ss) \leq r$.  Hence, by the induction hypothesis there is a compact subset $\tilde{\Ss} = \widetilde{(\Ss')} \subset |P|$ (and a corresponding map $\tilde{\Phi} = \widetilde{(\Phi')}$) satisfying 
parts (\ref{I:Phi.tilde.locally.Lip.if.Phi.is} through \ref{I:bound.on.S.tilde.vol}) 
of theorem \ref{T:main.theorem} 
(including \eqref{E:polyhedral.volume.magnification.factor} 
with $K = K_{r} < \infty$) with $\Ss$ and $\Phi$ replaced by $\Ss'$ and $\Phi'$, resp.  
In subsection \ref{SS:constructing.tilde.S.Phi} we showed that 
$(\tilde{\Phi}, \tilde{\Ss})$ satisfies parts (\ref{I:Phi.tilde.locally.Lip.if.Phi.is} 
-- \ref{I:Phi.tilde.sigma.subset.Phi.sigma}) of theorem \ref{T:main.theorem}.  Moreover, by \eqref{E:Int.rho.cuts.sigma.then.rho.in.sigma}, lemma \ref{L:big.lemma}
(\ref{I:simps.in.S.are.also.in.S'}, \ref{I:dont.change.sings.on.othr.n.simps}, \ref{I:dont.increase.sing.dim.in.big.simps}), and \eqref{E:bound.on.Hm.a.S'.cap.sigma}, we have
	\begin{align*}
		\Hm^{a} ( \tilde{\Ss} \cap |Q| ) &\leq K_{r} \Hm^{a} (\Ss' \cap |Q| ) \\
		  &= K_{r} \Hm^{a} \bigl[ (\Ss' \setminus \sigma) \cap |Q|  \bigr] 
		       + K_{r} \Hm^{a} (\Ss' \cap \sigma )   \\
		       &= K_{r} \sum_{\rho \in Q, \, \rho \nsubseteq \sigma} \Hm^{a}( (\text{Int} \, \rho) \cap \Ss')
		            + K_{r} \Hm^{a} ( \Ss' \cap \sigma) \\
		       &= K_{r} \sum_{\rho \in Q, \, \rho \nsubseteq \sigma} \Hm^{a}( (\text{Int} \, \rho) \cap \Ss)
		            + K_{r} \Hm^{a} ( \Ss' \cap \sigma) \\
		       &= K_{r} \Hm^{a} \bigl[ ( \Ss \setminus \sigma ) \cap |Q| \bigr]
		            + K_{r} \Hm^{a} ( \Ss' \cap \sigma) \\
		  &\leq K_{r} \Hm^{a} \bigl[ (\Ss \setminus \sigma) \cap |Q|  \bigr] 
		       + K_{r} \Hm^{a} ( \Ss \cap (\text{Bd} \, \sigma)) 
				    +  (2^{n+1} - 2) K_{r} \phi \Hm^{a} \bigl[ \Ss \cap (\text{Int} \, \sigma) \bigr]   \\
		  &\leq (2^{n+1} - 2) K_{r} (\phi+1) \, \Hm^{a} ( \Ss \cap |Q|).
	\end{align*}
Thus, \eqref{E:polyhedral.volume.magnification.factor} holds for compact $\Ss \subset |P|$ 
with $\mbf{rank}(\Ss) \leq r+1$.  So now we have proved parts (\ref{I:Phi.tilde.locally.Lip.if.Phi.is} through \ref{I:bound.on.S.tilde.vol}) of the theorem.  But to prove part (\ref{I:arb.fine.subdivision}) of the theorem we need a more explicit expression for $K$.

\subsubsection{More explicit expression for $K$}  \label{SSS:computational.approach}
Let $q = \dim Q$.  Let $k = 0, 1, 2, \ldots, q$. Recall that $Q^{(k)}$ denotes the $k$-skeleton of $Q$, i.e., the collection of all simplices in $Q$ of dimension no greater than $k$. Then $|Q^{(k)}|$ is the polytope, i.e., union of these simplices. Let $(Q^{(k)})^{c}$ denote the collection of simplices of $Q$ of dimension strictly greater than $k$. Let $\partial Q^{(k)}$ denote the collection of simplices of $Q$ of dimension exactly $k$. If $\tau$ is a proper face of a simplex $\sigma$ write 
$\tau \prec \sigma$ (Munkres \cite[p.\ 86]{jrM84}). If $\tau \prec \sigma$ or $\tau = \sigma$ 
write $\tau \preccurlyeq \sigma$.  We apply the recursive pushing procedure described in section \ref{SS:constructing.tilde.S.Phi}.  Extend the idea of pushing in a trivial way as follows. 
If $\sigma \in Q$ is not a partial simplex of $\Ss$, define \emph{pushing $\Ss$ out of} $\sigma$ to mean making no change to either $\Ss$ or $\Phi$. To be more explicit, this convention amounts to this extension of \eqref{E:S'.defn}.
	\begin{equation}  \tag{\ref{E:S'.defn}'}
		\Ss' = \Ss \text{ if } \sigma \text{ is not a partial simplex of } \Ss.
	\end{equation}
Moreover, if $\sigma \in Q$ is not a partial simplex of $\Ss$ and $z \in \text{Int} \, \sigma$, define 
$\bar{h} = \bar{h}_{z, \sigma}$ (see \eqref{E:b.hbar.defn}) to just be the identity on $\sigma$:
	\begin{equation}   \label{E:when.hbar.is.identity}
		\bar{h}(x) := \bar{h}_{\sigma}(x) := \bar{h}_{z, \sigma}(x) := x, \quad x \in \sigma, 
		     \text{ if $\sigma \in Q$ is not a partial simplex of $\Ss$.}
	\end{equation}
(In particular, in this case $\bar{h}(x) \notin \text{Bd} \, \sigma$ in general.)  In this way we can apply the pushing procedure to all simplices of $Q$.  In addition, with these extensions, lemma \ref{L:big.lemma} 
(points \ref{I:dont.change.sings.on.othr.n.simps}, \ref{I:S'.bggr.S.on.n-1.simps}) continue to hold, because if $\sigma$ is not a partial simplex of $\Ss$ then 
$\bar{h}(\Ss \cap \sigma) = \mcl{C}_{z} \cap (\text{Bd} \, \sigma) \subset \Ss$. 
For if $\text{Int} \, \sigma \subset \Ss$ then, since $\Ss$ is closed, $\text{Bd} \, \sigma \subset \Ss$.  On the other hand, if $(\text{Int} \, \sigma) \cap \Ss = \varnothing$, then, by \eqref{E:C.is.compact.contains.A}, \eqref{E:hbar.maps.A.to.C.cap.Bd.sigma}, and\eqref{E:hbar.is.idendt.on.Bd.sigma}, we have 
	\[
		 \Ss \cap (\text{Bd} \, \sigma) \subset \mcl{C}_{z} \cap (\text{Bd} \, \sigma)  
			 = \bar{h}(\Ss \cap \sigma) 
			 = \bar{h} \bigl[ \Ss \cap (\text{Bd} \, \sigma) \bigr] = \Ss \cap (\text{Bd} \, \sigma).
	\]

Let $(\Ss'_{q, 0}, \Phi'_{q,0}) := (\tilde{\Ss}_{q}, \tilde{\Phi}_{q}) := (\Ss, \Phi)$.  Let $a < d \leq q$ and let $\tilde{\Ss}_{d}$ be a compact subset of $|P|$ and let $\tilde{\Phi}_{d} : |P| \setminus \tilde{\Ss}_{d} \to \F$ be continuous.  The pair $(\tilde{\Ss}_{d}, \tilde{\Phi}_{d})$ is obtained through repeated pushing.  
Suppose $\dim \bigl( \tilde{\Ss}_{d} \cap |Q| \bigr) \leq a$ and no partial simplex of $\tilde{\Ss}_{d}$ has dimension higher than $d$. Then $\tilde{\Ss}_{d} \cap |Q| \subset |Q^{(d)}|$.  The reason for this is as follows.  Let $\sigma \in Q$ and suppose $\dim \sigma > d$.  Then by assumption, $\sigma$ is not a partial simplex of $\tilde{\Ss}_{d}$.  And we cannot have $\text{Int} \, \sigma \subset \tilde{\Ss}_{d}$ either because otherwise $\dim \bigl( \tilde{\Ss}_{d} \cap |Q| \bigr) \geq d > a$.  Therefore, 
$\text{Int} \, \sigma \subset \tilde{\Ss}_{d} = \varnothing$. So if $x \in \tilde{\Ss}_{d} \cap |Q|$ then, by \eqref{E:x.in.exctly.1.simplex.intrr}, $x$ lies in the interior of some simplex of $Q$ of dimension no greater than $d$.  I.e., $\tilde{\Ss}_{d} \cap |Q| \subset |Q^{(d)}|$, as desired. 
   
Write $(\Ss'_{d, 0}, \Phi'_{d,0}) := (\tilde{\Ss}_{d}, \tilde{\Phi}_{d})$. Push $\Ss'_{d,0}$ out of some $d$-simplex in $Q$.  Call the result $(\Ss'_{d, 1}, \Phi'_{d,1})$.  If there is another $d$-simplex in $Q$ 
then push $\Ss'_{d,1}$ out of that one, producing $(\Ss'_{d, 2}, \Phi'_{d,2})$.  Repeat this process producing a sequence $(\Ss'_{d, 1}, \Phi'_{d,1}), \ldots, (\Ss'_{d, M_{d}}, \Phi'_{d,M_{d}})$, where $M_{d}$ is the number of $d$-simplices in $Q$.  By lemma \ref{L:big.lemma}(\ref{I:dim.S'.leq.dim.S},\ref{I:dont.increase.sing.dim.in.big.simps}), for $m = 0, \ldots, M_{d}$, we have $\dim \bigl( \Ss'_{d, m} \cap |Q| \bigr) \leq a$ and there are no partial simplices of $\Ss'_{d, m}$ of dimension greater than $d$.  For the same reason and point (\ref{I:S'.cap.sigma.=.C.cap.Bd}) of lemma \ref{L:big.lemma}, there are no partial simplices of $\Ss'_{d, M_{d}}$ of dimension greater than $d-1$.
Let $(\tilde{\Ss}_{d-1}, \tilde{\Phi}_{d-1}) := (\Ss'_{d, M_{d}}, \Phi'_{d,M_{d}})$.   
So as before $\tilde{\Ss}_{d-1} \cap |Q| \subset |Q^{(d-1)}|$. 
Let $(\Ss'_{q, 0}, \Phi'_{q,0}) := (\tilde{\Ss}_{q}, \tilde{\Phi}_{q}) := (\Ss, \Phi)$.  Apply this operation recursively and let $(\tilde{\Ss}, \tilde{\Phi}) := (\tilde{\Ss}_{0}, \tilde{\Phi}_{0})$.

There are $M_{q}$ simplices of dimension $q$ in $Q$.  Denote them by $\sigma_{1}, \ldots, \sigma_{M_{q}}$, where the ordering is chosen so that $\Ss'_{q, i-1}$ is pushed out of $\sigma_{i}$ which produces $\Ss'_{q, i}$ ($i= 1, \ldots, M_{q}$; recall that $\Ss'_{q, 0} = \Ss$).   
Let $\bar{h}^{i} := \bar{h}_{\sigma_{i}}$ be the map 
$\bar{h}$ in the simplex $\sigma_{i}$. (See \eqref{E:b.hbar.defn} and \eqref{E:when.hbar.is.identity}.) We use the following fact.  Let $\tau \in Q^{(q-1)}$ and $r=1, \ldots, M_{q}$. Then,
	\begin{equation}   \label{E:tildeS.q-1.cap.tau.breakdown}
		\Ss'_{q, r} \cap (\text{Int} \, \tau) 
			\subset \bigl[ \Ss \cap (\text{Int} \, \tau) \bigr]  
			             \cup \bigcup_{i=1}^{r} 
			                   \bigl[ \bar{h}^{i} \bigl[ \Ss \cap (\text{Int} \, \sigma_{i}) \bigr]
			                   \cap (\text{Int} \, \tau) \bigr].
	\end{equation}
To prove this, first suppose $r = 1$.  Then by lemma 
\ref{L:big.lemma}(\ref{I:S'.bggr.S.on.n-1.simps}) (as extended above and using the modified definition \eqref{E:when.hbar.is.identity} of $\bar{h}^{1}$ if appropriate),
	\[
		\Ss'_{q, 1} \cap (\text{Int} \, \tau) 
		   = (\text{Int} \, \tau) \cap \bigl[ \Ss \cup \bar{h}^{1}(\Ss \cap \sigma_{1}) \bigr],
	\]
which is \eqref{E:tildeS.q-1.cap.tau.breakdown} in the case $r = 1$.  Let $m \geq 1$ and suppose \eqref{E:tildeS.q-1.cap.tau.breakdown} holds for any $r \leq m$.  Now suppose $r = m+1$.  Then by lemma \ref{L:big.lemma}(\ref{I:S'.bggr.S.on.n-1.simps}) again and the induction hypothesis,
	\begin{align}   \label{E:S'.a.Mq.induct.step}
		\Ss'_{q, r} \cap (\text{Int} \, \tau) &= \Ss'_{q, m+1} \cap (\text{Int} \, \tau) \notag \\
		   &= (\text{Int} \, \tau) \cap \bigl[ \Ss'_{q, m} 
		      \cup \bar{h}^{m+1}(\Ss'_{q, m}  \cap \sigma_{m+1}) \bigr]  \notag \\
		   &= \bigl[ (\text{Int} \, \tau) \cap \Ss'_{q, m} \bigr]  
		      \cup \bigl[ (\text{Int} \, \tau) \cap \bar{h}^{m+1}(\Ss'_{q, m}  \cap \sigma_{m+1}) \bigr]  \\
	        &\subset \left( \bigl[ \Ss \cap (\text{Int} \, \tau) \bigr]  
	             \cup \bigcup_{i=1}^{m} \Bigl[ \bar{h}^{i} \bigl( \Ss \cap (\text{Int} \, \sigma_{i}) \bigr) 
	                         \cap (\text{Int} \, \tau) \Bigr]  \right)   \notag \\
                 & \qquad \qquad \qquad \cup \bigl[ (\text{Int} \, \tau) 
	                        \cap \bar{h}^{m+1}(\Ss'_{q, m} \cap \sigma_{m+1}) \bigr].  \notag
	\end{align}
Now, 
	\begin{equation}  \label{E:divide.hbar.into.Int.Bd}
		\bar{h}^{m+1}(\Ss'_{q, m}  \cap \sigma_{m+1}) 
			= \bar{h}^{m+1} \bigl[ \Ss'_{q, m}  \cap (\text{Int} \, \sigma_{m+1})  \bigr]
			    \cup \bar{h}^{m+1}  \bigl[ \Ss'_{q, m}  \cap (\text{Bd} \, \sigma_{m+1})  \bigr].
	\end{equation}
Now by lemma \ref{L:big.lemma}(\ref{I:dont.change.sings.on.othr.n.simps}) (as extended above), and noting that if $j < m+1$, then $\Ss_{q,j}$ is obtained by pushing $\Ss'_{q,j-1}$ out of $\sigma_{j-1} \ne \sigma_{m+1}$,
	\begin{multline}   \label{E:S'.qm.cap.sigma.m+1.=.S.cap.sigma.m+1}
		\Ss'_{q, m}  \cap (\text{Int} \, \sigma_{m+1})
		   = \Ss'_{q, m-1}  \cap (\text{Int} \, \sigma_{m+1}) \\
		\cdots = \Ss'_{q,1}  \cap (\text{Int} \, \sigma_{m+1}) 
		   = \Ss'_{q,0}  \cap (\text{Int} \, \sigma_{m+1})
		      = \Ss \cap (\text{Int} \, \sigma_{m+1}).
	\end{multline}
Also, by \eqref{E:hbar.is.idendt.on.Bd.sigma}
	\begin{equation}  \label{E:hbar.m+1.is.ident.on.Bd}
		\bar{h}^{m+1}  \bigl[ \Ss'_{q, m}  \cap (\text{Bd} \, \sigma_{m+1})  \bigr]
			= \Ss'_{q, m}  \cap (\text{Bd} \, \sigma_{m+1}).
	\end{equation}
Substituting \eqref{E:S'.qm.cap.sigma.m+1.=.S.cap.sigma.m+1} and \eqref{E:hbar.m+1.is.ident.on.Bd} into \eqref{E:divide.hbar.into.Int.Bd} and recalling that $r = m+1$, we get
\begin{multline}   \label{E:hbar.m+1.cap.tau.decomp}
(\text{Int} \, \tau) \cap \bar{h}^{m+1}(\Ss'_{q, m}  \cap \sigma_{m+1}) 
	= \Bigl( \bar{h}^{m+1} \bigl[ \Ss  \cap (\text{Int} \, \sigma_{m+1})  \bigr] 
	      \cap (\text{Int} \, \tau) \Bigr)
	    \cup \Bigl( \bigl[ \Ss'_{q, m}  \cap (\text{Bd} \, \sigma_{m+1})  \bigr] 
	         \cap (\text{Int} \, \tau) \Bigr) \\
	\subset \Bigl( \bar{h}^{r} 
	        \bigl[ \Ss  \cap (\text{Int} \, \sigma_{r})  \bigr] \cap (\text{Int} \, \tau) \Bigr)
	    \cup \Bigl( \Ss'_{q, m}  \cap (\text{Int} \, \tau) \Bigr)
\end{multline}
Applying the induction hypothesis to $\Ss'_{q, m}  \cap (\text{Int} \, \tau)$ and then substituting \eqref{E:hbar.m+1.cap.tau.decomp} into \eqref{E:S'.a.Mq.induct.step} proves \eqref{E:tildeS.q-1.cap.tau.breakdown}.

By \eqref{E:hbar.maps.A.to.C.cap.Bd.sigma} and \eqref{E:when.hbar.is.identity}, 
for each $i = 1, \ldots, r$, we have 
$\bar{h}^{i} \bigl[ \Ss \cap (\text{Int} \, \sigma_{i}) \bigr] \subset \sigma_{i}$. Therefore, by \eqref{E:Int.rho.cuts.sigma.then.rho.in.sigma}, \eqref{E:tildeS.q-1.cap.tau.breakdown} implies
	\begin{equation}   \label{E:tildeS.q-1.cap.tau.brkdwn.for.bndrys}
		\Ss'_{q, r} \cap (\text{Int} \, \tau) 
			\subset \bigl[ \Ss \cap (\text{Int} \, \tau) \bigr]  
			             \cup \bigcup_{1 \leq i \leq r, \, \tau \prec \sigma_{i}} 
			                   \Bigl[ \bar{h}^{i} \bigl( \Ss \cap (\text{Int} \, \sigma_{i}) \bigr) 
			                   \cap (\text{Int} \, \tau) \Bigr].
	\end{equation}

\emph{Claim:}  If $\ell = 1, \ldots, q-a$ then, letting $m=q-\ell$, for every $\tau \in Q^{(q-\ell)}$ we have
   \begin{equation}  \label{E:volume.bound.inductive.statement}
      \Hm^{a} \bigl[ \tilde{\Ss}_{q-\ell} \cap (\text{Int} \, \tau) \bigr]  
          \leq \sum_{j=1}^{\ell} \; 
           \sum_{\tau \prec \tau_{1} \prec \cdots \prec \tau_{j} \in Q, \, \tau_{1} \in (Q^{(q-\ell)})^{c}} \;
             \phi^{j} \, \Hm^{a} \bigl[ \Ss \cap (\text{Int} \, \tau_{j}) \bigr] 
                + \Hm^{a} \bigl[ \Ss \cap (\text{Int} \, \tau) \bigr].
   \end{equation}

First, consider the case $\ell=1$.  Let $\tau \in Q^{(q-1)}$. Then by 
\eqref{E:tildeS.q-1.cap.tau.brkdwn.for.bndrys} and \eqref{E:meas.of.push.out.from.z0.is.bdd.above}, we have 
	\begin{align}  \label{E:bound.on.Hm.a.S'.cap.q-1.simplex}
	   \Hm^{a} \bigl[ \tilde{\Ss}_{q-1} \cap (\text{Int} \, \tau) \bigr] 
	          &= \Hm^{a} \bigl[ \Ss'_{q, M_{q}} \cap (\text{Int} \, \tau) \bigr]   \notag \\
	          &\leq \Hm^{a} \left[  \bigl( \Ss \cap (\text{Int} \, \tau) \bigr)  
	             \cup \left( \bigcup_{1 \leq i \leq M_{q}, \, \tau \prec \sigma_{i}} 
	                \Bigl[ \bar{h}^{i} \bigl( \Ss \cap (\text{Int} \, \sigma_{i}) \bigr) 
			                   \cap (\text{Int} \, \tau) \Bigr]
	                     \right) \right]   \\
	         &\leq \Hm^{a} \bigl[ \Ss \cap (\text{Int} \, \tau) \bigr] 
	                  +  \sum_{1 \leq i \leq M_{q}, \, \tau \prec \sigma_{i}} 
	                    \Hm^{a} \Bigl[ \bar{h}^{i} \bigl( \Ss \cap (\text{Int} \, \sigma_{i}) \bigr) 
			                   \cap (\text{Int} \, \tau) \Bigr]
	                              \notag  \\
	         &\leq \Hm^{a} \bigl[ \Ss \cap (\text{Int} \, \tau) \bigr] 
	                + \phi \sum_{1 \leq i \leq M_{q}, \, \tau \prec \sigma_{i}} 
	                    \Hm^{a} \bigl[ \Ss \cap (\text{Int} \, \sigma_{i}) \bigr].
					         \notag  
	\end{align}
This proves \eqref{E:volume.bound.inductive.statement} in the case $\ell = 1$.

Inductively, let $k = 1, \ldots, q-a$ and suppose \eqref{E:volume.bound.inductive.statement} holds for $\ell = 1, \ldots, k$.  We show that it holds with $\ell = k+1$. First apply \eqref{E:volume.bound.inductive.statement} with the $(q-k)$-skeleton, $Q^{(q-k)}$, in place of $Q$, $\tilde{\Ss}_{q-k}$ in place of $\Ss$, and $\ell = 1$.  Let $\tau \in Q^{(q-k-1)}$. Then by \eqref{E:bound.on.Hm.a.S'.cap.q-1.simplex},
   \begin{equation*}   
      \Hm^{a}\bigl[ \tilde{\Ss}_{q-k-1} \cap (\text{Int} \, \tau) \bigr]
         \leq \sum_{\tau \prec \tau_{1} 
               \in \partial Q^{(q-k)}} \phi \, \Hm^{a}(\tilde{\Ss}_{q-k} \cap (\text{Int} \, \tau_{1})) 
            + \Hm^{a} \bigl[ \tilde{\Ss}_{q-k} \cap (\text{Int} \, \tau) \bigr]  \notag  
  \end{equation*} 
Apply the induction hypothesis with $\ell = k$, but re-index the $\tau_{j}$'s as follows:  $\tau \to \tau_{1}$, $\tau_{1} \to \tau_{2}, \ldots$, and $\tau_{j} \to \tau_{j+1}$.  Then let $i = j+1$.
         \begin{align*}
             \Hm^{a}\bigl[ \tilde{\Ss}_{q-k-1} \cap (\text{Int} \, \tau) \bigr] 
                &\leq \sum_{\tau \prec \tau_{1} \in \partial Q^{(q-k)}} \phi \Biggl[ 
               \sum_{i=2}^{k+1} \, \sum_{\tau_{1} \prec \cdots \prec \tau_{i} \in Q, \, \tau_{2} \in 
                   (Q^{(q-k)})^{c}} \;
               \phi^{i-1} \Hm^{a} (\Ss \cap (\text{Int} \, \tau_{i})) \\
            & \qquad \qquad \qquad \qquad \qquad \qquad \qquad \qquad \qquad \qquad
                   + \Hm^{a} (\Ss \cap (\text{Int} \, \tau_{1})) \Biggr]   \\
            & \qquad \qquad \qquad \qquad \qquad 
                   + \Hm^{a} \bigl[ \tilde{\Ss}_{q-k} \cap (\text{Int} \, \tau) \bigr] .
         \end{align*}    \\
Apply the induction hypothesis to the last term with $\ell = k$ and note that 
$\tau_{1} \in  \partial Q^{(q-k)}$ and $\tau_{1} \prec  \tau_{2}$ automatically implies 
$\tau_{2} \in (Q^{(q-k)})^{c}$.  
         \begin{align}  \label{E:Ha.S.q-k-1.cap.tau.bound}
            \Hm^{a} \bigl[ \tilde{\Ss}_{q-k-1} \cap (\text{Int} \, \tau) \bigr] 
              &\leq \sum_{\tau \prec \tau_{1} \in \partial Q^{(q-k)}}  
                  \Biggl[ \sum_{i=2}^{k+1} \,\sum_{\tau_{1} \prec \cdots \prec \tau_{i} \in Q} \;
                   \phi^{i} \Hm^{a} (\Ss \cap (\text{Int} \, \tau_{i}))  \notag  \\
              & \qquad \qquad \qquad \qquad \qquad \qquad \qquad \qquad \qquad \qquad
                  + \phi \Hm^{a} (\Ss \cap (\text{Int} \, \tau_{1}) \Biggr]    \\
              & \qquad \qquad \qquad + \sum_{j=1}^{k} \; 
               \sum_{\tau \prec \tau_{1} \prec \cdots \prec \tau_{j} \in Q; \, \tau_{1} \in (Q^{(q-k)})^{c}} \;
                    \phi^{j} \Hm^{a} (\Ss \cap (\text{Int} \, \tau_{j}))  \notag \\
              & \qquad \qquad \qquad \qquad \qquad \qquad \qquad \qquad \qquad \qquad \qquad
                + \Hm^{a} (\Ss \cap (\text{Int} \, \tau)). \notag
        \end{align}

Suppose $\tau \prec \tau_{1} \prec \cdots \prec \tau_{j} \in Q$, $j=k+1$, and 
$\tau_{1} \in (Q^{(q-k)})^{c}$.  Then $q \geq \dim \tau_{j} = \dim \tau_{k+1} \geq \dim \tau_{1} + k$.  Hence, $q-k \geq \dim \tau_{1} \geq q-k+1$, an impossibility.  Therefore, if $j = k+1$, the following sum is empty:
	\[
		\sum_{\tau \prec \tau_{1} \prec \cdots \prec \tau_{j} \in Q; \, \tau_{1} \in (Q^{(q-k)})^{c}} \;
	                    \phi^{j} \Hm^{a} (\Ss \cap (\text{Int} \, \tau_{j})) = 0.
	\]
Hence, by \eqref{E:Ha.S.q-k-1.cap.tau.bound},
	\begin{multline*}
		\Hm^{a}\bigl[ \tilde{\Ss}_{q-k-1} \cap (\text{Int} \, \tau) \bigr] \leq 
	            \sum_{i=1}^{k+1} \,
	            \sum_{\tau \prec \tau_{1} \prec \cdots \prec \tau_{i} \in Q;  \tau_{1} \in \partial Q^{(q-k)}} 
	                \; \phi^{i} \Hm^{a} (\Ss \cap (\text{Int} \, \tau_{i}))    \\
	                + \sum_{j=1}^{k+1} \; 
	           \sum_{\tau \prec \tau_{1} \prec \cdots \prec \tau_{j} \in Q; \, \tau_{1} \in (Q^{(q-k)})^{c}} 
	              \; \phi^{j} \Hm^{a} (\Ss \cap (\text{Int} \, \tau_{j})) 
	                   + \Hm^{a} \bigl[ \Ss \cap (\text{Int} \, \tau) \bigr].
	\end{multline*}
Since $\partial Q^{(q-k)} \cap (Q^{(q-k)})^{c} = \varnothing$ and 
$\partial Q^{(q-k)} \cup (Q^{(q-k)})^{c} = (Q^{(q-k-1)})^{c}$ it follows that 
\eqref{E:volume.bound.inductive.statement} holds with $\ell = k+1$. 
By induction \eqref{E:volume.bound.inductive.statement} holds in general and 
the claim \eqref{E:volume.bound.inductive.statement} is proven.
  
Let $\tau_{0} \in \partial Q^{(a)}$.  Then $\tau_{0} \prec \tau_{1} \in Q$ automatically implies $\tau_{1} \in (Q^{(a)})^{c}$.  Thus,
applying \eqref{E:volume.bound.inductive.statement} with $\ell=q-a$ and $\tau = \tau_{0}$, 
we can start the summation in \eqref{E:volume.bound.inductive.statement} at $j=0$ and get for every $a$-simplex $\tau_{0} \in \partial Q^{(a)}$
   \begin{equation}   \label{E:S.measr.bound.in.a-simp}
      \Hm^{a}\bigl[ \tilde{\Ss}_{a} \cap (\text{Int} \, \tau_{0}) \bigr]
         \leq \sum_{j=0}^{q-a} \; \sum_{\tau_{0} \prec \tau_{1} \prec \cdots \prec \tau_{j} \in Q} \;
           \phi^{j} \Hm^{a} (\Ss \cap (\text{Int} \, \tau_{j})), \quad \tau_{0} \in \partial Q^{(a)}.
   \end{equation}

Next, let $\sigma \in (Q^{(a)})^{c}$ and let $\tau_{0} \preccurlyeq \sigma$ be an $a$-simplex.  We count the number of chains $\tau_{0} \prec \tau_{1} \prec \cdots  \prec \tau_{j-1} \prec \tau_{j} = \sigma$.
If $i = a$ let $j = 0$.  For $i = a+1, a+2, \dots$ let $j$ be any element of $\{1, \ldots, i-a \}$.  
Let  $V_{0} = \{ 0, \ldots, a \}$.  So $V_{0}$ has $a+1$ elements.  Let $N_{j}^{i}$ denote the number of filtrations $V_{0} \subsetneqq \cdots \subsetneqq V_{j} := \{ 0, \ldots, i \}$.  
Note that for $k=1,\ldots, j-1$, the elements of $V_{k}$ do not have to be consecutive integers.  Thus, $V_{k}$ will often contain fewer than $\max V_{k}$ elements and if $0 < k_{1} < k_{2} < j$ we may have $\max V_{k_{1}} > k_{2}$. We have $N_{0}^{a} =  1$ and for $i \geq a+1$ we also have $N_{1}^{i} = 1$. Moreover, $N_{j}^{i} = 0$ if $j > i-a$. 
If $a+1 = t_{0}  < t_{1} < \cdots < t_{j-1} < t_{j} = i+1$, the number of such filtrations s.t.\ the cardinality of $V_{k} = t_{k}$ ($k=0, \ldots, j$) is clearly the multinomial coefficient 
$\binom{i-a}{t_{1} - t_{0}, \; t_{2} - t_{1}, \; \cdots, \; t_{j} - t_{j-1}}$. Thus, if $i > a$ (so $j \geq 1$),
	\begin{equation}  \label{E:Nji.formula}
             N_{j}^{i} = \sum_{a+1 < t_{1} < \cdots < t_{j-1} < i+1} 
                  \binom{i-a}{t_{1} - a - 1, \; t_{2} - t_{1}, \; \cdots, \; i - t_{j-1}+1} < j^{i-a}.
	\end{equation}
Let $\tau_{0}, \sigma \in Q$ with $\tau_{0} \preccurlyeq \sigma$ and $\dim \tau_{0} = a$.  Then $N_{j}^{\dim \sigma}$ is the number of chains 
$\tau_{0} \prec \tau_{1} \prec \cdots  \prec \tau_{j-1} \prec \tau_{j} = \sigma$.

Let $m = 0, 1, \ldots$.  Let $\psi_{m} := 0$ if $m < a$, let $\psi_{a} := 1$, and for $m > a$ let
   \begin{equation}  \label{E:psi.m.defn}
      \psi_{m} := \psi_{m}(\phi) := \sum_{j=0}^{m-a} N_{j}^{m} \, \phi^{j} \text{ and } 
           \psi = \psi(\phi, q) := \max \{ \psi_{1}, \ldots, \psi_{q} \} \geq 1.
   \end{equation}
Note that by \eqref{E:phi.depends.on.a.q.t} and \eqref{E:Nji.formula}, $\psi$ only depends on $a$, 
$q = \dim Q$, and $t_{min}(Q)$ (see \eqref{E:t.min.Q.defn}).

Now $\tilde{\Ss}_{a} \cap |Q| \subset |Q^{(a)}|$ and $\Hm^{a}(|Q_{a-1}|) = 0$.  But
	\[
		(\tilde{\Ss}_{a} \cap |Q|) \setminus \left( \bigcup_{\tau_{0} \in \partial Q^{(a)}} 
		   \text{Int} \, \tau_{0} \right) \; \subset \; |Q_{a-1}|.
	\]
Moreover, by (\ref{E:intersection.of.simps}'), 
the sets $\text{Int} \, \tau_{0}$ ($\tau_{0} \in \partial Q^{(a)}$) are disjoint.  Therefore,
	\[
		\Hm^{a} (\tilde{\Ss}_{a} \cap |Q|) 
		   = \sum_{\tau_{0} \in \partial Q^{(a)}}  
		       \Hm^{a}\bigl[ \tilde{\Ss}_{a} \cap (\text{Int} \, \tau_{0}) \bigr].
	\]
Hence, from \eqref{E:S.measr.bound.in.a-simp} we get
   \begin{align*}  
      \Hm^{a}(\tilde{\Ss}_{a} \cap |Q|) &\leq \sum_{\tau_{0} \in \partial Q^{(a)}} \sum_{j=0}^{q-a} \; 
        \sum_{\tau_{0} \prec \tau_{1} \prec \cdots \prec \tau_{j} \in Q} \;
           \phi^{j} \Hm^{a} (\Ss \cap (\text{Int} \, \tau_{j}))  \\
        &= \sum_{\tau_{0} \in \partial Q^{(a)}} \sum_{j=0}^{q-a} \; \sum_{\tau_{j} \in Q} 
        \phi^{j} \Hm^{a} (\Ss \cap (\text{Int} \, \tau_{j})) \;
           \sum_{\tau_{0} \prec \tau_{1} \prec \cdots  \prec \tau_{j-1} \prec \tau_{j}} 1.
                   \notag
   \end{align*}
Now let $\sigma = \tau_{j} \in Q$ and recall that $N_{j}^{\dim \sigma} = 0$
if $j > \dim \sigma - a$.  Hence
   \begin{align*}  
     \Hm^{a}(\tilde{\Ss}_{a} \cap |Q|)  &\leq 
                   \sum_{\tau_{0} \in \partial Q^{(a)}} \sum_{\tau_{0} \preccurlyeq \sigma \in Q} 
                      \sum_{j=0}^{\dim \sigma -a} \phi^{j} N_{j}^{\dim \sigma } \;
                         \Hm^{a} (\Ss \cap (\text{Int} \, \sigma))  \notag  \\
        &= \sum_{\tau_{0} \in \partial Q^{(a)}} \sum_{\tau_{0} \preccurlyeq \sigma \in Q} 
              \psi_{\dim \sigma }  \; \Hm^{a} (\Ss \cap (\text{Int} \, \sigma))  \notag  \\
        &\leq \sum_{\tau \in \partial Q^{(a)}} \sum_{\tau \preccurlyeq \sigma \in Q} \; 
          \psi \Hm^{a} (\Ss \cap (\text{Int} \, \sigma))  \\
        &= \psi \sum_{\sigma \in Q; \, \dim \sigma \geq a} \; 
           \sum_{\tau \in \partial Q^{(a)}; \, \tau \preccurlyeq \sigma}
              \Hm^{a} (\Ss \cap (\text{Int} \, \sigma)).  \notag
   \end{align*}
Now, if $\dim \sigma \geq a$, the number of $a$-faces of $\sigma$ is $\binom{\dim \sigma + 1}{a + 1}$ and 
if $\dim \sigma < a$ then $\Hm^{a}( \Ss \cap (\text{Int} \, \sigma)) = 0$.  
Therefore, by \eqref{E:x.in.exctly.1.simplex.intrr},
   \begin{align}  \label{E:Hm.tilde.Sa.leq.mult.Hm.S.cap.Q}
     \Hm^{a}(\tilde{\Ss}_{a} \cap |Q|)    &\leq \psi \sum_{\sigma \in Q; \, \dim \sigma \geq a} 
                \binom{\dim \sigma + 1}{a + 1} \Hm^{a} (\Ss \cap (\text{Int} \, \sigma))    \notag  \\
        &\leq  \binom{q + 1}{a + 1} \psi \sum_{\sigma \in Q; \, \dim \sigma \geq a} 
             \Hm^{a} (\Ss \cap (\text{Int} \, \sigma))  \\
        &= \binom{q + 1}{a + 1} \psi \sum_{\sigma \in Q} \Hm^{a} (\Ss \cap (\text{Int} \, \sigma))  
             \notag  \\
        &= \binom{q + 1}{a + 1} \psi \, \Hm^{a} (\Ss \cap |Q|).   \notag
   \end{align}   

Now, $\tilde{\Ss}_{a} \subset |Q^{(a)}|$, but $\tilde{\Ss}_{a}$ may not be a subcomplex of $Q^{(a)}$.  I.e., there may exist partial simplices of $\Ss$ in $Q^{(a)}$.  If this happens continue recursively pushing $\tilde{\Ss}_{a}$ out of such simplices.
	Recall $\Ss'_{a,0} = \tilde{\Ss}_{a}$ and $M_{a} =$ the number of $a$-simplices in $Q$.  Let $i = 0, \ldots, M_{a} -1$, suppose one has constructed $\Ss'_{a,i}$, and let $\omega \in \partial Q^{(a)}$ be an $a$-simplex that has not yet benefited from the pushing operation, but suppose it is next in line.  If $\omega$ is not a partial simplex of $\Ss'_{a,i}$ then $\Ss'_{a,i+1} = \Ss'_{a,i}$ so $\Hm^{a}(\Ss'_{a,i+1}) = \Hm^{a}(\Ss'_{a,i})$.  If $\omega$ is a partial simplex of $\Ss'_{a,i}$ then by (points \ref{I:S'.cap.sigma.=.C.cap.Bd}, \ref{I:dont.change.sings.on.othr.n.simps}, \ref{I:dont.increase.sing.dim.in.big.simps})
of lemma \ref{L:big.lemma} we have $\Hm^{a}\bigl[ (\text{Int} \, \rho) \cap \Ss'_{a,i+1} \bigr] \leq \Hm^{a}\bigl[ (\text{Int} \, \rho) \cap \Ss'_{a,i} \bigr]$ for any 
$\rho \in \bigl( Q^{(a-1)} \bigr)^{c}$.   
As for $\Hm^{a}(\rho \cap \Ss'_{a,i+1})$ with $\rho \in Q^{(a-1)}$, of course $\Hm^{a}\bigl[ (\text{Int} \, \rho) \cap \Ss'_{a,i+1} \bigr] = \Hm^{a}\bigl[ (\text{Int} \, \rho) \cap \Ss'_{a,i} \bigr] = 0$.  In summary, further pushing from $\tilde{\Ss}_{a}$ can only reduce the $\Hm^{a}$-measure. Hence, 
   \begin{equation}  \label{E:Hm.tilde.S0.leq.mult.Hm.S.cap.Q}
     \Hm^{a}(\tilde{\Ss} \cap |Q|) = \Hm^{a}(\tilde{\Ss}_{0} \cap |Q|) 
        \leq \binom{q + 1}{a + 1} \psi \, \Hm^{a} (\Ss \cap |Q|) 
           = K_{2} \bigl[ \phi(Q), q \bigr] \Hm^{a} (\Ss \cap |Q|),
   \end{equation}
where 
	\begin{equation}   \label{E:K2.phi.q.defn}
		K_{2} = K_{2} \bigl[ \phi(Q), q \bigr] = \binom{q + 1}{a + 1} \psi \bigl[ \phi(Q), q \bigr].
	\end{equation}
(See \eqref{E:phi.Q.defn} and \eqref{E:psi.m.defn}.) By \eqref{E:psi.m.defn} 
	\begin{equation}   \label{E:K2.no.smaller.than.1}
		K_{2} \geq 1.
	\end{equation}
Then \eqref{E:polyhedral.volume.magnification.factor} holds with $K = K_{2}$ if $0 < a < q$.  (See \eqref{E:0.<.a.<.q}.)
\eqref{E:polyhedral.volume.magnification.factor} holds for general $a$ if we take
$K = \max \left\{ K_{1}, K_{2} \right\}$, where $K_{1}$ is defined in \eqref{E:K.for.a=q}.  

\subsection{Proof of part (\ref{I:arb.fine.subdivision}) of theorem \ref{T:main.theorem}}
First of all, a subdivision of $P$ is also a finite simplicial complex so parts  \ref{I:Phi.tilde.locally.Lip.if.Phi.is}  through \ref{I:Phi.tilde.sigma.subset.Phi.sigma} of theorem \ref{T:main.theorem} automatically hold for any subdivision of $P$.  The question is whether we can choose arbitrarily fine subdivisions of $P$ and a single constant $K < \infty$ in \eqref{E:polyhedral.volume.magnification.factor} that will work for all those subdivisions.

Let $p = \dim P$.  Suppose $0 < a < q$. By Munkres \cite[Lemma 9.4, p.\ 92]{jrM66}, there exists 
$t_{0} \in \bigl( 0, t_{min}(Q) \bigr]$ (see \eqref{E:t.min.Q.defn}) depending only on $P$ with the following property. 
For every $\epsilon > 0$ there exists a subdivision, $P' = P'(\epsilon)$, of $P$ s.t.\ the diameter of the largest simplex in $P'(\epsilon)$ is no greater than $\epsilon$
and the thickness of every simplex (of positive dimension) in $P'(\epsilon)$ is at least $t_{0}$.  By \eqref{E:phi.PR.defn}, \eqref{E:phi.a.r.t.defn}, \eqref{E:phi.increases.in.r}, \eqref{E:phi.Q.defn}, \eqref{E:psi.m.defn}, and \eqref{E:K2.phi.q.defn}, we have 
$K_{2} \bigl[ \phi(a,p, t_{0}), p \bigr] \geq K_{2} \bigl[ \phi(Q), q \bigr]$.  Hence, by \eqref{E:Hm.tilde.S0.leq.mult.Hm.S.cap.Q}, if we replace 
$K = K_{2} \bigl[ \phi(Q), q \bigr]$ by $K = K_{2} \bigl[ \phi(a,p, t_{0}), p \bigr]$ then \eqref{E:polyhedral.volume.magnification.factor} will hold if $P$ is replaced by $P'(\epsilon)$.  
  
As for the case $a = q$, all partial simplices in $Q$ can be ignored because the pushing out operation repaces them by ($\Hm^{a} = \Hm^{q}$)-null sets.  Only $q$-simplices lying 
in $\Ss$ matter.  By points (\ref{I:simps.in.S.are.also.in.S'},\ref{I:dont.increase.sing.dim.in.big.simps}) 
of lemma \ref{L:big.lemma}, pushing out and subdivision does not affect the total volume of these simplices.  

Finally, consider the case $a = 0$, so $\dim (\Ss \cap |Q|) = 0$. The argument given at the beginning of subsection \ref{SS:magnification.in.1.simp} shows that \eqref{E:polyhedral.volume.magnification.factor} holds for any $K \geq 1$ for any complex so, in particular, it holds for any $P'(\epsilon)$.

Hence, part (\ref{I:arb.fine.subdivision}) of theorem \ref{T:main.theorem} holds 
with $K = \max \left\{ K_{1}, K_{2} \right\}$ (see \eqref{E:K.for.a=q}), providing we use the updated version of $K_{2}$ defined above. That concludes the proof of theorem \ref{T:main.theorem}.

\clearpage

\setcounter{section}{0}
\renewcommand{\thesection}{\Alph{section}}


\begin{center}
\begin{Large}
APPENDICES
\end{Large}
\end{center}


\section{Miscellaneous proofs}  \label{S:misc.proofs}

\begin{proof}[Proof of corollary \ref{C:drop.finiteness.of.P}] Suppose $P$ is a simplicial complex s.t.\ $|P| \subset \RR^{N}$ and \eqref{E:intersect.finitely.many} holds.  Then, by lemma \ref{L:local.finiteness.and.compactness}(\ref{I:P.locally.finite},\ref{I:local.compactness}), $|P|$ is a locally compact subspace of $\RR^{N}$ and $P$ is locally finite.  
We prove that parts (\ref{I:Phi.tilde.locally.Lip.if.Phi.is}) through (\ref{I:bound.on.S.tilde.vol}) of theorem \ref{T:main.theorem} continue to hold and assertion (\ref{I:arb.fine.subdivision}') also holds.  
Since $P$ is locally finite and $Q$ is finite, the complex $P_{Q}$ is finite.  The idea is to apply the theorem, as stated, to $P_{Q}$.  Let $(\tilde{\Ss}_{Q}, \tilde{\Phi}_{Q})$ be the set-map pair whose existence is asserted by the theorem (applied to $P_{Q}$ and $\bigl( \Ss \cap |P_{Q}|, \Phi \vert_{|P_{Q}| \setminus \Ss} \bigr)$). (Here, $\Phi \vert_{|P_{Q}| \setminus \Ss}$ is the restriction of $\Phi$ to $|P_{Q}| \setminus \Ss$.)  Thus, $\tilde{\Ss}_{Q}$ is a closed subset of $|P_{Q}|$ and $\tilde{\Phi}_{Q} : |P_{Q}| \setminus \tilde{\Ss}_{Q} \to \F$ is continuous.  Now define
	\begin{equation}  \label{E:tildeS.tildePhi.defn}
		\tilde{\Ss} := \bigl( \Ss \setminus |P_{Q}| \bigr) \cup \tilde{\Ss}_{Q} \text{ and }
		\tilde{\Phi}(x) = 
			\begin{cases}
				\Phi(x), &\text{ if } x \in |P| \setminus |P_{Q}|, \\
				\tilde{\Phi}_{Q}(x), &\text{ if } x \in |P_{Q}|.
			\end{cases}
	\end{equation}
Obviously, by theorem \ref{T:main.theorem}, parts (\ref{I:dim.Stilde.no.bggr.thn.dim.S}), (\ref{I:S.tilde.subcomp}), and (\ref{I:no.change.off.Q}) through (\ref{I:bound.on.S.tilde.vol}) of theorem \ref{T:main.theorem} still hold with this definition because $P_{Q}$ is finite.  (Proof of part (\ref{I:dim.Stilde.no.bggr.thn.dim.S}) uses \eqref{E:dim.of.whole.=.max.dim.of.parts}.)  

First we show
	\begin{equation} \label{E:Stilde.=.S.off. PQ}
		\text{If } \sigma \in P \setminus P_{Q} \text{ then } \tilde{\Ss} \cap \sigma = \Ss \cap \sigma 
		 \text{ and } \tilde{\Phi} \vert_{\sigma \setminus \Ss} = \Phi \vert_{\sigma \setminus \Ss}.
	\end{equation}
By \eqref{E:tildeS.tildePhi.defn}, this is obvious if $\sigma \in P \setminus P_{Q}$ with $\sigma \cap |P_{Q}| = \varnothing$.  So let $\sigma \in P\setminus P_{Q}$ satisfy $\sigma \cap |P_{Q}| \ne \varnothing$. By \eqref{E:tildeS.tildePhi.defn} again, $\tilde{\Ss} \cap (\sigma \setminus |P_{Q}|) = \Ss \cap (\sigma \setminus |P_{Q}|)$ and 
		$\tilde{\Phi} \vert_{\sigma \setminus (\Ss \cup |P_{Q}|)} 
		                    = \Phi \vert_{\sigma \setminus (\Ss \cup |P_{Q}|)}$.
As for $\tilde{\Ss} \cap \sigma \cap |P_{Q}|$, note that by \eqref{E:intersection.of.simps}, $\sigma \cap |P_{Q}|$ is the union of simplices in $P_{Q}$ that are faces of $\sigma$. Let $\tau \in P_{Q}$ be such a simplex. If $\tau \cap |Q| \ne \varnothing$, then $\sigma \cap |Q| \ne \varnothing$.  But this contradicts the assumption that $\sigma \notin P_{Q}$.  Therefore, $\tau \cap |Q| = \varnothing$.  Hence, by part (\ref{I:no.change.off.nbhd.of.S.in.Q}) of theorem \ref{T:main.theorem}, $\tilde{\Ss} \cap \sigma \cap |P_{Q}| = \Ss \cap \sigma \cap |P_{Q}|$ and 
		$\tilde{\Phi} \vert_{(\sigma \cap |P_{Q}|) \setminus \Ss} 
		                    = \Phi \vert_{(\sigma \cap |P_{Q}|) \setminus \Ss}$.
This completes the proof of \eqref{E:Stilde.=.S.off. PQ}.

That part (\ref{I:no.change.off.nbhd.of.S.in.Q}) of the theorem still holds for $P$ is an obvious consequence of \eqref{E:Stilde.=.S.off. PQ}.

We still need to check that $\tilde{\Ss}$ is closed and $\tilde{\Phi}$ is continuous 
on $|P| \setminus \tilde{\Ss}$, prove that part (\ref{I:Phi.tilde.locally.Lip.if.Phi.is}) of the theorem still holds, and verify that assertion (\ref{I:arb.fine.subdivision}') holds.  By definition of the topology on $|P|$ (see appendix \ref{S:basics.of.simp.comps}), to show that $\tilde{\Ss}$ is closed it suffices to show that $\tilde{\Ss} \cap \sigma$ is closed in $\sigma$ for every $\sigma \in P$.  But by theorem \ref{T:main.theorem}, \eqref{E:Stilde.=.S.off. PQ}, and \eqref{E:tildeS.tildePhi.defn} whether $\sigma \in P_{Q}$ or $\sigma \in P \setminus P_{Q}$ since $\Ss$ is closed, $\tilde{\Ss} \cap \sigma$ is closed in $\sigma$. Thus, $\tilde{\Ss}$ is closed.

In a similar way we show that $\tilde{\Phi}$ is continuous on $|P| \setminus \tilde{\Ss}$.  Let $U \subset \F$ be open.  We must show that $\tilde{\Phi}^{-1}(U)$ is open in $|P| \setminus \tilde{\Ss}$.  But $\tilde{\Ss}$ is closed so it suffices to show that $\tilde{\Phi}^{-1}(U)$ is open in $|P|$.  Therefore, it suffices to show that $G(\sigma) := \tilde{\Phi}^{-1}(U) \cap \sigma$ is open in $\sigma$ for every $\sigma \in P$.  But this follows from theorem \ref{T:main.theorem}, \eqref{E:Stilde.=.S.off. PQ}, 
and the continuity of $\Phi$ on $|P| \setminus \Ss$. 

\emph{Next, we prove that part (\ref{I:Phi.tilde.locally.Lip.if.Phi.is}) of theorem \ref{T:main.theorem} still holds for $P$.}  Assume $\F$ has a metric, $d$, and $\Phi$ is locally Lipschitz on $|P| \setminus \Ss$.  Let $x \in |P| \setminus \tilde{\Ss}$ be arbitrary.  We need to find a neighborhood of $x$ on which $\tilde{\Phi}$ is Lipschitz.  
Let $P^{Q}$ be the subcomplex of $P$ consisting of all simplices in $P$ that are faces of simplices 
in $P \setminus P_{Q}$.  So $P \setminus P_{Q} \subset P^{Q}$.  Then, by \eqref{E:Stilde.=.S.off. PQ} we have $\tilde{\Ss} \cap |P^{Q}| = \Ss \cap |P^{Q}|$ and that $\tilde{\Phi}$ and $\Phi$ are both defined and agree on $|P^{Q}|  \setminus \Ss$. Thus, if $x \in |P| \setminus \bigl( |P_{Q}| \cup \tilde{\Ss} \bigr)$, $\tilde{\Phi}$ is Lipschitz in a neighborhood of $x$.  By part (\ref{I:Phi.tilde.locally.Lip.if.Phi.is}) of the theorem, the same thing is true if $x \in |P| \setminus \bigl( |P^{Q}| \cup \tilde{\Ss} \bigr)$.
  
So assume $x \in \bigl( |P_{Q}| \cap |P^{Q}| \bigr) \setminus \tilde{\Ss}$. By \eqref{E:tildeS.tildePhi.defn} and assumption on $\Phi$, there exists $K' < \infty$ and an open set $U \subset |P| \setminus \tilde{\Ss}$ s.t.\ $x \in U$ and $\tilde{\Phi}$ is Lipschitz on $U \cap |P^{Q}|$ with Lipschitz constant $K' < \infty$, say. By theorem \ref{T:main.theorem} part (\ref{I:Phi.tilde.locally.Lip.if.Phi.is}), we may assume $\tilde{\Phi}$ is Lipschitz on $U \cap |P_{Q}|$ and that the same Lipschitz constant $K'$ works for $\tilde{\Phi} \vert_{U \cap |P_{Q}|}$. 

By \eqref{E:x.in.exctly.1.simplex.intrr}, there exists a unique simplex $\sigma \in P$, s.t.\ $x \in  \text{Int} \, \sigma$.  By \eqref{E:St.sigma.is.open}, $\text{St} \, \sigma$ is an open neighborhood of $x$.
Therefore, for $t \in (0,1)$ the following set is also an open neighborhood of $x$.  
	\[
		V_{t,x} := t \bigl( (\text{St} \, \sigma) - x \bigr) + x.
	\]
Here, the vector operations are performed point-wise.  Pick $t > 0$ sufficiently small that $V_{t,x} \subset U$.  

We show that $\tilde{\Phi}$ is Lipschitz on $V_{t,x}$. If $\omega \in P$, let
	\begin{equation}   \label{E:rho.t.x.defn}
		\omega_{t,x} := t ( \omega -x) + x,
		         \quad t \in (0,1).
	\end{equation}
Then, $\omega_{t,x}$ is a simplex and
	\[
		\bigl\{ \omega_{t,x} \subset \RR^{N} : \omega \text{ is a face of some } \rho \in P \text{ s.t.\ } 
		                 \sigma \subset \rho \bigr\}
	\]
is a finite simplicial complex, call it $P_{t,x}$.  ($P_{t,x}$ is finite since $P$ is locally finite.)  If $\omega \in P$, then $\text{Int} \, \omega \subset \text{St} \, \sigma$ implies $\omega_{t,x} \in P_{t,x}$.   

Let $y, z \in V_{t,x}$. Then
there exist $\rho, \zeta \in P$ s.t.\ $\sigma \subset \rho \cap \zeta$ and $y \in \text{Int} \, \rho_{t,x}$ and $z \in \text{Int} \, \zeta_{x,t}$.  The simplex $\rho$ belongs to $P_{Q}$ or $P^{Q}$ and the same for $\zeta$.  Now by \eqref{E:Stilde.=.S.off. PQ} and assumption on $\Phi$, we have that $\tilde{\Phi}$ is Lipschitz on $V_{t,x} \cap |P^{Q}|$ with Lipschitz constant $K' < \infty$.  And by theorem \ref{T:main.theorem} part (\ref{I:Phi.tilde.locally.Lip.if.Phi.is}), $\tilde{\Phi}$ is Lipschitz on $V_{t,x} \cap |P^{Q}|$ also with Lipschitz constant $K'$. Hence, if both $\rho, \zeta \in P_{Q}$ or both $\rho, \zeta \in P^{Q}$ then
	\begin{equation}  \label{E:Lip.for.y.z}
		d \bigl[ \tilde{\Phi}(y), \tilde{\Phi}(z) \bigr] \leq K' | y - z |.
	\end{equation}

Now $P = P_{Q} \cup P^{Q}$ so, in particular, \eqref{E:Lip.for.y.z} holds if $\rho \subset \zeta$ 
or $\zeta \subset \rho$. Without loss of generality (WLOG) we may assume 
$\rho \in P_{Q}$, $\zeta \in P^{Q}$, $\rho \nsubseteq \zeta$, and $\zeta \nsubseteq \rho$.  
We have $\rho_{t,x}\cap \zeta_{t,x}\ne \varnothing$ because 
$\sigma_{t,x}\subset \rho_{t,x}\cap \zeta_{x,t}$.  Therefore, by \eqref{E:complex.contains.all.faces} and \eqref{E:intersection.of.simps}, $\rho_{t,x}\cap \zeta_{x,t}$ is a simplex in $P_{x,t}$.  
Moreover, $\rho_{t,x} \cap \zeta_{t,x} \subset P_{Q} \cap P^{Q}$. Hence, by \eqref{E:Lip.for.y.z} and corollary \ref{C:reverse.triangle.ineq.in.simp.cmplxs}, there exists $K < \infty$ depending only 
on $\rho_{x,t}$ and $\zeta_{x,t}$ and 
$\tilde{x}, \tilde{y} \in  \text{Int} \, (\rho_{t,x}\cap \zeta_{x,t}) \subset |P_{Q}| \cap |P^{Q}|$ s.t.\
	\begin{align*}
		d \bigl[ \tilde{\Phi}(y), \tilde{\Phi}(z) \bigr] 
		         &\leq d \bigl[ \tilde{\Phi}(y), \tilde{\Phi}(\tilde{y}) \bigr]  
			  + d \bigl[ \tilde{\Phi}(\tilde{y}), \tilde{\Phi}(\tilde{z}) \bigr]  
				  + d \bigl[ \tilde{\Phi}(\tilde{z}), \tilde{\Phi}(z)\bigr] \\
		    & \leq K' \bigl( |y - \tilde{y}| + |\tilde{y} - \tilde{z}| + |\tilde{z} - z| \bigr) \\
		    & \leq K K' |y - z|.
	\end{align*}
Since the complex $P_{t,x}$ is finite, we can assume $K$ works for any pair $\rho_{t,x}, \zeta_{t,x} \in P_{t,x}$ with $\text{Int} \, \rho_{t,x}, \, \text{Int} \, \zeta_{t,x} \subset V_{t,x}$. Thus, $\tilde{\Phi}$ is Lipschitz in $V_{t,x}$ and so part (\ref{I:Phi.tilde.locally.Lip.if.Phi.is}) of theorem \ref{T:main.theorem}, still holds for $P$.

Finally, we prove that (\ref{I:arb.fine.subdivision}') holds.  The only issue is whether the subdivision of $P_{Q}$ whose existence is asserted by part (\ref{I:arb.fine.subdivision}) of the theorem extends to a subdivision of all of $P$.  But one can always use a ``standard extension'' (Munkres \cite[Definition 7.12, pp. 76--77]{jrM66}) for this purpose. (Munkres' \cite[Definition 7.1, p. 69]{jrM66} definition of simplicial complex includes the requirement \eqref{E:intersect.finitely.many}.)
\end{proof}

\begin{proof}[Proof of corollary \ref{C:poly.sing.set.near}]
Let $\epsilon > 0$.  Define 
	\[
		A = \{ x \in |P| : dist(x, \Ss) \geq \epsilon \}.
	\]
Using subdivisions permitted by theorem \ref{T:main.theorem}, 
part (\ref{I:arb.fine.subdivision}), we may assume all simplices of $P$ have diameter $< \epsilon/2$.  Let $Q$ be the subcomplex of $P$ consisting of all simplices that intersect $\Ss$, and all faces of all such simplices.  Let $\sigma \in P$ have nonempty intersection with $A$.  
Suppose there exists $\tau \in Q$ s.t.\ $\sigma \cap \tau \ne \varnothing$.  By definition of $Q$ we may assume $\tau \cap \Ss \neq \varnothing$. Let $x_{1} \in \Ss \cap \tau$, $x_{2} \in  \sigma \cap \tau$, and $x_{3} \in \sigma \cap A$
Then
	\[
		\epsilon \leq |x_{1} - x_{3}| 
		  \leq |x_{1} -  x_{2}| + |x_{2} - x_{3}| < \tfrac{1}{2} \epsilon + \tfrac{1}{2} \epsilon = \epsilon,
	\]
A contradiction.  I.e.,
	\begin{equation*}  
		\text{No simplex intersecting $A$ is a face of any simplex intersecting } |Q|.
	\end{equation*}

Apply theorem \ref{T:main.theorem} to $Q$.  Point \ref{I:S.tilde.is.subcmplx.of.subdiv} of the corollary follows from parts (\ref{I:arb.fine.subdivision} and \ref{I:S.tilde.subcomp}) of the theorem. By part (\ref{I:dist.tween.S.tilde.and.S}) of theorem \ref{T:main.theorem}, point (\ref{I:Stilde.within.eps.ofS}) of the corollary holds.  By part (\ref{I:no.change.off.nbhd.of.S.in.Q}) of theorem \ref{T:main.theorem}, $\tilde{\Phi} = \Phi$ on $A$.  I.e., point \ref{I:same.far.away} of the corollary holds.  Point \ref{I:Phi.tilde.sigma.subset.Phi.sigma.in.corly} of the corollary follows from parts (\ref{I:arb.fine.subdivision} and \ref{I:Phi.tilde.sigma.subset.Phi.sigma}) of the theorem. Finally, point \ref{I:Hma.meas.Stilde.<.K.x.that.of.S} of the corollary follows from part (\ref{I:arb.fine.subdivision}) of theorem \ref{T:main.theorem}.
\end{proof}

\begin{proof}[Proof of lemma \ref{L:big.lemma}]
The following technical lemma will come in handy.
	\begin{lemma}   \label{L:k.theory}
Recall the definition, \eqref{E:defn.of.k}, of $k$.  We have
	\begin{equation}  \label{E:k.1-beta.bnd}
		k(\beta, \delta, t) \leq (1-\beta)^{\delta + t}, \text{ for } \beta \in [0,1], \; \delta + t > 0.
	\end{equation}
The gradient of $k$ is given by
	\begin{equation}  \label{E:gradient.of.k}
		   \nabla k(\beta, \delta, t) =
		   - \left( \frac{\delta + t}{(1 - \beta)^{2}} , \frac{\beta}{1 - \beta}  , \frac{\beta}{1 - \beta}\right) 
		   \exp \left\{ - \frac{\beta( \delta + t )}{1 - \beta} \right\}, \; 
		      \text{ for } \beta < 1.
	\end{equation}
Both $k$ and its first partial derivatives are continuous 
on $(\beta, \delta, t) \in (-\infty, 1) \times \RR \times \RR$. 
Let $ \ell = k$ or $ \ell = \partial k/ \partial \beta$ or $ \ell = \partial k/ \partial \delta$ 
or $ \ell = \partial k/ \partial t$. 
If $\delta > 0$ and $t' \geq 0$ then 
	\[
		\lim_{\beta \uparrow 1, t \downarrow 0}  \ell(\beta, \delta, t) 
			= \lim_{\beta \uparrow 1}  \ell(\beta, \delta, t') = 0.
	\]
Similarly, if $\delta' \geq 0$ and $t > 0$ then 
	\[
		\lim_{\beta \uparrow 1, \delta \downarrow 0}  \ell(\beta, \delta, t) 
			= \lim_{\beta \uparrow 1}  \ell(\beta', \delta, t) = 0.
	\]
It follows that $k$ is locally Lipschitz on 
	\begin{equation}   \label{E:defn.of.T}
		T := \bigl[ (-\infty, 1] \times [0, \infty) \times [0, \infty) \bigr]
		    \; \setminus \; \bigl\{ (1, 0, 0) \bigr\}.
	\end{equation}
Note that $k(\beta, \delta, t) \in [0,1]$ if $(\beta, \delta, t) \in T$ and $\beta \in [0,1]$.  Note further that 
$T$ is convex.
	\end{lemma}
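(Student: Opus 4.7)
First I would verify the explicit estimates on the open region $\{\beta < 1\}$. The inequality \eqref{E:k.1-beta.bnd} reduces, after taking logarithms and dividing by $\delta+t > 0$, to $-\beta/(1-\beta) \leq \log(1-\beta)$, which is the standard inequality $\log v \leq v-1$ applied to $v = 1/(1-\beta) \geq 1$. The gradient formula \eqref{E:gradient.of.k} is obtained by the chain rule applied to $\log k = -\beta(\delta+t)/(1-\beta)$. Continuity of $k$ and its first partials on the open set $(-\infty,1) \times \RR \times \RR$ is then immediate, since everything in sight is a composition of smooth functions there.

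Next I would establish the boundary limits by setting $u := 1-\beta \downarrow 0$ and rewriting $k$ (and each first partial derivative) as a rational function of $1/u$ times $e^{(\delta+t) - (\delta+t)/u}$. Under the stated hypotheses, $\delta+t$ stays bounded below by a strictly positive constant throughout the approach, so the exponential factor vanishes faster than any polynomial in $1/u$ can blow up, yielding the limit $0$ both for $k$ and for each first partial. This extends $k$ and $\nabla k$ continuously to $T$ by declaring them equal to $0$ on the slice $T \cap \{\beta=1\}$.

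The elementary assertions then follow easily. The range bound $k \in [0,1]$ on $T \cap \{\beta \in [0,1]\}$ holds because $k \geq 0$ and, for $\beta,\delta,t \geq 0$ with $\beta<1$, the exponent $-\beta(\delta+t)/(1-\beta)$ is nonpositive. For convexity of $T$, the ambient box $(-\infty,1] \times [0,\infty) \times [0,\infty)$ is convex as a product of half-lines, and the removed point $(1,0,0)$ is an extreme point of that box (each coordinate sits at an extreme value of its factor); since removing an extreme point from a convex set preserves convexity, $T$ is convex.

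The main point is the local Lipschitz claim on $T$. On the open subset $\{\beta<1\}$ of $T$ it follows from bounded continuous partials and the mean value theorem. Near a boundary point $p_0 = (1,\delta_0,t_0) \in T$, we have $\delta_0+t_0 > 0$ by definition of $T$, so I would choose a compact convex neighborhood $N \subset T$ of $p_0$ on which $\delta+t \geq c > 0$; by the previous paragraph the partials are continuous on $N$ (zero on $N \cap \{\beta=1\}$) and hence bounded by some $M$. For any $p,q \in N$, convexity of $N$ keeps the segment $[p,q]$ inside $T$; the coordinate $\beta$ is affine on $[p,q]$, so either $\beta \equiv 1$ on the segment (and $k(p)=k(q)=0$ makes the bound trivial), or the open segment lies in $\{\beta<1\}$, and the fundamental theorem of calculus together with continuity of $k$ at the possibly boundary endpoints yields $|k(q)-k(p)| \leq M|q-p|$. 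The expected obstacle is precisely this matching of interior smoothness with the continuous extension to the boundary $\{\beta=1\}$, which is why the exponential-beats-polynomial estimate above is the technical heart of the argument.
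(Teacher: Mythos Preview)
Your proposal is correct and follows essentially the same approach as the paper's proof: the inequality \eqref{E:k.1-beta.bnd} via an elementary logarithm estimate, the boundary limits via the exponential-beats-polynomial fact (the paper phrases this as $x^{r}e^{-x}\to 0$), and local Lipschitz on $T$ via a compact convex neighborhood on which the partials extend continuously and are bounded, followed by the mean value theorem on segments whose open interior lies in $\{\beta<1\}$. The only cosmetic differences are that the paper verifies \eqref{E:k.1-beta.bnd} by differentiating $f(\beta)=\log(1-\beta)+\beta/(1-\beta)$ rather than invoking $\log v\le v-1$, and it builds its compact convex set as $\{|x-x_1|\le r\}\cap\{(1-\beta)+\delta+t\ge\epsilon\}\cap T$ rather than phrasing it as a neighborhood of a boundary point.
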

Note that only one of conditions $\beta < 1$, $ \delta > 0$, $t > 0$ need hold in order 
that $(\beta, \delta, t) \in (-\infty, 1] \times [0, \infty) \times [0, \infty)$ is actually in $T$. 
	\begin{proof}[Proof of lemma \ref{L:k.theory}]
By definition of $k$, \eqref{E:k.1-beta.bnd} holds with $\beta = 1$.  So assume $0 \leq \beta < 1$.  Since 
	\[
		k(\beta, \delta, t) = \exp \left\{ - \frac{\beta}{1 - \beta} \right\}^{ \delta + t },
	\]
WLOG we may take $\delta + t = 1$.  Let 
	\[
		f(\beta) = \log(1-\beta) - \log k( \beta, \delta, t) 
		  = \log(1-\beta) + \frac{\beta}{1-\beta}.
	\]  
Then
	\[
		f(0) = 0 \text{ and } f'(\beta) = - \frac{1}{1-\beta} + \frac{1}{(1-\beta)^{2}}.
	\]
But the $f'$ is nonnegative if $0 \leq \beta < 1$.  \eqref{E:k.1-beta.bnd} follows.

The existence and values of the limits follows from the fact that for $r \geq 0$
	\[
		x^{r} e^{-x} \to 0 \text{ as } x \to + \infty.
	\]

By corollary \ref{C:cont.diff.=.loc.Lip}, $k$ is locally Lipschitz 
on $(\beta, \delta, t) \in (-\infty, 1) \times \RR \times \RR$, but we also want to allow $\beta = 1$ so another approach is required.  Let $x_{1} = (\beta_{1} , \delta_{1}, t_{1}) \in T$ and 
$x_{2} = (\beta_{2} , \delta_{2}, t_{2}) \in T$.  Let $r > |x_{2} - x_{1}|$.   Since $x_{1}, x_{2} \in T$, there exists $\epsilon > 0$ s.t.\ 
	\[
		\min \bigl\{ (1-\beta_{1}) + \delta_{1} + t_{1}, \;  
		  (1-\beta_{2}) + \delta_{2} + t_{2} \bigr\} > \epsilon.
	\]
Let
	\[
		K = \bigl\{ x = (\beta, \delta, t) \in T : |x - x_{1}| \leq r 
		   \text{ and } (1-\beta) + \delta + t \geq \epsilon \bigr\}.
	\]
It is clear that $T$ is convex. Therefore, $K$ is convex, being the intersection of three convex sets.  $K$ is also compact and contains $x_{1}$ and $x_{2}$.  $k$ is continuous on $K$ and its derivatives are continuous in the interior, $K^{\circ}$, of $K$.  Moreover, the derivatives of $k$ can be extended continuously to all of $K$ with finite values on the boundary of $K$.  Hence, since $K$ is compact, the derivatives of $k$ are bounded in the interior of $K$.  Let $D < \infty$ be an upper bound on the length of the gradient of $k$ in $K^{\circ}$.  Suppose $\beta_{1} = \beta_{2} = 1$.  Then, $\bigl| k(x_{2}) - k(x_{1}) \bigr| = 0 \leq D |x_{2} - x_{1}|$. Suppose for definiteness that $\beta_{1} < 1$.  Consider the line segment, $L$, joining $x_{1}$ and $x_{2}$.  It lies in $K$. Let $f$ denote the restriction, $k \vert_{D}$, of $k$ to $L$ parametrized by arc length.  Then $f$ is differentiable in the interior of $L$ (this is true even if $x_{1}$ and $x_{2}$ lie on the boundary of $K$, since $\beta_{1} < 1$) and derivative of $f$ is less than $D$ in absolute value.  Applying the Mean Value Theorem (Apostol \cite[Theorem 5--10, p.\ 93]{tmA57.Apostol}) we see that $\bigl| k(x_{2}) - k(x_{1}) \bigr| \leq D |x_{2} - x_{1}|$.
       \end{proof}

\emph{(Proof of lemma \ref{L:big.lemma} continued.)} We have
	\begin{equation}  \label{E:s.in.terms.of.k}
	      s(y,t) = \bar{h}(y) + k \bigl( b(y), \bar{\Delta}(y), t \bigr) \bigl[ z - \bar{h}(y) \bigr],
	         \quad y \in \text{Int} \, \sigma, \; t \in [0,1].
	\end{equation}
Let $y \in \sigma$ and $t \in [0,1]$.  By \eqref{E:defn.of.k} 
$0 \leq k \bigl( b(y), \bar{\Delta}(y), t \bigr) \leq 1$ so by \eqref{E:syt.defn},
	\begin{equation} \label{E:s.maps.sigma.into.sigma}
		s(y,t) \in \sigma, \quad \text{ if } 
			y \in \sigma, \; t \in [0,1].
	\end{equation}

The following describes some properties of the function $s$ defined in \eqref{E:syt.defn}, $\bar{h}$ defined in \eqref{E:b.hbar.defn}, and $f$ defined in \eqref{E:f.defn}.  
\begin{lemma}   \label{L:props.of.s}
Except where noted, $y \in \sigma$ and $t \in [0,1]$ are arbitrary.
\begin{enumerate}
      \item $s(y,0) = z$ for every $y \in \mcl{C} \setminus (\text{Bd} \, \sigma)$.   
          \label{I:s.y0.=z.on.C.less.Bd}
      \item $s(z,t) = z$.  If  $s(y,t) = z$ and $t \in (0,1]$ then $y = z$.   
           \label{I:z.is.fixed.pt.of.s}
      \item $s(y,t) \in \text{Bd} \, \sigma$ if and only if $y \in \text{Bd} \, \sigma$ 
         (in which case $s(y,t) = y$).
              \label{I:s.in.Bd.iff.y.in.Bd}
      \item If $s(y,t) \ne z$, then $y \ne z$ and $\bar{h} \bigl[ s(y,t) \bigr] = \bar{h}(y)$.   
           \label{I:hbar.s.=.hbar.y}
      \item $s(y,t) \in \mcl{C}$ if and only if $y \in \mcl{C}$.     	\label{I:s(y,t).in.C.iff.y.in.C}
      \item There exists $K'' = K''(z) < \infty$ depending only on $\sigma$ and $z$, and continuous in $z$, s.t.\ if $y_{1}, y_{2} \in \sigma$
	\begin{equation}  \label{E:|yi-z||h2-h1|.Lip.condtn}
		|y_{i} - z|  \bigl| \bar{h}(y_{2}) - \bar{h}(y_{1}) \bigr|  \leq K'' |y_{2} - y_{1}|, \quad i=1,2.
	\end{equation}
	    \label{I:y-z.h2-h1.bound}
      \item $\bar{h}(y)$ is locally Lipschitz in $y \in \sigma \setminus \{ z \}$, $\bar{h}$ is Lipschitz 
      on $\mcl{A}$, and
         \begin{equation}   \label{E:b.is.Lip.on.sigma}
		b(y) \text{ is Lipschitz on } \sigma.
	\end{equation}
	      \label{I:hbar.locally.Lip.b.Lip} 
      \item The function $f : (y,t) \mapsto k \bigl( b(y), \bar{\Delta}(y), t \bigr)$ is a locally Lipschitz map on	\begin{equation}   \label{E:Bz.defn}
	      B_{z} := \bigl( \sigma \times [0,1] \bigr)
	         \setminus \Bigl( \bigl[ \mcl{C} \cap (\text{Bd} \, \sigma) \bigr]  \times \{0\} \Bigr).
	\end{equation}
	\label{I:f=k(b,delta,t).loc.Lip}
      \item  $s$ is locally Lipschitz on $B_{z}$.          \label{I:s.locly.Lip.off.C.cap.Bd}
   \end{enumerate}
\end{lemma}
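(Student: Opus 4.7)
Parts (\ref{I:s.y0.=z.on.C.less.Bd})--(\ref{I:s(y,t).in.C.iff.y.in.C}) are direct from the definitions. For (\ref{I:s.y0.=z.on.C.less.Bd}), $y \in \mcl{C} \setminus (\text{Bd}\,\sigma)$ gives $b(y) < 1$ and, via \eqref{E:y.in.C.iff.hbar.y.in.C}, $\bar{h}(y) \in \mcl{C}$, so $\bar{\Delta}(y) = 0$ and $k(b(y), 0, 0) = 1$ by \eqref{E:defn.of.k}, yielding $s(y, 0) = z$. For (\ref{I:z.is.fixed.pt.of.s}), $b(z) = 0$ forces $k \equiv 1$ and so $s(z, t) = z$; conversely, $\bar{h}(y) \in \text{Bd}\,\sigma \neq z$ forces $f(y,t) = 1$, which by \eqref{E:defn.of.k} with $t > 0$ requires $b(y) = 0$. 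Part (\ref{I:s.in.Bd.iff.y.in.Bd}) holds because, on $\text{Int}\,\sigma$, $f(y,t) > 0$ makes $s(y,t)$ a strict convex combination of $z \in \text{Int}\,\sigma$ with $\bar{h}(y) \in \text{Bd}\,\sigma$. Part (\ref{I:hbar.s.=.hbar.y}) follows since $s(y,t)$ lies on the closed ray from $z$ through $\bar{h}(y)$, so uniqueness in lemma \ref{L:rays.intersect.bndry.in.one.pt} forces $\bar{h}(s(y,t)) = \bar{h}(y)$. Part (\ref{I:s(y,t).in.C.iff.y.in.C}) is immediate from (\ref{I:hbar.s.=.hbar.y}), (\ref{I:z.is.fixed.pt.of.s}), and \eqref{E:y.in.C.iff.hbar.y.in.C}.

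For (\ref{I:y-z.h2-h1.bound}), subtracting $y_i - z = b(y_i)(\bar{h}(y_i) - z)$ for $i = 1, 2$ yields
\begin{equation*}
b(y_2)\bigl[\bar{h}(y_2) - \bar{h}(y_1)\bigr] = (y_2 - y_1) + \bigl[b(y_2) - b(y_1)\bigr]\bigl[z - \bar{h}(y_1)\bigr].
\end{equation*}
With $L = L(z)$ the Lipschitz constant of $b$ (continuous in $z$ via the barycentric coordinates $\alpha_j$, as in the remark after \eqref{E:sigma.less.z.in.zeta.union}) and $|z - \bar{h}(y_1)| \leq diam(\sigma)$, one gets $b(y_2)|\bar{h}(y_2) - \bar{h}(y_1)| \leq (1 + L\,diam(\sigma))|y_1 - y_2|$; multiplying by $|y_2 - z|/b(y_2) = |\bar{h}(y_2) - z| \leq diam(\sigma)$ yields (\ref{I:y-z.h2-h1.bound}) with $K'' = diam(\sigma)(1 + L\,diam(\sigma))$. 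The case $i=1$ is symmetric. Part (\ref{I:hbar.locally.Lip.b.Lip}) follows at once: $|y_i - z|$ is bounded below on any compact subset of $\sigma \setminus \{z\}$, and in particular on $\mcl{A}$ since $z \notin \mcl{A}$.

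The main technical content is in (\ref{I:f=k(b,delta,t).loc.Lip}) and (\ref{I:s.locly.Lip.off.C.cap.Bd}); the hard part will be that $\bar{\Delta}$ is genuinely discontinuous at $y = z$. The key is that the product $b(y)\bar{\Delta}(y)$ is Lipschitz on all of $\sigma$. Using $b(y)\bar{h}(y) = y + (b(y) - 1)z$, for each $c \in \mcl{C}$ one computes $b(y)|\bar{h}(y) - c| = |y - z + b(y)(z - c)|$, so
\begin{equation*}
b(y)\bar{\Delta}(y) = \inf_{c \in \mcl{C}} \bigl| y - z + b(y)(z - c) \bigr|,
\end{equation*}
an infimum of maps uniformly Lipschitz in $y$ with constant $\leq 1 + L\,diam(\sigma)$. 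Reparametrize via $\Psi(\beta, \eta, t) := \exp\bigl(-(\eta + \beta t)/(1-\beta)\bigr)$ (extended by $0$ when $\beta = 1$, $\eta + t > 0$); then $f(y,t) = \Psi(b(y), b(y)\bar{\Delta}(y), t)$. The argument of lemma \ref{L:k.theory} applies essentially verbatim to $\Psi$ to give local Lipschitz continuity off $\{(1, 0, 0)\}$; since $(y,t) \mapsto (b(y), b(y)\bar{\Delta}(y), t)$ is Lipschitz on $\sigma \times [0, 1]$ and hits $(1, 0, 0)$ exactly on $[\mcl{C} \cap (\text{Bd}\,\sigma)] \times \{0\}$ (the set excluded from $B_z$), composition yields (\ref{I:f=k(b,delta,t).loc.Lip}).

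For (\ref{I:s.locly.Lip.off.C.cap.Bd}), I would use the decomposition
\begin{equation*}
s(y_1, t_1) - s(y_2, t_2) = \bigl(1 - f(y_1,t_1)\bigr)\bigl[\bar{h}(y_1) - \bar{h}(y_2)\bigr] + \bigl(f(y_2,t_2) - f(y_1,t_1)\bigr)\bigl[\bar{h}(y_2) - z\bigr]
\end{equation*}
to reduce the problem to bounding each summand. The second is $\leq diam(\sigma) \cdot |f_1 - f_2|$, controlled by (\ref{I:f=k(b,delta,t).loc.Lip}). For the first, near $\text{Bd}\,\sigma$ (away from $z$), (\ref{I:hbar.locally.Lip.b.Lip}) gives Lipschitz control of $|\bar{h}_1 - \bar{h}_2|$ directly. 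Near $z$, combine $1 - f(y_1, t_1) \leq b(y_1)(\bar{\Delta}(y_1) + t_1)/(1 - b(y_1))$ with the rearrangement $|\bar{h}(y_1) - \bar{h}(y_2)| \leq K''|y_1 - y_2|/|y_1 - z|$ from (\ref{I:y-z.h2-h1.bound}): the factors $b(y_1)$ and $|y_1 - z| = b(y_1)|\bar{h}(y_1) - z|$ cancel, leaving a bound $\leq C|y_1 - y_2|$ with $C$ depending only on $diam(\sigma)$, a lower bound $|\bar{h}(y_1) - z| \geq dist(z, \text{Bd}\,\sigma)$, and an upper bound on $b$ in the neighborhood. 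The discontinuity of $\bar{\Delta}$ at $z$ is absorbed throughout by the uniformly present factor of $b(y)$.
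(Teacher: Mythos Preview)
Your proof is correct, and in two places it is genuinely cleaner than the paper's.

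For part~(\ref{I:y-z.h2-h1.bound}) the paper argues geometrically: it splits into cases according to the sign of $(y_1-z)\cdot(y_2-z)$, then in the acute case uses the Law of Sines on the triangle $h_1 z h_2$ together with a compactness argument showing the angle at $h_1$ is bounded away from zero. Your route is purely algebraic: you invoke the Lipschitz bound on $b$ coming from barycentric coordinates (which the paper itself notes just after \eqref{E:sigma.less.z.in.zeta.union}) and then the identity $b(y_2)[\bar h(y_2)-\bar h(y_1)]=(y_2-y_1)+[b(y_2)-b(y_1)][z-\bar h(y_1)]$ does all the work. This is shorter and avoids the angle estimate entirely; the only thing to watch is the degenerate case $y_i=z$, where $b(y_i)=0$ and the inequality holds trivially, so the division by $b(y_2)$ is harmless. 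The paper's geometric argument, by contrast, is self-contained in the sense that it does not presuppose Lipschitzness of $b$ (it re-derives it in part~(\ref{I:hbar.locally.Lip.b.Lip}) as a consequence of part~(\ref{I:y-z.h2-h1.bound})), but since the barycentric route is already available this is no real advantage.

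For part~(\ref{I:f=k(b,delta,t).loc.Lip}) the paper composes $k$ with $(b,\bar\Delta,t)$ and then must treat a neighborhood of $z$ separately, because $\bar\Delta$ is discontinuous there; it handles this via an explicit Mean Value estimate on $\nabla k$ together with the bound $b(y)\,|\bar\Delta(y_2)-\bar\Delta(y_1)|\le b(y)\,|\bar h(y_2)-\bar h(y_1)|$ and part~(\ref{I:y-z.h2-h1.bound}). Your reparametrization $f(y,t)=\Psi\bigl(b(y),\,b(y)\bar\Delta(y),\,t\bigr)$ with $\Psi(\beta,\eta,t)=\exp\bigl(-(\eta+\beta t)/(1-\beta)\bigr)$ is a nice device: since $b\bar\Delta$ is globally Lipschitz on $\sigma$ (your infimum formula makes this transparent), the discontinuity of $\bar\Delta$ at $z$ never enters, and the only singular point of the outer map $\Psi$ is exactly the image of $[\mcl C\cap\text{Bd}\,\sigma]\times\{0\}$. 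The Lemma~\ref{L:k.theory} argument carries over to $\Psi$ with the same exponential-decay calculations. The trade-off is that you must redo that lemma for a slightly different function, but this is routine.

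Part~(\ref{I:s.locly.Lip.off.C.cap.Bd}) in your write-up follows the same two-region strategy as the paper (away from $z$ via part~(\ref{I:hbar.locally.Lip.b.Lip}); near $z$ via $1-f(y,t)\le b(y)(\bar\Delta(y)+t)/(1-b(y))$ combined with part~(\ref{I:y-z.h2-h1.bound})), and the cancellation of the $b(y_1)$ against $|y_1-z|=b(y_1)|\bar h(y_1)-z|$ is exactly what the paper exploits as well.
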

See below for the proof. The following is also proved below. 
  \begin{lemma}  \label{L:s.hat.has.Lip.invrs}
     For any $t \in (0,1]$ the map $y \mapsto s(y,t)$ is a one-to-one map of $\text{Int} \, \sigma$ onto itself.  Moreover, 
		   \begin{equation}  \label{E:s.t.has.locally.Lip.inv}
		      \text{the map } \hat{s} : (y,t) \mapsto \bigl( s(y,t), t \bigr) \text{ on } 
		            (\text{Int} \, \sigma) \times (0,1] \text{ has a locally Lipschitz inverse.}
		   \end{equation}
  \end{lemma}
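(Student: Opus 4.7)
The plan is to establish bijectivity of $y\mapsto s(y,t)$ on $\text{Int}\,\sigma$ for each fixed $t\in(0,1]$ via a ray-by-ray analysis, then produce an explicit formula for $\hat{s}^{-1}$ whose local Lipschitz character can be verified directly from the formula.

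For injectivity, suppose $s(y_1,t)=s(y_2,t)$ with $t\in(0,1]$. If the common value is $z$, lemma \ref{L:props.of.s}(\ref{I:z.is.fixed.pt.of.s}) forces $y_1=y_2=z$. Otherwise lemma \ref{L:props.of.s}(\ref{I:hbar.s.=.hbar.y}) gives $\bar{h}(y_1)=\bar{h}(y_2)=:h$, so $y_1$ and $y_2$ lie on the common ray from $z$ through $h$ and share a common value of $\bar{\Delta}$. Substituting into \eqref{E:s.in.terms.of.k} and cancelling the nonzero vector $z-h$ reduces the equation to $k(b(y_1),\bar{\Delta},t)=k(b(y_2),\bar{\Delta},t)$. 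Since $t>0$, formula \eqref{E:defn.of.k} shows $\beta\mapsto k(\beta,\bar{\Delta},t)$ is strictly decreasing on $[0,1]$; hence $b(y_1)=b(y_2)$ and $y_1=y_2$ by \eqref{E:b.hbar.defn}. (Since $f(y,t)>0$ for $t>0$, the image of $\text{Int}\,\sigma$ lies in $\text{Int}\,\sigma$.)

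For surjectivity, given $y'\in\text{Int}\,\sigma$ set $b':=b(y')\in[0,1)$ and $\delta':=\bar{\Delta}(y')$. By \eqref{E:defn.of.k}, the equation $k(\beta,\delta',t)=1-b'$ has the unique solution $\beta(y',t):=-\log(1-b')/[(\delta'+t)-\log(1-b')]\in[0,1)$ (equal to $0$ when $b'=0$). The preimage is $y=z+\beta(y',t)(\bar{h}(y')-z)\in\text{Int}\,\sigma$, which, using the identity $y'-z=b'(\bar{h}(y')-z)$ from \eqref{E:b.hbar.defn}, may be rewritten entirely in terms of $y'$ as
\[
y-z \;=\; \psi\bigl(b(y'),\bar{\Delta}(y'),t\bigr)\,(y'-z),
\qquad
\psi(b,\delta,t) \;:=\; \frac{-\log(1-b)}{b\bigl[(\delta+t)-\log(1-b)\bigr]},
\]
extended by $\psi(0,\delta,t):=1/(\delta+t)$. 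This $\psi$ is real-analytic on $[0,1)\times[0,\infty)\times(0,\infty)$, and direct substitution verifies $s(y,t)=y'$.

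The main difficulty will be showing that $\hat{s}^{-1}(y',t)=\bigl(z+\psi(b(y'),\bar{\Delta}(y'),t)(y'-z),\,t\bigr)$ is locally Lipschitz on all of $(\text{Int}\,\sigma)\times(0,1]$, including near $\{z\}\times(0,1]$ where $\bar{h}$ and $\bar{\Delta}$ may be discontinuous. The virtue of the formula is that the potentially discontinuous ray direction $\bar{h}(y')-z$ has been absorbed into $(y'-z)$, which is globally Lipschitz in $y'$. Away from $z$, local Lipschitz continuity is routine since $b$, $\bar{h}$, and $\bar{\Delta}=\text{dist}(\bar{h}(\cdot),\mcl{C})$ are all locally Lipschitz on $\sigma\setminus\{z\}$ (by \eqref{E:b.is.Lip.on.sigma}, lemma \ref{L:props.of.s}(\ref{I:hbar.locally.Lip.b.Lip}), and the Lipschitz distance function), and $\psi$ is $C^1$ on the relevant compact range. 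Near $z$ I plan to use either of the decompositions
\[
y_1-y_2 \;=\; \psi_1(y_1'-y_2')+(\psi_1-\psi_2)(y_2'-z) \;=\; \psi_2(y_1'-y_2')+(\psi_1-\psi_2)(y_1'-z),
\]
selecting whichever of $(y_1'-z)$ or $(y_2'-z)$ corresponds to the smaller $b$, bounding the $\psi$ factors uniformly on $[0,b_\ast]\times[0,\text{diam}(\sigma)]\times[t_0,1]$ (for $b_\ast<1$, $t_0>0$), and controlling $|\psi_1-\psi_2|\,|y_i'-z|$ via $|y_i'-z|=b(y_i')|\bar{h}(y_i')-z|$ together with the identity $\bar{h}(y_1')-\bar{h}(y_2')=[b(y_2')(y_1'-z)-b(y_1')(y_2'-z)]/[b(y_1')b(y_2')]$; the factor $b(y_i')$ in $|y_i'-z|$ cancels the $1/b(y_i')$ from the identity, leaving a bound proportional to $|y_1'-y_2'|$.
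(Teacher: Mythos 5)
Your proof is correct, and the bijectivity half is essentially the paper's: both arguments run ray by ray, using lemma \ref{L:props.of.s} parts (\ref{I:z.is.fixed.pt.of.s}) and (\ref{I:hbar.s.=.hbar.y}) together with the strict monotonicity of $\beta \mapsto k(\beta,\delta,t)$ for $t>0$; the only difference there is that you solve $k(\beta,\delta',t)=1-b'$ in closed form where the paper just invokes that $k(\cdot,\delta,t)$ maps $[0,1)$ onto $(0,1]$ (\eqref{E:k.maps.[0,1).onto.(0,1]}). The locally-Lipschitz-inverse half, however, is genuinely different. The paper never writes the inverse explicitly: it recovers $\bar{h}(y)$ and then $b(y)$ from $\hat{s}(y,t)$, localizes in the image variable ($b(s(y,t))<B_{max}$, $t>\epsilon$), transfers that to $b(y)\le\min\{\beta_{max},\epsilon^{-1}B\}$ via \eqref{E:beta.log.k.bnd} and \eqref{E:k.beta.eps.bnd}, and then assembles the estimate from the three claims \eqref{E:b.x.diff.s.bnd}, \eqref{E:b.s.t.diff.bound}, \eqref{E:y.diff.by.s.diff.bnd}, proved with the mean value theorem applied to $k$ (via \eqref{E:gradient.of.k}), the Lipschitz property \eqref{E:b.is.Lip.on.sigma} of $b$, and, crucially, part (\ref{I:y-z.h2-h1.bound}). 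Your explicit formula $y=z+\psi\bigl(b(y'),\bar{\Delta}(y'),t\bigr)(y'-z)$ short-circuits the transfer step (localizing $b(y')\le b_{*}<1$ is immediate because $b$ is evaluated at the image point), packages the mean-value computation as smoothness of $\psi$ on a compact parameter box, and tames the discontinuity of $\bar{h}$ and $\bar{\Delta}$ at $z$ with the damping factor $y'-z$; the one delicate term, $|\bar{\Delta}(y_{1}')-\bar{\Delta}(y_{2}')|\,|y_{i}'-z|$, is controlled by your identity $\bar{h}(y')=z+(y'-z)/b(y')$ plus \eqref{E:b.is.Lip.on.sigma}, which is precisely an algebraic re-derivation of the ``min'' estimate \eqref{E:min.|yi-z|.leq.K''.|y1-y2|} underlying part (\ref{I:y-z.h2-h1.bound}) — you could equally just cite that part. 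What each approach buys: yours is more self-contained and computational, avoiding \eqref{E:beta.log.k.bnd}--\eqref{E:k.beta.eps.bnd} entirely (their only role in the paper is the localization transfer), while the paper's implicit route avoids introducing and estimating $\psi$ and reuses part (\ref{I:y-z.h2-h1.bound}), which it needs anyway for lemma \ref{L:g.is.loc.Lip}. When you write it up, just make the degenerate case $b(y_{i}')=0$ explicit: there $y_{i}'=z$ and $\bar{\Delta}(z)=diam(\sigma)$, $\bar{h}(z)$ are conventional, but your rule of attaching the $(y_{i}'-z)$ factor with the smaller $b$ makes that factor vanish, so the convention never enters (and, as a remark, the cancellation in fact works with either decomposition).
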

  
Let $(\text{Lk} \, \sigma)^{(0)}$ denote the set of vertices in $\text{Lk} \, \sigma$.  Thus, every vertex in $\overline{\text{St}} \, \sigma$ is either in $\sigma^{(0)}$ or $(\text{Lk} \, \sigma)^{(0)}$. Let $x \in \overline{\text{St}} \, \sigma$.  Unless $x \in \text{Lk} \, \sigma$, some of the $\beta_{v}(x)$'s are positive at $v \in \sigma^{(0)}$.  In that case, define 
	\begin{equation}  \label{E:sigma.x.defn}
	      \sigma(x) = \left( \sum_{v \in \sigma^{(0)}} \beta_{v}(x) \right)^{-1}
	        \sum_{v \in \sigma^{(0)}} \beta_{v}(x) \, v \quad \in \; \sigma, 
	        \qquad x \in (\overline{\text{St}} \, \sigma) \setminus (\text{Lk} \, \sigma).
	\end{equation}
($\sigma(\cdot)$ will denote the function.  $\sigma$ without the parentheses will mean the simplex.) Then 
	\begin{equation}  \label{E:sigma.ident.on.sigma}
		\sigma(x) = x \text{  if } x \in \sigma
	\end{equation}
 and by lemma \ref{L:bary.coords.are.Lip}, corollary \ref{C:cont.diff.=.loc.Lip}, and \eqref{E:comp.of.Lips.is.Lip}, we have
	\begin{equation}  \label{E:sigma.(.).is.loc.Lip.off.Lk}
		\sigma(\cdot) \text{ is locally Lipschitz on } 
		    (\overline{\text{St}} \, \sigma) \setminus (\text{Lk} \, \sigma).
	\end{equation}
We have
	\begin{equation}  \label{E:x.in.rho.intrr.iff.sigma.x.in.sigma.intrr}
		\text{If } \rho \in P \text{ has } \sigma \text{ as a face (so }
		      \rho \subset \overline{\text{St}} \, \sigma \text{) and } 
		    x \in \text{Int} \, \rho, \text{ then } \sigma(x) \in \text{Int} \, \sigma
	\end{equation}
because $x \in \text{Int} \, \rho$ if and only if $\beta_{v}(x) > 0$ for every vertex $v$ of $\rho$.  In particular, $\beta_{v}(x) > 0$ for every vertex $v$ of $\sigma$.  

Partially conversely, suppose $\rho \in P$, $\rho \subset \overline{\text{St}} \, \sigma$, and $\rho \cap \sigma \ne \varnothing$ but $\sigma$ is not a face of $\rho$.  (In particular, $\rho \ne \sigma$. Note that the case in which $\rho$ is a proper face of $\sigma$ is included.)  Suppose  $x \in \rho$ but $x \notin \text{Lk} \, \sigma$.  Then $\sum_{v \in \sigma^{(0)}} \beta_{v}(x) > 0$ but since $\sigma$ is not a face of $\rho$, we have $\beta_{v}(x) = 0$ for some vertex $v$ of $\sigma$.  Say, $\beta_{v(j)}(x) = 0$.  Then by \eqref{E:sigma.x.defn} $\beta_{v(j)} \bigl[ \sigma(x) \bigr] = 0$.  To sum up, we have
	\begin{multline}  \label{E:when.sigma.x.in.Bd.sigma}
		\text{\emph{If} } \rho \in P, \; \rho \subset  \overline{\text{St}} \, \sigma, 
		       \text{  and  } \rho \cap \sigma \ne \varnothing 
		    \text{  but  } \sigma  \text{ is not a face of  } \rho  \\
			\text{  \emph{then}  } x \in \rho \setminus (\text{Lk} \, \sigma) 
			  \text{ implies } \sigma(x) \in \text{Bd} \, \sigma.
	\end{multline}

Suppose $\overline{\text{St}} \, \sigma \ne \sigma$.  (I.e., $\sigma$ is a proper subset 
of $\overline{\text{St}} \, \sigma$.)  Then there exists 
$\rho \in P$ with $\rho \subset \overline{\text{St}} \, \sigma$ having $\sigma$ as a \emph{proper} face.  Let $\omega \subset \rho$ be the face of $\rho$ ``opposite'' $\sigma$ (appendix \ref{S:basics.of.simp.comps}).  If $y \in \omega$ define $\sigma(y)$ to be an arbitrary point of $\sigma$. Then if $y \in \rho$ we can write 
	\begin{equation}  \label{E:y.in.terms.of.sigma.w}
		y = \mu \sigma(y) + (1 - \mu) w
	\end{equation}
where $\mu \in [0,1]$ and $w \in \omega$.  Now, suppose $y = \nu x + (1-\nu) u$, 
where $\nu \in [0,1]$, $x \in \sigma$, and $u \in \omega$.  Then, by the geometric independence (appendix \ref{S:basics.of.simp.comps}) of the vertices of $\rho$ we have $\nu = \mu$.  
If $y \notin \text{Lk} \, \sigma$ then we also have $x = \sigma(y)$.  If $y \notin \sigma$, then we also have $u = w(y)$.  (In particular, $\mu$ and $w$ are uniquely determined by $y \in \text{Int} \, \rho$.)  So we can define $\mu(y) = \mu$ and, providing $y \notin \sigma$, we can define $w(y) = w$.  Adopt the following conventions.
   \begin{small}
	        \begin{align} \label{E:w.on.sigma,sigma(.).on.Lk}
		    w(y)& = 
		       \begin{cases}
			\text{ an arbitrary fixed point of } \text{Lk} \, \sigma,  
			        & \text{ if } y \in \sigma \text{ and } \overline{\text{St}} \, \sigma \ne \sigma, \\
			\text{ and arbitrary fixed point of } |P|, 
			               &\text{ if } \overline{\text{St}} \, \sigma = \sigma.
		      \end{cases}  \\
			\text{If } y &\in \text{Lk} \, \sigma,  
			  \text{ let } \sigma(y) 
			         \text{ be an arbitrary fixed point of } \sigma.    \notag
		    \end{align}
   \end{small}

Clearly,
	\begin{align}  \label{E:w.and.mu.in.terms.of.bary.coords}
		w(y) &= \frac{1}{\sum_{v \in (\text{Lk} \, \sigma)^{(0)}} \beta_{v}(y) } 
		    \sum_{v \in (\text{Lk} \, \sigma)^{(0)}} \beta_{v}(y) v, \text{ if }
		    y \in (\overline{\text{St}} \, \sigma) \setminus \sigma,
		    \text{ and } \\
		\mu(y) &= \sum_{v \in \sigma^{(0)}} \beta_{v}(y), \text{ if }
		    y \in \overline{\text{St}} \, \sigma.    \notag
	\end{align}

It follows that if there are multiple simplices $\rho \in P$ with $\rho \subset \overline{\text{St}} \, \sigma$ having $\sigma$ as a proper face, and s.t.\ $y \in \rho$ then $\mu(y)$ and $w(y)$ are the same no matter which of these $\rho$'s we use to define them. We then have
	\begin{multline}  \label{E:wy.muy.when.y.in.Lk.or.sigma}
		 y \in \text{Lk} \, \sigma \text{ if and only if } w(y) = y  \text{ if and only if } \mu(y) = 0.  \\
		y \in \sigma \text{ if and only if } \mu(y) = 1  \text{ if and only if } \sigma(y) = y.
	\end{multline}

Since by lemma \ref{L:bary.coords.are.Lip}, $\{ \beta_{v}(y), \, v \in P^{(0)} \}$ is Lipschitz in $y$  and $\mu(y)$ and $w(y)$ can be expressed in terms of barycentric coordinates, it follows that 
	\begin{equation}   \label{E:mu.w.loc.Lip}
		\mu \text{ is Lipschitz on }  \overline{\text{St}} \, \sigma 
		     \text{ and } w \text{ is locally Lipschitz on } (\overline{\text{St}} \, \sigma) 
		           \setminus \sigma. 
	\end{equation} 

Recall \eqref{E:g.defn.when.St.sigma.ne.sigma} and \eqref{E:g.defn.when.St.=.sigma}. 
If $\overline{\text{St}} \, \sigma = \sigma$ define $\mu(y) = 1$ for every $y \in \overline{\text{St}} \, \sigma = \sigma$ and let $w(y)$ be an arbitrary fixed point of $|P|$. Then we see that \eqref{E:g.defn.when.St.sigma.ne.sigma} still makes sense if $\overline{\text{St}} \, \sigma = \sigma$. 
By \eqref{E:wy.muy.when.y.in.Lk.or.sigma} we have 
	\begin{equation*}   
		g(y) = s(y,0), \text{ if } y \in \sigma \setminus \bigl[ \mcl{C} \cap (\text{Bd} \, \sigma) \bigr].
	\end{equation*}
Thus, whether $\overline{\text{St}} \, \sigma$ equals $\sigma$  or not we have
	\begin{multline}  \label{E:general.formula.for.g}
		g(y) = g_{z}(y) = \\
			\begin{cases}
				\mu(y) \, s_{z} \bigl( \sigma(y), 1 - \mu(y) \bigr) + \bigl( 1 - \mu(y) \bigr) w(y) 
				                  \in (\overline{\text{St}} \, \sigma) \setminus \sigma, 
				   &\text{ if } y \in \overline{\text{St}} \, \sigma 
				             \setminus \sigma, \\
			       s(y,0) 
			               = \mu(y) \, s_{z} \bigl( \sigma(y), 1 - \mu(y) \bigr) + \bigl( 1 - \mu(y) \bigr) w(y) 
			                    \in \sigma, 
			                    &\text{ if } 
			                                y \in \sigma \setminus \bigl[ \mcl{C} \cap (\text{Bd} \, \sigma) \bigr], \\
				y, &\text{ if } y \in |P| \setminus (\overline{\text{St}} \, \sigma).
			\end{cases}
	\end{multline}
 (See \eqref{E:w.on.sigma,sigma(.).on.Lk}.)

Note that, by lemma \ref{L:props.of.s}(part \ref{I:s.in.Bd.iff.y.in.Bd}), \eqref{E:y.in.terms.of.sigma.w}, 
and \eqref{E:wy.muy.when.y.in.Lk.or.sigma} 
   \begin{align}  \label{E:when.g(y)=y}
      g(y) = y \text{ if: } \; &  \notag \\
        y &\in (\overline{\text{St}} \, \sigma) \setminus \bigl[ \mcl{C} \cap (\text{Bd} \, \sigma) \bigr] 
           \text{ but } \sigma(y) \in \text{Bd} \, \sigma,  \\
        y &\in \text{Lk} \, \sigma 
          \text{ (i.e., } \mu(y) = 0), \text{ or }  \notag \\
        y &\in |P| \setminus (\overline{\text{St}} \, \sigma). \notag
   \end{align}

\emph{Claim:}
	\begin{equation}  \label{E:off.sig.g.maps.simps.into.thmslvs}
		\text{If } \rho \in P \text{ then } 
		       g ( \rho \setminus \sigma) \subset \rho \setminus \sigma.
	\end{equation}
To see this, let $\rho \in P$ and $y \in \rho \setminus \sigma$.  If $y \notin \overline{\text{St}} \, \sigma$ then by \eqref{E:when.g(y)=y}, $g(y) = y \in \rho \setminus \sigma$.  So suppose $y \in (\overline{\text{St}} \, \sigma) \setminus \sigma$.  $\rho$ has a face $\xi$ s.t.\ $\xi \subset \overline{\text{St}} \, \sigma$ and $y \in \xi$.  If $y \in \text{Lk} \, \sigma$, then, again by \eqref{E:when.g(y)=y}, $g(y) = y \in \rho \setminus \sigma$.  So suppose $y \notin \text{Lk} \, \sigma$.  Then $\xi \cap \sigma \ne \varnothing$.  If $\sigma$ is not a face of $\xi$ then, by \eqref{E:when.sigma.x.in.Bd.sigma} and \eqref{E:when.g(y)=y}, we have $g(y) = y \in \rho \setminus \sigma$.  Finally, suppose $\sigma$ is a face of $\xi$ (and hence of $\rho$).  Then, by \eqref{E:s.maps.sigma.into.sigma} and \eqref{E:general.formula.for.g}, we have $g(y) \in \xi \subset \rho$.  But $y \notin \sigma$.  Therefore, by \eqref{E:wy.muy.when.y.in.Lk.or.sigma}, $\mu(y) < 1$.  So, by \eqref{E:general.formula.for.g} again, $\mu \bigl[ g(y) \bigr] < 1$.  So, by \eqref{E:wy.muy.when.y.in.Lk.or.sigma} again, $g(y) \in \rho \setminus \sigma$.  This proves the claim \eqref{E:off.sig.g.maps.simps.into.thmslvs}.  

By \eqref{E:when.sigma.x.in.Bd.sigma} and \eqref{E:when.g(y)=y}, $g$ is the identity on all simplices in $P$ that do not have $\sigma$ as a face (so in particular do not equal $\sigma$). 
Moreover, by \eqref{E:wy.muy.when.y.in.Lk.or.sigma} and lemma \ref{L:props.of.s}(part \ref {I:s.y0.=z.on.C.less.Bd}), $g(\mcl{C} \setminus  \text{Bd} \, \sigma) = \{ z \}$.  

\emph{Claim:}
	\begin{equation}  \label{E:g.is.cont.off.C.meet.Bd.sigma}
		g \text{ is a continuous map of } |P| \setminus \bigl[ \mcl{C} \cap (\text{Bd} \, \sigma) \bigr]
		\text{ into itself.}
	\end{equation}
First, we show 
$g : |P| \setminus \bigl[ \mcl{C} \cap (\text{Bd} \, \sigma) \bigr] \to |P| \setminus \bigl[ \mcl{C} \cap (\text{Bd} \, \sigma) \bigr]$. Suppose $y \in |P| \setminus \bigl[ \mcl{C} \cap (\text{Bd} \, \sigma) \bigr]$.  
If $y \notin \overline{\text{St}} \, \sigma$, 
then $g(y) = y \notin \mcl{C} \cap (\text{Bd} \, \sigma)$.  
If $y \in (\overline{\text{St}} \, \sigma) \setminus \sigma$, then, by \eqref{E:wy.muy.when.y.in.Lk.or.sigma}, $\mu(y) < 1$, 
so $g(y) \notin \sigma \supset \mcl{C} \cap (\text{Bd} \, \sigma)$.  
Suppose $y \in \sigma \setminus \bigl[ \mcl{C} \cap (\text{Bd} \, \sigma) \bigr]$.  
Then, by \eqref{E:wy.muy.when.y.in.Lk.or.sigma}, $\mu(y) = 1$, $\sigma(y) = y$, and 
by lemma \ref{L:props.of.s}(parts \ref{I:s.in.Bd.iff.y.in.Bd}, \ref{I:s(y,t).in.C.iff.y.in.C}), 
$s \bigl( \sigma(y), 1 - \mu(y) \bigr) = s(y, 0) \notin \mcl{C} \cap (\text{Bd} \, \sigma)$.  
I.e., $g : |P| \setminus \bigl[ \mcl{C} \cap (\text{Bd} \, \sigma) \bigr] \to |P| \setminus \bigl[ \mcl{C} \cap (\text{Bd} \, \sigma) \bigr]$.

By lemma \ref{L:props.of.s}(part \ref{I:s.locly.Lip.off.C.cap.Bd}),
\eqref{E:mu.w.loc.Lip}, 
and \eqref{E:sigma.(.).is.loc.Lip.off.Lk}, $g$ is continuous 
on $(\overline{\text{St}} \, \sigma) \setminus (\sigma \cup (\text{Lk} \, \sigma))$ 
(or $(\overline{\text{St}} \, \sigma) \setminus \{ \sigma \cup (\text{Lk} \, \sigma) \} = \varnothing$).  
Let $y \in \sigma \setminus \bigl[ \mcl{C} \cap (\text{Bd} \, \sigma) \bigr]$ so, by \eqref{E:sigma.ident.on.sigma}, $\sigma(y) = y$.  Let $\{ y_{m} \} \subset |P|$ be a sequence converging to $y$.  It suffices to consider two separate cases.  

Suppose $\{ y_{m} \} \subset |P| \setminus (\overline{\text{St}} \, \sigma)$. Since $P$ is finite WLOG, there exists $\rho \in P$ s.t.\ $\{ y_{m} \} \subset \rho$. Hence, if $y \in \text{Int} \, \sigma$ then, by \eqref{E:Int.rho.cuts.sigma.then.rho.in.sigma}, $\sigma$ is a face of $\rho$.  
This contradicts $\{ y_{m} \} \subset |P| \setminus (\overline{\text{St}} \, \sigma)$. 
Therefore, $y \in (\text{Bd} \, \sigma) \setminus \mcl{C}$.  
Therefore, $\sigma(y) = y \in  \text{Bd} \, \sigma$ and, by \eqref{E:when.g(y)=y}, $g(y_{m}) = y_{m} \to y = g(y)$. 

Next, suppose $\{ y_{m} \} \subset \overline{\text{St}} \, \sigma$. 
Then eventually $y_{m} \in (\text{St} \, \sigma) \setminus (\text{Lk} \, \sigma)$.  By \eqref{E:sigma.(.).is.loc.Lip.off.Lk}, \eqref{E:mu.w.loc.Lip}, and \eqref{E:wy.muy.when.y.in.Lk.or.sigma}, 
$y_{m} \to y \in \sigma \setminus \bigl[ \mcl{C} \cap (\text{Bd} \, \sigma) \bigr]$ implies $\mu(y_{m}) \to 1 = \mu(y)$ and $\sigma(y_{m}) \to \sigma(y) = y$.  Hence, eventually, $\sigma(y_{m}) \notin \mcl{C} \cap (\text{Bd} \, \sigma)$.  Thus, $g(y_{m}) \to y$ by lemma \ref{L:props.of.s} (part \ref{I:s.locly.Lip.off.C.cap.Bd}).  

It suffices, finally, to consider $y \in \text{Lk} \, \sigma$.    Let $\{ y_{m} \} \subset |P|$ be a sequence converging to $y$.  By \eqref{E:general.formula.for.g} and \eqref{E:when.g(y)=y} it suffices to prove that if 
$\{ y_{m} \} \subset (\overline{\text{St}} \, \sigma) \setminus (\text{Lk} \, \sigma)$ then $g(y_{m}) \to y$.  
By 
and \eqref{E:wy.muy.when.y.in.Lk.or.sigma}, $\mu(y_{m}) \to 0$ and $w(y_{m}) \to y$ as $m \to \infty$.  Since the range of $s$ is the bounded set $\sigma$, we have $g(y_{m}) \to y = g(y)$ as $m \to \infty$.  This completes the proof of the claim \eqref{E:g.is.cont.off.C.meet.Bd.sigma}.

We have the following.  (The proof is given below.)
	\begin{lemma}  \label{L:g.is.loc.Lip}
	$g$ is a locally Lipschitz map on $|P| \setminus \bigl[ \mcl{C} \cap (\text{Bd} \, \sigma) \bigr]$.
	\end{lemma}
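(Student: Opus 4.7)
The plan is to prove local Lipschitzness of $g$ at each $y_0 \in |P| \setminus [\mcl{C} \cap (\text{Bd}\,\sigma)]$ by case analysis on the location of $y_0$. When $y_0 \in |P| \setminus \overline{\text{St}}\,\sigma$ (an open set since $\overline{\text{St}}\,\sigma$ is a finite union of simplices), $g$ equals the identity on a neighborhood of $y_0$ by the last branch of \eqref{E:general.formula.for.g}. When $y_0 \in (\overline{\text{St}}\,\sigma) \setminus (\sigma \cup \text{Lk}\,\sigma)$, we have $\mu(y_0) \in (0,1)$, and on a small neighborhood $\mu$ stays bounded away from both $0$ and $1$; there $\mu(\cdot)$, $\sigma(\cdot)$, $w(\cdot)$ are locally Lipschitz by \eqref{E:mu.w.loc.Lip} and \eqref{E:sigma.(.).is.loc.Lip.off.Lk}, while $(\sigma(y), 1-\mu(y))$ stays in a compact subset of $B_z$ where $s$ is Lipschitz by lemma \ref{L:props.of.s}(\ref{I:s.locly.Lip.off.C.cap.Bd}), so composition yields the result. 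When $y_0 \in \sigma \setminus [\mcl{C} \cap \text{Bd}\,\sigma]$, the restriction $g\vert_{\sigma}(y) = s(y,0)$ is locally Lipschitz on $\sigma$ by the same lemma; combining with the preceding case for adjacent cofaces of $\sigma$ (where $\mu$ is close to $1$), one glues Lipschitz bounds on the finitely many simplices meeting $y_0$ via corollary \ref{C:reverse.triangle.ineq.in.simp.cmplxs}.

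The delicate remaining case is $y_0 \in \text{Lk}\,\sigma$, where $\mu(y_0) = 0$ and $\sigma(\cdot)$ is typically discontinuous at $y_0$. The plan is to use the convenient identity
\begin{equation*}
g(y) - y = \mu(y)\, S\bigl(\sigma(y),\, 1 - \mu(y)\bigr), \qquad S(\xi, \tau) := s(\xi, \tau) - \xi,
\end{equation*}
which follows from \eqref{E:general.formula.for.g} and the decomposition $y = \mu(y)\sigma(y) + (1 - \mu(y))w(y)$. By lemma \ref{L:props.of.s}(\ref{I:s.locly.Lip.off.C.cap.Bd}), $S$ is locally Lipschitz on $B_z$, and for any $\epsilon \in (0, 1)$ the compact set $\sigma \times [1 - \epsilon, 1]$ is a subset of $B_z$, so $S$ has some Lipschitz constant $L_S$ there. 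On a neighborhood of $y_0$ small enough that $\mu(y) < \epsilon$, the pair $(\sigma(y), 1 - \mu(y))$ lies in this Lipschitz region.

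The hard part is then to estimate $|\mu_1 S_1 - \mu_2 S_2|$, where $S_i := S(\sigma(y_i), 1 - \mu(y_i))$, in terms of $|y_1 - y_2|$, despite the discontinuity of $\sigma(\cdot)$. The plan is to restrict to a single coface $\rho$ of $\sigma$ and decompose
\begin{equation*}
\mu_1 S_1 - \mu_2 S_2 = (\mu_1 - \mu_2) S_1 + \mu_2 (S_1 - S_2).
\end{equation*}
The first summand is bounded by $L_\mu \|S\|_\infty |y_1 - y_2|$, since $\mu$ is Lipschitz and $S$ is bounded on the compact set. For the second, expand via Lipschitzness of $S$: $|S_1 - S_2| \leq L_S (|\sigma_1 - \sigma_2| + |\mu_1 - \mu_2|)$, then bound $\mu_2 |\sigma_1 - \sigma_2|$ by rewriting $\mu_2(\sigma_1 - \sigma_2) = (\mu_1 \sigma_1 - \mu_2 \sigma_2) - (\mu_1 - \mu_2) \sigma_1$ and observing that $\mu(\cdot)\sigma(\cdot) = \sum_{v \in \sigma^{(0)}} \beta_v(\cdot)\, v$ is Lipschitz by lemma \ref{L:bary.coords.are.Lip}, giving $\mu_2 |\sigma_1 - \sigma_2| \leq C |y_1 - y_2|$ for a constant $C$ depending only on $\rho$. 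Assembling these bounds and gluing across the finitely many cofaces of $\sigma$ meeting $y_0$ via corollary \ref{C:reverse.triangle.ineq.in.simp.cmplxs} then produces the required local Lipschitz constant for $g$ at $y_0$.
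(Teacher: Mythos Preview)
Your approach is essentially correct and genuinely different from the paper's. The key innovation is the identity
\[
g(y)-y=\mu(y)\,S\bigl(\sigma(y),1-\mu(y)\bigr),\qquad S(\xi,t)=s(\xi,t)-\xi,
\]
which removes $w(\cdot)$ from the analysis entirely, together with your observation that $\mu(\cdot)\sigma(\cdot)=\sum_{v\in\sigma^{(0)}}\beta_v(\cdot)\,v$ is globally Lipschitz by lemma~\ref{L:bary.coords.are.Lip}. From this you get the crucial weighted bound $\mu_2|\sigma(y_1)-\sigma(y_2)|\le C|y_1-y_2|$ directly. The paper instead splits each coface $\rho\supset\sigma$ into $U^{\sigma}=\{\mu<3/4\}$ and $U_{\sigma,\epsilon}=\{\mu>1/4,\ \ell>\epsilon\}$, and on each piece proves the analogous weighted bounds $\mu_2|\sigma(y_2)-\sigma(y_1)|\lesssim|y_2-y_1|$ and $(1-\mu_2)|w(y_2)-w(y_1)|\lesssim|y_2-y_1|$ by a geometric trick: it enlarges $\rho$ via an auxiliary simplex-expansion lemma so that the relevant ``center'' lies in the interior, then invokes the radial-projection estimate of lemma~\ref{L:props.of.s}(\ref{I:y-z.h2-h1.bound}). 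Your route is more elementary and bypasses both the expansion lemma and the second weighted estimate for $w$ altogether.

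One small gap to patch: in your case~3 ($y_0\in\sigma\setminus[\mcl{C}\cap\text{Bd}\,\sigma]$) you appeal to ``the preceding case'' on adjacent cofaces where $\mu$ is close to~$1$, but your case~2 argument uses local Lipschitzness of $w(\cdot)$, which holds only on $(\overline{\text{St}}\,\sigma)\setminus\sigma$ and gives no uniform constant as $\mu\to1$. The fix is immediate with your own tool: apply the $g(y)-y$ identity on cofaces near $y_0$ as well. There $\sigma(\cdot)$ is locally Lipschitz (you are away from $\text{Lk}\,\sigma$), $\mu$ is Lipschitz, and $(\sigma(y),1-\mu(y))$ lies in a compact neighborhood of $(y_0,0)\in B_z$ (since $y_0\notin\mcl{C}\cap\text{Bd}\,\sigma$), so $S$ is Lipschitz there and composition finishes the job. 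With that adjustment your proof goes through, and corollary~\ref{C:reverse.triangle.ineq.in.simp.cmplxs} handles the gluing across the finitely many simplices in $\overline{\text{St}}\,\tau_0$ just as you describe.
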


Note further that, by \eqref{E:wy.muy.when.y.in.Lk.or.sigma} and \eqref{E:general.formula.for.g}, 
	\begin{equation}  \label{E:g.maps.sigma.into.itself}
		g \Bigl( \sigma \setminus \bigl[ \mcl{C} \cap (\text{Bd} \, \sigma) \bigr] \Bigr) \subset \sigma 
		       \text{ and } g \bigl( |P| \setminus \sigma \bigr) \subset |P| \setminus \sigma.
	\end{equation}
It follows from this and the definition of $\Ss'$, \eqref{E:S'.defn},  that 
	\begin{equation}  \label{E:S'.cap.sigma.=.C.cap.Bd.sigma}
		\Ss' \cap \sigma = \mcl{C} \cap (\text{Bd} \, \sigma).
	\end{equation}
This is the first sentence of point (\ref{I:S'.cap.sigma.=.C.cap.Bd}) of the lemma.  The second sentence is then immediate from \eqref{E:C.is.compact.contains.A}.
By \eqref{E:hbar.maps.A.to.C.cap.Bd.sigma}  
and lemma \ref{L:props.of.s} (part \ref{I:hbar.locally.Lip.b.Lip}) 
$\mcl{C} \cap (\text{Bd} \, \sigma)$ is a Lipschitz image of $\mcl{A} = \Ss \cap \sigma$. 
Point (\ref{I:S'.cap.sigma.=.C.cap.Bd}) then follows from lemma \ref{L:loc.Lip.image.of.null.set.is.null}.

Let $\rho \in P$ not be a face of $\sigma$.  Note that if $y \in \text{Int} \, \rho$, then by \eqref{E:Int.rho.cuts.sigma.then.rho.in.sigma}, $y \notin \mcl{C} \cap (\text{Bd} \, \sigma)$ so $g(y)$ is defined.  \emph{First, suppose $\rho$ has $\sigma$ as a proper face} and let $\omega$ be the face of $\rho$ opposite to $\sigma$.  (In particular, $\rho \subset \overline{\text{St}} \, \sigma$.) Let $y \in \text{Int} \, \rho$, then by \eqref{E:criterion.for.int.simp} and \eqref{E:w.and.mu.in.terms.of.bary.coords}, we have $w(y) \in \text{Int} \, \omega$ and, by \eqref{E:x.in.rho.intrr.iff.sigma.x.in.sigma.intrr}, we have $\sigma(y) \in \text{Int} \, \sigma$. Hence, by lemma \ref{L:props.of.s}(part \ref{I:s.in.Bd.iff.y.in.Bd}) we have $s \bigl( \sigma(y), 1 - \mu(y) \bigr) \in \text{Int} \, \sigma$.  Moreover, by \eqref{E:wy.muy.when.y.in.Lk.or.sigma}, $1 > 1 - \mu(y) > 0$.  Therefore, by \eqref{E:general.formula.for.g}, $g(y) \in \text{Int} \, \rho$.  

Conversely, let $y' \in \text{Int} \, \rho$.  We show that there exists 
$y \in \text{Int} \, \rho$ s.t.\  $y' = g(y)$.  Write $y' = \mu(y') \sigma(y') + \bigl( 1 - \mu(y') \bigr) w(y')$.  
Since $y' \in \text{Int} \, \rho$, we have, as before, $w(y') \in \text{Int} \, \omega$, $\sigma(y') \in \text{Int} \, \sigma$, and $1 > 1 - \mu(y') > 0$. By lemma \ref{L:s.hat.has.Lip.invrs}, there exists a unique $x \in \text{Int} \, \sigma$ s.t.\  
$s\bigl( x, 1 - \mu(y') \bigr) = \sigma(y')$.  Let $y = \mu(y') x + \bigl( 1 - \mu(y') \bigr) w(y')$.  Then
$y \in \text{Int} \, \rho$ and $\mu(y) = \mu(y')$, $\sigma(y) = x$, and $w(y) = w(y')$.  Thus, by \eqref{E:general.formula.for.g}, 
$y' = \mu(y') s\bigl( x, 1 - \mu(y') \bigr) + \bigl( 1 - \mu(y') \bigr) w(y) = g(y)$, as desired.  
This proves that $g(\text{Int} \, \rho) = \text{Int} \, \rho$. Now, as observed following \eqref{E:y.in.terms.of.sigma.w}, we have that $\mu(y)$ ($\mu(y')$) and $w(y)$ (resp. $w(y')$) are uniquely determined by $y \in \text{Int} \, \rho$ (respectively [resp.], $y' \in \text{Int} \, \rho$).  Therefore, by lemma \ref{L:s.hat.has.Lip.invrs}, $y \in \text{Int} \, \rho$ s.t.\ $g(y) = y'$ is uniquely determined by $y' \in \text{Int} \, \rho$.

Let $y, y'$ be as in the last paragraph. Now, $\mu(y')$, $\sigma(y')$, and $w(y')$ are uniquely determined by $y' \in \text{Int} \, \rho$ in a locally Lipschitz fashion (by \eqref{E:sigma.(.).is.loc.Lip.off.Lk} and \eqref{E:mu.w.loc.Lip}).  Hence, by \eqref{E:s.t.has.locally.Lip.inv} and  \eqref{E:comp.of.Lips.is.Lip}, 
$x = x(y') \in \text{Int} \, \sigma$ solving $s\bigl( x, 1 - \mu(y') \bigr) = \sigma(y')$ is locally Lipschitz in $y' \in \text{Int} \, \rho$.  Therefore, by \eqref{E:comp.of.Lips.is.Lip} again, we have that
$y = \mu(y') x + \bigl( 1 - \mu(y') \bigr) w(y') \in \text{Int} \, \rho$ solving $g(y) = y'$ is uniquely determined by $y' \in \text{Int} \, \rho$ in a locally Lipschitz fashion.  To sum up 
we have the following.
	\begin{multline}   \label{E:rstrction.of.g.to.tau.props}
		   g \text{ is one-to-one on }  \text{Int} \, \rho, \; g(\text{Int} \, \rho) = \text{Int} \, \rho, \\
		    \text{ and the restriction, } g \vert _{ \text{Int} \, \rho},  
		              \text{ has a locally Lipschitz inverse on } \text{Int} \, \rho. 
	\end{multline}

\emph{Next, assume $\rho \in P$ is not a face of $\sigma$ but neither is $\sigma$ a face 
of $\rho$.} If $\rho \subset \text{Lk} \, \sigma$ then by \eqref{E:when.g(y)=y}, we have $g(y) = y$ 
for $y \in \text{Int} \, \rho$. Suppose $\rho \subset  \overline{\text{St}} \, \sigma$ 
but $\rho \nsubseteq \text{Lk} \, \sigma$ and let $y \in \text{Int} \, \rho$. Then, by \eqref{E:Int.rho.cuts.sigma.then.rho.in.sigma}, $y \notin \text{Lk} \, \sigma$, so, by \eqref{E:when.sigma.x.in.Bd.sigma}, $\sigma(y) \in \text{Bd} \, \sigma$.  Therefore, by \eqref{E:when.g(y)=y} again, we have $g(y) = y$ for $y \in \text{Int} \, \rho$.
Suppose $\rho \nsubseteq  \overline{\text{St}} \, \sigma$ and let $y \in \text{Int} \, \rho$.  Then by \eqref{E:Int.rho.cuts.sigma.then.rho.in.sigma}, $y \notin  \overline{\text{St}} \, \sigma$ so, yet again 
by \eqref{E:when.g(y)=y}, we have $g(y) = y$.  Hence, \eqref{E:rstrction.of.g.to.tau.props} holds for $\rho$ s.t.\ $\rho \subset \text{Lk} \, \sigma$ or $\rho \nsubseteq  \overline{\text{St}} \, \sigma$.

It remains to prove the following. \emph{Claim:}
	\begin{equation}   \label{E:rho.not.face.of.sigma.->.g.invrs.Int.rho.=.Int.rho}
		\text{If } \rho \in P \text{ is not a face of } \sigma 
		   \text{ then } g^{-1}(\text{Int} \, \rho) = \text{Int} \, \rho.
	\end{equation}
To see this let $\rho \in P$ not be a face of $\sigma$.  By 
\eqref{E:rstrction.of.g.to.tau.props}, we have $\text{Int} \, \rho \subset g^{-1}(\text{Int} \, \rho)$.  
Let $y \in \text{Int} \, \rho$. By \eqref{E:Int.rho.cuts.sigma.then.rho.in.sigma} $y \notin \sigma$.
Suppose $x \in |P| \setminus \bigl[ \mcl{C} \cap (\text{Bd} \, \sigma) \bigr]$ and $g(x) = y$.  Then by \eqref{E:g.maps.sigma.into.itself}, $x \notin \sigma$.  Let $\tau \in P$ satisfy $x \in \text{Int} \, \tau$.  (See \eqref{E:x.in.exctly.1.simplex.intrr}.)  
Since $x \notin \sigma$, $\tau$ is not a face of $\sigma$.  Therefore, by \eqref{E:rstrction.of.g.to.tau.props}, 
$y = g(x) \in \text{Int} \, \tau$.  Thus, $(\text{Int} \, \tau) \cap (\text{Int} \, \rho) \ne \varnothing$. Hence, by (\ref{E:intersection.of.simps}'), $\tau = \rho$.  In particular, $x \in \text{Int} \, \rho$. This proves claim \eqref{E:rho.not.face.of.sigma.->.g.invrs.Int.rho.=.Int.rho}. Point (\ref{I:g.has.locally.Lip.invrs}) of the lemma follows.

We prove point (\ref{I:S'.closed.Phi'.cont.off.S'}) of the lemma, \emph{viz.}, that $\Ss'$ is closed 
and $\Phi'$ is continuous off $\Ss'$ (and locally Lipschitz, too, when appropriate). \emph{First, we prove that $\Ss'$ is closed.}  Since $\Ss$ is closed, $\Ss' \setminus \sigma$ is closed in $|P| \setminus \sigma$ by \eqref{E:S'.defn} and \eqref{E:g.maps.sigma.into.itself}.  Moreover, by \eqref{E:C.is.compact.contains.A}, 
$\mcl{C}_{z} \cap (\text{Bd} \, \sigma)$ is closed in $|P|$.  So to prove $\Ss'$ is closed it suffices to show that all limit points of $\Ss' \setminus \sigma$ in $\sigma$ lie 
in $\mcl{C}_{z} \cap (\text{Bd} \, \sigma)$.  It suffices to show that if $x \in \sigma \setminus (\mcl{C}_{z} \cap (\text{Bd} \, \sigma))$ then 
$x$ has an open neighborhood in $|P|$ disjoint from $\Ss'$.  

Suppose first that $x \in \sigma \setminus \mcl{C}_{z}$.  
We show that $x$ has an open neighborhood disjoint from $\Ss'$. 
By lemma \ref{L:props.of.s}(part \ref{I:s(y,t).in.C.iff.y.in.C}) and \eqref{E:general.formula.for.g} we have 
$g(x) = s(x,0) \in \sigma \setminus C_{z}$. But $\Ss$ is closed and, by \eqref{E:C.is.compact.contains.A}, $\Ss \cap \sigma \subset \mcl{C}_{z}$ and $\mcl{C}_{z}$ is also closed.   
I.e., $g(x)$ has a neighborhood $V$ in $|P|$ disjoint from $\Ss \cup \mcl{C}_{z}$.  
By \eqref{E:g.is.cont.off.C.meet.Bd.sigma} and \eqref{E:S'.defn}, $g^{-1}(V)$ is a relatively open subset of the open set $|P| \setminus \bigl[ \mcl{C} \cap (\text{Bd} \, \sigma) \bigr]$ and $g^{-1}(V)$ is disjoint from $\Ss'$.  Thus, $g^{-1}(V)$ is an open neighborhood of $x$ disjoint from $\Ss'$.  

Next, suppose $x \in \mcl{C} \setminus (\text{Bd} \, \sigma)$.  By \eqref{E:z.notin.A} the point $z$ has a neighborhood $U$ disjoint from $\Ss \cup (\text{Bd} \, \sigma)$.    By lemma \ref{L:g.is.loc.Lip}, the set 
$g^{-1}(U)$ is a relatively open subset of the open set 
$\subset |P| \setminus  \bigl[ \mcl{C} \cap (\text{Bd} \, \sigma) \bigr]$.  Hence, $g^{-1}(U)$ is open and, by \eqref{E:S'.defn} and \eqref{E:g.maps.sigma.into.itself}, disjoint from $\Ss' \setminus \sigma = g^{-1}\bigl(\Ss \setminus \sigma \bigr)$. Moreover, by point 
(\ref {I:S'.cap.sigma.=.C.cap.Bd}) of the lemma, $\Ss' \cap \sigma$ is disjoint from the domain of $g$. \emph{A fortiori,} $g^{-1}(U) \cap \Ss' \cap \sigma = \varnothing$.  Thus, $g^{-1}(U) \cap \Ss' = \varnothing$. But, by \eqref{E:general.formula.for.g} and lemma \ref{L:props.of.s}(\ref{I:s.y0.=z.on.C.less.Bd}),
$g(x) = s(x,0) = z \in U$. I.e., $x \in g^{-1}(U) \subset |P| \setminus \Ss'$. This completes the proof that 
$\Ss'$ is closed.

Next, we continue the proof of point (\ref{I:S'.closed.Phi'.cont.off.S'}) by \emph{showing that $\Phi'$ is defined and continuous off $\Ss'$.} Let $x_{0} \in |P| \setminus \Ss'$.  First, suppose $x_{0} \in |P| \setminus \sigma$.  By \eqref{E:g.maps.sigma.into.itself}, we have $g(x_{0}) \in |P| \setminus \sigma$.  
Then, by \eqref{E:Phi'.defn}, $\Phi'(x_{0})$ is defined except, perhaps, if $g(x_{0}) \in \Ss \setminus \sigma$, i.e., by \eqref{E:S'.defn}, except, perhaps, if $x_{0} \in \Ss'$.  But $x_{0} \in |P| \setminus \Ss'$.  Therefore, 
	\begin{equation}  \label{E:x0.notin.S'.or.sigma.gx0.notin.S}
		\text{if } x_{0} \in |P| \setminus (\Ss' \cup \sigma), 
			\text{ then } g(x_{0}) \notin \Ss \setminus \sigma 
				\text{ (so } \Phi'(x_{0}) \text{ is defined).}
	\end{equation}  

By \eqref{E:x0.notin.S'.or.sigma.gx0.notin.S} and \eqref{E:g.maps.sigma.into.itself}, if $x_{0} \in |P| \setminus (\Ss' \cup \sigma)$ then $g(x_{0}) \notin \Ss$, so $\Phi$ is defined and continuous at $g(x_{0})$, and, by \eqref{E:g.is.cont.off.C.meet.Bd.sigma}, $g$ is continuous at $x_{0}$.  But, by \eqref{E:Phi'.defn}, $\Phi'(x_{0}) = \Phi \circ g(x_{0})$.  Therefore, $\Phi'$ is defined and continuous at $x_{0}$.

Next, let 
$x_{0} \in \sigma \setminus \Ss' = \sigma \setminus \bigl( \mcl{C} \cap (\text{Bd} \, \sigma) \bigr)$ (see \eqref{E:S'.cap.sigma.=.C.cap.Bd.sigma}).   By \eqref{E:g.is.cont.off.C.meet.Bd.sigma} $g$ is continuous at $x_{0}$.  Therefore, by  \eqref{E:Phi'.defn} it suffices to show that $g(x_{0}) \notin \Ss$.  Since $x_{0} \in \sigma$ we have by \eqref{E:general.formula.for.g} $g(x_{0}) = s(x_{0}, 0)$. Suppose $x_{0} \in \sigma \setminus \mcl{C}$.  By lemma \ref{L:props.of.s}(part \ref{I:s(y,t).in.C.iff.y.in.C}) we have $g(x_{0}) = s(x_{0},0) \in \sigma \setminus \mcl{C} \subset \sigma \setminus \Ss$,
by \eqref{E:C.is.compact.contains.A}. Next, suppose $x_{0} \in (\text{Int} \, \sigma) \cap \mcl{C}$.  
By lemma \ref{L:props.of.s}(part \ref{I:s.y0.=z.on.C.less.Bd}) and \eqref{E:z.notin.A} we have 
$g(x_{0}) = s(x_{0},0) = z \notin \Ss$.  To sum up,
	\begin{equation}   \label{E:g.maps.sigma.less.S'.into.sigma.less.S}
		g(\sigma \setminus \Ss') \subset \sigma \setminus \Ss.
	\end{equation}
Thus, $\Phi$ is also continuous on $\sigma \setminus \Ss'$.

\eqref{E:x0.notin.S'.or.sigma.gx0.notin.S} and \eqref{E:g.maps.sigma.less.S'.into.sigma.less.S} tell us that $g \bigl[ |P| \setminus \Ss' \bigr] \subset |P| \setminus \Ss$. By lemma \ref{L:g.is.loc.Lip} and \eqref{E:comp.of.Lips.is.Lip}, if $\F$ (recall $\Phi : |P| \setminus \Ss \to \F$) is a metric space and $\Phi$ is locally Lipschitz on $|P| \setminus \Ss$, then $\Phi' = \Phi \circ g$ is locally Lipschitz 
on $|P| \setminus \Ss' \subset |P| \setminus \bigl[ \mcl{C}_{z} \cap (\text{Bd} \, \sigma) \bigr]$. This completes the proof of point (\ref{I:S'.closed.Phi'.cont.off.S'}).

Suppose $\rho \in P$ and $\text{Int} \, \rho \subset \Ss$. Hence, by \eqref{E:sigma.intrscts.S},  we have $\rho \ne \sigma$.  Since $\Ss$ is closed, we have, in fact, that  $\rho \subset \Ss$. 
Thus, if $\rho \subset \sigma$, then $\rho \subset \text{Bd} \, \sigma $.  By \eqref{E:S'.cap.sigma.=.C.cap.Bd.sigma}, 
we have $\Ss' \cap (\text{Bd} \, \sigma) = \mcl{C} \cap (\text{Bd} \, \sigma)$.  Hence, by \eqref{E:C.is.compact.contains.A}, we have
$\rho \subset \Ss \cap (\text{Bd} \, \sigma)  \subset \mcl{C} \cap (\text{Bd} \, \sigma) \subset \Ss'$.  
Next, assume $\rho \nsubseteq \sigma$, so by \eqref{E:Int.rho.cuts.sigma.then.rho.in.sigma}, 
$(\text{Int} \, \rho) \cap \sigma = \varnothing$.  Then by point (\ref{I:g.has.locally.Lip.invrs}) and \eqref{E:S'.defn} we have $\text{Int} \, \rho \subset \Ss'$.  This proves point (\ref{I:simps.in.S.are.also.in.S'}) of the lemma.

Let $\tau \in P$ and suppose $\tau \cap \sigma = \varnothing$.  Then $\tau \subset (\text{Lk} \, \sigma) \cup \bigl( |P| \setminus (\overline{\text{St}} \, \sigma) \bigr)$.  Hence, by \eqref{E:when.g(y)=y}, if $y \in \tau$ then $g(y) = y$.  Moreover, certainly $\tau$ is not a face of $\sigma$.  Nor is any face of $\tau$ a face of $\sigma$.  Hence, by \eqref{E:rho.not.face.of.sigma.->.g.invrs.Int.rho.=.Int.rho}, we have $g^{-1}(\tau) = \tau$.  
Therefore, if $y \in \tau$, then $y \in \Ss'$ if and only if $y \in \Ss$ and $\Phi' = \Phi$ on $\tau \setminus \Ss$.  Point (\ref{I:Phi'.and.Phi.agree.on.Lk.sigma}) of the lemma follows. 

Now we prove point \ref{I:Phi'.Phi.can.agree.off.sigma} of the lemma.  However, \emph{that point does not seem to be used anywhere!} Similarly, suppose $\overline{\text{St}} \, \sigma = \sigma$.  
Let $y \in |P| \setminus \sigma = |P| \setminus (\overline{\text{St}} \, \sigma)$.  
Then by \eqref{E:g.defn.when.St.=.sigma}
and \eqref{E:g.maps.sigma.into.itself}, $g \bigl( \{y\} \bigr) = \{y\} = g^{-1}(y)$.  Hence, by \eqref{E:S'.defn},
$\Ss' \setminus \sigma = \Ss \setminus \sigma$ and, by \eqref{E:Phi'.defn}, $\Phi'(y) = \Phi(y)$.
It then follows from \eqref{E:S'.cap.sigma.=.C.cap.Bd.sigma} and \eqref{E:C.is.compact.contains.A} that
		\[
			|P| \setminus \bigl[ (\text{Int} \, \sigma) \cup \Ss' \bigr] 
		              \subset |P| \setminus \bigl[ (\text{Int} \, \sigma) \cup \Ss \bigr].
              \]
Suppose 
$y \in  (\text{Bd} \, \sigma) \setminus \bigl[ \mcl{C}_{z} \cap (\text{Bd} \, \sigma) \bigr] 
= (\text{Bd} \, \sigma) \setminus \Ss'$.  (See \eqref{E:S'.cap.sigma.=.C.cap.Bd.sigma}.) Then, by \eqref{E:sigma.ident.on.sigma}, we have $\sigma(y) = y \in \text{Bd} \, \sigma$ so, by \eqref{E:when.g(y)=y}, again $g(y) = y$. Thus, $\Phi'(y) = \Phi(y)$ and point (\ref{I:Phi'.Phi.can.agree.off.sigma}) of the lemma is proved.

As for point (\ref{I:dont.change.sings.on.othr.n.simps}), 
let $\tau \neq \sigma$ be a simplex in $P$ of dimension no greater than $n$. \emph{Claim:} 
	  \begin{equation}   \label{E:ginvrs.tau-sigma}
		\Ss' \cap (\tau \setminus \sigma) 
		         = g^{-1}(\Ss \setminus \sigma) \cap (\tau \setminus \sigma) 
		             = g^{-1} \bigl[ \Ss \cap (\tau \setminus \sigma) \bigr].
	  \end{equation}
The first equality is immediate from \eqref{E:S'.defn}.  As for the second equality, first suppose $y \in g^{-1}(\Ss \setminus \sigma) \cap (\tau \setminus \sigma)$.  Then, there exists $x \in \Ss \setminus \sigma$ s.t.\ $x = g(y)$.  Since $y \in \tau \setminus \sigma$, we have by \eqref{E:off.sig.g.maps.simps.into.thmslvs} that $x \in \tau \setminus \sigma$.  I.e., 
$g^{-1}(\Ss \setminus \sigma) \cap (\tau \setminus \sigma) 
\subset g^{-1} \bigl[ \Ss \cap (\tau \setminus \sigma) \bigr]$.  Conversely, suppose $x \in \Ss \cap (\tau \setminus \sigma)$ and let $y \in |P| \setminus \bigl[ \mcl{C} \cap (\text{Bd} \, \sigma) \bigr]$ satisfy $g(y) = x$. Since $x \notin \sigma$, by \eqref{E:g.maps.sigma.into.itself}, we have $y \in |P| \setminus \sigma$.  By \eqref{E:x.in.exctly.1.simplex.intrr} there exists $\xi \in P$ s.t.\ $y \in \text{Int} \, \xi$.  Then $\xi$ is not a face of $\sigma$.  Since $y \in \text{Int} \, \xi$, by point (\ref{I:g.has.locally.Lip.invrs}), we have $x \in \text{Int} \, \xi$.  But $x \in \Ss \cap (\tau \setminus \sigma)$.  Hence, by \eqref{E:Int.rho.cuts.sigma.then.rho.in.sigma}, $\xi$ is a face of $\tau$.  I.e., $y \in \tau \setminus \sigma$.  I.e., $g^{-1} \bigl[ \Ss \cap (\tau \setminus \sigma) \bigr] \subset g^{-1}(\Ss \setminus \sigma) \cap (\tau \setminus \sigma)$ and the claim \eqref{E:ginvrs.tau-sigma} is proved.

If $\tau \subset  \text{Lk} \, \sigma$, then 
$\Ss' \cap (\tau \setminus \sigma) = \Ss \cap (\tau \setminus \sigma)$ and $\Phi' = \Phi$ 
on $\tau \setminus (\sigma \cup \Ss')$, by point (\ref{I:Phi'.and.Phi.agree.on.Lk.sigma}).  
If $\tau \subset \overline{\text{St}} \, \sigma$ but $\tau \nsubseteq \text{Lk} \, \sigma$, 
then $\tau \cap \sigma \ne \varnothing$.  Since $\dim \tau \leq n = \dim \sigma$ and $\tau \ne \sigma$, the simplex $\sigma$ cannot be a face of $\tau$. 
If $y \in \tau \setminus (\sigma \cup (\text{Lk} \, \sigma))$ then by \eqref{E:when.sigma.x.in.Bd.sigma}, 
$\sigma(y) \in \text{Bd} \, \sigma$, so by \eqref{E:when.g(y)=y}, we have $g(y) = y$.  
If $y \in \tau \cap (\text{Lk} \, \sigma)$ then, by \eqref{E:when.g(y)=y} again,
$g(y) = y$.  Therefore, by \eqref{E:ginvrs.tau-sigma}, we have 
$\Ss' \cap (\tau \setminus \sigma) = \Ss \cap (\tau \setminus \sigma)$  and $\Phi' = \Phi$ 
on $\tau \setminus (\sigma \cup \Ss')$.

Finally, suppose $\tau \nsubseteq \overline{\text{St}} \, \sigma$ (i.e., $\tau$ is not a face 
of any $\rho \in P$ with $\sigma \subset \rho$).  Write
	\begin{equation*}
		\Ss' \cap (\tau \setminus \sigma) 
		  = \biggl[ \Ss' \cap \Bigl( \bigl[ \tau \cap ( \overline{\text{St}} \, \sigma) \bigr] 
		           \setminus \sigma \Bigr) \biggr]  
			\cup \Bigl[ (\Ss' \cap \tau) \setminus ( \overline{\text{St}} \, \sigma) \Bigr].
	\end{equation*}
By \eqref{E:when.g(y)=y}, $g$ is the identity map 
on $(\Ss' \cap \tau) \setminus ( \overline{\text{St}} \, \sigma)$. Moreover, by \eqref{E:off.sig.g.maps.simps.into.thmslvs} and \eqref{E:g.maps.sigma.into.itself}, 
	\[
		g : (\overline{\text{St}} \, \sigma) \setminus \bigl[ \mcl{C} \cap (\text{Bd} \, \sigma) \bigr] 
			\to \overline{\text{St}} \, \sigma.
	\]
Hence, by \eqref{E:S'.defn}, we have 
$(\Ss' \cap \tau) \setminus ( \overline{\text{St}} \, \sigma) 
= (\Ss \cap \tau) \setminus ( \overline{\text{St}} \, \sigma)$ and $\Phi' = \Phi$ 
on $\tau \setminus \bigl[ \Ss \cup ( \overline{\text{St}} \, \sigma) \bigr] = \bigl[ \tau \setminus ( \overline{\text{St}} \, \sigma) \bigr] \setminus \bigl[ (\Ss \cap \tau) \setminus ( \overline{\text{St}} \, \sigma) \bigr]$.

Furthermore, by \eqref{E:intersection.of.simps}, $\tau \cap ( \overline{\text{St}} \, \sigma)$ is the union of simplices 
in $\overline{\text{St}} \, \sigma$ that are also faces of $\tau$.  
Since $\tau \nsubseteq \overline{\text{St}} \, \sigma$, none of these simplices equal $\sigma$ (otherwise $\tau$ would have $\sigma$ as a face and so would lie in $\overline{\text{St}} \, \sigma$) and the dimension of each of these simplices is less than $n$.  We have already proved that point (\ref{I:dont.change.sings.on.othr.n.simps}) of the lemma applies to such simplices.  Summing up,
	\begin{equation*}
		\Ss' \cap (\tau \setminus \sigma) 
		  = \biggl[ \Ss \cap \Bigl( \bigl[ \tau \cap ( \overline{\text{St}} \, \sigma) \bigr] 
		           \setminus \sigma \Bigr) \biggr]  
			\cup \Bigl[ (\Ss \cap \tau) \setminus ( \overline{\text{St}} \, \sigma) \Bigr]
			= \Ss \cap (\tau \setminus \sigma)
	\end{equation*} 
and $\Phi' = \Phi$ on $\tau \setminus (\sigma \cup \Ss)$. This proves point (\ref{I:dont.change.sings.on.othr.n.simps}) of the lemma.

To prove point (\ref{I:S'.bggr.S.on.n-1.simps}) first write
	\begin{equation}   \label{E:breakdown.Int.tau.cap.S'}
		(\text{Int} \, \tau) \cap \Ss'  = \bigl[ (\text{Int} \, \tau) \cap \Ss' \cap (\text{Int} \, \sigma) \bigr] 
		  \cup \bigl[ (\text{Int} \, \tau) \cap \Ss' \cap (\text{Bd} \, \sigma) \bigr] 
		    \cup \bigl[ (\text{Int} \, \tau) \cap \Ss' \setminus \sigma \bigr].
	  \end{equation}
Since $\dim \tau < n := \dim \sigma$ we have $\tau \ne \sigma$ so, by (\ref{E:intersection.of.simps}'), 
	\begin{equation}     \label{E:Int.tau.cap.Int.sigma.empty}
		(\text{Int} \, \tau) \cap \Ss' \cap (\text{Int} \, \sigma) = \varnothing
		  = (\text{Int} \, \tau) \cap \Ss \cap (\text{Int} \, \sigma).
	\end{equation}
In particular,
	\begin{equation}  \label{E:Int.tau.cap.sig.=.cap.Bd.sig}
		(\text{Int} \, \tau) \cap \Ss \cap \sigma = (\text{Int} \, \tau) \cap \Ss \cap (\text{Bd} \, \sigma).
	\end{equation}
By \eqref{E:S'.cap.sigma.=.C.cap.Bd.sigma} 
and \eqref{E:C.is.compact.contains.A} and \eqref{E:hbar.maps.A.to.C.cap.Bd.sigma},
	\begin{align}   \label{E:Int.tau.cap.S'.in.C.cap.Bd.sig}
		(\text{Int} \, \tau) \cap \Ss' \cap (\text{Bd} \, \sigma) &= (\text{Int} \, \tau) \cap \bigl[ \mcl{C}_{z} 
		  \cap   (\text{Bd} \, \sigma) \bigr]  \notag \\
		    &= (\text{Int} \, \tau) 
		          \cap \Bigl( \bigl[ \Ss \cap (\text{Bd} \, \sigma) \bigr] 
		             \cup \bigl[ \mcl{C}_{z} \cap (\text{Bd} \, \sigma) \bigr]  \Bigr) \\
		    &= (\text{Int} \, \tau) 
		          \cap \Bigl( \bigl[ \Ss \cap (\text{Bd} \, \sigma) \bigr] 
		             \cup \bar{h}_{z, \sigma}(\Ss \cap \sigma)  \Bigr)  \notag.
	\end{align}
Finally, by point (\ref{I:dont.change.sings.on.othr.n.simps}) of the lemma, we have
	\begin{equation}  \label{E:Int.tau.cap.S'.=.Int.tau.cap.S.off.sig}
		(\text{Int} \, \tau) \cap (\Ss' \setminus \sigma) 
		         = (\text{Int} \, \tau) \cap (\Ss \setminus \sigma).
	\end{equation}
Point (\ref{I:S'.bggr.S.on.n-1.simps}) of the lemma follows from applying \eqref{E:Int.tau.cap.Int.sigma.empty}, \eqref{E:Int.tau.cap.S'.in.C.cap.Bd.sig}, \eqref{E:Int.tau.cap.S'.=.Int.tau.cap.S.off.sig}, and \eqref{E:Int.tau.cap.sig.=.cap.Bd.sig} to \eqref{E:breakdown.Int.tau.cap.S'}.

To prove point (\ref{I:dont.increase.sing.dim.in.big.simps}) first note that 
if $\rho \in P$ but $\rho \nsubseteq \overline{\text{St}} \, \sigma$ (i.e., $\rho$ is not a face of any simplex in $P$ having $\sigma$ as a face; in particular $\rho$ is not a face of $\sigma$; $\rho$ cannot be a proper face 
of $\sigma$ anyway because $\dim \rho \geq n$) then 
$\Ss' \cap (\text{Int} \, \rho) = \Ss \cap (\text{Int} \, \rho)$ 
by \eqref{E:rho.not.face.of.sigma.->.g.invrs.Int.rho.=.Int.rho}
and \eqref{E:general.formula.for.g}.
Suppose $\rho \in P$ and $\rho \subset \overline{\text{St}} \, \sigma$. 
If $\rho \subset \text{Lk} \, \sigma$ then by point (\ref{I:Phi'.and.Phi.agree.on.Lk.sigma}) of the lemma, we have 
$\Ss' \cap (\text{Int} \, \tau) = \Ss \cap (\text{Int} \, \tau)$.  
So suppose $\rho \subset \overline{\text{St}} \, \sigma$ but $\rho \nsubseteq \text{Lk} \, \sigma$.  Thus, $\rho \cap \sigma \ne \varnothing$. If $\sigma(\text{Int} \, \rho) \subset \text{Bd} \, \sigma$ then by \eqref{E:rho.not.face.of.sigma.->.g.invrs.Int.rho.=.Int.rho} again and  \eqref{E:when.g(y)=y}, we have $\Ss' \cap (\text{Int} \, \rho) = \Ss \cap (\text{Int} \, \rho)$. 
Suppose $\sigma(\text{Int} \, \rho) \cap (\text{Int} \, \sigma) \ne \varnothing$.  Then by \eqref{E:when.sigma.x.in.Bd.sigma}, $\sigma$ is a face of $\rho$.  
If $\rho = \sigma$ then $\Ss' \cap (\text{Int} \, \rho) = \varnothing$ by \eqref{E:S'.cap.sigma.=.C.cap.Bd.sigma} and point (\ref{I:dont.increase.sing.dim.in.big.simps}) of the lemma holds trivially. So suppose $\sigma$ is a proper face of $\rho$.  
Then $(\text{Int} \, \rho) \cap \sigma = \varnothing$. It follows 
from \eqref{E:rho.not.face.of.sigma.->.g.invrs.Int.rho.=.Int.rho}
	\[
			\Ss' \cap (\text{Int} \, \rho) 
			        = g^{-1}( \Ss \cap (\text{Int} \, \rho)).
	\] 
By point (\ref{I:g.has.locally.Lip.invrs}) of the lemma, $g^{-1}$ is locally Lipschitz on $\text{Int} \, \rho$. Point (\ref{I:dont.increase.sing.dim.in.big.simps}) then follows from lemma \ref{L:loc.Lip.image.of.null.set.is.null} and \eqref{E:A.empty.iff.H0.A.=.0} in appendix \ref{S:Lip.Haus.meas.dim}.  

Next, we prove point (\ref{I:Int.rho.cut.S'.then.cuts.S.or.rho.is.sigma.face}). 
Suppose $s \geq 0$, $\tau \in P$, and $\Hm^{s}(\Ss' \cap (\text{Int} \, \tau)) > 0$. 
We show that either $\Hm^{s}(\Ss \cap (\text{Int} \, \tau)) > 0$ or $\tau$ is a proper face of $\sigma$ and $\Hm^{s}(\Ss \cap (\text{Int} \, \sigma)) > 0$.    
If $\dim \tau \geq n$ the claim is immediate from point (\ref{I:dont.increase.sing.dim.in.big.simps}) of the lemma.  So suppose $\tau \in P$ with $\dim \tau < n$ and suppose $\Hm^{s}(\Ss' \cap (\text{Int} \, \tau)) > 0$.  First, suppose $\tau$ is not a face of $\sigma$.  Then, 
by \eqref{E:Int.rho.cuts.sigma.then.rho.in.sigma} in appendix \ref{S:basics.of.simp.comps}, 
we have $\text{Int} \, \tau \cap \sigma = \varnothing$.  Hence, $\text{Int} \, \tau \subset \tau \setminus \sigma$.  Thus, by point (\ref{I:dont.change.sings.on.othr.n.simps}) of the lemma,
	\begin{equation}    \label{E:S'.cap.Int.tau.=.S.cap.Int.tau} 
		\Ss' \cap (\text{Int} \, \tau) = \bigl[ \Ss' \cap (\tau \setminus \sigma) \bigr] 
		                              \cap (\text{Int} \, \tau) 
				= \bigl[ \Ss \cap (\tau \setminus \sigma) \bigr] \cap (\text{Int} \, \tau) 
					= \Ss \cap (\text{Int} \, \tau).  
	\end{equation}
Hence, $\Hm^{s}(\Ss \cap (\text{Int} \, \tau)) > 0$. 

Suppose $\tau$ is a face of $\sigma$. Then it is a proper face, since $\dim \tau < n$.   I.e., $\tau \subset \text{Bd} \, \sigma$. 
By point (\ref{I:S'.cap.sigma.=.C.cap.Bd}) of lemma \ref{L:big.lemma}, \eqref{E:hbar.maps.A.to.C.cap.Bd.sigma} and \eqref{E:hbar.is.idendt.on.Bd.sigma},  we have
	\begin{align*}
		\Hm^{s} \bigl( \Ss' \cap (\text{Int} \, \tau) \bigr) 
		  &\leq \Hm^{s} \bigl[ \Ss \cap (\text{Int} \, \tau) \bigr]  
			+ \Hm^{s} \Bigl( \bar{h} \bigl[ \Ss \cap (\text{Int} \, \sigma) \bigr] 
			      \cap (\text{Int} \, \tau)  \Bigr) \\
	        &\leq \Hm^{s} \bigl[ \Ss \cap (\text{Int} \, \tau) \bigr]  
			+ \Hm^{s} \Bigl( \bar{h} \bigl[ \Ss \cap (\text{Int} \, \sigma) \bigr]  \Bigr). 
	\end{align*}
Thus, if $\Hm^{s} \bigl[ \Ss \cap (\text{Int} \, \tau) \bigr]  = 0$, we must have 
$\Hm^{s} \Bigl( \bar{h} \bigl[ \Ss \cap (\text{Int} \, \sigma) \bigr]  \Bigr) > 0$.  Therefore, by lemma \ref{L:props.of.s} part (\ref{I:hbar.locally.Lip.b.Lip}), and lemma \ref{L:loc.Lip.image.of.null.set.is.null} we have 
$\Hm^{s} \bigl[ \Ss \cap (\text{Int} \, \sigma) \bigr]   > 0$.
This completes the proof of point (\ref{I:Int.rho.cut.S'.then.cuts.S.or.rho.is.sigma.face}).

Next, point (\ref{I:dim.S'.leq.dim.S}).  Write $\Ss' = (\Ss' \cap \sigma) \cup (\Ss' \setminus \sigma)$ and $\Ss = (\Ss \cap \sigma) \cup (\Ss \setminus \sigma)$.  By \eqref{E:dim.S'.cap.sigma.le.dim.S.cap.sigma}, $\dim (\Ss' \cap \sigma) \leq \dim (\Ss \cap \sigma)$.  So, by \eqref{E:dim.of.whole.=.max.dim.of.parts} in appendix \ref{S:Lip.Haus.meas.dim}, it suffices to show $\dim (\Ss' \setminus \sigma) \leq \dim (\Ss \setminus \sigma)$.  Note that, by \eqref{E:Int.rho.cuts.sigma.then.rho.in.sigma} and \eqref{E:x.in.exctly.1.simplex.intrr}, 
	\[
		\Ss' \setminus \sigma 
			= \bigcup_{\tau \in P, \;\tau \nsubseteq \sigma} \Ss' \cap (\text{Int} \, \tau).
	\]
A similar formula holds for $\Ss$.  
Thus, by \eqref{E:dim.of.whole.=.max.dim.of.parts} again, it suffices to show that 
	\begin{equation}  \label{E:dim.S'.cap.Int.rho.leq.dim.S.cap.Int.rho}
		\dim \bigl[ \Ss' \cap (\text{Int} \, \tau) \bigr] \leq \dim \bigl[ \Ss \cap (\text{Int} \, \tau) \bigr]
	\end{equation}
for every $\tau \in P$ s.t.\ $\tau \nsubseteq \sigma$.  Let $\tau \in P$ and suppose $
\tau \nsubseteq \sigma$.  
If $\dim \tau \geq n = \dim \sigma$ then \eqref{E:dim.S'.cap.Int.rho.leq.dim.S.cap.Int.rho} holds by point (\ref{I:dont.increase.sing.dim.in.big.simps}) of the lemma.  Suppose that $\dim \tau < n = \dim \sigma$.  Then $\tau$ is not a face of $\sigma$ so \eqref{E:S'.cap.Int.tau.=.S.cap.Int.tau} implies that 
$\dim \bigl[ \Ss' \cap (\text{Int} \, \tau) \bigr] = \dim \bigl[ \Ss \cap (\text{Int} \, \tau) \bigr]$. This shows that $\dim \Ss' \leq \dim \Ss$.  The same argument shows 
$\dim \bigl( \Ss' \cap |Q| \bigr) \leq \dim \bigl( \Ss \cap |Q| \bigr)$.

We prove point (\ref{I:Phi'.rho.subset.Phi.rho}) of the lemma.  First, note that 
by \eqref{E:x.in.exctly.1.simplex.intrr} it suffices to show
	\begin{equation}  \label{E:Phi'.Int.rho.subset.Phi.Int.rho}
		\text{For every } \rho \in P \text{ we have } 
		  \Phi' \bigl( (\text{Int} \, \rho) \setminus \Ss' \bigr) 
		    \subset \Phi \bigl( (\text{Int} \, \rho) \setminus \Ss \bigr).
	\end{equation}
Let $\rho \in P$.  First, suppose $\rho \nsubseteq \sigma$ and let 
$y \in ( \text{Int} \, \rho ) \setminus \Ss'$.  Then $y \notin \sigma$ by \eqref{E:Int.rho.cuts.sigma.then.rho.in.sigma}.  Hence, by \eqref{E:g.maps.sigma.into.itself} and \eqref{E:S'.defn} $g(y) \notin \Ss$.  Moreover, by point (\ref{I:g.has.locally.Lip.invrs}), $g(y) \in \text{Int} \, \rho$.  Hence,
	\[
		\Phi'(y) = \Phi \bigl[ g(y) \bigr] \in \Phi \bigl[ (\text{Int} \, \rho) \setminus \Ss \bigr].
	\]
Now suppose $\rho \subset \sigma$ and let $y \in (\text{Int} \, \rho) \setminus \Ss'$.  First, suppose $\rho$ is a proper face of $\sigma$. Then by point (\ref{I:S'.cap.sigma.=.C.cap.Bd}) of the lemma, $y \in (\text{Int} \, \rho) \setminus \Ss$ and, by \eqref{E:sigma.ident.on.sigma}, \eqref{E:when.g(y)=y}, and \eqref{E:Phi'.defn}, $\Phi'(y) = \Phi(y)$.  \eqref{E:Phi'.Int.rho.subset.Phi.Int.rho} follows in this case. 
Suppose $\rho = \sigma$.  Then, by point (\ref{I:S'.cap.sigma.=.C.cap.Bd}) of the lemma, we have $(\text{Int} \, \rho) \cap \Ss' = \varnothing$. If $y \in \mcl{C}$, then by \eqref{E:general.formula.for.g} and lemma \ref{L:props.of.s} (part \ref{I:s.y0.=z.on.C.less.Bd}), $g(y) = z \notin \mcl{C}$.  \emph{A fortiori}, by \eqref{E:C.is.compact.contains.A}, $g(y) \in (\text{Int} \, \rho) \setminus \Ss$. Suppose $y \in \sigma \setminus \mcl{C}$.  Then by \eqref{E:general.formula.for.g} and lemma \ref{L:props.of.s} (part \ref{I:s(y,t).in.C.iff.y.in.C}), $g(y) \in \sigma \setminus \mcl{C}$ and so, again, by \eqref{E:C.is.compact.contains.A}, $g(y)  \in \sigma \setminus \Ss$.  This concludes the proof of point (\ref{I:Phi'.rho.subset.Phi.rho}) of the lemma.

This concludes the proof of lemma \ref{L:big.lemma}.
\end{proof}

\begin{proof}[Proof of lemma \ref{L:props.of.s}]
\emph{Proof of part (\ref{I:s.y0.=z.on.C.less.Bd})}  Suppose 
$y \in \mcl{C} \setminus (\text{Bd} \, \sigma)$.  By \eqref{E:y.in.C.iff.hbar.y.in.C}, $\bar{h}(y) \in \mcl{C}$ so $\bar{\Delta}(y) = 0$ and therefore, $f(y,0)$ defined by \eqref{E:f.defn} is 1.   

\emph{Proof of part (\ref{I:z.is.fixed.pt.of.s})}  Easy consequence of \eqref{E:when.bx.=1.or.0} and \eqref{E:Delta.bar.z.=diam.sig}.

\emph{Proof of part (\ref{I:s.in.Bd.iff.y.in.Bd})}  Suppose $s(y,t) \in \text{Bd} \, \sigma$.  Then by definition of $s(y,t)$, either $y \in \text{Bd} \, \sigma$ or $b(y) < 1$ and $f(y,t) = 0$.  But by definitions of $f$ and $k$, $b(y) < 1$ and $f(y,t) = 0$ are mutually contradictory.  Thus, $y \in \text{Bd} \, \sigma$.

\emph{Proof of part (\ref{I:hbar.s.=.hbar.y})}  Use property (\ref{I:z.is.fixed.pt.of.s}) .

\emph{Proof of part (\ref{I:s(y,t).in.C.iff.y.in.C})} Suppose $s(y,t) \in \mcl{C}$.  If $s(y,t) \in \text{Bd} \, \sigma$ then, by property (\ref{I:s.in.Bd.iff.y.in.Bd}), $y = s(y,t) \in \mcl{C}$.  What if $s(y,t) \notin \text{Bd} \, \sigma$?  Then $y \notin \text{Bd} \, \sigma$, by property (\ref{I:s.in.Bd.iff.y.in.Bd}) again, so $b(y) \in [0,1)$. Suppose $y \notin \mcl{C}$. By \eqref{E:C.is.compact.contains.A}, $y \ne z$ so by \eqref{E:y.in.C.iff.hbar.y.in.C} $\bar{h}(y) \notin \mcl{C}$.  Therefore, since $\mcl{C}$ is compact (\eqref{E:C.is.compact.contains.A} again), $\bar{\Delta}(y) > 0$.  Hence, 
      \[
	      k \bigl[ b(y), \bar{\Delta}(y), t \bigr] <1. 
      \]
I.e., $s(y,t) \ne z$.  Hence, by property (\ref{I:hbar.s.=.hbar.y}),  $\bar{h} \bigl[ s(y,t) \bigr] = \bar{h}(y) \notin \mcl{C}$.  But by \eqref{E:y.in.C.iff.hbar.y.in.C}, this means $s(y,t) \notin \mcl{C}$, contradiction.  It follows that $y \in \mcl{C}$.

Next, suppose $y \in \mcl{C}$.  If $y = z$, then by property (\ref{I:z.is.fixed.pt.of.s}), we have $s(y,t) = z \in \mcl{C}$.  Assume $s(y,t) \ne z$.  Then by property (\ref{I:hbar.s.=.hbar.y}),  $y \ne z$ and $\bar{h} \bigl[ s(y,t) \bigr] = \bar{h}(y) \in \mcl{C}$ so by \eqref{E:y.in.C.iff.hbar.y.in.C} again $s(y,t) \in \mcl{C}$.

\emph{Proof of part (\ref{I:y-z.h2-h1.bound})} \eqref{E:|yi-z||h2-h1|.Lip.condtn} is only interesting when
	\begin{equation}  \label{E:trivial.condns.for.Lip}
		\bar{h}(y_{2}) \ne \bar{h}(y_{1}) \text{ and either } y_{1} \ne z \text{ or } y_{2} \ne z 
		         \text{ or both}.
	\end{equation}
Assume \eqref{E:trivial.condns.for.Lip}.  Suppose $(y_{1} - z) \cdot (y_{2} - z) \leq 0$.  
	\[
		| y_{2} - y_{1} | = \sqrt{|y_{2} - z|^{2} - 2 (y_{1} - z) \cdot (y_{2} - z) + |y_{1} - z|^{2}}
			\geq |y_{i} - z |, \quad i = 1,2.
	\]
Thus, 
	\begin{equation} \label{E:|y2-z||h2-h1|.obtuse} 
		|y_{i} - z| | \bar{h}(y_{2}) - \bar{h}(y_{1}) |  \leq diam(\sigma) | y_{2} - y_{1} |,
			\text{ if } (y_{1} - z) \cdot (y_{2} - z) \leq 0.
	\end{equation}
I.e., \eqref{E:|yi-z||h2-h1|.Lip.condtn} holds if $(y_{1} - z) \cdot (y_{2} - z) \leq 0$.

So suppose 
	\begin{equation}  \label{E:acute.y1.z.y2.angle}
		(y_{1} - z) \cdot (y_{2} - z) > 0.
	\end{equation}
In particular, $y_{1} \ne z$ and $y_{2} \ne z$. Write $h_{i} := \bar{h}(y_{i})$ and note that, 
since $y_{i} \neq z$, we have $b(y_{i}) > 0$, by \eqref{E:when.bx.=1.or.0} ($i = 1,2$).  Therefore, by \eqref{E:b.hbar.defn} and \eqref{E:acute.y1.z.y2.angle} 
	\begin{equation}  \label{E:h1-z.dot.h2-z.poz}
		(h_{1} - z) \cdot (h_{2} - z) > 0
	\end{equation}
and all the points in \eqref{E:|yi-z||h2-h1|.Lip.condtn} lie on the triangle $h_{1}zh_{2}$.    

WLOG 
	\begin{equation}  \label{E:h1-z.lngr.thn.h2-z}
		|h_{1} - z| \geq |h_{2} - z|.
	\end{equation}
\emph{Claim:}  Under \eqref{E:acute.y1.z.y2.angle} the angle, $\omega$, between $z - h_{1}$ and $h_{2} - h_{1}$ is bounded away from 0 by a value depending only on $\sigma$ and 
$z \in \text{Int} \, \sigma$.  It suffices to treat the case $0 \leq \omega < \pi/2$.  Let $w$ lie on the line (unique because of \eqref{E:trivial.condns.for.Lip}) passing through $h_{1}$ and $h_{2}$.  Thus, for some $s \in \RR$, we have $w-h_{1} = s(h_{2} - h_{1})$.  Since $0 \leq \omega < \pi/2$, we have 
$(h_{2} - h_{1}) \cdot (h_{1} -z) \neq 0$.  Therefore, we may pick $s \in \RR$ s.t.\ 
$w - z \perp h_{1} - z$ . Hence, 
	\begin{multline*}
		0 - |h_{1} - z|^{2} = (h_{1} - z) \cdot \bigl[ (w - z) - (h_{1} - z) \bigr]  \\
			= (h_{1} - z) \cdot s\bigl[ (h_{2} - z) - (h_{1} - z) \bigr] 
			= s (h_{1} - z) \cdot (h_{2} - z) - s |h_{1} - z|^{2}.
	\end{multline*}
Thus,
	\[
		(s - 1)|h_{1} - z|^{2} = s (h_{1} - z) \cdot (h_{2} - z).
	\]
But $(h_{1} - z) \cdot (h_{2} - z) > 0$, by \eqref{E:h1-z.dot.h2-z.poz}.   Therefore, $s < 0$ or $s > 1$.  
If $s < 0$ then by Schwarz's inequality 
(Stoll and Wong \cite[Theorem 3.1, p.\ 79]{rrSetW68.LinearAlgebra}) and \eqref{E:h1-z.lngr.thn.h2-z}
	\[
		(s - 1)|h_{1} - z|^{2} = s (h_{1} - z) \cdot (h_{2} - z) \geq s |h_{1} - z || h_{2} - z | 
			\geq s | h_{1} - z |^{2}.
	\]
I.e., $- |h_{1} - z|^{2} \geq 0$.  This is impossible since $h_{1} \in \text{Bd} \, \sigma$ while $z \in \text{Int} \, \sigma$.  Therefore, $s > 1$.  Now,
	\begin{equation}  \label{E:h2.is.convx.combo.w.h1}
		s^{-1} w + (1 - s^{-1}) h_{1} = h_{2}.
	\end{equation}

Let $\Pi \subset \RR^{N}$ be the smallest (affine) plane containing $\sigma$.  Thus, $\dim \Pi = n$ and $h_{1}, h_{2} \in \Pi$.  Since $w = (1-s) h_{1} + s h_{2}$, we also have $w \in \Pi$.  
Since $z \in \text{Int} \, \sigma$, $r = r(z) := dist(z, \text{Bd} \, \sigma) > 0$. The open ball $B$, of radius $r$ centered at $z$ satisfies $B \cap \Pi \subset \text{Int} \, \sigma$.  
Suppose $\tan \omega < \epsilon = \epsilon(z) := r(z)/diam(\sigma) > 0$.  (So $\epsilon(z)$ is continuous in $z$.) Now, $|h_{1} - z| \leq diam(\sigma)$ and 
	\[
		\frac{r}{diam(\sigma)} > \tan \omega = \frac{|w - z|}{|h_{1} - z|} 
		   \geq  \frac{|w - z|}{diam(\sigma)}.
	\]
Thus, we must have $w \in B \cap \Pi \subset  \text{Int} \, \sigma$.  (We also observe that 
$0 < \tan \omega < + \infty$ so $0 < \omega < \pi/2$.) Hence, from \eqref{E:h2.is.convx.combo.w.h1}, we see that $h_{2} \in \text{Bd} \, \sigma$ lies on the open line segment joining $w \in  \text{Int} \, \sigma$ and $h_{1} \in \sigma$.  It easily follows that $h_{2} \in \text{Int} \, \sigma$, a contradiction.  Therefore, $\omega \geq \omega_{0} = \omega_{0}(z) := \arctan \epsilon(z)$.  This proves the claim that $\omega$ is bounded away from 0.
Note $\sin \omega \geq \sin \omega_{0} = \epsilon/\sqrt{1 + \epsilon^{2}}$.

Let $\alpha$ be the angle between $h_{1} - z$ and $h_{2} - z$.  By \eqref{E:h1-z.dot.h2-z.poz}, 
$0 \leq \alpha < \pi/2$.  
Let $K''_{1} = K''_{1}(z) := \frac{diam(\sigma) \sqrt{1 + \epsilon(z)^{2}}}{\epsilon(z)}$.  So $K''_{1}(z)$ is continuous in $z$. By the Law of Sines,
	\begin{equation}  \label{E:appl.of.Law.of.Sines}
		\frac{|h_{2} - h_{1}|}{\sin \alpha} = \frac{|h_{2} - z|}{\sin \omega} \leq K''_{1}.
	\end{equation}
Now, by \eqref{E:acute.y1.z.y2.angle}, $y_{1} \neq z$ and $y_{2} \neq z$. Thus, $\alpha$ is the angle between $y_{1} - z$ and $y_{2} - z$.  Therefore, by \eqref{E:appl.of.Law.of.Sines},
	\begin{align}  \label{E:h2-h1.radical.bound}
		|h_{2} - h_{1}| &\leq K''_{1} \sqrt{1 - \cos^{2} \alpha}  \notag \\
		       &= K''_{1} \sqrt{(1 + \cos \alpha) (1 - \cos \alpha)}  \notag \\
		  &\leq \sqrt{2} K''_{1} \sqrt{1 - \cos \alpha} \notag \\
		  &= \sqrt{2} K''_{1} \sqrt{1 - \frac{(y_{1} - z) \cdot (y_{2} - z)}{|y_{1} - z||y_{2} - z|}  } \\
		  &= \sqrt{2} K''_{1} 
		        \sqrt{1 + \frac{|y_{1} - y_{2}|^{2} - |y_{1} - z|^{2} - |y_{2} - z|^{2}}{2|y_{1} - z||y_{2} - z|}  } 
		            \notag \\
		  &= \sqrt{2} K''_{1} 
		        \sqrt{1 + \frac{|y_{1} - y_{2}|^{2}}{2|y_{1} - z||y_{2} - z|} -
		             \frac{|y_{1} - z|}{2|y_{2} - z|} - \frac{|y_{2} - z|}{2|y_{1} - z|} }. \notag 
	\end{align}
Now, for $a, b > 0$ we have $\tfrac{a}{2b} + \tfrac{b}{2a} \geq 1$.  Applying this general inequality to \eqref{E:h2-h1.radical.bound} we get
	\begin{equation*}
		|h_{2} - h_{1}| \leq \sqrt{2} K''_{1}
		        \frac{1}{\sqrt{2|y_{1} - z||y_{2} - z|}} |y_{1} - y_{2}|.
	\end{equation*}

Multiplying the extreme members of the preceding  by $\min_{i=1,2} |y_{i} - z|$ we get
	\begin{multline}  \label{E:min.|yi-z|.leq.K''.|y1-y2|}
		\Bigl( \min_{i=1,2} |y_{i} - z| \Bigr) \bigl| \bar{h}(y_{2}) - \bar{h}(y_{1}) \bigr| 
		  = |h_{2} - h_{1}|\Bigl( \min_{i=1,2} |y_{i} - z| \Bigr)  \\
		  \leq \sqrt{2} K''_{1}
		        \frac{\min_{i=1,2} |y_{i} - z|}{\sqrt{2|y_{1} - z||y_{2} - z|}} |y_{1} - y_{2}| 
		            \leq K''_{1} |y_{1} - y_{2}|.
	\end{multline}
(By definition of $K''_{1}$, we have $K''_{1} \geq diam(\sigma)$.  Therefore, by \eqref{E:|y2-z||h2-h1|.obtuse} , the same $K''_{1}$ works if $(y_{1} - z) \cdot (y_{2} - z) \leq 0$.) Let $K'' = K''(z) = diam(\sigma) + K''_{1}(z)$.  Then $K''(z)$ depends only on $\sigma$ and $z$ and is continuous in $z$.

Suppose $|y_{1} - z| \leq |y_{2} - z|$.  Then, by \eqref{E:min.|yi-z|.leq.K''.|y1-y2|}, 
\eqref{E:|yi-z||h2-h1|.Lip.condtn} holds with $i=1$.  But
	\begin{align*}
		|y_{2} - z|  | \bar{h}(y_{2}) - \bar{h}(y_{1}) |  
		 &\leq |y_{2} - y_{1}|  | \bar{h}(y_{2}) - \bar{h}(y_{1}) | 
		      + |y_{1} - z|  | \bar{h}(y_{2}) - \bar{h}(y_{1}) |   \\
		 &\leq \bigl( diam(\sigma) + K''_{1} \bigr) |y_{1} - y_{2}|  \\
		 &= K'' |y_{2} - y_{1}|.
	\end{align*}
Similarly if $|y_{2} - z| \leq |y_{1} - z|$.

\emph{Proof of part (\ref{I:hbar.locally.Lip.b.Lip})} 
As previously observed, it follows from lemma \ref{L:bary.coords.are.Lip} that $b(y)$ is Lipschitz in $y \in \sigma$. We give another proof here. 
Let $y_{1}, y_{2} \in \sigma$. WLOG $|y_{2} - z| \leq |y_{1} - z|$.  By \eqref{E:b.hbar.defn}
	\begin{equation}  \label{E:y2.-.y1.in.terms.of.bs.hs}
		y_{2} - y_{1} = \bigl[ b(y_{2}) - b(y_{1}) \bigr]  \bigl[ \bar{h}(y_{1}) - z \bigr] 
			+ b(y_{2}) \bigl[ \bar{h}(y_{2}) - \bar{h}(y_{1}) \bigr].
	\end{equation}
Now, $\bar{h}(y_{2}) \in \text{Bd} \, \sigma$ (even if $y_{2} = z$) and $dist (z, \text{Bd} \, \sigma) > 0$. Moreover, by \eqref{E:b.hbar.defn},   
	\begin{equation}   \label{E:b2.is.ratio}
		b(y_{2}) = |y_{2}-z|/ \bigl| \bar{h}(y_{2}) - z \bigr|,
	\end{equation}
even if $y_{2} = z$.  Thus, by \eqref{E:y2.-.y1.in.terms.of.bs.hs} and part (\ref{I:y-z.h2-h1.bound}), 
	\begin{align*}
		 dist(z, \text{Bd} \, \sigma) \bigl| b(y_{2}) - b(y_{1}) \bigr|  
			 &\leq \bigl| b(y_{2}) - b(y_{1}) \bigr|  \bigl| \bar{h}(y_{1}) - z \bigr| \\
			 &\leq |y_{2} - y_{1}| + b(y_{2}) \bigl| \bar{h}(y_{2}) - \bar{h}(y_{1}) \bigr| \\
			 &= |y_{2} - y_{1}| + \frac{|y_{2}-z|}{|\bar{h}(y_{2}) - z|} \bigl| \bar{h}(y_{2}) - \bar{h}(y_{1}) \bigr|  \\
			 &\leq |y_{2} - y_{1}| 
			     + dist(z, \text{Bd} \, \sigma)^{-1} |y_{2} - z| \bigl| \bar{h}(y_{2}) - \bar{h}(y_{1}) \bigr| \\
			 &\leq |y_{2} - y_{1}| 
			     + dist(z, \text{Bd} \, \sigma)^{-1} K'' |y_{2} - y_{1}|.
	\end{align*}    
Thus, \eqref{E:b.is.Lip.on.sigma} holds.

If $y \in \sigma \setminus \{ z \}$, we have, by \eqref{E:when.bx.=1.or.0}, that $b(y) > 0$ and, by \eqref{E:b.hbar.defn},
	\[
		\bar{h}(y) = b(y)^{-1} (y - z) + z.
	\]
It follows from \eqref{E:b.is.Lip.on.sigma}  and 
\eqref{E:comp.of.Lips.is.Lip} that $\bar{h} : \sigma \setminus \{ z \} \to \text{Bd} \, \sigma$ is  locally Lipschitz on $\sigma \setminus \{ z \}$.  Similarly, since $b$ is bounded away from 0 on $\mcl{A}$ (because $\mcl{A}$ is compact and disjoint from $z$; \eqref{E:A.intrscts.not.contains.sigma.int},  \eqref{E:z.notin.A}, and \eqref{E:when.bx.=1.or.0}),  $\bar{h}$ is actually Lipschitz on $\mcl{A}$. 

\emph{Proof of part (\ref{I:f=k(b,delta,t).loc.Lip})}  
We wish to show that $f$ is locally Lipschitz on $B_{z}$.  
Now, by property (\ref{I:hbar.locally.Lip.b.Lip}), $\bar{h}(y)$ is locally Lipschitz in $y \in \sigma \setminus \{ z \}$ and . Therefore, $\bar{\Delta}(y)$ is locally Lipschitz in $y \in \sigma \setminus \{ z \}$.  Moreover, $(y,t) \in B_{z}$ implies $\bigl( b(y), \bar{\Delta}(y), t \bigr) \in T$ defined in \eqref{E:defn.of.T}.  Hence, since $b(y)$ is Lipschitz in $\sigma$ it follows from \eqref{E:comp.of.Lips.is.Lip} and lemma \ref{L:k.theory} that $f$ is locally Lipschitz on $\bigl\{ (y,t) \in B_{z} : y \ne z \bigr\}$.  Therefore, to prove part (\ref{I:f=k(b,delta,t).loc.Lip}), it suffices to show that $f$ is locally Lipschitz 
in $(\text{Int} \, \sigma) \times [0,1]$.  

Let $y_{i} \in \text{Int} \, \sigma$, $t_{i} \in [0,1]$ and write $b_{i} = b(y_{i})$, $\Delta_{i} = \bar{\Delta}(y_{i})$, and $\xi_{i} = \bigl( b_{i}, \Delta_{i}, t_{i} \bigr)$ ($i=1,2$).  Since $y_{1}, y_{2} \in \text{Int} \, \sigma$ we have, by \eqref{E:when.bx.=1.or.0}, $b_{1}, b_{2} < 1$.  In fact, we may assume that for some $\epsilon > 0$, we have $b_{1}, b_{2} < 1-\epsilon$.

Now, $\xi_{1}, \xi_{2} \in T$ defined \eqref{E:defn.of.T} and, lemma \ref{L:k.theory} tells us, $T$ is convex.  Therefore, by the multivariate Mean Value Theorem (Apostol \cite[6--17, p.\ 117]{tmA57.Apostol}) and \eqref{E:gradient.of.k} there exists $\xi = (b, \Delta, t)$ in the line segment joining $\xi_{1}$ and $\xi_{2}$ s.t.\
	\begin{align*}
		\bigl| f(y_{2}, t_{2}) - f(y_{1}, t_{1}) \bigr| &= \bigl| (\xi_{2} - \xi_{1}) \cdot \nabla k(\xi) \bigr| \\
		 &= \left| \frac{(b_{2} - b_{1}) (\Delta + t)}{(1-b)^{2}} + b \frac{\Delta_{2} - \Delta_{1}}{1-b} 
		   + b \frac{t_{2} - t_{1}}{1-b}\right| \exp \left\{ - \frac{b}{1-b} (\Delta + t) \right\}.
	\end{align*}
Now, $b_{1}, b_{2} < 1-\epsilon$ implies $b < 1 - \epsilon$.  Therefore, from \eqref{E:b.is.Lip.on.sigma}, we know that there exists $K < \infty$, depending only on $\epsilon$ and $\sigma$, s.t.\ 
	\[
		\left| \frac{(b_{2} - b_{1}) (\Delta + t)}{(1-b)^{2}} \right| \leq K \bigl| (y_{2}, t_{2}) - (y_{1}, t_{1}) \bigr| 
			\text{ and } \left| b \frac{t_{2} - t_{1}}{1-b}\right| 
			          \leq K \bigl| (y_{2}, t_{2}) - (y_{1}, t_{1}) \bigr| .
	\]

So it suffices to show that, at the possible cost of increasing $K$, we have
	\begin{equation}   \label{E:Delta.frac.leq.K.y.diff}
		\left| b \frac{\Delta_{2} - \Delta_{1}}{1-b} \right|  
		  \leq K \bigl| (y_{2}, t_{2}) - (y_{1}, t_{1}) \bigr| .
	\end{equation}
Let $h_{i} = \bar{h}(y_{i})$.  By compactness of $\mcl{C}$ (\eqref{E:C.is.compact.contains.A}), there exists $w_{i} \in \mcl{C}$ s.t.\ $\Delta_{i} = |h_{i} - w_{i}|$ ($i=1,2$).  WLOG $\Delta_{2} \geq \Delta_{1}$.  Thus, 
	\begin{align*}
		| \Delta_{2} - \Delta_{1} | &= |h_{2} - w_{2}| - |h_{1} - w_{1}| \\
		   &\leq |h_{2} - w_{1}| - |h_{1} - w_{1}| \\
		   &\leq |h_{2} - w_{1} - h_{1} + w_{1}| \\
		   &= | h_{2} - h_{1} |.
	\end{align*}
Thus, as in \eqref{E:b2.is.ratio},  
	\begin{align*}
		b | \Delta_{2} - \Delta_{1} | &\leq b | h_{2} - h_{1} | \\
		  &\leq (b_{1} + b_{2}) | h_{2} - h_{1} | \\
		  &= \left( \frac{ | y_{1} - z| }{ |h_{1} - z | } +  \frac{ | y_{2} - z| }{ |h_{2} - z | } \right) 
		           | h_{2} - h_{1} | \\
		  &\leq \frac{1}{ dist( z, \text{Bd} \, \sigma) } \bigl( | y_{1} - z|  +  | y_{2} - z| \bigr) 
		           | h_{2} - h_{1} |.
	\end{align*}
Now apply \eqref{E:|yi-z||h2-h1|.Lip.condtn}.

\emph{Proof of part (\ref{I:s.locly.Lip.off.C.cap.Bd})} 
By part (\ref{I:f=k(b,delta,t).loc.Lip}) of the lemma, $f$ is locally Lipschitz on $B_{z}$.  Moreover, 
by part (\ref{I:hbar.locally.Lip.b.Lip}) of the lemma, $\bar{h}$, and hence $\bar{\Delta}$, is locally Lipschitz on $\sigma \setminus \{z\}$.  Therefore, by \eqref{E:comp.of.Lips.is.Lip}, $s(y,t)$ is locally Lipschitz on $B_{z} \setminus \bigl( \{z\} \times [0,1] \bigr)$.  

It remains to show that $s$ is locally Lipschitz in $U \times [0,1]$ for some neighborhood, $U$, of $z$.  
By part (\ref{I:f=k(b,delta,t).loc.Lip}) of the lemma again and compactness, there exists a neighborhood, $U \subset \text{Int} \, \sigma$, of $z$, s.t.\ $f$ is actually Lipschitz on $U \times [0,1]$, with Lipschitz constant $M < \infty$, say.  
We wish to show that there exists $K < \infty$ s.t.\ 
	\begin{equation}  \label{E:s.Lip.condtn.near.z}
		\bigl| s(y_{2}, t_{2}) - s(y_{1}, t_{1}) \bigr|  \leq K \bigl| (y_{2}, t_{2}) - (y_{1}, t_{1}) \bigr|,
		\quad \text{for }y_{i} \in U \text{ and }t_{i} \in [0,1] \; (i = 1,2).
	\end{equation}
Since $U \subset \text{Int} \, \sigma$, by \eqref{E:when.bx.=1.or.0}, $b(y_{i}) < 1$ and we have $(y_{i}, t_{i}) \in B_{z}$ ($i=1,2$).  Therefore, by part (\ref{I:f=k(b,delta,t).loc.Lip}) of the lemma, 
	\begin{align}  \label{E:bound.s.diff}
		\bigl| s(y_{2}, t_{2}) - s(y_{1}, t_{1}) \bigr| 
		&\leq \Bigl| \bigl(1 - f(y_{2}, t_{2}) \bigr) \bar{h}(y_{2}) 
			- \bigl(1 - f(y_{1}, t_{1}) \bigr) \bar{h}(y_{1}) \Bigr|  \notag \\
		& \qquad \qquad   + \bigl| f(y_{2}, t_{2}) 
		             - f(y_{1}, t_{1}) \bigr| |z|   \\
		&\leq \Bigl| \bigl(1 - f(y_{2}, t_{2}) \bigr) \bar{h}(y_{2}) 
			- \bigl(1 - f(y_{1}, t_{1}) \bigr) \bar{h}(y_{1}) \Bigr|  \notag \\
		& \qquad \qquad   + M |z| \bigl| (y_{2}, t_{2}) - (y_{1}, t_{1}) \bigr|  \notag \\
		&\leq \Bigl| \bigl(1 - f(y_{2}, t_{2}) \bigr) \bar{h}(y_{2}) 
			- \bigl(1 - f(y_{1}, t_{1}) \bigr) \bar{h}(y_{1}) \Bigr|  \notag \\
		& \qquad \qquad   + M D \bigl| (y_{2}, t_{2}) - (y_{1}, t_{1}) \bigr|,  \notag 
	\end{align}
where $D < \infty$ is an upper bound on the norm of the points in $\sigma$.  Now
	\begin{multline} \label{E:bound.(1-k)h.diff}
		\Bigl| \bigl(1 - f(y_{2}, t_{2}) \bigr) \bar{h}(y_{2}) 
				- \bigl(1 - f(y_{1}, t_{1}) \bigr) \bar{h}(y_{1}) \Bigr| \\
			\begin{aligned}
			    {} 
			       &\leq \bigl|1 - f(y_{2}, t_{2}) \bigr|  
				   | \bar{h}(y_{2}) - \bar{h}(y_{1}) |   \\
				 & \qquad \qquad  + \bigl| f(y_{1}, t_{1}) 
				       - f(y_{2}, t_{2}) \bigr| | \bar{h}(y_{1}) |  \\
			       &\leq \bigl|1 - f(y_{2}, t_{2}) \bigr|  
					| \bar{h}(y_{2}) - \bar{h}(y_{1}) | 
					           + M \bigl| (y_{2}, t_{2}) - (y_{1}, t_{1}) \bigr| |\bar{h}(y_{1}) | \\
			       &\leq \bigl|1 - f(y_{2}, t_{2}) \bigr|  
					| \bar{h}(y_{2}) - \bar{h}(y_{1}) | 
							+ M D \bigl| (y_{2}, t_{2}) - (y_{1}, t_{1}) \bigr|.
			\end{aligned}
	\end{multline}
Thus, to prove \eqref{E:s.Lip.condtn.near.z} it suffices to find $K' < \infty$ s.t.\
	\begin{equation}  \label{E:(1-k)|h2-h1|.Lip.condtn}
		    \bigl| \bar{h}(y_{2}) - \bar{h}(y_{1}) \bigr| 
		       \bigl|1 - f(y_{2}, t_{2})\bigr|  
		          \leq K' |y_{2} - y_{1}|.
	\end{equation}
Now $b$ is bounded away from 1 on $U$ so by lemma \ref{L:k.theory}, 
$\tfrac{\partial}{\partial \beta} k \bigl( \beta, \bar{\Delta}(y_{2}), t_{2} \bigr)$ is bounded above 
for $\beta = b(y_{2})$, $y_{2} \in U$. Therefore, by \eqref{E:f.defn}, the mean value theorem, and \eqref{E:b.hbar.defn}, 
	\begin{align}
	\bigl|1 - f(y_{2}, t_{2})\bigr|  &=
		\bigl|k \bigl[ 0, \bar{\Delta}(y_{2}), t_{2} \bigr]  
		    - k \bigl[ b(y_{2}), \bar{\Delta}(y_{2}), t_{2} \bigr] \bigr|  \\
		&\leq M' b(y_{2}) \leq M' \frac{|y_{2} - z|}{dist(z, \text{Bd} \, \sigma)},  \notag
	\end{align}
for some $M' < \infty$ valid throughout $U$. Substituting this into \eqref{E:(1-k)|h2-h1|.Lip.condtn}, \eqref{E:s.Lip.condtn.near.z} follows from part (\ref{I:y-z.h2-h1.bound}) of the lemma. 
       \end{proof}

\begin{proof}[Proof of lemma \ref{L:s.hat.has.Lip.invrs}]
Let $k$ be the function defined in \eqref{E:defn.of.k} 
So 
	\begin{equation}   \label{E:k0.=.1}
		k(0,\delta,t) = 1, \text{ for every } \delta, t \in \RR.
	\end{equation}  
From lemma \ref{L:k.theory}, we have
	\begin{equation}  \label{E:k.beta.to.1}
		\text{$k(\beta, \delta, t) \to 0$ as $\beta \uparrow 1$ 
		    for any $\delta \geq 0$ and $t > 0$.}
	\end{equation} 
    
From \eqref{E:gradient.of.k}, we observe that  
   \begin{equation*}  
      \frac{ \partial }{ \partial \beta } k(\beta, \delta, t) < 0,  \quad
        \beta \in [0,1), \; \delta \geq 0, \; t > 0.
   \end{equation*}
Hence, 
	\begin{multline}  \label{E:k.is strictly.decreasing.in.beta}
		\text{For any $t > 0$ and } \delta \geq 0,   \\
			\text{ the function } \beta \mapsto k(\beta, \delta, t) 
			\text{ is strictly decreasing in } \beta \in [0,1).
	\end{multline} 
   
Combining \eqref{E:k.is strictly.decreasing.in.beta} with \eqref{E:k0.=.1} and \eqref{E:k.beta.to.1} we see that 
	\begin{equation}  \label{E:k.maps.[0,1).onto.(0,1]}
		\text{The map  } \beta \mapsto k(\beta, \delta, t) \text{ maps $[0,1)$ onto $(0,1]$  for any }
			\delta \geq 0 \text{ and } t > 0.
	\end{equation}
 
Notice that, by definition of $k$ (\eqref {E:defn.of.k}),       
	\begin{equation}  \label{E:k.bdd.by.pwr.of.1-beta}
	      k(\beta, \delta, t) \leq \exp \left\{ - \frac{\beta \epsilon}{1 - \beta} \right\},
	      \quad \text{if } \beta \in [0,1), \; 
	        \delta \geq 0, 
	         \text{ and } 0 < \epsilon \leq t.  
	\end{equation}
Hence, since $- \log k(\beta, \delta, t) > 0$, 
	\begin{equation}  \label{E:beta.log.k.bnd}
	      \beta \leq \frac{- \log k(\beta, \delta, t)}{\epsilon - \log k(\beta, \delta, t)}, 
	      \quad \text{if } \beta \in [0,1), \; 
	        \delta \geq 0, 
	         \text{ and } 0 < \epsilon \leq t.
	\end{equation}
Note that the right hand side (RHS) of the preceding is strictly less than 1. 
From \eqref{E:k.1-beta.bnd}, we see      
	\begin{equation*} 
	      k(\beta, \delta, t) \leq (1 - \beta)^{\epsilon}, 
		    \quad \text{if } \beta \in [0,1), \; 
		        \delta \geq 0, \text{ and } 0 < \epsilon \leq t.
	\end{equation*}
Thus,
	\begin{equation}  \label{E:k.1-beta.eps.bnd}
		\epsilon^{-1} \Bigl[ 1 - k(\beta, \delta, t) \Bigr] 
		  \geq \epsilon^{-1} \bigl[ 1 - (1 - \beta)^{\epsilon} \bigr],  
		    \quad \text{if } \beta \in [0,1), \; 
		        \delta \geq 0, \text{ and } 0 < \epsilon \leq t.
	\end{equation}
For $0 \leq \beta < 1$ and $0 < \epsilon \leq 1$, let 
$j(\beta) = 1 - (1 - \beta)^{\epsilon} - \epsilon \beta$.  Then $j(0) = 0$ and 
$j'(\beta) = (1 - \beta)^{\epsilon-1} - \epsilon \geq 0$ for $0 \leq \beta < 1$ and $0 < \epsilon \leq 1$.  Hence, $j(\beta) \geq 0$ for $0 \leq \beta < 1$ and $0 < \epsilon \leq 1$.  
Thus, from \eqref{E:k.1-beta.eps.bnd}
	\begin{equation}  \label{E:k.beta.eps.bnd}
		\epsilon^{-1} \Bigl[ 1 - k(\beta, \delta, t) \Bigr] \geq \beta , 
		    \quad \text{if } \beta \in [0,1), \; 
		        \delta \geq 0, \text{ and } 0 < \epsilon \leq t \leq 1.
	\end{equation}

\emph{Claim:}   
	\begin{equation}  \label{E:s.maps.Int.sigma.x.(0,1].onto.Int.sigma}
		\text{For any } t \in (0,1] \text{ we have }
			s \bigl[ (\text{Int} \, \sigma) \times \{ t \} \bigr] = \text{Int} \, \sigma.
	\end{equation}
By \eqref{E:s.maps.sigma.into.sigma}, we have 
$s \bigl[ (\text{Int} \, \sigma) \times \{ t \} \bigr] \subset \sigma$.  
It follows from  lemma \ref{L:props.of.s} (part \ref{I:s.in.Bd.iff.y.in.Bd}) that
$s \bigl[ (\text{Int} \, \sigma) \times \{ t \} \bigr] \subset \text{Int} \, \sigma$. 

To prove the reverse inclusion, note that by lemma \ref{L:props.of.s}(part \ref{I:z.is.fixed.pt.of.s}), for every $t \in (0,1]$, $z \in s \bigl[ (\text{Int} \, \sigma) \times \{ t \} \bigr]$.  
Let $w \in (\text{Int} \, \sigma) \setminus \{ z \}$ and $t \in (0,1]$. Then, by \eqref{E:when.bx.=1.or.0}, $0 < b(w) < 1$ and by \eqref{E:b.hbar.defn}
	\[
		w = \bar{h}(w) + \bigl( 1 - b(w) \bigr) \bigl[ z - \bar{h}(w) \bigr].
	\]
By \eqref{E:k.maps.[0,1).onto.(0,1]}, we may pick $\beta \in [0, 1)$ s.t.\ 
		$k \bigl( \beta, \bar{\Delta}(w), t \bigr) = 1 - b(w)$.
Since $1 - b(w) \in (0,1)$, by \eqref{E:k0.=.1} and \eqref{E:defn.of.k}, $1 > \beta > 0$.  Let 
		$y = \bar{h}(w) + (1 - \beta) \bigl[ z - \bar{h}(w) \bigr] \in \text{Int} \, \sigma$.
Then $\bar{h}(y) = \bar{h}(w)$ (so $\bar{\Delta}(y) = \bar{\Delta}(w)$), $b(y) = \beta$, and by \eqref{E:s.in.terms.of.k},
	\[
		s(y,t) = \bar{h}(w) + \bigl( 1 - b(w) \bigr) \bigl[ z - \bar{h}(w) \bigr] = w.
	\]
This proves the claim \eqref{E:s.maps.Int.sigma.x.(0,1].onto.Int.sigma}.

Let $y \in \text{Int} \, \sigma$ and $t \in (0,1]$.  If $s(y,t) = z$ then lemma \ref{L:props.of.s}(part \ref{I:z.is.fixed.pt.of.s}) implies $y=z$ (since $t > 0$).  
Suppose $s(y,t) \ne z$.  Then, by lemma \ref{L:props.of.s}(part \ref{I:hbar.s.=.hbar.y}), 
$\bar{h}(y) = \bar{h} \bigl[ s(y,t) \bigr]$ can be determined by $s(y,t)$.  Hence, $\bar{\Delta}(y)$ and, 
by \eqref{E:s.in.terms.of.k}, $k \bigl( b(y), \bar{\Delta}(y), t \bigr)$ can be determined by $s(y,t)$.  Therefore, by \eqref{E:k.is strictly.decreasing.in.beta}, 
$b(y)$ and, hence, $y$ can be determined from $\hat{s}(y,t)$.  To sum up:
   \[
      \hat{s} \text{ is invertible on } (\text{Int} \, \sigma) \times (0,1].
   \]

By lemma \ref{L:props.of.s} 
(part \ref{I:hbar.locally.Lip.b.Lip}) and \eqref{E:when.bx.=1.or.0}, if $B_{max} \in (0,1)$, then
	\[
		W(B_{max}) := \bigl\{ y \in \sigma : b(y) < B_{max} \bigr\}
	\]
is an open neighborhood of $z$ whose closure lies in $\text{Int} \, \sigma$.  To prove that $\hat{s}$ has a locally Lipschitz inverse on $(\text{Int} \, \sigma) \times (0,1]$, it suffices to show that its inverse is \emph{Lipschitz} on $W(B_{max}) \times (\epsilon, 1]$ for arbitrary $B_{max} \in (0,1)$ and $\epsilon \in (0,1)$.  

Let $B_{max} \in (0,1)$ and $\epsilon \in (0,1)$, let $y, y' \in \text{Int} \, \sigma$, let $t, t' \in (\epsilon, 1]$ and suppose $s(y,t), s(y',t') \in W := W(B_{max})$.  Write  
	\begin{multline*}
		x = \bar{h} \bigl( s(y,t) \bigr), \, B = b \bigl( s(y,t) \bigr) 
		       = 1 - k\bigl( b(y), \bar{\Delta}(y), t \bigr), \;
		    x' = \bar{h} \bigl( s(y',t') \bigr),  \\
		\text{ and } B' = b \bigl( s(y',t') \bigr) = 1 - k\bigl( b(y'), \bar{\Delta}(y'), t' \bigr).  
	\end{multline*}
So $s(y,t) = B x + (1-B) z$ and $s(y',t') = B' x' + (1 - B') z$.  Since $s(y,t) \in W(B_{max})$, we have $B < B_{max}$.  Hence,  
	\[
		k \bigl( b(y), \bar{\Delta}(y), t \bigr) > 1 - B_{max}.
	\]
Similarly for $B'$.  Therefore, by \eqref{E:beta.log.k.bnd} and \eqref{E:k.beta.eps.bnd}, there exists 
$\beta_{max} \in (0,1)$, depending only on $B_{max}$ and $\epsilon$, s.t.\
	\begin{equation} \label{E:by.B.beta.max.eps.bnd}
		b(y) < \min \Bigl\{ \beta_{max}, \; \epsilon^{-1} B \Bigr\}.
	\end{equation}
Similarly for $b(y')$.

First, assume $s(y,t) = z$.  Then, by \eqref{E:when.bx.=1.or.0}, we have $b \bigl( s(y,t) \bigr) = 0$.  Moreover, since $t > \epsilon > 0$, by lemma \ref{L:props.of.s} (part \ref{I:z.is.fixed.pt.of.s}), $y = z$.  Hence, WLOG $y' \ne y = z$. Then, by \eqref{E:b.hbar.defn}, \eqref{E:by.B.beta.max.eps.bnd}, and \eqref{E:b.is.Lip.on.sigma} there exists $K_{3} < \infty$ s.t.\
	\begin{align*}
		|y-y'| &= |y'-z| \\
		  &= b(y') \bigl| \bar{h}(y') - z \bigr| \\
		  &\leq \epsilon^{-1} B' \; diam(\sigma) \\
		      &= \epsilon^{-1} \; diam(\sigma) \Bigl| b \bigl( s(y',t') \bigr) - 0 \Bigr| \\
		  &= \epsilon^{-1} \; diam(\sigma) \Bigl| b \bigl( s(y',t') \bigr) - b \bigl( s(y,t) \bigr) \Bigr| \\
		  &\leq K_{3} \epsilon^{-1} \; diam(\sigma) \bigl| s(y',t') - s(y,t) \bigr|.
	\end{align*}
This proves the lemma in the case where either $s(y,t) = z$ or $s(y',t') = z$.

Now assume $s(y,t), s(y', t') \in W \setminus \{ z \}$.  Then, by lemma \ref{L:props.of.s} (part \ref{I:hbar.s.=.hbar.y}), 
	\begin{equation}  \label{E:x.=.h}
	x = \bar{h} \bigl( s(y,t) \bigr) = \bar{h}(y) \text{ and } x' = \bar{h} \bigl( s(y',t') \bigr) = \bar{h}(y').
	\end{equation}
\emph{Claim:}  There exists $K_{1} < \infty$ depending only on $\sigma$, $z$, $B_{max}$, and $\epsilon$ s.t.\ 
	\begin{equation}   \label{E:b.x.diff.s.bnd}
		b(y) |x' - x| \leq K_{1} \bigl| s(y',t') - s(y,t) \bigr|.
	\end{equation}
By \eqref{E:by.B.beta.max.eps.bnd}, \eqref{E:when.bx.=1.or.0},  and \eqref{E:b.is.Lip.on.sigma}, there exists $K_{2} < \infty$ and by lemma \ref{L:props.of.s} (part \ref{I:y-z.h2-h1.bound}), there exists $K'' < \infty$ ($K_{2}$ and $K''$ depending only on $\sigma$ and $z$) s.t.\ 
	\begin{align*}
		 b(y) |x' - x| &\leq \epsilon^{-1} B | x ' - x |  \\
		 &= \epsilon^{-1} \Bigl| b \bigl( s(y,t) \bigr) - b(z) \Bigr| 
		       \Bigl| \bar{h} \bigl( s(y',t') \bigr) - \bar{h} \bigl( s(y,t) \bigr) \Bigr| \\
		 &\leq \epsilon^{-1} K_{2} \bigl| s(y,t) - z \bigr| 
		       \Bigl| \bar{h} \bigl( s(y',t') \bigr) - \bar{h} \bigl( s(y,t) \bigr) \Bigr| \\
		 &\leq \epsilon^{-1} K_{2} K'' \bigl| s(y',t') - s(y,t) \bigr|.
	\end{align*}
This proves the claim \eqref{E:b.x.diff.s.bnd}.

Next, we \emph{claim} that there exists $K_{4} < \infty$ depending only on $\sigma$, $z$, $B_{max}$, and $\epsilon$ s.t.\
	\begin{equation}  \label{E:b.s.t.diff.bound}
		\bigl| b(y') - b(y) \bigr| \leq K_{4} \Bigl( \bigl| s(y',t') - s(y,t) \bigr| + |t' - t| \Bigr).
	\end{equation}
WLOG $b(y) \geq b(y')$.  Thus, since $t, t' \in (\epsilon, 1]$, if $( \tilde{\beta}, \tilde{\delta}, \tilde{t} )$ is a point on the line joining $\bigl( b(y'), \bar{\Delta}(y'), t' \bigr)$ and $\bigl( b(y), \bar{\Delta}(y), t \bigr)$ we have $\tilde{\beta} \leq b(y)$ and $\tilde{t} > \epsilon$. By lemma \ref{L:k.theory}, the multivariate Mean Value Theorem (Apostol \cite[6--17, p.\ 117]{tmA57.Apostol}), and \eqref{E:by.B.beta.max.eps.bnd}, we may choose 
$( \tilde{\beta}, \tilde{\delta}, \tilde{t} )$  s.t.\
	\begin{multline*}
		\Bigl| k \bigl( b(y'), \bar{\Delta}(y'), t' \bigr) - k \bigl( b(y), \bar{\Delta}(y), t \bigr) \Bigr| \\
			\begin{aligned}
			  {} &= \left| \frac{\tilde{\delta} + \tilde{t}}{(1-\tilde{\beta})^{2}} \bigl( b(y') - b(y) \bigr) 
			     + \frac{\tilde{\beta}}{1-\tilde{\beta}} \bigl( \bar{\Delta}(y') - \bar{\Delta}(y) \bigr)   
			     + \frac{\tilde{\beta}}{1-\tilde{\beta}} (t' - t) \right| \\
			   &\geq \frac{\tilde{\delta} + \tilde{t}}{(1-\tilde{\beta})^{2}} \bigl| b(y') - b(y) \bigr|  
			     - \frac{\tilde{\beta}}{1-\tilde{\beta}} \bigl| \bar{\Delta}(y') - \bar{\Delta}(y) \bigr|  
			     - \frac{\tilde{\beta}}{1-\tilde{\beta}} |t' - t|   \\
			   &\geq \epsilon \bigl| b(y') - b(y) \bigr|  
			     - \frac{b(y)}{1-\beta_{max}} \bigl| \bar{\Delta}(y') - \bar{\Delta}(y) \bigr|  
			     - \frac{\beta_{max}}{1-\beta_{max}} |t' - t|.
			\end{aligned}
	\end{multline*}
Thus, 
	\begin{multline}  \label{E:b.diff.bnd}
		\bigl| b(y') - b(y) \bigr| \leq 
		\epsilon^{-1} \Bigl| k \bigl( b(y'), \bar{\Delta}(y'), t' \bigr) 
		        - k \bigl( b(y), \bar{\Delta}(y), t \bigr) \Bigr| \\
		+ \frac{b(y)}{\epsilon (1-\beta_{max})} \bigl| \bar{\Delta}(y') - \bar{\Delta}(y) \bigr|  
		+ \frac{\beta_{max}}{\epsilon (1-\beta_{max})} |t' - t|.
	\end{multline}
Now, by \eqref{E:b.is.Lip.on.sigma} there exists $K'' < \infty$ s.t.,
	\begin{multline} \label{E:k.diff.s.diff.bnd}
		\Bigl| k \bigl( b(y'), \bar{\Delta}(y'), t' \bigr) 
			        - k \bigl( b(y), \bar{\Delta}(y), t \bigr) \Bigr| = |B - B'|   \\
			        = \Bigl| b \bigl( s(y',t') \bigr) - b \bigl( s(y,t) \bigr) \Bigr| 
			                 \leq K''  \bigl| s(y',t') - s(y,t) \bigr|.
	\end{multline}
In addition, by definition of $\bar{\Delta}$ (\eqref{E:Delta.bar.defn}), example \ref{E:dist.is.Lip}, \eqref{E:x.=.h}, and \eqref{E:b.x.diff.s.bnd}, we have
	\[
		b(y) \bigl| \bar{\Delta}(y') - \bar{\Delta}(y) \bigr| 
		   \leq b(y) |x - x'| \leq K_{1} \bigl| s(y',t') - s(y,t) \bigr|.
	\]
Substituting this and \eqref{E:k.diff.s.diff.bnd} into \eqref{E:b.diff.bnd} yields the claimed \eqref{E:b.s.t.diff.bound} 

Finally, we show that there exists $K_{5} < \infty$ depending only on $\sigma$, $z$, $B_{max}$, and $\epsilon$ s.t.\ 
	\begin{equation}  \label{E:y.diff.by.s.diff.bnd}
		|y' - y| \leq K_{5} \bigl| \hat{s}(y',t') - \hat{s}(y,t) \bigr|.
	\end{equation}
To see this, argue as follows using \eqref{E:b.hbar.defn} and \eqref{E:x.=.h}
	\begin{align*}
		|y' - y| &= \Bigl| \bigl( b(y') - b(y) \bigr) \bigl( \bar{h}(y') - z \bigr) 
		   + b(y) \bigl( \bar{h}(y') - \bar{h}(y) \bigr)  \Bigr| \\
		   &\leq  \bigl| b(y') - b(y) \bigr| diam(\sigma) + b(y) \bigl| \bar{h}(y') - \bar{h}(y) \bigr| \\
		   &= diam(\sigma) \bigl| b(y') - b(y) \bigr| + b(y) \bigl| x' - x \bigr|. \\
	\end{align*}
\eqref{E:y.diff.by.s.diff.bnd} now follows from \eqref{E:b.s.t.diff.bound} and \eqref{E:b.x.diff.s.bnd}.  Since, trivially, $|t - t'| \leq \bigl| \hat{s}(y',t') - \hat{s}(y,t) \bigr|$ lemma \ref{L:s.hat.has.Lip.invrs} follows.
\end{proof}

  \begin{proof}[Proof of lemma \ref{L:g.is.loc.Lip}]
Let $x \in|P|  \setminus \bigl[ \mcl{C} \cap (\text{Bd} \, \sigma) \bigr]$ and let $\tau \in P$ satisfy 
$x \in \text{Int} \, \tau$.    
(See \eqref{E:x.in.exctly.1.simplex.intrr}.) We re-use an idea that we used in the proof of corollary \ref{C:drop.finiteness.of.P}. Write $\overline{\text{St}}_{x} := \overline{\text{St}} \, \tau$. Since $\overline{\text{St}}_{x}$ is starlike w.r.t.\ $x$ (appendix \ref{S:basics.of.simp.comps}), we have
	\begin{equation*}
		\bar{V}_{t,x} := t (\overline{\text{St}}_{x} - x) + x \subset \overline{\text{St}}_{x},
		         \quad t \in (0,1). 
	\end{equation*}
Here, the vector operations are performed point-wise.  Let
	\begin{equation*}
		V_{t,x} := t (\text{St} \, \tau - x) + x,
		      \quad t \in (0,1). 
	\end{equation*}
Note that $V_{t,x}$ only intersects simplices in $P$ that have $\tau$ as a face.  
In particular, $\tau_{t,x} \subset V_{t,x}$. If $\rho \in P$ and $\rho \subset \overline{\text{St}}_{x}$, let
	\begin{equation}   \label{E:rho.t.x.defn}
		\rho_{t,x} := t ( \rho -x) + x,
		         \quad t \in (0,1).
	\end{equation}
Then, $\rho_{t,x} \subset \overline{\text{St}}_{x}$ is a simplex, for $t \in (0,1)$
	\[
		\bigl\{ \rho_{t,x} : \rho \in P \text{ and } \, \rho \subset \overline{\text{St}}_{x} \}
	\]
is a simplicial complex, call it $P_{t,x}$, and
	\begin{equation*}
		\bar{V}_{t,x} = \bigcup_{\rho \in P; \, \rho \subset \overline{\text{St}}_{x}} \rho_{t,x}.
	\end{equation*}
Thus, $\bar{V}_{t,x}$ is the underlying space of $P_{t,x}$.

Since $\mcl{C} \cap (\text{Bd} \, \sigma)$ is closed by \eqref{E:C.is.compact.contains.A} and 
$x \notin \mcl{C} \cap (\text{Bd} \, \sigma)$,  we may pick $t_{x} \in (0,1)$ so small that 
	\begin{equation}  \label{E:Vbar.tx.doesnt.intrsct.C.capBd}
		\bar{V}_{t_{x},x} \cap \bigl[ \mcl{C} \cap (\text{Bd} \, \sigma) \bigr] = \varnothing.  
	\end{equation}
By \eqref{E:St.sigma.is.open}, we have that $V_{t_{x},x}$ is an open neighborhood of $x$. Hence, it suffices to prove that $g$ is Lipschitz on $V_{t_{x},x}$ for every 
$x \in |P|  \setminus \bigl[ \mcl{C} \cap (\text{Bd} \, \sigma) \bigr]$.

Let $y_{1}, y_{2} \in V_{t_{x},x}$.  Then, by definition of $\text{St} \, \tau$ 
(appendix \ref{S:basics.of.simp.comps}),
$y_{i} \in  \text{Int} \, \rho_{i; t_{x},x}$ for some $\rho_{i; t_{x},x} \in P_{t_{x},x}$, s.t.\ $\rho_{i} \in P$ has 
$\tau$ as a face ($i=1,2$).  
Thus,  
$\varnothing \ne \tau_{t_{x},x} \subset \rho_{1; t_{x},x} \cap \rho_{2; t_{x},x}$.  Suppose neither $\rho_{1; t_{x},x}$ nor 
$\rho_{2; t_{x},x}$ is a subset of the other. Then, by corollary \ref{C:reverse.triangle.ineq.in.simp.cmplxs}, there exists $K_{x} < \infty$, depending only 
on $\rho_{i; t_{x},x}$ ($i=1,2$), and 
$\tilde{y}_{1}, \tilde{y}_{2} \in \text{Int} \, (\rho_{1; t_{x},x} \cap \rho_{2; t_{x},x})$ s.t.\	
         \begin{equation}  \label{E:break.up.y2.y1.diff}
		|y_{2} - \tilde{y}_{2}| + |\tilde{y}_{2} - \tilde{y}_{1}| + |\tilde{y}_{1} - y_{1}| 
				     \leq K_{x} |y_{2}-y_{1}|.
	\end{equation}
Now,
		\begin{equation*}
			\bigl| g(y_{2}) - g(y_{1}) \bigr| 
			  \leq \bigl| g(y_{2}) - g(\tilde{y}_{2}) \bigr| + \bigl| g(\tilde{y}_{2}) - g(\tilde{y}_{1}) \bigr| 
			       + \bigl| g(\tilde{y}_{1}) - g(y_{1}) \bigr|.   
	  	     \end{equation*}
Therefore, by \eqref{E:break.up.y2.y1.diff}, it suffices to prove that $g$ is Lipschitz on $\rho_{t_{x},x}$ for each $\rho \in P$ having $\tau$ as a face (which means in particular $x \in \rho$). 

First, let $\rho \in P$ have $\tau$ as a face but assume $\rho \cap (\text{St} \, \sigma) = \varnothing$.  Let $y \in \rho_{t_{x},x} \subset \rho \setminus \bigl[ \mcl{C} \cap (\text{Bd} \, \sigma) \bigr]$. If $y \notin \overline{\text{St}} \, \sigma$ or $y \in \text{Lk} \, \sigma$, then, by \eqref{E:when.g(y)=y}, $g(y) = y$. Suppose $y \in (\overline{\text{St}} \, \sigma) \setminus (\text{Lk} \, \sigma)$ and let $\xi$ be the face of $\rho$ s.t.\ $y \in \text{Int} \, \xi$.  Since $\rho \cap (\text{St} \, \sigma) = \varnothing$, we have that $\sigma$ is not a face of $\xi$. Since $y \in \overline{\text{St}} \, \sigma$, by \eqref{E:Int.rho.cuts.sigma.then.rho.in.sigma}, we have $\xi \subset \overline{\text{St}} \, \sigma$.  
Therefore, $\xi \subset \overline{\text{St}} \, \sigma$, $\xi \cap \sigma \neq \varnothing$, but $\sigma$ is not a face of $\xi$.  Hence, by \eqref{E:when.sigma.x.in.Bd.sigma} and \eqref{E:when.g(y)=y} again, $g(y) = y$, since $y \notin \text{Lk} \, \sigma$.  In summary, $\rho \cap (\text{St} \, \sigma) = \varnothing$ implies $g(y) = y$ for $y \in \rho_{t_{x},x}$ so $g$ is Lipschitz on $\rho_{t_{x},x}$.

Next, let $\rho \in P$ have $\tau$ as a face (so $x \in \rho_{t_{x},x} \subset \rho$) but this time assume $\rho \cap (\text{St} \, \sigma) \ne \varnothing$.  It follows from \eqref{E:Int.rho.cuts.sigma.then.rho.in.sigma} and the definition of $\text{St} \, \sigma$ (appendix \ref{S:basics.of.simp.comps}) that $\rho$ has $\sigma$ as a face. Let $\omega \in P$ be the face 
of $\rho$ opposite $\sigma$. Let 
	\[
		U^{\sigma} := U^{\sigma, \rho} := \bigl\{ y \in \rho : \mu(y) < 3/4 \bigr\}.  
	\]
By \eqref{E:mu.w.loc.Lip}, $U^{\sigma}$ is open in $\rho$.

Define
	\begin{multline}  \label{E:ell.defn}
		   		\ell(y) :=   \\
				\begin{cases}
				\bigl[ 1 - \mu(y) \bigr]
				   +  \mu(y) \left[ \Bigl( 1 - b \bigl[ \sigma(y) \bigr] \Bigr) 
				        + b \bigl[ \sigma(y) \bigr] 
				             \frac{\bar{\Delta} \bigl[ \sigma(y) \bigr]}{diam(\sigma)}  \right], &\text{ if } 
				   y \in \rho \setminus \bigl[ (\text{Lk} \, \sigma)\cup \{ z \} \bigr] , \\
				   1, &\text{ if } y \in \rho \cap (\text{Lk} \, \sigma), \\
				   1, &\text{ if } y = z.
				\end{cases}
	\end{multline}
Thus, $\ell \geq 0$.  \emph{Claim:} 
	\begin{equation}  \label{E:el.is.cont.poz.off.C.cap.Bd}
		\ell \text{ is continuous on } \rho   
		   \text{ and } \ell(y) = 0 \text{ if and only if } y \in  \mcl{C} \cap (\text{Bd} \, \sigma).
	\end{equation}
To see this, note that \eqref{E:mu.w.loc.Lip} implies $\mu$ is continuous on $\rho$ and lemma \ref{L:props.of.s}(part \ref{I:hbar.locally.Lip.b.Lip}), \eqref{E:b.is.Lip.on.sigma},  tells us that $b$ is continuous on $\sigma$. Lemma \ref{L:props.of.s}(part \ref{I:hbar.locally.Lip.b.Lip}) also tells use that $\bar{h}(x)$ is locally Lipschitz in $x \in \sigma \setminus \{ z \}$.  Therefore, \eqref{E:Delta.bar.defn}, example \ref{E:dist.is.Lip}, and \eqref{E:C.is.compact.contains.A} imply that $\bar{\Delta}$ is continuous 
on $\sigma \setminus \{ z \}$. But  
$\bar{\Delta}/diam(\sigma) \leq 1$ is bounded (see \eqref{E:Delta.bar.z.=diam.sig}) and \eqref{E:sigma.ident.on.sigma} and\eqref{E:when.bx.=1.or.0} tells us $b \bigl[ \sigma(z) \bigr] = b(z) = 0$.  \eqref{E:sigma.(.).is.loc.Lip.off.Lk} tells us that 
$\sigma(\cdot)$ is continuous 
on $\rho \setminus (\text{Lk} \, \sigma)$.  But $\sigma(\cdot)$ is bounded and \eqref{E:wy.muy.when.y.in.Lk.or.sigma} tells us that 
$\mu(y) = 0$ on $\text{Lk} \, \sigma$ and $\mu(z) = 1$. That establishes the continuity of $\ell$. 

\eqref{E:wy.muy.when.y.in.Lk.or.sigma} tells us that $\mu(y) = 1$ if and only if $y \in \sigma$, \eqref{E:sigma.ident.on.sigma} tells us that $\sigma(y) = y$ if  $y \in \sigma$, \eqref{E:when.bx.=1.or.0} tells us that $b(x) = 1$ if and only if $x \in \text{Bd} \, \sigma$, and \eqref{E:Delta.bar.defn} and \eqref{E:C.is.compact.contains.A} tells us that $x \in \text{Bd} \, \sigma$ and $\bar{\Delta}(x) = 0$ if and only 
if $x \in \mcl{C} \cap (\text{Bd} \, \sigma)$.  Therefore, $\ell(y) = 0$ if and only if $y \in  \mcl{C} \cap (\text{Bd} \, \sigma)$. This proves the claim \eqref{E:el.is.cont.poz.off.C.cap.Bd}.

For $\epsilon \in (0,1)$, let
	\[
		U_{\sigma, \epsilon} := U_{\sigma, \epsilon, \rho} 
		    := \bigl\{ y \in \rho : 
		           \mu(y) > 1/4 \text{ and }
			        \ell(y) > \epsilon \bigr\}.
	\]
By \eqref{E:mu.w.loc.Lip}, \eqref{E:wy.muy.when.y.in.Lk.or.sigma}, and \eqref{E:el.is.cont.poz.off.C.cap.Bd}, $U_{\sigma, \epsilon}$ is open in $\rho$ and 
$U_{\sigma, \epsilon}$ contains $z \in \sigma$.  
In addition, by \eqref{E:wy.muy.when.y.in.Lk.or.sigma}, \eqref{E:mu.w.loc.Lip}, 
and \eqref{E:el.is.cont.poz.off.C.cap.Bd},
	\begin{equation}   \label{E:U.sigma.eps.disjnt.from.Lk.C.cp.Bd}
		\bar{U}_{\sigma, \epsilon} \text{ is compact and disjoint from } \text{Lk} \, \sigma 
			\text{ and } \mcl{C} \cap (\text{Bd} \, \sigma). 
	\end{equation}
Note that, since $x \in \tau \subset \rho$, we have by \eqref{E:Vbar.tx.doesnt.intrsct.C.capBd} and \eqref{E:wy.muy.when.y.in.Lk.or.sigma},
	\[
		\rho_{t_{x},x} \subset \rho \setminus \bigl[ \mcl{C} \cap (\text{Bd} \, \sigma) \bigr] 
		    =  U^{\sigma} \cup \left( \bigcup_{\epsilon > 0} U_{\sigma, \epsilon} \right)
		     \subset (\overline{\text{St}} \, \sigma) 
		            \setminus \bigl[ \mcl{C} \cap (\text{Bd} \, \sigma) \bigr].
	\]
In particular, either $x \in U^{\sigma, \rho}$ or, for some $\epsilon > 0$, 
$x \in U_{\sigma, \epsilon, \rho}$.  Therefore, by compactness of $\overline{\rho_{t_{x},x}}$
and \eqref{E:Vbar.tx.doesnt.intrsct.C.capBd}, for each of the finitely many $\rho \in P$ having $\tau$ as a face and s.t.\  $\sigma \subset \rho$, we have either $\rho_{t_{x},x} \subset U^{\sigma, \rho}$ or, for some $\epsilon > 0$, we have $\rho_{t_{x},x} \subset U_{\sigma, \epsilon, \rho}$.  Thus, it suffices to show that $g$ is Lipschitz on each $U^{\sigma, \rho}$ and on each $U_{\sigma, \epsilon, \rho}$.

Let $\rho \in P$ with $\tau \subset \rho \subset \overline{\text{St}}_{x}$ and $\sigma \subset \rho$ be fixed.  We show that $g$ is Lipschitz in $U^{\sigma}$.  The closure $\bar{U}^{\sigma}$ of $U^{\sigma, \rho}$ is compact and lies, by \eqref{E:wy.muy.when.y.in.Lk.or.sigma} and \eqref{E:mu.w.loc.Lip}, 
in $(\overline{\text{St}} \, \sigma) \setminus \sigma$. In addition, $\sigma \times [1/4, 1]$ is
a compact subset of $B_{z}$ as defined in \eqref{E:Bz.defn}.
This means, by \eqref{E:mu.w.loc.Lip} and lemma \ref{L:props.of.s}(part \ref{I:s.locly.Lip.off.C.cap.Bd}), that 
	\begin{equation}  \label{E:fns.Lip.on.U^sigma}
	    \mu  \text{ and } w \text{ are Lipschitz on } U^{\sigma}.    
		   \text{ Moreover, } 
		     s_{z} \text{ is Lipschitz on } \sigma \times [1/4,1].
	\end{equation}

Let $y_{1}, y_{2} \in  U^{\sigma}$.  Recall that $\omega$ is the face of $\rho$ opposite $\sigma$.  So $\omega \subset U^{\sigma}$ by \eqref{E:wy.muy.when.y.in.Lk.or.sigma}.  To simplify the notation, write $x_{i} = \sigma(y_{i})$, $\mu_{i} = \mu(y_{i})$, $s_{i} = s( x_{i}, 1 - \mu_{i} )$, and $w_{i} = w(y_{i}) \in \omega$ ($i=1,2$).  (See \eqref{E:w.on.sigma,sigma(.).on.Lk}.) Also, $K_{1}, K_{2}, \ldots$ will be finite and only depend on $U^{\sigma}$.  Then
	\begin{equation*}  \label{E:g2-g1.ineq}
		\bigl| g(y_{2}) - g(y_{1}) \bigr| 
		  \leq | \mu_{2} s_{2} - \mu_{1} s_{1}| + \bigl| (1-\mu_{2}) w_{2} - (1-\mu_{1}) w_{1} \bigr| 
		    \leq | \mu_{2} s_{2} - \mu_{1} s_{1}| +  K_{1} | y_{2} - y_{1} |,
	\end{equation*}
by \eqref{E:general.formula.for.g}, \eqref{E:comp.of.Lips.is.Lip}, and \eqref{E:fns.Lip.on.U^sigma}. Let $D := \max \{ |u|, u \in |P| \} < \infty \}$.  Then, also by  
\eqref{E:fns.Lip.on.U^sigma},
	\begin{align*}
		| \mu_{2} s_{2} - \mu_{1} s_{1}| &\leq | \mu_{2} s_{2} - \mu_{2} s_{1}| 
		           + | \mu_{2} s_{1} - \mu_{1} s_{1}| \\
		   &= \mu_{2} | s_{2} - s_{1} | + | \mu_{2} - \mu_{1} | |s_{1}| \\
		   &\leq \mu_{2} K_{2} \bigl| ( x_{2}, 1- \mu_{2} ) -  ( x_{1}, 1- \mu_{1} ) \bigr| +
		        D K_{3} |y_{2} - y_{1}| \\
		   &\leq \mu_{2} K_{2} \bigl( |x_{2} - x_{1}| +  | \mu_{1} - \mu_{2} | \bigr) +
		        D K_{3} |y_{2} - y_{1}| \\ 
		   &\leq \mu_{2} K_{2} |x_{2} - x_{1}| + K_{2} K_{4} |y_{2} - y_{1}| +
		        D K_{3} |y_{2} - y_{1}|.
	\end{align*}

Thus, to prove that $g$ is Lipschitz in $U^{\sigma}$ 
it suffices to find a $K < \infty$ s.t.\
	\begin{equation}   \label{E:mu2.sigma1-sigma2.Lip.bound}
		\mu_{2} |x_{2} - x_{1}| \leq K |y_{2} - y_{1}|.
	\end{equation} 
Note that $\min \{ dist(y, \omega), y \in \sigma \} > 0$.  Then, from \eqref{E:y.in.terms.of.sigma.w}, we see 
	\[
		\mu_{2} = \frac{| y_{2} - w_{2} |}{| x_{2} - w_{2} |} 
			\leq \frac{| y_{2} - w_{2} |}{\min \{ dist(x, \omega), x \in \sigma \} }.
	\]
Thus, to prove \eqref{E:mu2.sigma1-sigma2.Lip.bound}, it suffices to find $K_{5} < \infty$ s.t.\
	\begin{equation}  \label{E:y-w.sigma.bound}
		| y_{2} - w_{2} ||x_{2} - x_{1}| \leq K |y_{2} - y_{1}|.
	\end{equation}

Let 
	\[
		y_{1}' = \mu_{1} x_{1} + (1 - \mu_{1}) w_{2} \in \rho.
	\]
So, by \eqref{E:y.in.terms.of.sigma.w}, $\sigma(y_{1}') = x_{1}$ (even if $y \in \text{Lk} \, \sigma$; see \eqref{E:w.on.sigma,sigma(.).on.Lk}) and $\mu(y_{1}') = \mu_{1}$.  In particular, $y_{1}' \in U^{\sigma}$ because $y_{1} \in U^{\sigma}$. Now, by \eqref{E:fns.Lip.on.U^sigma}
we have that $w_{i} = w(y_{i})$ is Lipschitz in $y_{i} \in U^{\sigma}$ ($i=1,2$).  
Therefore, for some $K_{6} < \infty$,
	\begin{align}  \label{E:y2-y1'}
		| y_{2} - y_{1}' | &\leq | y_{2} - y_{1} | + \bigl| y_{1} - y_{1}' \bigr| \notag \\
		  &= | y_{2} - y_{1} | + (1 - \mu_{1}) | w_{1} - w_{2} | \\
		  &= | y_{2} - y_{1} | +  K_{6} | y_{2} - y_{1} |. \notag 
	\end{align}
Thus, to prove \eqref{E:y-w.sigma.bound} it suffices to show that there exists $K_{7} < \infty$ s.t.\
	\begin{equation}  \label{E:y.prime-w.sigma.bound}
		| y_{2} - w_{2} ||x_{2} - x_{1}| \leq K_{7} |y_{2} - y_{1}'|.
	\end{equation}

But we can ``trick'' lemma \ref{L:props.of.s}(part \ref{I:y-z.h2-h1.bound}) into doing this for us.  The point $w_{2}$ will play the role of $z$ in lemma \ref{L:props.of.s}(part \ref{I:y-z.h2-h1.bound}).  But $\rho$ cannot play the role of $\sigma$ in lemma \ref{L:props.of.s}(part \ref{I:y-z.h2-h1.bound}) because $z \in  \text{Int} \, \sigma$ (by \eqref{E:z.notin.A}) while $w_{2} \in  \text{Bd} \, \rho$.  So we have to replace $\rho$ by a bigger simplex that contains $w_{2}$ in its interior.  This is accomplished by the following lemma.

  \begin{lemma}  \label{L:expand.simplex}
Let $\chi$ be a simplex, let $\xi$ be a face of $\chi$, and let $\zeta$ be the face of $\chi$ opposite 
$\xi$.  Then one can construct from $\chi$ and $\xi$ a simplex $\chi'$ s.t.\ $\xi$ is a face 
of $\chi'$ and $\zeta \subset \text{Int} \, \chi'$.  In particular, $\chi \subset \chi'$.
  \end{lemma}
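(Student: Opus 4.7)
The plan is to construct $\chi'$ explicitly by keeping the vertices of $\xi$ fixed and pushing the vertices of $\zeta$ radially outward from a chosen interior point of $\chi$. Label the vertices of $\chi$ so that $u_0, \ldots, u_k$ span $\xi$ and $u_{k+1}, \ldots, u_n$ span $\zeta$. Fix any $p \in \text{Int} \, \chi$ (for concreteness, the barycenter) and write its barycentric expansion $p = \sum_{i=0}^n p_i u_i$ with each $p_i > 0$; abbreviate $P_1 := \sum_{i \leq k} p_i$ and $P_2 := \sum_{j > k} p_j$, so $P_1 + P_2 = 1$. Pick any $\lambda > 1$, set $u_j' := p + \lambda(u_j - p) = \lambda u_j + (1-\lambda) p$ for $j > k$, and let $\chi'$ be the convex hull of $\{u_0, \ldots, u_k, u_{k+1}', \ldots, u_n'\}$.

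I would then verify three properties. First, that $\chi'$ is a genuine $n$-simplex: writing $u_j' - u_0 = \lambda(u_j - u_0) + (1-\lambda)(p - u_0)$ for $j > k$ and expanding in the basis $\{u_i - u_0 : 1 \leq i \leq n\}$ produces a block upper triangular change-of-basis matrix with diagonal blocks $I_k$ and $\lambda I_{n-k} + (1-\lambda) v \mathbf{1}^T$, where $v = (p_{k+1}, \ldots, p_n)^T$. By the matrix determinant lemma this has determinant $\lambda^{n-k-1}(\lambda P_1 + P_2) > 0$, so the new spanning set is affinely independent. Second, $\xi$ is automatically a face of $\chi'$ since $u_0, \ldots, u_k$ are literal vertices of $\chi'$.

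The third and main step is $\zeta \subset \text{Int} \, \chi'$. Since the interior of a convex set is convex, it suffices to check that each vertex $u_j$ ($j > k$) of $\zeta$ has all its barycentric coordinates in $\chi'$ strictly positive. I would write $u_j = \sum_{i \leq k} \gamma_i u_i + \sum_{j' > k} \delta_{j'} u_{j'}'$, substitute the definition of $u_{j'}'$, and invoke uniqueness of barycentric coordinates in $\chi$ to obtain $\gamma_i = (\lambda-1) D p_i$ for $i \leq k$, $\lambda \delta_{j'} = (\lambda-1) D p_{j'}$ for $j' \in \{k+1, \ldots, n\} \setminus \{j\}$, and $\lambda \delta_j = 1 + (\lambda-1) D p_j$, where $D := \sum_{j' > k} \delta_{j'}$. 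Summing these equations and using $P_1 + P_2 = 1$ yields $D = 1/(\lambda P_1 + P_2) > 0$, whence every $\gamma_i$ and $\delta_{j'}$ is strictly positive, so $u_j \in \text{Int} \, \chi'$. The inclusion $\chi \subset \chi'$ follows immediately, because each vertex of $\chi$ either is a vertex of $\chi'$ (for $i \leq k$) or lies in $\chi'$ (for $j > k$). The main obstacle is simply keeping the linear algebra bookkeeping clean; once the block structure and the rank-one form of the perturbation are identified, both the determinant computation and the barycentric coordinate calculation become routine.
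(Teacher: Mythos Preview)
Your proof is correct and uses essentially the same construction as the paper: keep the vertices of $\xi$ fixed and push the vertices of $\zeta$ radially outward from an interior point of $\chi$. The paper takes the specific choice $p = \hat{\chi}$ (the barycenter) and $\lambda = 2$, so that the new vertices are $v' = 2v - \hat{\chi}$ for $v \in \zeta^{(0)}$, whereas you allow any $p \in \text{Int}\,\chi$ and any $\lambda > 1$.

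The verifications differ in style. For affine independence, the paper argues directly from the definition (showing that $\sum c_v = 0$ and $\sum c_v v' = 0$ force all $c_v = 0$), while you compute the determinant of the change-of-basis matrix via its block-triangular structure and the matrix determinant lemma. For $\zeta \subset \text{Int}\,\chi'$, the paper first shows the barycenter $\hat{\chi}$ lies in $\text{Int}\,\chi'$ and then uses $v = \tfrac{1}{2}v' + \tfrac{1}{2}\hat{\chi}$ to place each $v \in \zeta^{(0)}$ in the interior; you instead solve directly for the barycentric coordinates of each $u_j$ in $\chi'$. Your route is slightly more computational but has the advantage of working uniformly for all $\lambda > 1$ and all interior basepoints $p$, and it makes the positivity of every coordinate completely explicit. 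Both arguments are clean; neither has a real edge in difficulty.
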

  \begin{proof}
Let $\hat{\chi}$ be the barycenter of $\chi$ (\eqref{E:barycenter.of.sigma}) and modify the vertices of $\chi$ as follows.
	\begin{equation}  \label{E:v'.defn}
		v' =
			\begin{cases}
				v, &\text{ if } v \in \xi^{(0)}, \\
				2( v - \hat{\chi}) + \hat{\chi} = 2 v - \hat{\chi}, &\text{ if } v \in \zeta^{(0)}.
			\end{cases}
	\end{equation}
We show that $v' (v \in \chi^{(0)}$) are geometrically independent (appendix \ref{S:basics.of.simp.comps}).  Let $c_{v} \in \RR$ ($v \in \chi^{(0)}$) satisfy
	\begin{equation}  \label{E:cv.constraints}
		\sum_{v \in \chi^{(0)}} c_{v} = 0 \text{ and } \sum_{v \in \chi^{(0)}} c_{v} v' = 0.
	\end{equation}
Let $m$ be the number of vertices in $\chi$ and let $r < m$ be the number of vertices in $\zeta$.  Then, by \eqref{E:v'.defn} and \eqref{E:cv.constraints} 
	\begin{align*}
		0 & = \sum_{\xi^{(0)}} c_{v} v  +  \sum_{\zeta^{(0)}} 2 c_{v} v   
		    - \sum_{v \in \chi^{(0)}} \left( \frac{1}{m} \sum_{u \in \zeta^{(0)}} c_{u} \right) v \\
		    &= \sum_{\xi^{(0)}} \left( c_{v} - \frac{1}{m} \sum_{\zeta^{(0)}} c_{u} \right) v  
		         +  \sum_{\zeta^{(0)}} \left( 2 c_{v} - \frac{1}{m} 
		                \sum_{\zeta^{(0)}} c_{u}  \right) v  \\
		    &= \sum_{\xi^{(0)}} \left( c_{v} + \frac{1}{m} \sum_{\xi^{(0)}} c_{u} \right) v  
		         +  \sum_{\zeta^{(0)}} \left( 2 c_{v} - \frac{1}{m} \sum_{\zeta^{(0)}} c_{u}  \right)
		                      v.
	\end{align*}
Also by \eqref{E:cv.constraints}, when we add up the coefficients in the preceding we get,
	\begin{multline*}
		\sum_{\xi^{(0)}} \left( c_{v} + \frac{1}{m} \sum_{\xi^{(0)}} c_{u} \right)   
		         +  \sum_{\zeta^{(0)}} \left( 2 c_{v} - \frac{1}{m} \sum_{\zeta^{(0)}} c_{u}  \right)  \\
			\begin{aligned}
				{} &= \sum_{\xi^{(0)}} c_{v} + \frac{m-r}{m} \sum_{\xi^{(0)}} c_{v}   
				         +  2 \sum_{\zeta^{(0)}} c_{v} - \frac{r}{m} \sum_{\zeta^{(0)}} c_{v}  \\
				   &= \frac{m-r}{m} \sum_{\xi^{(0)}} c_{v}   
				         +  \left(1 -  \frac{r}{m} \right) \sum_{\zeta^{(0)}} c_{v}  \\
				   &= 0.
			\end{aligned}
	\end{multline*}
Therefore, since the vertices, $v \in \chi^{(0)}$, are geometrically independent, we have, by \eqref{E:cv.constraints},
	\begin{equation}   \label{E:cv.relations}
		c_{v} = -\frac{1}{m}\sum_{\xi^{(0)}} c_{u} 
		          = \frac{1}{m} \sum_{\zeta^{(0)}} c_{u}, \; (v \in \xi^{(0)}) \text{ and }
			c_{v} = \frac{1}{2m} \sum_{\zeta^{(0)}} c_{u}, \; (v \in \zeta^{(0)}).
	\end{equation}
But $\sum_{v \in \chi^{(0)}} c_{v} = 0$, by \eqref{E:cv.constraints}.  Hence, \eqref{E:cv.relations} implies that a non-zero multiple of $\sum_{u \in \zeta^{(0)}} c_{u}$ is 0.  Therefore, by \eqref{E:cv.relations} again, $c_{v} \propto \sum_{u \in \zeta^{(0)}} c_{u} = 0$ for every $v \in \chi^{(0)}$.  This proves the geometric independence of $v'$ ($v \in \chi^{(0)}$).

Let $\chi'$ be the simplex with vertex set $\{ v', v \in \chi^{(0)} \}$.  Then by \eqref{E:v'.defn},
	\begin{equation} \label{E:v.in.trms.of.v'.rho}
		v = \tfrac{1}{2} v' + \tfrac{1}{2} \hat{\chi}, \; \text{ if } v \in \zeta^{(0)}.
	\end{equation}
It easily follows that 
	\[
		\hat{\chi} = \frac{2}{2m-r} \sum_{v \in \xi^{(0)}} v' 
		  + \frac{1}{2m-r} \sum_{v \in \zeta^{(0)}} v'.
	\]
In particular, $\hat{\chi}$ is an interior point of $\chi'$.  Hence, by \eqref{E:v.in.trms.of.v'.rho}, each $v \in \zeta^{(0)}$ is an interior point of $\chi'$.  Hence, $\zeta \subset \text{Int} \, \chi'$.  Since $\hat{\chi} \in \text{Int} \, \chi$, \eqref{E:v'.defn} and \eqref{E:v.in.trms.of.v'.rho} imply that $v \in \chi'$ for every $v \in \chi^{(0)}$.  Finally, every vertex of $\xi$ is a vertex of $\chi'$ by \eqref{E:v'.defn}.  Therefore, $\xi$ is a face of $\chi'$.
  \end{proof}

\emph{Proof of lemma \ref{L:g.is.loc.Lip}, continued:} Apply lemma \ref{L:expand.simplex} with 
$\chi = \rho$ and $\xi = \sigma$, so $\zeta = \omega$. (Recall that $\omega$ is the face 
of $\rho$ opposite $\sigma$.)  Thus, there is a simplex $\rho'$ s.t.\ 
$\omega \subset \text{Int} \, \rho'$, $\rho \subset \rho'$, and $\sigma$ is a face of $\rho'$.   In particular, $w_{2} \in \text{Int} \, \rho'$ and $y_{1}', y_{2} \in \rho'$.  By \eqref{E:b.hbar.defn}, \eqref{E:y.in.terms.of.sigma.w}, 
and lemma \ref{L:props.of.s}(part \ref{I:y-z.h2-h1.bound}),
	\begin{equation*}
		|y_{2} - w_{2}| |x_{2} - x_{1}| 
			= |y_{2} - w_{2}| \bigl| \bar{h}_{w_{2},\rho'}(y_{2}) - \bar{h}_{w_{2},\rho'}(y_{1}') \bigr|
				\leq K''(w_{2}) |y_{2} - y_{1}'|.
	\end{equation*}
Now, $K''(w_{2})$ is continuous in $w_{2}$, by lemma \ref{L:props.of.s}(part \ref{I:y-z.h2-h1.bound}).  Therefore, by \eqref{E:fns.Lip.on.U^sigma}, $K''(w_{2}) = K''\bigl[ w(y_{2}) \bigr]$ is bounded in $y_{2} \in U^{\sigma}$. I.e., \eqref{E:y.prime-w.sigma.bound} holds.

Next, let $\epsilon \in (0,1)$ and consider $U_{\sigma, \epsilon} = U_{\sigma, \epsilon, \rho}$.   
By \eqref{E:U.sigma.eps.disjnt.from.Lk.C.cp.Bd}, \eqref{E:sigma.(.).is.loc.Lip.off.Lk}, and\eqref{E:mu.w.loc.Lip}, and  we have
	\begin{equation}  \label{E:sig.mu.Lip.on.U.sig.eps}
		\sigma(\cdot) \text{ and } \mu \text{ are Lipschitz on } \bar{U}_{\sigma, \epsilon}.
	\end{equation}
Moreover, the map $\sigma(\cdot) \times (1-\mu)$ maps $\bar{U}_{\sigma, \epsilon}$ onto a compact subset of $B_{z}$. (See \eqref{E:Bz.defn}, \eqref{E:sigma.ident.on.sigma}, and \eqref{E:wy.muy.when.y.in.Lk.or.sigma}.)  Hence, 
by lemma \ref{L:props.of.s}(part \ref{I:s.locly.Lip.off.C.cap.Bd}),  
	\begin{equation} \label{E:s.Lip.on.sig.1-mu.image}
		s \text{ is Lipschitz on } \bigl[ \sigma(\cdot) \times (1-\mu) \bigr](\bar{U}_{\sigma, \epsilon}).
	\end{equation}
Let $y_{1}, y_{2} \in U_{\sigma, \epsilon}$. Write $x_{i} = \sigma(y_{i})$, $\mu_{i} = \mu(y_{i})$, $s_{i} = s( x_{i}, 1 - \mu_{i} )$, and $w_{i} = w(y_{i})$ ($i=1,2)$. WLOG $\mu_{2} \geq \mu_{1}$ 

Recall $D := \max \{ |u|, u \in |P| \} < \infty \}$. Then we have, by \eqref{E:general.formula.for.g}, \eqref{E:s.Lip.on.sig.1-mu.image},
and \eqref{E:sig.mu.Lip.on.U.sig.eps}, 
	\begin{align}  \label{E:gy2.gy1.diff.bnd}
		\bigl| g(y_{2}) - g(y_{1})| &\leq |\mu_{2} s_{2} - \mu_{1} s_{1} | 
		   + \bigl| (1-\mu_{2}) w_{2} - (1-\mu_{1}) w_{1}  \bigr|  \notag \\
		   &\leq \bigl| \mu_{2} ( s_{2} - s_{1} ) \bigr| + |\mu_{2} s_{1} - \mu_{1} s_{1} |
		   + (1-\mu_{2}) | w_{2} - w_{1} | + | \mu_{1} - \mu_{2} ||w_{1}|  \notag \\
		   &= \mu_{2} | s_{2} - s_{1} | + |\mu_{2}  - \mu_{1} | |s_{1}|
		   + | \mu_{1} - \mu_{2} ||w_{1}| + (1-\mu_{2}) | w_{2} - w_{1} |  \\
		   &\leq K_{1} \bigl( |x_{2} - x_{1}| + | \mu_{1} - \mu_{2} | \bigr) 
		     + K_{2} D | y_{2} - y_{1} |  \notag \\
		     & \qquad \qquad + K_{2} D | y_{2} - y_{1} | + (1-\mu_{2}) | w_{2} - w_{1} |.  \notag
	\end{align}
By \eqref{E:sig.mu.Lip.on.U.sig.eps} again, 
	\[
		K_{1} \bigl( |x_{2} - x_{1}| + | \mu_{1} - \mu_{2} | \bigr)
			 \leq K_{1} \bigl( K_{3} | y_{2} - y_{1} | + K_{2} | y_{2} - y_{1} | \bigr).
	\]
So, by \eqref{E:gy2.gy1.diff.bnd},  to prove that $g$ is Lipschitz on $\bar{U}_{\sigma, \epsilon}$
it suffices to find $K < \infty$ s.t.\ 
	\begin{equation}  \label{E:1-mu.w2-w1.leq.K.y2-y1}
		(1-\mu_{2}) | w_{2} - w_{1} | \leq K | y_{2} - y_{1} |.
	\end{equation}
By \eqref{E:y.in.terms.of.sigma.w},
	\[
		(1-\mu_{2})(w_{2} - x_{2}) = y_{2} - x_{2}.
	\] 
Recall that $\omega$ is the face of $\rho$ opposite $\sigma$. I.e.
	\begin{equation}  \label{E:mu2.in.terms.of.y2.sig2}
		1-\mu_{2} = \frac{|y_{2} - x_{2}|}{|w_{2} - x_{2}|} 
		   \leq \frac{|y_{2} - x_{2}|}{\min \{ |w - x| : w \in \omega, x \in \sigma \}}.
	 \end{equation}

Let 
	\[
		y_{1}' = \mu_{1} x_{2} + (1-\mu_{1}) w_{1} \in \rho.
	\]
Let $\Delta_{i} = \bar{\Delta}(x_{i})$ ($i=1,2$).  Now, by \eqref{E:y.in.terms.of.sigma.w}, 
$\sigma(y_{1}') = x_{2}$ and $\mu(y_{1}') = \mu_{1} > 1/4$.   
Hence, $\bar{\Delta} \bigl[ \sigma(y_{1}') \bigr] = \Delta_{2}$ and 
$b \bigl[ \sigma(y_{1}') \bigr] = b \bigl[ \sigma(y_{2}) \bigr]$.  

Now, by \eqref{E:wy.muy.when.y.in.Lk.or.sigma}, $y_{2} \in U_{\sigma, \epsilon}$ implies 
$y_{2} \notin \text{Lk} \, \sigma$.  Suppose $y_{2} = z \in \sigma$.  Then, by \eqref{E:sigma.ident.on.sigma}, $\sigma(y_{1}') = \sigma(y_{2}) = z$.  Therefore, by \eqref{E:when.bx.=1.or.0}, $b \bigl[ \sigma(y_{1}') \bigr] = 0$ so, by \eqref{E:ell.defn} 
(whether $y_{1}' = z$, $y_{1}' \in \text{Lk} \, \sigma$, or neither), we have $\ell(y_{1}') = 1$ 
so $y_{1}' \in U_{\sigma, \epsilon}$ if $y_{2} = z$.   

Suppose $y_{2} \ne z$.  Then we have, by definition of $U_{\sigma, \epsilon}$ and the facts that 
$y_{2} \notin \text{Lk} \, \sigma$ and $\mu_{2} \geq \mu_{1}$, 
	\begin{align*}
		\epsilon < \ell(y_{2}) 
		&= (1 - \mu_{2})
				   +  \mu_{2} \left[ \bigl( 1 - b (x_{2}) \Bigr) 
				        + b ( x_{2}) 
				             \frac{\Delta_{2}}{diam(\sigma)}  \right] \\
		&= 1 - \mu_{2} \, b(x_{2}) \left( 1 -  \frac{\Delta_{2}}{diam(\sigma)} \right) \\
		&\leq 1 - \mu_{1} \, b(x_{2}) \left( 1 -  \frac{\Delta_{2}}{diam(\sigma)} \right) \\
		   &= \ell(y_{1}'),
	\end{align*}
since $\Delta_{2}/diam(\sigma) \leq 1$ and, by \eqref{E:wy.muy.when.y.in.Lk.or.sigma}, $\mu_{1} = \mu(y_{1}') = 0$ if $y_{1} \in \text{Lk} \, \sigma$.  (See \eqref{E:Delta.bar.defn} and \eqref{E:Delta.bar.z.=diam.sig}.) Therefore, again $y_{1}' \in U_{\sigma, \epsilon}$.  Now by \eqref{E:y.in.terms.of.sigma.w} and \eqref{E:sig.mu.Lip.on.U.sig.eps},
	\begin{align}
		|y_{2} - y_{1}'| &\leq |y_{2} - y_{1}| + |y_{1} - y_{1}'|  \notag \\
		  &= |y_{2} - y_{1}| +  \mu_{1} | x_{1} - x_{2} | \\
		  & \leq (1 + K_{3}) |y_{2} - y_{1}|. \notag 
	\end{align}
Hence, by \eqref{E:mu2.in.terms.of.y2.sig2}, to prove \eqref{E:1-mu.w2-w1.leq.K.y2-y1} it suffices to find $K_{4} < \infty$ s.t.\ 
	\begin{equation} \label{E:y2-sigma2.w2-w1.leq.K.y2-y1'}
		|y_{2} - x_{2}| | w_{2} - w_{1} | \leq K_{4} | y_{2} - y_{1}' |.
	\end{equation}

To prove \eqref{E:y2-sigma2.w2-w1.leq.K.y2-y1'}, apply lemma \ref{L:expand.simplex} 
with $\chi = \rho$ and $\xi = \omega$, so $\zeta = \sigma$ to enlarge $\rho$ to a simplex $\rho'$ (not in $P$) s.t.\ $\sigma \subset \text{Int} \, \rho'$ while $\omega$ remains a face of $\rho'$.  Then apply \eqref{E:y.in.terms.of.sigma.w}, \eqref{E:b.hbar.defn} (identifying $1-\mu$ and $b$), and lemma \ref{L:props.of.s}(part \ref{I:y-z.h2-h1.bound}) as follows.
	\begin{equation*}
		|y_{2} - x_{2}| | w_{2} - w_{1} | 
			= |y_{2} - x_{2}| 
				   \bigl| \bar{h}_{x_{2}, \rho'}(y_{2}) - \bar{h}_{x_{2}, \rho'}(y_{1}') \bigr|
				\leq K''(x_{2}) | y_{2} - y_{1}' |.
	\end{equation*}
Now, $K''(x_{2})$ is continuous in $x_{2}$, by lemma \ref{L:props.of.s}(part \ref{I:y-z.h2-h1.bound}).  Therefore, by \eqref{E:sig.mu.Lip.on.U.sig.eps}, $K''(x_{2}) = K''\bigl[ \sigma(y_{2}) \bigr]$ is bounded in $y_{2} \in U_{\sigma, \epsilon}$. I.e., \eqref{E:y2-sigma2.w2-w1.leq.K.y2-y1'} holds. This completes the proof that $g$ is Lipschitz on $U_{\sigma, \epsilon}$.  Lemma \ref{L:g.is.loc.Lip} is proved.
   \end{proof}

The following was adapted from \cite[Appendix A]{spE.fact.anal.long}.

   \begin{lemma} \label{L:Eigen.cont.}
      If $M$ is a symmetric $q \times  q$ (real) matrix ($q$, a given positive integer), let 
      $\Lambda (M) =$ \linebreak $(\lambda_{1}(M), \ldots, \lambda _{q}(M))$, where 
      $\lambda _{1}(M) \geq \ldots \geq \lambda _{q}(M)$ are the eigenvalues of $M$. Let also $\| M \|$ be the Frobenius norm (Blum \emph{et al} \cite[p.\ 203]{lBfCmSsS98.realcompute}) $\| M \| = \sqrt{trace \, M M^{T}}$.  Then $\Lambda$ 
      is a continuous function (w.r.t. $\| \cdot \|$).  Moreover, if $N$ and $M_{1}, M_{2},\ldots$ are all symmetric $q \times  q$ (real) matrices s.t.\ 
      $M_{j} \rightarrow N$ (w.r.t.\ $\| \cdot \|$, i.e., entrywise) as 
      $j \rightarrow \infty $, let $Q_{j}^{q \times q}$ be a matrix whose rows comprise an orthonormal
	     basis of $\mathbb{R}^{q}$ consisting of eigenvectors of $M_{j}$.  Then there is a subsequence $j(n)$
      s.t.\ $Q_{j(n)}$ converges to a matrix whose rows comprise a basis of $\mathbb{R}^{q}$ consisting
      of unit eigenvectors of $N$.
   \end{lemma}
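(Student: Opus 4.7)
The plan is to treat the two assertions separately. First I would establish continuity of $\Lambda$ via the Courant--Fischer min--max characterization: for each $k = 1, \ldots, q$ and each symmetric $q \times q$ matrix $M$,
\[
   \lambda_k(M) = \max_{\substack{V \subset \RR^{q} \\ \dim V = k}} \; \min_{\substack{v \in V \\ |v| = 1}} v^{T} M v.
\]
For fixed $v$, $M \mapsto v^{T} M v$ is continuous (indeed, linear and hence Lipschitz) in $M$ with respect to $\| \cdot \|$. For fixed $M$, the inner minimum over the unit sphere of any $k$-dimensional $V$ is attained by compactness, and as a function of $M$ it is the minimum of a jointly continuous function on a compact set and therefore continuous. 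The outer maximum over the compact Grassmannian of $k$-planes preserves continuity for the same reason. Hence each coordinate of $\Lambda$ is continuous, so $\Lambda$ itself is.

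For the second assertion, observe that each $Q_{j}$ is orthogonal, so $\{ Q_{j} \}$ lies in the orthogonal group $O(q) \subset \RR^{q \times q}$, which is compact in $\| \cdot \|$ (bounded and cut out by the closed polynomial condition $Q Q^{T} = I$). Therefore some subsequence $Q_{j(n)}$ converges to a matrix $Q \in O(q)$. Writing the rows of $Q_{j}$ as $q_{j,1}, \ldots, q_{j,q}$ and those of $Q$ as $q_{1}, \ldots, q_{q}$, the rowwise convergence $q_{j(n),k} \to q_{k}$ gives $|q_{k}| = 1$ for each $k$, and orthogonality of $Q$ implies $\{q_{1}, \ldots, q_{q}\}$ is an orthonormal basis of $\RR^{q}$.

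It remains to check that each $q_{k}$ is an eigenvector of $N$. By hypothesis $M_{j} q_{j,k}^{T} = \lambda_{k}(M_{j}) q_{j,k}^{T}$ for every $j$ and $k$; passing to the limit along $j(n)$, the left side tends to $N q_{k}^{T}$ (since $M_{j} \to N$ entrywise and $q_{j(n),k} \to q_{k}$), while on the right, the first part of the lemma gives $\lambda_{k}(M_{j(n)}) \to \lambda_{k}(N)$, so the right side tends to $\lambda_{k}(N) q_{k}^{T}$. Hence $N q_{k}^{T} = \lambda_{k}(N) q_{k}^{T}$, so $q_{k}$ is a unit eigenvector of $N$, completing the proof.

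The only step with any real subtlety is the first: one must be sure that the \emph{sorted} spectrum depends continuously on $M$, not merely the unordered multiset of eigenvalues. Courant--Fischer handles this cleanly because it characterizes $\lambda_{k}$ in the prescribed order directly, avoiding any issue of tracking roots of the characteristic polynomial across collisions of eigenvalues. Everything else is a routine compactness-and-continuity argument.
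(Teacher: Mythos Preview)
Your argument is essentially correct, but one step is unjustified: you write ``By hypothesis $M_{j} q_{j,k}^{T} = \lambda_{k}(M_{j}) q_{j,k}^{T}$,'' which assumes the $k$-th row of $Q_{j}$ corresponds to the $k$-th largest eigenvalue. The hypothesis only says the rows of $Q_{j}$ are eigenvectors, not that they are sorted. The repair is easy and in fact decouples the second half from the first: write $M_{j} q_{j,k}^{T} = \mu_{j,k} q_{j,k}^{T}$ for \emph{some} eigenvalue $\mu_{j,k}$ of $M_{j}$; since $M_{j} \to N$ the sequence $\{M_{j}\}$ is bounded, hence so is $\{\mu_{j,k}\}_{j}$ for each $k$; pass to a further subsequence along which each $\mu_{j(n),k}$ converges, and then take limits as before.

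With that fix your route is genuinely different from the paper's. The paper proves both assertions simultaneously: it extracts convergent subsequences of eigenvectors and eigenvalues together, and then must argue by induction on $q$ that the subsequential limits $\nu_{1} \geq \cdots \geq \nu_{q}$ coincide with $\lambda_{1}(N), \ldots, \lambda_{q}(N)$ \emph{in order} (not merely as multisets); continuity of $\Lambda$ is then deduced via a subsequence-of-a-subsequence argument. Your use of Courant--Fischer bypasses the ordering difficulty entirely and yields continuity of $\Lambda$ directly (indeed the Lipschitz bound $|\lambda_{k}(M)-\lambda_{k}(N)| \leq \|M-N\|$), while invoking compactness of $O(q)$ rather than of $(S^{q-1})^{q}$ packages orthonormality of the limit vectors for free. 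Your version is shorter and cleaner; the paper's is more self-contained in that it does not quote the min--max theorem.
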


 \begin{proof}
  Let $N$ and $M_{1}, M_{2},\ldots $ all be symmetric $q \times  q$ matrices.  
  Suppose $M_{j} \rightarrow N$ as $j \rightarrow \infty$.  
Let   $\mu _{i} = \lambda _{i}(N)$ ($i = 1, \ldots, q$).  Since   $M_{j} \rightarrow N $, $\{ M_{j} \}$ is bounded.  Hence, $\bigl\{ (\lambda _{1}(M_{j}), \ldots, \lambda _{q}(M_{j})) \bigr\}$ is bounded (Marcus and Minc \cite[1.3.1, pp.\ 140--141]{mMhM64.MatrixThyIneq}).  For each $n$, let   $v_{n1}, \ldots, v_{nq} \in \RR^{q}$ be orthonormal eigenvectors of $M_{n}$ corresponding to $\lambda _{1}(M_{n}), \ldots, \lambda _{q}(M_{n})$, resp.  
In particular, $v_{n1}, \ldots, v_{nq}$ span $\RR^{q}$.  Let $S^{q -1}$ be the $(q-1)$-sphere
	\[
		S^{q -1} = \{ x \in \RR^{q} : |x| = 1 \}.
	\]
Then $v_{n1}, \ldots, v_{nq} \in S^{q-1}$. By compactness of $S^{q -1}$ we may choose a subsequence, $\{ j(n) \}$, s.t.\  $v_{j(n)i}$ converges to some $v_{i} \in \mathbb{R}^{q}$ 
and $\lambda _{i}(M_{j(n)})$ converges to some $\nu_{i} \in \mathbb{R}$ as $n \rightarrow \infty$ 
($i = 1, \ldots, q$).  Then $v_{1}, \ldots, v_{q}$ are orthonormal eigenvectors of $N$ with eigenvalues 
$\nu _{1} \geq \ldots \geq \nu _{q}$. 
Hence, $\{ \nu _{1}, \ldots, \nu _{q} \} \subset \{ \mu _{1}, \ldots, \mu _{q} \}$. 
Note that $v_{1}, \ldots, v_{q}$ must span $\RR^{q}$.
  
We show that 
	 \begin{equation}  \label{E:nus.=.mus}
		\nu_{i} = \mu_{i} \quad (i=1, \ldots, q).
	\end{equation}
 This is obvious if $q = 1$.  Suppose \eqref{E:nus.=.mus} always holds for $q = m \geq 1$ and suppose $q = m+1$. Suppose some $\mu_{i} \notin \{ \nu _{1}, \ldots, \nu _{q} \}$ 
 and let $w_{i} \in \RR^{q} \setminus \{ 0 \}$ be an eigenvector of $N$ corresponding to $\mu_{i}$.  Then by Stoll and Wong \cite[Theorem 4.1, p.\ 207]{rrSetW68.LinearAlgebra}, 
 we have that $w_{i} \perp v_{1}, \ldots, v_{q}$.  But $v_{1}, \ldots, v_{q}$ span $\RR^{q}$.  
  Hence, $w_{i} = 0$, contradiction.  Thus, for some $j_{i}$ we have $\mu_{i} = \nu_{j_{i}}$.  
  
Let $T : \RR^{m+1} \to \RR^{m+1}$ be the linear operator associated with $N$.  Let $V \subset \RR^{q}$ be the orthogonal complement of the span of $v_{j_{i}} = w_{i}$ and let $N_{i}$ be a matrix of the restriction $T \vert_{V}$.  Then $V$ is spanned by $v_{j}$ with $j \ne j_{i}$ and $\mu_{i'}$ with $i' \ne i$ are the eigenvalues of $N_{i}$.  Then by the induction hypothesis $j_{i} = i$ 
and $\nu_{i'} = \mu_{i'}$ for $i \ne i'$, etc.  
Thus, $\{ \mu _{1}, \ldots, \mu _{q} \} = \{ \nu _{1}, \ldots, \nu _{q} \}$.  
  
 The preceding argument obviously goes through it we had started with a subsequence 
 of $M_{1}, M_{2}, \ldots$.  Thus, any subsequence has a further subsequence 
 s.t.\ $\Lambda (M_{n})$ converges to $\Lambda (N)$ along that subsequence of a subsequence.  
  It follows that $\Lambda (M_{n}) \to \Lambda (N)$.  I.e., $\Lambda$ is continuous.
  \end{proof}
 
\section{Basics of simplicial complexes}  \label{S:basics.of.simp.comps}
This appendix presents some of the material in Munkres \cite{jrM84}, mostly from pages 2 -- 11, 83, and 371.  (See also Rourke and Sanderson \cite{cpRbjS72.PiecewiseLinearTopol}.)  Let $N$ be a positive integer and let $n \in \{0, \ldots, N \}$. Points $v(0), \ldots, v(n)$ in $\RR^{N}$ are ``geometrically independent'' (or are in ``general position'') if $v(1) - v(0), \ldots, v(n) - v(0)$ are linearly independent.  Equivalently, $v(0), \ldots, v(n)$ are geometrically independent if and only if
	\[
		\sum_{i=1}^{n} t_{i} = 0 \text{ and } \sum_{i=1}^{n} t_{i} v(i) = 0
	\]  
together imply $t_{0} = \cdots t_{n} = 0$. If $v(0), \ldots, v(n) \in \RR^{N}$ are geometrically independent then they are the vertices of the ``simplex''
	\[
		\sigma = \{ \beta_{0} v(0) + \cdots + \beta_{n} v(n) : \beta_{0}, \ldots \beta_{n} \geq 0 
			\text{ and } \beta_{0} + \cdots + \beta_{n} = 1 \}.
	\]
We say that $\sigma$ is ``spanned'' by $v(0), \ldots, v(n)$ and $n$ is the ``dimension'' of $\sigma$. (Sometimes we call $\sigma$ a ``$n$-simplex'' and write $\dim \sigma = n$.) Note that $\sigma$ is convex and compact.  Indeed, it is the convex hull of $\{ v(0), \ldots, v(n) \}$.  Thus, every $y \in \sigma$ can be expressed uniquely (and continuously) in ``barycentric coordinates''
   \[
      y = \sum_{v \text{ is a vertex in } \sigma} \beta_{v}(y) \, v,
   \]
where the $\beta_{v}(y)$'s are nonnegative and sum to 1.

The simplex $\sigma$ lies on the plane
	\begin{equation}  \label{E:formla.for.affine.plane}
		\Pi = \{ \beta_{0} v(0) + \cdots + \beta_{n} v(n) : \beta_{0} + \cdots + \beta_{n} = 1 \}.
	\end{equation}
I.e., the definition of $\Pi$ is like that of $\sigma$ except the non-negativity requirement is dropped.  Note that $\Pi$ need not include the origin of $\RR^{N}$.  $\Pi$ is the smallest plane containing 
$\sigma$. The dimension of $\Pi$ is $n$.  Any subset of $\{ v(0), \ldots, v(n) \}$ is geometrically independent and the simplex spanned by that subset is a ``face'' of $\sigma$.  So $\sigma$ is a face of itself and a vertex of $\sigma$ is also a face of $\sigma$  (and so is ?).  A ``proper'' face of $\sigma$ is a face of $\sigma$ different from $\sigma$. 

Let $\sigma$ be a simplex spanned by geometrically independent points $v(0), \ldots, v(n)$. If $J \subsetneqq \{ 0, \ldots, n \}$ is nonempty, let $\tau$ be  the proper face of $\sigma$ spanned by $\{ v(j), \, j \in J \}$.  The face ``opposite'' $\tau$ is the span, $\omega$, of $\{ v(j), \, j \notin J \}$ (Munkres \cite[p.\ 5 and Exercise 4, p.\ 7]{jrM84}). E.g., $\tau$ might be a vertex of $\sigma$.  Thus,  $\tau$ consists of those $y \in \sigma$ s.t.\ $\beta_{v}(y) = 0$ for all vertices $v \notin \tau$ and $\omega$ consists of those $y \in \sigma$ s.t.\ $\beta_{v}(y) = 0$ for all vertices $v \in \tau$.  

The union of all proper faces of $\sigma$ is the ``boundary'' of $\sigma$, denoted $\text{Bd} \, \sigma$.  The ``(simplicial) interior'' of $\sigma$ (as a simplex) is the set 
$\text{Int} \, \sigma := \sigma \setminus (\text{Bd} \, \sigma)$, where ``$\setminus$'' indicates set-theoretic subtraction.  If 
	\begin{equation}  \label{E:criterion.for.int.simp}
		y = \sum_{j=0}^{n} \beta_{j}(y) \, v(j), 
		      \text{ then } y \in \text{Int} \, \sigma \text{ if and only if $\beta_{j}(y) > 0$ 
		          for all }  j = 0, \ldots, n.
	\end{equation}
Thus, the interior of $\sigma$ as a simplex is in general different from its (usually empty) interior as a subspace of $\RR^{N}$.  In fact, the interior (as a simplex) of a 0-dimensional simplex (a single point) is the point itself.  But $\sigma$ is the topological closure of $\text{Int} \, \sigma$ and $\text{Int} \, \sigma$ is the relative interior of $\sigma$ as a subset of $\Pi$ defined by \eqref{E:formla.for.affine.plane}.

The following lemma (Munkres \cite[lemma 1.1, p.\ 6]{jrM84}) about convex sets is handy.

\begin{lemma}  \label{L:rays.intersect.bndry.in.one.pt}
Let $U$ be a bounded, convex, open set in some affine space (e.g., a Euclidean space).  Let $w \in U$.  Then each ray emanating from $w$ intersects the boundary of $U$ in precisely one point.
\end{lemma}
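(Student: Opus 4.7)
The plan is to parametrize the ray, use a supremum to obtain a boundary point, then lean on convexity to rule out a second one. Fix a unit vector $v$ and parametrize the ray as $R(t)=w+tv$, $t\ge 0$. Since $U$ is open and contains $w$, there is $\epsilon>0$ with $R([0,\epsilon))\subset U$; since $U$ is bounded, there is $T<\infty$ with $R(t)\notin U$ for $t\ge T$. Set
\[
t_0 \;:=\; \sup\{\, t\ge 0 : R(t)\in U\,\}\;\in\;[\epsilon,T].
\]
I will show $R(t_0)\in\partial U$. By definition of the supremum there is $t_n\uparrow t_0$ with $R(t_n)\in U$, so $R(t_0)\in\overline U$. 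On the other hand, if $R(t_0)\in U$, then openness of $U$ would give a small $\delta>0$ with $R(t_0+\delta)\in U$, contradicting the definition of $t_0$. Hence $R(t_0)\in\overline U\setminus U=\partial U$, giving existence.

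For uniqueness the key auxiliary fact is: if $U$ is convex and open, $w\in U$, and $y\in\overline U$, then $(1-s)w+sy\in U$ for every $s\in[0,1)$. I would prove this by picking $r>0$ with the ball $B(w,r)\subset U$, choosing $y'\in U$ within distance $(1-s)r/s$ of $y$ (possible since $y\in\overline U$), and noting that $(1-s)B(w,r)+sy'=B((1-s)w+sy',(1-s)r)$ lies in $U$ by convexity; by the choice of $y'$ this ball contains $(1-s)w+sy$.

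Now suppose there were two distinct boundary hits, $R(t_1)$ and $R(t_2)$ with $0<t_1<t_2$. Write $\lambda:=t_1/t_2\in(0,1)$; a direct check gives
\[
R(t_1)\;=\;w+t_1 v\;=\;(1-\lambda)w+\lambda R(t_2).
\]
Since $w\in U$ and $R(t_2)\in\overline U$, the auxiliary fact forces $R(t_1)\in U$, contradicting $R(t_1)\in\partial U=\overline U\setminus U$ (recall $U$ is open so $\partial U$ is disjoint from $U$). Hence the boundary intersection is unique.

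I expect the only nontrivial step to be the auxiliary fact that the half-open segment from an interior point to a closure point stays inside an open convex set; everything else is a direct supremum/openness argument. Nothing in the proof uses the ambient Euclidean structure essentially beyond the existence of a metric (or a local base of convex neighborhoods of $w$), so the statement holds in any affine space with the usual topology on finite-dimensional subspaces, which is all that is needed for the applications in the paper.
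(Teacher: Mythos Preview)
Your argument is correct and complete. The paper does not supply its own proof of this lemma; it simply attributes the result to Munkres \cite[Lemma~1.1, p.~6]{jrM84}. Your proof is the standard one: existence via a supremum argument using openness and boundedness, and uniqueness via the fact that the half-open segment from an interior point to a closure point of an open convex set stays in the interior. This is essentially the argument one finds in Munkres and elsewhere, so there is nothing to compare.
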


Let $v(0), \ldots, v(n) \in \RR^{n}$ be the vertices of $\sigma$.  Then
	\begin{equation}  \label{E:barycenter.of.sigma}
      		\hat{\sigma} = \frac{1}{n+1} \sum_{j=0}^{n} v(j)
	\end{equation}
is the ``barycenter'' of $\sigma$ (Munkres \cite[p.\ 85]{jrM84}). Munkres \cite[p.\ 90]{jrM66} defines the 
``radius'', $r(\sigma)$, of $\sigma$ to be the minimum distance from $\hat{\sigma}$ to $\text{Bd} \, \sigma$.  He defines the ``thickness'' of the simplex $\sigma$ to be
$t(\sigma) := r(\sigma)/diam(\sigma)$. Here, ``$diam(\sigma)$'' is the diameter of $\sigma$, i.e., the length of the longest edge of $\sigma$.

A ``simplicial complex'', $P$, in $\RR^{N}$ is a collection of simplices in $\RR^{N}$ s.t.\
	\begin{equation}   \label{E:complex.contains.all.faces}
		\text{Every face of a simplex in $P$ is in $P$.}
	\end{equation}
and		     
	\begin{equation}   \label{E:intersection.of.simps}
		\text{The intersection of any two simplices in $P$ is a face of each of them.}  
	\end{equation}
It turns out that an equivalent definition of simplicial complex is obtained by replacing condition \eqref{E:intersection.of.simps} by the following. 
	\begin{equation}  \tag{\ref{E:intersection.of.simps}'}
		\text{Every pair of distinct simplices in $P$ have disjoint interiors.}
	\end{equation}  
It follows that
	\begin{equation}   \label{E:Int.rho.cuts.sigma.then.rho.in.sigma}
		\text{If } \rho, \sigma \text{ are elements of a simplicial complex and } 
		       (\text{Int} \, \rho) \cap \sigma \ne \varnothing \text{ then } 
			         \rho \text{ is a face of } \sigma.
	\end{equation}
(\emph{Proof:} $(\text{Int} \, \rho) \cap \sigma$ lies in some face of $\sigma$, e.g., in $\sigma$ itself.  Let $\tau$ be the smallest face of $\sigma$ (in terms of inclusion) containing 
$(\text{Int} \, \rho) \cap \sigma$.  Suppose $\rho \ne \tau$.  Then by (\ref{E:intersection.of.simps}') $(\text{Int} \, \rho) \cap \tau$ lies in some proper face of $\tau$.  But $(\text{Int} \, \rho) \cap \tau  = \bigl[ (\text{Int} \, \rho) \cap \sigma \bigr] \cap \tau = (\text{Int} \, \rho) \cap \sigma$, since $(\text{Int} \, \rho) \cap \sigma \subset \tau \subset \sigma$.  I.e., $(\text{Int} \, \rho) \cap \sigma$ lies in a proper face of $\tau$. That contradicts the minimality of $\tau$.  Therefore, $\rho =  \tau$.)

A simplicial complex, $P$|, is ``finite'' if it is finite as a set (of simplices). The ``dimension'' of a simplicial complex is 
	\[
		\dim P = \max \{ \dim \sigma : \sigma \in P \}
	\] 
(Munkres \cite[p.\ 14]{jrM84}).  (So infinite dimensional simplicial complexes are possible.)  In the following assume $P$ is a non-empty simplicial complex.

A subset, $L$, of $P$ is a ``subcomplex'' of $P$ if $L$ is a simplicial complex in its own right.  The collection, $P^{(q)}$, of all simplices in $P$ of dimension at most $q \geq 0$ is a subcomplex, called the ``$q$-skeleton'' of $P$.  In particular, $P^{(0)}$ is the set of all vertices of simplices in $P$.  The ``polytope'' or ``underlying space'' of $P$, denoted by $|P|$, is just the union of the simplices in $P$.  If $P$ is finite, i.e., consists of finitely many simplices, then $|P|$ is assigned the relative topology it inherits from $\RR^{N}$.  In general, a subset, $X$, of $|P|$ is closed (open) if and only if 
$X \cap \sigma$ is closed (resp. open) in $\sigma$ for every $\sigma \in P$.  Call this topology the ``polytope topology'' on $|P|$.  A space that equals $|P|$ for some simplicial complex, $P$, is called a ``polyhedron''.  By Munkres \cite[Lemma 2.5, p.\ 10]{jrM84}, $|P|$ is compact if and only if $P$ is finite.  If $X$ is a topological space, then a ``triangulation'' of $X$ is a simplicial complex, $P$, and a homeomorphism $f : |P| \to X$.

Let $P$ be a finite simplicial complex of positive dimension.  As in Munkres \cite[p.\ 10]{jrM84}, define ``barycentric coordinates'' on $|P|$ as follows.
First, note that
	\begin{equation} \label{E:x.in.exctly.1.simplex.intrr}
		\text{If $x \in |P|$ then there is exactly one simplex 
		    $\tau \in P$ s.t.\ $x \in \text{Int} \, \tau$.}
	\end{equation}
(To see this, note that since $P$ is finite, there is a smallest simplex (w.r.t.\ inclusion order), $\tau$, in $P$ containing $x$.  Clearly, $x \in \text{Int} \, \tau$.  By (\ref{E:intersection.of.simps}') this implies $\tau$ is unique.)  Let $\tau^{(0)}$ be the set of vertices of $\tau$.  
Then, by \eqref{E:criterion.for.int.simp}, there exist strictly positive numbers 
$\beta_{v}(x), \; v \in \tau^{(0)}$ that sum to 1 and satisfy
	\[
		x = \sum_{v \in \tau^{(0)}} \beta_{v}(x) v.
	\]
Since $v \in \tau ^{(0)}$ are geometrically independent, $\beta_{v}(x)$, $v \in \tau ^{(0)}$, are unique.  
If $v \in P^{(0)}$ is not a vertex of $\tau$ define $\beta_{v}(x) = 0$.  Thus, 
	\[
		x = \sum_{v \in P^{(0)}} \beta_{v}(x) v, \quad \quad x \in |P|.
	\]
The entries in $\bigl\{ \beta_{v}(x), \; v \in P^{(0)} \bigr\}$ are the ``barycentric coordinates'' of $x$. For each $v \in P^{(0)}$ the function $\beta_{v}$ is continuous on $|P|$ (Munkres \cite[p.\ 10]{jrM84}).  If $P$ is finite, we have the following.  (See below for the proof.)

	\begin{lemma}  \label{L:bary.coords.are.Lip}
Let $P$ be a finite simplicial complex.  Then the vector-valued function 
$\boldsymbol{\beta} : x \mapsto \bigl\{ \beta_{v}(x), \; v \in P^{(0)} \bigr\}$ is Lipschitz in $x \in |P|$ (w.r.t.\ the obvious Euclidean metrics; see appendix \ref{S:Lip.Haus.meas.dim}).
	\end{lemma}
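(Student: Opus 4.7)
The plan is to exploit that on each simplex $\sigma \in P$ the restriction $\boldsymbol{\beta}|_\sigma$ is affine, then combine Lipschitz bounds across the finitely many simplices via a compactness argument.

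First, for each $\sigma \in P$ with vertices $v(0),\ldots,v(n)$, the map $\boldsymbol{\beta}|_\sigma$ extends uniquely to an affine map $\bar{A}_\sigma:\mathrm{aff}(\sigma)\to\RR^{P^{(0)}}$ (taking value zero in coordinates indexed by vertices of $P$ not in $\sigma$), since the barycentric coordinates solve an invertible affine system on $\mathrm{aff}(\sigma)$ by geometric independence of the vertices. Each $\bar{A}_\sigma$ is Lipschitz with some constant $K_\sigma<\infty$, and because $P$ is finite, $K_0:=\max_{\sigma\in P}K_\sigma<\infty$ serves as a per-simplex Lipschitz constant. For any two simplices $\sigma_1,\sigma_2\in P$, the affine maps $\bar{A}_{\sigma_1}$ and $\bar{A}_{\sigma_2}$ agree on $\mathrm{aff}(\sigma_1\cap\sigma_2)$: they give the same value $\boldsymbol{\beta}$ on the common face $\sigma_1\cap\sigma_2$ (which is a simplex by \eqref{E:intersection.of.simps}), and that face affinely spans its own affine hull.

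Second, I would obtain the global Lipschitz bound by contradiction. If $\boldsymbol{\beta}$ were not Lipschitz there would exist $x_n,y_n\in|P|$ with $x_n\neq y_n$ and $|\boldsymbol{\beta}(x_n)-\boldsymbol{\beta}(y_n)|/|x_n-y_n|\to\infty$. Since $\boldsymbol{\beta}$ takes values in $[0,1]^{P^{(0)}}$ the numerator is bounded, so $|x_n-y_n|\to 0$. By compactness of $|P|$ pass to subsequences $x_n\to z,\,y_n\to z\in|P|$, and by finiteness of $P$ pass to further subsequences with $x_n\in\sigma_1$, $y_n\in\sigma_2$ for fixed $\sigma_1,\sigma_2\in P$. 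Set $\tau_0:=\sigma_1\cap\sigma_2$, a common face by \eqref{E:intersection.of.simps}, with $z\in\tau_0$. If one of $\sigma_1,\sigma_2$ is a face of the other, both sequences lie in the larger simplex and $K_0$ bounds the ratio, contradicting blow-up. Otherwise $\tau_0$ is a proper face of each; letting $p_n^{(1)}$ and $p_n^{(2)}$ be the orthogonal projections of $x_n$ and $y_n$ onto $\mathrm{aff}(\tau_0)$, the equalities $\bar{A}_{\sigma_1}(p)=\bar{A}_{\sigma_2}(p)$ for $p\in\mathrm{aff}(\tau_0)$ and the per-simplex Lipschitz bound give
\[
|\boldsymbol{\beta}(x_n)-\boldsymbol{\beta}(y_n)|\leq K_0\bigl(\mathrm{dist}(x_n,\mathrm{aff}\,\tau_0)+|p_n^{(1)}-p_n^{(2)}|+\mathrm{dist}(y_n,\mathrm{aff}\,\tau_0)\bigr).
\]
Contractivity of orthogonal projection gives $|p_n^{(1)}-p_n^{(2)}|\leq|x_n-y_n|$, so it remains to bound $\mathrm{dist}(x_n,\mathrm{aff}\,\tau_0)+\mathrm{dist}(y_n,\mathrm{aff}\,\tau_0)$ by a constant multiple of $|x_n-y_n|$.

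This last estimate is the main obstacle. I would prove that there exists $c>0$, depending only on $\sigma_1,\sigma_2$, with $|x-y|\geq c\bigl(\mathrm{dist}(x,\mathrm{aff}\,\tau_0)+\mathrm{dist}(y,\mathrm{aff}\,\tau_0)\bigr)$ for all $x\in\sigma_1$, $y\in\sigma_2$. Fixing $z\in\mathrm{Int}\,\tau_0$, letting $V_0$ be the linear direction of $\mathrm{aff}(\tau_0)$ and $\pi$ the orthogonal projection onto $V_0^\perp$ based at $z$, one reduces (via $|x-y|^2\geq|\pi(x-z)-\pi(y-z)|^2$) to showing that the closed convex cones $\tilde{C}_i:=\pi\bigl(\mathrm{cone}(\sigma_i-z)\bigr)\subset V_0^\perp$ satisfy $\tilde{C}_1\cap\tilde{C}_2=\{0\}$. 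Granting this, the continuous function $(u_1,u_2)\mapsto|u_1-u_2|/(|u_1|+|u_2|)$ attains a positive minimum $c$ on the compact set $\{(u_1,u_2)\in\tilde{C}_1\times\tilde{C}_2:|u_1|+|u_2|=1\}$. The separation $\tilde{C}_1\cap\tilde{C}_2=\{0\}$ itself follows from $\sigma_1\cap\sigma_2=\tau_0$: a nonzero common element would, by scaling preimages via convexity of each $\sigma_i$ toward $z$, produce points $p_1\in\sigma_1\setminus\tau_0$, $p_2\in\sigma_2\setminus\tau_0$ with $p_1-p_2\in V_0\setminus\{0\}$; using constancy of direction cones on $\mathrm{Int}\,\tau_0$, a suitable translate $z'\in\mathrm{Int}\,\tau_0$ would witness a small segment from $z'$ in a direction $u\notin V_0$ that lies in both $\sigma_1$ and $\sigma_2$, forcing $\sigma_1\cap\sigma_2$ to contain points outside $\text{aff}\,\tau_0$—a contradiction. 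Combining everything yields the global Lipschitz constant $K=(2/c+1)K_0$.
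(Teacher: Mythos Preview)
Your argument is correct and shares its skeleton with the paper's proof: both first show $\boldsymbol{\beta}$ is affine on each simplex, then reduce the cross-simplex estimate to an angle/separation statement between the ``transverse'' parts of $\sigma_1$ and $\sigma_2$ relative to the common face $\tau_0$. The geometric core---that directions out of $\sigma_1\setminus\tau_0$ and $\sigma_2\setminus\tau_0$, projected orthogonally off $\mathrm{aff}\,\tau_0$, cannot coincide---is the same fact in both proofs.

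Where you differ is in packaging. The paper does an explicit case analysis (disjoint simplices handled by a uniform gap $\delta_1$; nested simplices trivially; proper intersection via constructed intermediate points $\tilde{x},\tilde{y}\in\mathrm{Int}(\rho\cap\tau)$ obtained by radial projection from the opposite face), and proves the angle bound \eqref{E:angle.tween.x-xhat.y-yhat.ain't.0} by a sequential compactness argument using auxiliary points $\dot{x},\dot{y}$. You instead run a single compactness/contradiction argument from the outset and phrase the separation as $\tilde{C}_1\cap\tilde{C}_2=\{0\}$ in $V_0^\perp$. Your route is tidier; note that the cleanest justification of the cone separation is that each tangent cone $T_i=\mathrm{cone}(\sigma_i-z)$ contains the linear space $V_0$ (since $z\in\mathrm{Int}\,\tau_0$), hence is $V_0$-translation invariant, so a common nonzero $u\in\tilde{C}_1\cap\tilde{C}_2$ lifts to a common $w\in T_1\cap T_2\setminus V_0$, forcing $z+tw\in\sigma_1\cap\sigma_2\setminus\mathrm{aff}\,\tau_0$ for small $t$---this is what your ``constancy of direction cones'' sentence is doing, and it would benefit from being stated this way.

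One trade-off worth flagging: the paper's constructive version produces explicit points $\tilde{x},\tilde{y}\in\mathrm{Int}(\rho\cap\tau)$ satisfying $|x-\tilde{x}|+|\tilde{x}-\tilde{y}|+|\tilde{y}-y|\le K'|x-y|$, which is recorded separately as Corollary~\ref{C:reverse.triangle.ineq.in.simp.cmplxs} and reused later (in the proofs of Corollary~\ref{C:drop.finiteness.of.P} and Lemma~\ref{L:g.is.loc.Lip}). Your compactness argument establishes the lemma but does not directly yield that corollary; if you need it elsewhere you would have to extract it from your distance estimate (which is possible but not immediate, since your intermediate points $p_n^{(i)}$ lie in $\mathrm{aff}\,\tau_0$ rather than in $\tau_0$ itself). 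Also, your final line ``$K=(2/c+1)K_0$'' has $c$ depending on the pair $(\sigma_1,\sigma_2)$; either take the maximum over pairs or simply observe that the contradiction is already reached.
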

In the course of proving this lemma, the following useful fact emerges.

\begin{corly}   \label{C:reverse.triangle.ineq.in.simp.cmplxs}
Let $P$ be a finite simplicial complex.  There exists $K < \infty$ s.t.\ the following holds.  Let $\rho, \tau \in P$ satisfy $\rho \cap \tau \ne \varnothing$, but suppose neither simplex is a subset of the other.  If $x \in \text{Int} \, \rho$ and $y \in \text{Int} \, \tau$ then there exist 
$\tilde{x}, \tilde{y} \in  \text{Int} \, (\rho \cap \tau)$ s.t.\ 
	\[
		|x - \tilde{x}| + |\tilde{x} - \tilde{y}| + |\tilde{y} - y| \leq K |x-y|.
	\]
\end{corly}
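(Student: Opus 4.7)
The plan is to prove the corollary by a finiteness reduction followed by a compactness argument for each fixed pair of simplices, with a blow-up analysis to control the diagonal behavior. Since $P$ is finite, there are only finitely many ordered pairs $(\rho, \tau) \in P \times P$ with $\rho \cap \tau \ne \varnothing$ and neither contained in the other. It therefore suffices to exhibit, for each such pair, a constant $K(\rho, \tau) < \infty$; the required $K$ is then the maximum. Fix such a pair, and set $\mu := \rho \cap \tau$, which by \eqref{E:intersection.of.simps} is a common face, proper in both simplices since neither contains the other.

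Having fixed the pair, the key reduction is to take $\tilde{x} = \tilde{y} = p$ for a single $p \in \mu$, so that the estimate becomes
	\[
		\Phi(x,y) := \min_{p \in \mu}\bigl(|x - p| + |p - y|\bigr) \le K(\rho,\tau)\,|x - y|.
	\]
The minimum is attained by compactness of $\mu$, and $g(x,y) := \Phi(x,y)/|x - y|$ is continuous on the relatively open subset $\{(x,y) \in \rho \times \tau : x \ne y\}$ of the compact set $\rho \times \tau$. Standard continuity and compactness bound $g$ uniformly away from the ``singular diagonal'' $\{(z,z) : z \in \mu\}$, so the whole problem collapses to controlling $g(x_n, y_n)$ along sequences with $(x_n, y_n) \to (z,z)$ for some $z \in \mu$.

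I would handle this diagonal behavior by a blow-up argument. Setting $r_n := \max(|x_n - z|, |y_n - z|)$, $u_n := (x_n - z)/r_n$, and $v_n := (y_n - z)/r_n$, after passing to a subsequence $u_n \to u$ and $v_n \to v$, where $u$ lies in the tangent cone $T_\rho^z$ of $\rho$ at $z$ and $v$ in $T_\tau^z$. By homogeneity, the problem reduces to showing that the scale-invariant ratio $\min_{p \in T_\mu^z}\bigl(|u - p| + |p - v|\bigr)/|u - v|$ is bounded on $T_\rho^z \times T_\tau^z$ off the zero. The crucial geometric point, which follows from the simplicial complex axioms \eqref{E:intersection.of.simps} and (\ref{E:intersection.of.simps}'), is that the faces $\alpha$ of $\rho$ and $\beta$ of $\tau$ opposite $\mu$ share no vertex---any common vertex would lie in $\rho \cap \tau = \mu$, contradicting its placement in the opposite face. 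Hence $T_\rho^z$ and $T_\tau^z$ are \emph{distinct} polyhedral cones, both containing $T_\mu^z$ but extending along transverse ``exit'' directions. A second compactness argument on the unit sphere of the quotient space $(T_\rho^z + T_\tau^z)/T_\mu^z$ then delivers the desired uniform ratio bound. This is the main technical obstacle, essentially a statement that the cones $T_\rho^z$ and $T_\tau^z$ do not become tangent to each other along $\mu$.

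Combining the steps: the tangent-cone estimate contradicts $g(x_n, y_n) \to \infty$, so $g$ is bounded on $\rho \times \tau$ minus the diagonal by some $K(\rho, \tau) < \infty$; taking the maximum over the finitely many admissible pairs yields the uniform $K$, with $\tilde{x} = \tilde{y}$ in $\mu$. To achieve $\tilde{x}, \tilde{y} \in \text{Int}\,\mu$ rather than all of $\mu$, use the density of $\text{Int}\,\mu$ in $\mu$ to replace any optimizer $p \in \mu$ by $p' \in \text{Int}\,\mu$ within any prescribed $\epsilon > 0$ of $p$; this changes $|x - \tilde{x}| + |\tilde{x} - \tilde{y}| + |\tilde{y} - y|$ by at most $2\epsilon$, which can be absorbed into $K$ (by replacing $K$ with $K + 1$, say, once $\epsilon$ is chosen small relative to $|x - y|$, or more carefully by a separate case analysis when $|x - y|$ is itself small, in which case $x$ and $y$ already lie near $\mu$ and the perturbation is harmless).
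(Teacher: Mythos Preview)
Your approach is sound and reaches the goal, but it is genuinely different from the paper's.

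The paper proves the corollary constructively inside the proof of Lemma~\ref{L:bary.coords.are.Lip}. For $x\in\text{Int}\,\rho$ it builds $\tilde x\in\text{Int}\,(\rho\cap\tau)$ explicitly as the point where the line through $x$ and a specific point $z_0(x)$ of the face $\xi$ of $\rho$ opposite $\mu=\rho\cap\tau$ meets $\mu$, and likewise $\tilde y$ from $y$. It then introduces the orthogonal projections $\hat x,\hat y$ onto the affine span $\Pi_{\rho\cap\tau}$, proves by a compactness-by-contradiction argument (the ``$\dot x$'' construction) the angle bound $\langle x-\hat x,\,y-\hat y\rangle\le\gamma\,|x-\hat x|\,|y-\hat y|$ with $\gamma<1$ depending only on $(\rho,\tau)$, and separately bounds $|x-\tilde x|/|x-\hat x|$ via a second angle estimate. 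Combining these two angle bounds yields \eqref{E:mult.of.x.minus.y.dominates.x.minus.xtilde.etc}, and maximizing over the finitely many pairs gives $K$. So the paper's $\tilde x$ and $\tilde y$ are in general distinct and are pinned down by a concrete recipe.

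Your route is softer: you take $\tilde x=\tilde y=p$ from the outset, reduce to bounding a ratio on a compact set minus the diagonal, and handle the diagonal by blowing up and invoking the transversality $T_\rho^z\cap T_\tau^z=T_\mu^z$ (which, as you note, follows in one line from convexity and \eqref{E:intersection.of.simps}). What each approach buys: the paper's argument is explicit and produces $\tilde x,\tilde y\in\text{Int}\,(\rho\cap\tau)$ directly, with no perturbation step; your argument is shorter, avoids the auxiliary points $z_0,\hat x,\dot x$, and in fact proves the slightly stronger statement with a single waypoint. The key geometric content is the same in both---the paper's angle bound \eqref{E:angle.tween.x-xhat.y-yhat.ain't.0} is exactly the statement that the images of $T_\rho^z$ and $T_\tau^z$ in $L^\perp$ meet only at the origin.

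One point in your sketch to tighten: the ``quotient by $T_\mu^z$'' step presumes $T_\mu^z$ is a linear subspace, which holds only when $z\in\text{Int}\,\mu$. If the diagonal limit $z$ lands in $\partial\mu$, the tangent cone $T_\mu^z$ is a proper cone and the quotient is not defined. This is not fatal---one can either iterate the blow-up (each iteration strictly enlarges the lineality space of the $\mu$-cone, so the process terminates in at most $\dim\mu$ steps) or, more simply, bypass the blow-up entirely: the cones $\{x_\perp:x\in\rho\}$ and $\{y_\perp:y\in\tau\}$ in $L^\perp$ are fixed closed cones meeting only at $0$ (this is exactly what you prove about tangent cones, but it holds globally, not just infinitesimally), so compactness on the unit sphere of $L^\perp$ gives the angle bound directly without any blow-up. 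That direct version is essentially the paper's argument stripped of the explicit $\tilde x,\tilde y$.
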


Let $\sigma \in P$ and let
   \[
      \overline{\text{St}} \, \sigma 
         = \bigcup_{\sigma \subset \omega \in P} \omega.
   \]
$\overline{\text{St}} \, \sigma$ is the ``closed star'' of $\sigma$ (Munkres \cite[p.\ 371]{jrM84}).  By \eqref{E:intersection.of.simps} $\overline{\text{St}} \, \sigma$ is the union of all simplices in $P$ having 
$\sigma$ as a face. In particular, $\sigma \subset \overline{\text{St}} \, \sigma$.  
Let $\text{Lk} \, \sigma$ be the union of all simplices in $\overline{\text{St}} \, \sigma$ that do \emph{not} intersect $\sigma$. $\text{Lk} \, \sigma$ is the ``link'' of $\sigma$. 
The simplices in $\text{Lk} \, \sigma$ will be faces of simplices in $\overline{\text{St}} \, \sigma$ that also have $\sigma$ as a face. We may have $\overline{\text{St}} \, \sigma = \sigma$, which implies 
$\text{Lk} \, \sigma = \varnothing$.  This can happen, e.g., if $\dim \sigma = \dim P$. If $\rho \in P$, 
$\sigma$ is a proper face of $\rho$, and $\omega$ is the face of $\rho$ opposite $\sigma$, 
then $\omega \subset \text{Lk} \, \sigma$.  Thus, $\overline{\text{St}} \, \sigma = \sigma$ 
if and only if $\text{Lk} \, \sigma = \varnothing$.

Let the ``star'', $\text{St} \, \sigma$ of $\sigma$ be the union of the interiors of all simplices of $P$ having $\sigma$ as a face (Munkres \cite[p.\ 371]{jrM84}). (If $ \overline{\text{St}} \, \sigma = \sigma$, then $\text{St} \, \sigma = \text{Int} \, \sigma$.) We have
	\begin{multline}  \label{E:St.sigma.is.open}
	        \text{St} \, \sigma 
	             = \bigl\{ y \in |P| : \beta_{v}(y) > 0 \text{ for every } v \in \sigma^{(0)} \bigr\}
	               \text{ so  } \text{St} \, \sigma \text{ is open in } |P|. \\
	                \text{Moreover, } \text{Int} \, \sigma \subset \text{St} \, \sigma, 
	                       (\text{St} \, \sigma) \cap (\text{Lk} \, \sigma) = \varnothing, \text{ and } 
	                          (\text{St} \, \sigma) \cap (\text{Bd} \, \sigma) = \varnothing.
	\end{multline}
(\emph{Proof:}  $\rho \in P$ has $\sigma$ as a face if and only if $\sigma^{(0)} \subset \rho^{(0)}$.  But $x = \sum_{v \in \rho^{(0)}} \beta_{v}(x) \in \text{Int} \, \rho$ if and only if $\beta_{v}(x) > 0$ for every 
$v \in \rho^{(0)}$.  Hence, if $x \in \text{Int} \, \rho$ and $\rho$ has $\sigma$ as a face then 
$\beta_{v}(x) > 0$ for every $v \in \sigma^{(0)}$.  Conversely, suppose $x \in |P|$ and 
$\beta_{v}(x) > 0$ for every $v \in \sigma^{(0)}$.  Then obviously, if $\rho$ is the simplex in $P$ with 
$x \in \text{Int} \, \rho$, we have $\sigma^{(0)} \subset \rho^{(0)}$ so $\rho \in P$ has $\sigma$ as a face.  Thus, $x \in \text{Int} \, \rho \subset \text{St} \, \sigma$.  
In particular, $\text{Int} \, \sigma \subset \text{St} \, \sigma$.  Since 
$\beta_{v}$ ($v \in \sigma^{(0)}$) are continuous, continuous, if follows that $\text{St} \, \sigma$ is open.  Moreover, if $x \in (\text{Lk} \, \sigma) \cup (\text{Bd} \, \sigma)$ then $\beta_{v}(x) = 0$ 
for some $v \in \sigma^{(0)}$.  Hence, neither $\text{Lk} \, \sigma$ nor $\text{Bd} \, \sigma$ intersects 
$\text{St} \, \sigma$.)

Let $\sigma \in P$.  Observe that, true to their names, both $\overline{\text{St}} \, \sigma$ and 
$\text{St} \, \sigma$ are ``starlike'' w.r.t.\ any $x \in \text{Int} \, \sigma$.  
I.e., if $y \in \overline{\text{St}} \, \sigma$ then the line segment joining $x$ and $y$ lies entirely 
in $\overline{\text{St}} \, \sigma$.  The same goes for $y \in \text{St} \, \sigma$.  \emph{Claim:} $|P|$ is locally arcwise connected (Massey \cite[p.\ 56]{wsM67.Masseey}).  To see this, let $x \in |P|$ and let $\sigma$ be the unique simplex in $P$ s.t.\ $x \in \text{Int} \, \sigma$.  (See \eqref{E:x.in.exctly.1.simplex.intrr}.)  $\text{St} \, \sigma$ is an open neighborhood of $x$.  Let $r > 0$ be so small that the open ball $B_{r}(x)$, of radius $r$ centered at $x$ satisfies $B_{r}(x) \cap |P| \subset \text{St} \, \sigma$.  If $y, z \in B_{r}(x)  \cap |P|$, then the line segments joining $y$ to $x$ and $x$ to $z$ also lie in $B_{r}(x)  \cap |P|$.  I.e., $B_{r}(x)  \cap |P|$ is path connected.  This proves the claim. Thus, if $|P|$ is connected it is also arcwise connected.

The following result will be helpful in proving corollary \ref{C:drop.finiteness.of.P}.
	  \begin{lemma}  \label{L:local.finiteness.and.compactness}
 Let $P$ be a simplicial complex lying in a finite dimensional Euclidean space, $\RR^{N}$.  Suppose every $x \in |P|$ has a neighborhood, open in $\RR^{N}$, intersecting only finitely many simplices in $P$.  
 Then the following hold.
		\begin{enumerate}
		\renewcommand{\theenumi}{\roman{enumi}}
		\item $P$ is ``locally finite'':  Each $v \in P^{(0)}$ belongs to only finitely many simplices 
		     in $P$.  \label{I:P.locally.finite}
		\item $|P|$ is locally compact.  \label{I:local.compactness}
		\item $|P|$ is a subspace of $\RR^{N}$.  I.e., the polytope topology of $|P|$ coincides 
		      with the topology that $|P|$ inherits from $\RR^{N}$.  \label{I:|P|.is.subspace}
		\end{enumerate}
	  \end{lemma}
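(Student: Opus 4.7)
My plan is to prove the three parts in the order \textup{(\ref{I:P.locally.finite})}, \textup{(\ref{I:|P|.is.subspace})}, \textup{(\ref{I:local.compactness})}, with the topological identification in \textup{(\ref{I:|P|.is.subspace})} doing the real work and the other two falling out as easy corollaries.

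For part \textup{(\ref{I:P.locally.finite})}, let $v \in P^{(0)}$. Since $v \in |P| \subset \RR^{N}$, the hypothesis \eqref{E:intersect.finitely.many} supplies an open $\RR^{N}$-neighborhood $U$ of $v$ that meets only finitely many simplices of $P$. Any simplex $\sigma \in P$ having $v$ as a vertex contains $v \in U$, so $\sigma \cap U \neq \varnothing$; hence only finitely many such $\sigma$ exist. This is the whole argument for local finiteness.

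The main content is part \textup{(\ref{I:|P|.is.subspace})}. Since each simplex $\sigma \in P$ is closed in $\RR^{N}$, any $\RR^{N}$-relatively open subset $X \subset |P|$ trivially satisfies that $X \cap \sigma$ is open in $\sigma$, so the subspace topology is coarser than the polytope topology. For the reverse inclusion -- which is where the hypothesis \eqref{E:intersect.finitely.many} is essential -- suppose $X \subset |P|$ is polytope-open and fix $x \in X$. Choose an $\RR^{N}$-open neighborhood $U$ of $x$ meeting only the simplices $\sigma_{1}, \ldots, \sigma_{k} \in P$. By polytope-openness, $X \cap \sigma_{i}$ is open in $\sigma_{i}$, so each $\sigma_{i} \setminus X$ is closed in $\sigma_{i}$ and therefore closed in $\RR^{N}$. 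Set
\[
   C := \bigcup_{i=1}^{k} (\sigma_{i} \setminus X) \quad \text{and} \quad V := U \setminus C.
\]
Then $C$ is closed in $\RR^{N}$ as a finite union of closed sets, and $x \notin C$ because $x \in X$, so $V$ is an $\RR^{N}$-open neighborhood of $x$. Any $y \in V \cap |P|$ lies in $U \cap |P|$, hence in some $\sigma_{i}$ with $i \leq k$; since $y \notin C$, we get $y \notin \sigma_{i} \setminus X$, so $y \in X$. Thus $V \cap |P| \subset X$, which shows $X$ is open in the subspace topology. The two topologies therefore agree.

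Finally, part \textup{(\ref{I:local.compactness})} follows at once. Given $x \in |P|$, pick $U$ and $\sigma_{1}, \ldots, \sigma_{k}$ as above, and let $\bar{B} \subset U$ be a closed $\RR^{N}$-ball centered at $x$. Then
\[
    \bar{B} \cap |P| = \bigcup_{i=1}^{k} (\bar{B} \cap \sigma_{i}),
\]
a finite union of closed bounded subsets of $\RR^{N}$, hence compact in $\RR^{N}$; by part \textup{(\ref{I:|P|.is.subspace})} it is compact in $|P|$ as well, and it contains the $|P|$-neighborhood of $x$ obtained from the corresponding open ball. I expect no real obstacle -- the only subtle point is verifying in part \textup{(\ref{I:|P|.is.subspace})} that $\sigma_{i} \setminus X$ is closed in the ambient $\RR^{N}$ (not merely in $\sigma_{i}$), which is exactly what lets the finite union $C$ be closed and permits the excision $V = U \setminus C$ to produce an $\RR^{N}$-open witness.
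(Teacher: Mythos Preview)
Your proof is correct. Part \textup{(\ref{I:P.locally.finite})} matches the paper's argument verbatim. For parts \textup{(\ref{I:local.compactness})} and \textup{(\ref{I:|P|.is.subspace})}, however, the paper simply cites Munkres (Lemma~2.6, p.~11 for local compactness of locally finite complexes, and Exercise~9, p.~14 for the coincidence of the two topologies), whereas you give direct, self-contained arguments and reverse the order, deducing \textup{(\ref{I:local.compactness})} from \textup{(\ref{I:|P|.is.subspace})} rather than from local finiteness. Your excision argument $V = U \setminus C$ is exactly the standard proof that locally finite complexes in $\RR^{N}$ carry the subspace topology, so in substance you are reproducing what Munkres proves; the payoff is that your version is elementary and avoids the external references, at the cost of a few more lines.
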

  \begin{proof}
Suppose $|P| \subset \RR^{N}$ and every $x \in |P|$ has a neighborhood intersecting only finitely many simplices in $P$. Let $v \in P^{(0)}$.  Then $v$ has a neighborhood $U$ that intersects only finitely many simplices in $P$. If $\sigma \in P$ and $v \in \sigma^{(0)}$, then $v \in \sigma \cap U$. I.e., $U$ intersects 
$\sigma$.  Therefore, $v$ is a vertex of only finitely many simplices in $P$. This proves (\ref{I:P.locally.finite}).  
(See Munkres \cite[p.\ 11]{jrM84}.)

By item (\ref{I:P.locally.finite}) and Munkres \cite[Lemma 2.6, p.\ 11]{jrM84} we have that $|P|$ is locally compact.  And by Munkres \cite[Exercise 9, p.\ 14]{jrM84}, the space $|P|$ is a subspace of $\RR^{N}$.
  \end{proof}  

A simplicial complex $P'$ in $\RR^{N}$ is a ``subdivision'' of $P$ (Munkres \cite[p.\ 83]{jrM84}) if:
	\begin{enumerate}
		\item Each simplex in $P'$ is contained in a simplex of $P$.
		\item Each simplex in $P$ equals the union of finitely many simplices in $P'$.
	\end{enumerate}
In particular, a subdivision of a finite complex is finite.
	
\begin{proof}[Proof of lemma \ref{L:bary.coords.are.Lip}]
Let $x, y \in |P|$.    Since $P$ is a finite complex there exists $\delta_{1} > 0$ s.t.\ if $\rho, \tau \in P$ are disjoint then $dist(\rho, \tau) > 2 \delta_{1}$. 
Let $\rho$ ($\tau$) be the unique simplex in $P$ s.t.\ $x \in \text{Int} \, \rho$ (respectively [resp.], $y \in \text{Int} \, \tau$; see \eqref{E:x.in.exctly.1.simplex.intrr}).  Therefore, if $\rho$ and $\tau$ are disjoint then the Euclidean length $|x-y|$ is bounded below by $2 \delta_{1}$.  Moreover, $\bigl| \boldsymbol{\beta}(z) \bigr| \leq 1$ for every $z \in |P|$ since the components of $\boldsymbol{\beta}(x)$ are nonnegative and sum to 1.  Thus,
	\begin{equation}  \label{E:x.y.more.than.delta1.apart}
		\bigl| \boldsymbol{\beta}(x) - \boldsymbol{\beta}(y) \bigr| \leq (1/\delta_{1}) |x - y| 
		    \text{ if $x$ and $y$ lie in disjoint simplicies.}
	\end{equation}

So assume $\rho \cap \tau \ne \varnothing$.  In fact, first consider the behavior of $\beta$ on a single simplex, $\rho$ in $P$.  (This covers the case where $\tau\subset \rho$ or \emph{vice versa}.) Suppose $\rho$ is an $n$-simplex, so $\rho$ has $n+1$ vertices $v(0), \ldots, v(n)$.  If $n=0$, i.e., $\rho$ is a single point, then $\beta$ is trivially Lipschitz on $\rho$.  So suppose $n > 0$. We show that $\beta$ is Lipschitz on $\rho$.   
We can assume $|P| \subset \RR^{N}$ for some $N \geq n$.  
Let $V^{(n+1) \times N}$ be the matrix whose $i^{th}$ row is $v(i-1)$ ($i = 1, \ldots, n+1$).  (Use superscripts to indicate matrix dimension.)  Let $V_{0}^{n \times N}$ be the matrix whose $i^{th}$ row is $v(i) - v(0)$ ($i = 1, \ldots, n$).  Let 
$1_{n}^{n \times 1}$ be the column vector $(1, \ldots, 1)^{T}$.  Thus,
	\begin{equation}  \label{E:V0.from.V}
		(-1_{n} \; \;  I_{n} ) V = V_{0},
	\end{equation}	
where $I_{n}$ is the $n \times n$ identity matrix.

The vertices of $\rho$ are geometrically independent so $V_{0}$ has full rank $n$.  This means $V_{0} V_{0}^{T}$ is invertible.  But by \eqref{E:V0.from.V} $(-1_{n} \; \;  I_{n} ) V V_{0}^{T} = V_{0} V_{0}^{T}$.  Therefore, $W^{(n+1) \times n} := V V_{0}^{T}$ has rank $n$.  This implies 
that the vector $1_{n+1}^{(n+1) \times 1} = (1, \ldots, 1)^{T}$ is not in the column space 
of $W^{(n+1) \times n}$.  For suppose for some column vector $\alpha$ we have $W \alpha = 1_{n+1}$.  Then $\alpha \ne 0$ and from \eqref{E:V0.from.V} and the fact that $V_{0} V_{0}^{T}$ is nonsingular we have
	\[
		0 \ne V_{0} V_{0}^{T} \alpha = (-1_{n} \; I_{n} ) W \alpha = (-1_{n} \; I_{n} ) 1_{n+1} = 0.
	\]
Therefore, $(W, \; 1_{n+1})$ is invertible.

For $x \in \rho$, let $\bigl( \boldsymbol{\beta}^{\rho}(x) \bigr)^{1 \times (n+1)}$ be the row vector 
$\bigl( \beta_{v(0)}(x), \ldots, \beta_{v(n)}(x) \bigr)$.  Think of $x \in \RR^{N}$ as a row vector.  Then we have $x = \boldsymbol{\beta}^{\rho}(x) V$ and 
$1 = \boldsymbol{\beta}^{\rho}(x) 1_{n+1}$.  Therefore,
	\[
		(x V_{0}^{T}, \; 1) = \boldsymbol{\beta}^{\rho}(x) (W, \; 1_{n+1})^{(n+1) \times (n+1)}.
	\]
But we have just observed that $U^{(n+1) \times (n+1)} := (W, \; 1_{n+1})$ is invertible.  Therefore,
	\[
		\boldsymbol{\beta}^{\rho}(x) = (x V_{0}^{T}, \; 1) U^{-1}.
	\]
Hence, $\boldsymbol{\beta}^{\rho}$ is affine on $\rho$.  Therefore, $\boldsymbol{\beta}^{\rho}$ and, hence, $\boldsymbol{\beta}$ is Lipschitz on $\rho$.  Since $P$ is a finite complex there is $K < \infty$ that works as a Lipschitz constant for every simplex in $P$.  I.e.,
	\begin{equation}  \label{E:beta.unif.Lip.on.each.simplex}
		\bigl| \boldsymbol{\beta}(x) - \boldsymbol{\beta}(x')  \bigr| \leq K |x - x'| 
		\text{ for every } x, x' \in \rho \text{ for every } \rho \in P.
	\end{equation}

It remains to tackle the case 
	\begin{multline}  \label{E:rho.intersect.but.aren't.subsets}
		x \in \text{Int} \, \rho \text{ and } y \in \text{Int} \, \tau ; \; \rho, \tau \in P; \;  \\
		     \rho \cap \tau \ne \varnothing \text{ but } \rho \text{ is not a subset of } 
		       \tau \text{ and } \tau \text{ is not a subset of } \rho.
	\end{multline}
$\rho \cap \tau \ne \varnothing$ but $\rho$ is not a subset of $\tau$ and $\tau$ is not a subset of $\rho$.  In this case, by \eqref{E:Int.rho.cuts.sigma.then.rho.in.sigma},  
$(\text{Int} \, \rho) \cap (\text{Int} \, \tau) = \varnothing$. We handle this case by reducing it to the last case.  By \eqref{E:intersection.of.simps}, 
$\rho \cap \tau$ is a simplex, a proper face of both $\rho$ and $\tau$. Let $\xi$ be the face of $\rho$ opposite $\rho \cap \tau$ 
and let $\omega$ be the face of $\tau$ opposite $\rho \cap \tau$.  Let $x \in \text{Int} \, \rho$ and $y \in \text{Int} \, \tau$.  

\emph{Claim:} There is a unique $z_{0} = z_{0}(x) \in \xi$ s.t.\ the line passing through $x$ and $z_{0}$ intersects $ \text{Int} \, (\rho \cap \tau)$.  Given $z \in \xi$, the line, $L(z) = L(z,x)$, passing through $z$ and $x$ is unique since $x \in \text{Int} \, \rho$ implies $x \notin \xi$.  Let $v(0), \ldots, v(n)$ be the vertices of $\rho$ and, renumbering if necessary, we may assume $v(0), \ldots, v(m)$ are the vertices of $\rho \cap \tau$ for some $m = 0, \ldots, n-1$.  Then $v(m+1), \ldots, v(n)$ are the vertices of $\xi$.  Let $z \in \xi$ and write
	\[
		z = \sum_{i=m+1}^{n} \mu_{i} v(i),
	\]
where the $\mu_{i}$'s are nonnegative and sum to 1.

First, we prove there is at most one $z \in \xi$ s.t.\ $L(z) \cap \rho \cap \tau \ne \varnothing$. Suppose 
$L(z)$ intersects $\rho \cap \tau$ at $\tilde{x} = \sum_{i=0}^{m} \mu_{i} v(i)$.  Then for some $t \in \RR$ with $t \ne 1$ we have
	\begin{multline}  \label{E:xtilde.is.convx.combo.of.x.z}
		\tilde{x} = \sum_{i=0}^{m} \mu_{i} v(i) 
			= t  \sum_{i=0}^{n} \beta_{v(i)}(x) v(i) + (1-t) \sum_{i=m+1}^{n} \mu_{i} v(i)  \\
			= \sum_{i=0}^{m} t \, \beta_{v(i)}(x) v(i) 
			+ \sum_{i=m+1}^{n} \bigl[ t \, \beta_{v(i)}(x) - (t-1) \mu_{i} \bigr] v(i).
	\end{multline}
Then by geometric independence of $v(0), \ldots, v(n)$ we have
	\begin{equation}  \label{E:mu.t.beta}
		\mu_{i} = t  \beta_{v(i)}(x), \quad i = 0, \ldots, m  \quad \text{ and } \quad
			\mu_{i} = \frac{t}{t-1} \beta_{v(i)}(x), \quad i = m+1, \ldots, n.
	\end{equation}
Let $b = \sum_{i=0}^{m} \beta_{v(i)}(x)$.  Since $x \in \text{Int} \, \rho$, we have $b \in (0,1)$.  From \eqref{E:mu.t.beta} and the fact that $\sum_{i=0}^{m} \mu_{i} = 1$ we see $t = 1/b > 1$.  In particular, $z$ and $\tilde{x}$ are unique if they exist.  If it exists, denote that $z$ by $z_{0}$.

Next, we prove existence of $z_{0}$.  Let $t = 1/b$.  Then it is easy to see that if $\mu_{0}, \ldots, \mu_{n}$ are defined by \eqref{E:mu.t.beta} then 
	\[
	\sum_{i=0}^{m} \mu_{i} = 1 = \sum_{i=m+1}^{n} \mu_{i}.
	\]
Hence, $z_{0} := \sum_{i=m+1}^{n} \mu_{i} v(i) \in \xi$ and 
$\tilde{x} := \sum_{i=0}^{m} \mu_{i} v(i) \in \rho \cap \tau$ and \eqref{E:xtilde.is.convx.combo.of.x.z} holds.  Since $x \in  \text{Int} \, \rho$, we have $\beta_{v(i)}(x) > 0$ for $i = 1, \ldots, n$.  Therefore, $\mu_{i} > 0$ for $i = 1, \ldots, m$.  Thus, $\tilde{x} \in \text{Int} \, (\rho \cap \tau)$.  I.e., $z_{0} \in \xi$, $x$, and $\tilde{x} \in \text{Int} \, (\rho \cap \tau)$ lie on the same line. This proves the claim. Define $\tilde{y} \in \rho \cap \tau$ similarly.  It has similar  properties.

The idea behind the rest of the proof is to first show that 
	\begin{equation}  \label{E:x.y.tilde.bckwrds.tringle.ineq}
		| x-\tilde{x} |+ |\tilde{x} - \tilde{y}| + | \tilde{y} - y | \leq K' \, |x - y|, 
	\end{equation}
where $K' = K'(\rho, \tau) < \infty$ depends only on $\rho$ and $\tau$, not on $x$ or $y$.  Notice that $x$ and $\tilde{x}$ lie in the same simplex in $P$, \emph{viz.} $\rho$. Similarly, $\tilde{x}$ and $\tilde{y}$ both lie in $\rho \cap \tau \in P$.  The points $\tilde{y}$ and $y$ also lie in the same simplex in $P$. So we may apply \eqref{E:beta.unif.Lip.on.each.simplex} to each term 
in $| x-\tilde{x} |+ |\tilde{x} - \tilde{y}| + | \tilde{y} - y |$ and then maximize $K'(\rho, \tau)$ over appropriate pairs $\rho, \tau \in P$.

The simplex $\rho \cap \tau$ lies on a unique plane, $\Pi_{\rho \cap \tau}$,  of minimum dimension.  (See \eqref{E:formla.for.affine.plane}.)  ($\Pi_{\rho \cap \tau}$ might not pass through the origin.) So, e.g., if $\rho \cap \tau$ is a single point $v$ (i.e., $\rho \cap \tau$ 0-dimensional) 
then $\Pi_{\rho \cap \tau} = \{ v \}$.  Now, $x \in \text{Int} \, \rho$ so $x \notin \Pi_{\rho \cap \tau}$.
Let $\hat{x} \in \Pi_{\rho \cap \tau}$ be the orthogonal projection of $x$ onto $\Pi_{\rho \cap \tau}$, i.e., 
$\hat{x}$ is the closest point of $\Pi_{\rho \cap \tau}$ to $x$. Note that $\hat{x}$ may not lie 
in $\rho \cap \tau$.  Define $\hat{y}$ similarly. Let $x_{0}$ be an arbitrary point 
in $\text{Int} \, (\rho \cap \tau)$. E.g., $x_{0}$ might be the barycenter of $\rho \cap \tau$.  (See \eqref{E:barycenter.of.sigma}.)  In any case, $x_{0}$ need only depend on $\rho \cap \tau$, not on $x$ or $y$.Let 
	\begin{equation}  \label{E:y0.=.x0}
		y_{0} := x_{0}.
	\end{equation}   
Then by \eqref{E:Int.rho.cuts.sigma.then.rho.in.sigma}, there exists $r > 0$ s.t.\ the distance 
from $x_{0} = y_{0}$ to any face of $\rho$ or $\tau$ that does not itself have $\rho \cap \tau$ as a face is at least $2r$.  We may assume $r$ only depends on $\rho \cap \tau$, not on $x$ or $y$.

\emph{Claim:} 
	\begin{equation}  \label{E:xdot.in.rho.ydot.in.tau}
		\dot{x} := x_{0} + |x - \hat{x}|^{-1} r (x - \hat{x}) \in \rho \text{ and }
			\dot{y} := y_{0} + |y - \hat{y}|^{-1} r (y - \hat{y}) \in \tau.  
	\end{equation}
First, note that  
	\begin{equation}  \label{E:x0.+.t.x.-.xhat.in.rho}
		\text{for $t > 0$ sufficiently small, } x_{0} + t (x - \hat{x}) \in \text{Int} \, \rho.
	\end{equation}
To see this, observe that by \eqref{E:formla.for.affine.plane} we can write
	\[
		\hat{x} = \sum_{i=0}^{m} \zeta_{i} v(i),
	\]
where $v(0), \ldots, v(m)$ are the vertices of $\rho \cap \tau$; $\zeta_{0}, \ldots, \zeta_{m} \in \RR$; and $\zeta_{0} + \cdots + \zeta_{m} = 1$.  (But the $\zeta_{i}$'s do not have to be nonnegative.)  Moreover, since $x_{0}$ is an interior point of $\rho \cap \tau$ we have
	\[
		\beta_{v(i)}(x_{0}) > 0, \text{ for } i = 0, \ldots, m, 
			\text{ but } \beta_{v(i)}(x_{0}) = 0 \text{ for } i = m+1, \ldots, n.
	\]
Let $t > 0$.  Then
	\begin{equation}  \label{E:x0.+.t.x.-.xhat.in.terms.of.vs}
		x_{0} + t (x - \hat{x}) 
			= \sum_{i=0}^{m} \bigl( \beta_{v(i)}(x_{0}) - t \zeta_{i} + t \beta_{v(i)}(x) \bigr) v(i) 
			+ t \sum_{i=m+1}^{n} \beta_{v(i)}(x) v(i).
	\end{equation}
Since $\beta_{v(i)}(x_{0}) > 0$ for $i = 0, \ldots, m$, for $t > 0$ sufficiently small $\beta_{v(i)}(x_{0}) - t \zeta_{i} > 0$ for $i = 0, \ldots, m$.  So certainly $\beta_{v(i)}(x_{0}) - t \zeta_{i} + t \beta_{v(i)}(x) > 0$ for $i = 0, \ldots, m$.  I.e., the coefficients in \eqref{E:x0.+.t.x.-.xhat.in.terms.of.vs} are all strictly positive.  Finally, the sum of the coefficients satisfies
	\begin{align*}
		\sum_{i=0}^{m} \bigl( \beta_{v(i)}(x_{0}) - t \zeta_{i} + t \beta_{v(i)}(x) \bigr) 
				+ t \sum_{i=m+1}^{n} \beta_{v(i)}(x)
			&= \sum_{i=0}^{m} \beta_{v(i)}(x_{0}) - t \sum_{i=0}^{m} \zeta_{i} 
				+ t \sum_{i=0}^{n} \beta_{v(i)}(x) \\
			&= 1 - t + t \\
			&= 1.
	\end{align*}
That completes the proof of \eqref{E:x0.+.t.x.-.xhat.in.rho}.

Now suppose $\dot{x}$ defined by \eqref{E:xdot.in.rho.ydot.in.tau} does \emph{not} lie in $\rho$.  Let $\Pi_{\rho}$ be the smallest plane in $\RR^{N}$ containing $\rho$.  So $\Pi_{\rho \cap \tau} \subset \Pi_{\rho}$.  By \eqref{E:formla.for.affine.plane}, we have
	\[
		\Pi_{\rho} = \left\{ \sum_{i=0}^{n} \gamma_{i} v(i) : \sum_{i=0}^{n} \gamma_{i} = 1 \right\}
		         = \left\{ v(0) + \sum_{i=1}^{n} \gamma_{i} \bigl( v(i) - v(0) \bigr) : 
		            \gamma_{1}, \ldots, \gamma_{n} \in \RR \right\},
	\]
where $v(0), \ldots, v(n)$ are the vertices of $\rho$.  Since $v(1) - v(0), \ldots, v(n) - v(0)$ are linearly independent, the map that takes a point 
$\sum_{i=0}^{n} \gamma_{i} v(i) \in \Pi_{\rho}$ to the vector $\gamma_{0}, \ldots, \gamma_{n}$ is well-defined and continuous.  Now $x_{0} \in \rho \cap \tau \subset \Pi_{\rho}$, $x \in \rho \subset \Pi_{\rho}$, and $\hat{x} \in \Pi_{\rho \cap \tau} \subset \Pi_{\rho}$. Moreover, the coefficients of $x_{0}$, $x$, and $\hat{x}$ in the expression for $\dot{x}$ in \eqref{E:xdot.in.rho.ydot.in.tau}, \emph{viz.}, 1, $r/|x - \hat{x}|$, and $-r/|x-\hat{x}|$ sum to 1.  It follows that  $\dot{x} \in \Pi_{\rho}$.  
Hence, we can write $\dot{x} = \sum_{i=0}^{n} \zeta_{i} v(i)$ with $\zeta_{0} + \cdots + \zeta_{n} = 1$.  

Let $S$ be the line segment joining $x_{0}$ and $\dot{x}$.  I.e., 
	\begin{equation}  \label{E:line.segment.S.defn}
		S = \bigl\{ x_{0} + t(x - \hat{x}) : 0 \leq t \leq r/|x-\hat{x}| \bigr\}.
	\end{equation}
By \eqref{E:x0.+.t.x.-.xhat.in.rho} for some $t \in (0, r/|x-\hat{x}|)$ we have
	\begin{equation}   \label{E:x.prime.defn}
		x' := x_{0} + t(x-\hat{x}) \in (\text{Int} \, \rho) \cap S.
	\end{equation}
Since $x' \in \text{Int} \, \rho$, the coefficients in the representation of $x'$ as a linear combination of 
\linebreak $v(0), \ldots, v(n)$ must all be strictly positive.  
Since by assumption $\dot{x} \notin \rho$, one or more of the coefficients, $\zeta_{0}, \ldots, \zeta_{n}$, of $v(0), \ldots, v(n)$ for $\dot{x}$ must be strictly negative.  Therefore, somewhere between $x'$ and $\dot{x}$ the segment $S$ must cross the boundary $\text{Bd} \, \rho$.  
Let $w \in \text{Bd} \, \rho$ be the point of intersection. Thus, for some $s \in (t, r/|x-\hat{x}|)$ we have 
	\begin{equation}  \label{E:w.in.terms.of.x0.x.xhat}
		w = x_{0} + s (x - \hat{x}).
	\end{equation}
	
Let $\omega$ be the, necessarily proper, face of $\rho$ s.t.\ $w \in \text{Int} \, \omega$.  (See \eqref{E:x.in.exctly.1.simplex.intrr}.) Now, $\rho \cap \tau$ cannot be a face of $\omega$.  For suppose  $\rho \cap \tau \subset \omega$.  Note that $w \ne x_{0}$, because otherwise $s (x - \hat{x}) = 0$ in \eqref{E:w.in.terms.of.x0.x.xhat}, an impossibility since $x \ne \hat{x}$ and $s > 0$.  Hence, under the assumption that $\rho \cap \tau \subset \omega$ the segment $S$ contains two distinct points of $\omega$, \emph{viz.}, $x_{0} \in \rho \cap \tau$ and $w$.  As a proper face of $\rho$, the simplex 
$\omega$ is defined by the vanishing of some set of barycentric coordinates.  Thus, there exists a nonempty proper subset 
$J$ of $\{0, \ldots, n  \}$ s.t.\
	\[
		\omega = \left\{ \sum_{j = 0}^{n} \beta_{j} v(j) : \beta_{j} \geq 0 \; (j = 0, \ldots, n), 
			\beta_{j} = 0 \text{ if } j \in J, \text{ and } \sum_{j=0}^{n} \beta_{j} = 1 \right\}.
	\]
Since $x, \hat{x} \in \Pi_{\rho}$, for some $\gamma_{0}, \ldots, \gamma_{n} \in \RR$ we have
	\[
		x - \hat{x} =  \sum_{j=0}^{n} \gamma_{j} v(j), 
			\text{ where } \sum_{j=0}^{n} \gamma_{j} = 0.
	\]
Under the hypothesis that $\rho \cap \tau \subset \omega$, we have $w, x_{0} \in \omega$.  In particular, we have $\beta_{v(j)}(x_{0}) = 0$ for $j \in J$.  It follows from \eqref{E:w.in.terms.of.x0.x.xhat} that $\gamma_{j} = 0$ if $j \in J$. Hence, by \eqref{E:line.segment.S.defn} for every $x'' \in S \subset \Pi_{\rho}$ we can write (uniquely)
	\[
		x'' = \sum_{j \in J^{c}} \alpha_{j} v(j), \text{ where } \sum_{j \in J^{c}} \alpha_{j} = 1.
	\]
(Here, $J^{c} = \{ j = 0, \ldots, n : j \notin J \}$.)  In particular, $S \cap (\text{Int} \, \rho) = \varnothing$.  
But by \eqref{E:x.prime.defn}, $x' \in S \cap (\text{Int} \, \rho)$. Contradiction.  This proves $\rho \cap \tau$ cannot be a face of $\omega$.  

Since $\rho \cap \tau$ is not a face of $\omega$, by choice of $r > 0$ the distance 
from $x_{0}$ to $\omega$ is at least $2r$.  Since $\omega$ lies between $x_{0}$ and $\dot{x}$ 
along $S$ we have by \eqref{E:xdot.in.rho.ydot.in.tau} 
	\[
		r = | \dot{x} - x_{0} | \geq 2r > 0.
	\]
This contradiction proves the claim \eqref{E:xdot.in.rho.ydot.in.tau}.

\emph{Claim:  The angle between $x-\hat{x}$ and $y-\hat{y}$ is bounded away from 0.}  I.e., there exists $\gamma \in (0,1)$ independent of $x \in \text{Int} \, \rho$ and $y \in \text{Int} \, \tau$ (i.e., $\gamma$ only depends on $\rho$ and $\tau$) s.t.\
	\begin{equation}  \label{E:angle.tween.x-xhat.y-yhat.ain't.0}
		(x-\hat{x}) \cdot (y-\hat{y}) \leq \gamma |x-\hat{x}||y-\hat{y}|,
	\end{equation}
where, as usual, ``$\cdot$'' indicates the usual Euclidean inner product.
Suppose \eqref{E:angle.tween.x-xhat.y-yhat.ain't.0} is false.  Then there exist sequences 
$\{ x_{n} \} \subset \text{Int} \, \rho$, $\{ y_{n} \} \subset \text{Int} \, \tau$ s.t.\ 
	\[
		\frac{(x_{n}-\hat{x}_{n}) \cdot (y_{n}-\hat{y}_{n})}{|x_{n}-\hat{x}_{n}||y_{n}-\hat{y}_{n}|} 
		           \to 1,
	\]
where $\hat{x}_{n}$ ($\hat{y}_{n}$) is the orthogonal projection of $x_{n}$ (resp. $y_{n}$) onto $\Pi_{\rho \cap \tau}$. Define $\dot{x}_{n}$ as in \eqref{E:xdot.in.rho.ydot.in.tau} with $x$ and $\hat{x}$ replaced by $x_{n}$ and $\hat{x}_{n}$, resp. Define $\dot{y}_{n}$ similarly.   By definition of $\dot{x}_{n}$ and $\hat{x}_{n}$ the vector $\dot{x}_{n} - x_{0}$ has length $r > 0$ and is orthogonal to $\Pi_{\rho \cap \tau}$.  Ditto for $\dot{y}_{n} - y_{0}$.  But $x_{0} \in \rho \cap \tau  \subset \Pi_{\rho \cap \tau}$.  Hence, $dist(\dot{x}_{n}, \rho \cap \tau) \geq r$.  Moreover, by \eqref{E:xdot.in.rho.ydot.in.tau}, $\dot{x}_{n} \in \rho$.  
Similarly, $dist(\dot{y}_{n}, \rho \cap \tau) \geq r$ and $\dot{y}_{n} \in \tau$.
Therefore, by compactness of $\rho$ and $\tau$, we may assume $\dot{x}_{n} \to \dot{x}_{\infty} \in \rho$ and $\dot{y}_{n} \to \dot{y}_{\infty} \in \tau$.  We must have $| \dot{x}_{\infty} - x_{0} | = r$, $| \dot{y}_{\infty} - y_{0} | = r$, $dist(\dot{x}_{\infty}, \rho \cap \tau) \geq r$, and $dist(\dot{y}_{\infty}, \rho \cap \tau) \geq r$. 
In particular, 
	\begin{equation}  \label{E:x.dot.infty.in.rho.less.rho.cap.tau}
		\dot{x}_{\infty} \in \rho \setminus (\rho \cap \tau) \text{ and }
			\dot{y}_{\infty} \in \tau \setminus (\rho \cap \tau)
	\end{equation}

Now, by definition of $\{ x_{n} \}$, $\{ y_{n} \}$, $\{ \dot{x}_{n} \}$, and $\{ \dot{y}_{n} \}$, we have
	\begin{equation*}
		(\dot{x}_{n} - x_{0}) \cdot (\dot{y}_{n} - y_{0})  
			= r^{2} \frac{(x_{n}-\hat{x}_{n}) \cdot (y_{n}-\hat{y}_{n})}
					{|x_{n}-\hat{x}_{n}||y_{n}-\hat{y}_{n}|}
			         \to r^{2} = | \dot{x}_{\infty} - x_{0} | | \dot{y}_{\infty} - y_{0} | \text{ as } n \to \infty.
	\end{equation*} 
But, 
	\[
		 (\dot{x}_{n} - x_{0}) \cdot (\dot{y}_{n} - y_{0})
			  \to (\dot{x}_{\infty} - x_{0}) \cdot (\dot{y}_{\infty} - y_{0}) \text{ as } n \to \infty.
	\]
This means $\dot{x}_{\infty} - x_{0}$ and $\dot{y}_{\infty} - y_{0}$ are positive multiples of each other.  But $\dot{x}_{\infty} - x_{0}$ and $\dot{y}_{\infty} - y_{0}$ have the same length $r$.  
Hence, $\dot{x}_{\infty} - x_{0} = \dot{y}_{\infty} - y_{0}$. However, by \eqref{E:y0.=.x0}, $y_{0} = x_{0}$. Therefore, $\dot{x}_{\infty} = \dot{y}_{\infty}$. In particular, 
$\dot{x}_{\infty}, \dot{y}_{\infty} \in \rho \cap \tau$.  This contradicts \eqref{E:x.dot.infty.in.rho.less.rho.cap.tau}.  The claim \eqref{E:angle.tween.x-xhat.y-yhat.ain't.0} follows.

By definition of $\hat{x}$ and $\hat{y}$ and \eqref{E:angle.tween.x-xhat.y-yhat.ain't.0}, we have 
	\begin{align}  \label{E:expand.length.of.x-y.using.hats}
		|x - y|^{2} &= \bigl| (x-\hat{x}) + (\hat{x} - \hat{y}) + (\hat{y} - y) \bigr|^{2} \notag \\
		  &= | x-\hat{x} |^{2} + | \hat{x} - \hat{y} |^{2} - 2 (x-\hat{x})  \cdot (y-\hat{y}) + | \hat{y} - y |^{2}            
		             \notag \\
		  &\geq | x-\hat{x} |^{2} + | \hat{x} - \hat{y} |^{2} - 2 \gamma |x-\hat{x}||y-\hat{y}| 
		        + | y  - \hat{y} |^{2} \\
		  &= (1-\gamma) \bigl( | x-\hat{x} |^{2} + | y  - \hat{y} |^{2} \bigr) + | \hat{x} - \hat{y} |^{2} + 
		         \gamma \bigl( | x-\hat{x} | - | y  - \hat{y} | \bigr)^{2} \notag \\
		  &\geq (1-\gamma) \bigl( | x-\hat{x} |^{2} + | y  - \hat{y} |^{2}+ | \hat{x} - \hat{y} |^{2}  \bigr).  
		             \notag
	\end{align}
Recall the inequality
	\begin{equation}  \label{E:2a.squared.etc.ineq}
		2 a^{2} + 2 b^{2} \geq (a + b)^{2}, \quad (a,b \in \RR).
	\end{equation}
Applying this twice to \eqref{E:expand.length.of.x-y.using.hats} we get
	\begin{align*}
		|x - y|^{2} &\geq \frac{1-\gamma}{4} 
	                      \Bigl(2  \bigl[ | x-\hat{x} | + | y  - \hat{y} | \bigr]^{2}+ 4| \hat{x} - \hat{y} |^{2}  \Bigr) \\
	                 &\geq \frac{1-\gamma}{4} 
	                      \Bigl(2  \bigl[ | x-\hat{x} | + | y  - \hat{y} | \bigr]^{2}+ 2| \hat{x} - \hat{y} |^{2}  \Bigr) \\
	                  &\geq \frac{1-\gamma}{4} 
	                      \bigl[ | x-\hat{x} | + | y  - \hat{y} | + | \hat{x} - \hat{y} | \bigr]^{2}. \\
	\end{align*}
We conclude
	\begin{equation}  \label{E:|x-y|.dominates.mult.of.sum.of.tilde.pieces}
		\frac{2}{\sqrt{1-\gamma}} |x - y| 
		\geq  | x-\hat{x} |+ | \hat{x} - \hat{y} | + | y  - \hat{y} |  ,
			\text{ for } x \in \text{Int} \, \rho, \; y \in \text{Int} \, \tau.
	\end{equation}

\emph{Claim: The angle, $\theta$, between $x - \tilde{x}$ and $\Pi_{\rho \cap \tau}$ is bounded away from 0.}   Since $\hat{x}$ is the orthogonal projection of $x$ onto $\Pi_{\rho \cap \tau}$, we have 
that $\theta$ is the angle between $x - \tilde{x}$ and $\hat{x} - \tilde{x}$ 
and $\sin \theta =  |x - \hat{x}|/|x - \tilde{x}|$.  By definition of $\hat{x}$, $|x - \tilde{x}|/|x - \hat{x}| \ge 1$.  Therefore, $\theta$ being bounded away from 0 is equivalent to
	\begin{multline}  \label{E:x.minus.xtilde.over.x.minus.xhat.bdd}
		1/\sin \theta = |x - \tilde{x}|/|x - \hat{x}| \text{ is bounded above by some }  \\
		    \alpha \in (1, \infty)  \text{ independent of } x \in \text{Int} \, \rho.
	\end{multline}
And similarly for $y$, $\tilde{y}$, and $\hat{y}$.

If $z \in \xi$ (the face of $\rho$ opposite $\rho \cap \tau$), let $\hat{z}$ denote the orthogonal projection of $z$ onto $\Pi_{\rho \cap \tau}$.  Recall that $\tilde{x}$, $x$, and $z_{0}$ lie on the same line.  Taking orthogonal projections, we see that $\tilde{x}$, $\hat{x}$, and $\hat{z}_{0}$ lie on the same line in $\Pi_{\rho \cap \tau}$. Therefore, by similarity of triangles\footnote{To see all this analytically, 
let $c = |x-\tilde{x}|/|z_{0} - \tilde{x}|$.  ($|z_{0} - \tilde{x}| > 0$, since $\tilde{x} \in \rho \cap \tau$ 
and $z_{0} \in \xi$, the face opposite $\rho \cap \tau$.)  Then 
	\begin{equation} \label{E:x.in.terms.of.c.z0.xtilde}
		x = c(z_{0} - \tilde{x}) + \tilde{x},
	\end{equation}
since $x$ lies on the line segment joining $z_{0}$ and $\tilde{x}$.  
Let $\ddot{x} = c(\hat{z}_{0} - \tilde{x}) + \tilde{x}$.  Then $\ddot{x}$ lies on the line joining $\tilde{x}$ and $\hat{z}_{0}$.  (In particular, $\ddot{x} \in \Pi_{\rho \cap \tau}$.)  But it is easy to see 
from \eqref{E:x.in.terms.of.c.z0.xtilde} that $x - \ddot{x} = c(z_{0} - \hat{z}_{0}) \perp \Pi_{\rho \cap \tau}$.  I.e., 
	\begin{equation}  \label{E:xhat.=.xdoubledot}
		\hat{x} = \ddot{x} = c(\hat{z}_{0} - \tilde{x}) + \tilde{x}.
	\end{equation}
Thus, $z_{0} - \tilde{x}$, $x - \tilde{x}$, $\hat{x} - \tilde{x}$, and $\hat{z}_{0} - \tilde{x}$ lie in the subspace spanned by $z_{0} - \tilde{x}$ and $\hat{z}_{0} - \tilde{x}$ and
		\[
			\frac{|x - \tilde{x}|}{|x - \hat{x}|} 
			  = \frac{\Bigl| \bigl[ c(z_{0} - \tilde{x}) + \tilde{x} \bigr] - \tilde{x} \Bigr|}
			            {\Bigl| \bigl[ c(z_{0} - \tilde{x}) + \tilde{x} \bigr] 
			                     - \bigl[ c(\hat{z}_{0} - \tilde{x}) + \tilde{x} \bigr]  \Bigr|} 
			  = \frac{|z_{0} - \tilde{x}|}{|z_{0} - \hat{z}_{0}|}
		\]
by \eqref{E:x.in.terms.of.c.z0.xtilde} and \eqref{E:xhat.=.xdoubledot}.},  
		\[
			\frac{|x - \tilde{x}|}{|x - \hat{x}|} 
			  = \frac{|z_{0} - \tilde{x}|}{|z_{0} - \hat{z}_{0}|}.
		\]
But since $\xi$ and 
$\rho \cap \tau$ are disjoint and compact, $|z - \hat{z}|$ is bounded below and $|z - w|$ is bounded above in $(z,w) \in \xi \times (\rho \cap \tau)$.  The claim
\eqref{E:x.minus.xtilde.over.x.minus.xhat.bdd} follows.  Of course, the same thing goes for $y$ and we may assume the same $\alpha$ works for both $\rho$ and $\tau$.  

It follows from \eqref{E:x.minus.xtilde.over.x.minus.xhat.bdd} and the Pythagorean theorem that 
	\begin{equation}  \label{E:xtilde.minus.xhat.bound}
		|\tilde{x} - \hat{x}| \leq \sqrt{\alpha^{2} -1} \;   |x - \hat{x}| < \alpha |x - \hat{x}|. 
		   \text{ Similarly for } y, \tilde{y}, \text{ and } \hat{y}.
	\end{equation}
Consequently,  
	\begin{multline*}
		|\tilde{x} - \tilde{y}| \leq |\tilde{x} - \hat{x}| + |\hat{x} - \hat{y}| + |\hat{y} - \tilde{y}|
			\leq \alpha |x - \hat{x}| + |\hat{x} - \hat{y}| + \alpha |y - \hat{y}| \\
			\leq \alpha |x - \hat{x}| + 2 \alpha |\hat{x} - \hat{y}| + \alpha |y - \hat{y}|,
	\end{multline*}
since $\alpha > 1$.  Hence,
	\[
		|\hat{x} - \hat{y}| \geq \frac{1}{2 \alpha} |\tilde{x} - \tilde{y}| - \frac{1}{2} |x - \hat{x}| 
			- \frac{1}{2} |y - \hat{y}|.
	\]
Substituting this into \eqref{E:|x-y|.dominates.mult.of.sum.of.tilde.pieces} we get
	\begin{equation*}
		\frac{2}{\sqrt{1-\gamma}} |x - y| 
			\geq  \frac{1}{2} | x-\hat{x} |+ \frac{1}{2 \alpha} |\tilde{x} - \tilde{y}| 
				+ \frac{1}{2}| y  - \hat{y} |.
	\end{equation*}
Therefore, by \eqref{E:x.minus.xtilde.over.x.minus.xhat.bdd} again,
	\begin{equation}
		\frac{2}{\sqrt{1-\gamma}} |x - y| 
			\geq  \frac{1}{2 \alpha}  \bigl( | x-\tilde{x} |+ |\tilde{x} - \tilde{y}| + | y - \tilde{y} | \bigr).
	\end{equation}
I.e., if \eqref{E:rho.intersect.but.aren't.subsets} holds
	\begin{equation}  \label{E:mult.of.x.minus.y.dominates.x.minus.xtilde.etc}
		\frac{4 \alpha}{\sqrt{1-\gamma}} |x - y| 
			\geq  | x-\tilde{x} |+ |\tilde{x} - \tilde{y}| + | y - \tilde{y} |.
	\end{equation}

Let $K' = K'(\rho, \tau) := \tfrac{4 \alpha}{\sqrt{1-\gamma}}$.  
Then \eqref{E:x.y.tilde.bckwrds.tringle.ineq} holds.  (Maximizing over all appropriate $\rho, \tau \in P$ yields corollary \ref{C:reverse.triangle.ineq.in.simp.cmplxs}.) \eqref{E:mult.of.x.minus.y.dominates.x.minus.xtilde.etc} 
and \eqref{E:beta.unif.Lip.on.each.simplex} together imply 
	\begin{align*}
		\bigl| \boldsymbol{\beta}(x) - \boldsymbol{\beta}(y)  \bigr| 
			& \leq \bigl| \boldsymbol{\beta}(x) - \boldsymbol{\beta}(\tilde{x})  \bigr|
				+ \bigl| \boldsymbol{\beta}(\tilde{x}) - \boldsymbol{\beta}(\tilde{y})  \bigr|
				+ \bigl| \boldsymbol{\beta}(\tilde{y}) - \boldsymbol{\beta}(y)  \bigr| \\
			&\leq K \bigl( | x-\tilde{x} |+ | \tilde{x} - \tilde{y} | + | y - \tilde{y} |  \bigr) \\
			&\leq K K'(\rho, \tau) |x - y|.
	\end{align*}
Now maximize over all $\rho, \tau \in P$.  This completes the proof.
\end{proof}

\section{Lipschitz maps and Hausdorff measure and dimension}   \label{S:Lip.Haus.meas.dim}
Hausdorff dimension (Giaquinta \emph{et al} \cite[p.\ 14, Volume I]{mGgMjS98.cart.currents} and Falconer \cite[p.\ 28]{kF90}) is defined as follows.  First, we define Hausdorff measure (Giaquinta \emph{et al} \cite[p.\ 13, Volume I]{mGgMjS98.cart.currents}, 
Hardt and Simon \cite[p.\ 9]{rHlS86.GMT}, and Federer \cite[2.10.2. p.\ 171]{hF69}).  Let $s \geq 0$.  If $s$ is an integer, let $\omega_{s}$ denote the volume of the unit ball in $\RR^{s}$:
	\begin{equation}  \label{E:vol.of.unit.ball}
		\omega_{s} = \frac{\Gamma(1/2)^{s}}{\Gamma \left( \tfrac{s}{2} + 1 \right)},
	\end{equation}
where $\Gamma$ is Euler's gamma function (Federer \cite[pp.\ 135, 251]{hF69}).  If $s$ is not an integer, then $\omega_{s}$ can be any convenient positive constant.  Federer uses \eqref{E:vol.of.unit.ball} for any $s \geq 0$. Let $X$ be a metric space with metric $d_{X}$. For any subset $A$ of $X$ and $\delta > 0$ first define	
	\begin{equation}  \label{E:Hs.delta.defn}
		\Hm^{s}_{\delta}( A ) 
		      = \omega_{s} \inf \left\{ \sum_{j} \left( \frac{diam(C_{j})}{2} \right)^{s} \right\}.
	\end{equation}
Here, ``$diam$'' is diameter (w.r.t.\ $d_{X}$) and the infimum is taken over all (at most) countable collections 
$\{ C_{j} \}$ of subsets of $X$ with $A \subset \bigcup_{j} C_{j}$ and 
$diam \, C_{j} < \delta$.  (Thus, $\Hm^{s}_{\delta}(\varnothing) = 0$ since an empty cover covers $\varnothing$ and an empty sum is 0.  If $X$ is second countable, it follows from Lindel\"of's theorem, Simmons \cite[Theorem A, p.\ 100]{gfS63}, that for any $\delta > 0$, such a countable cover exists. 
Otherwise, $\Hm^{s}_{\delta}( A ) = + \infty$.) We may assume that the covering sets $C_{j}$ are all open or that they are closed (Federer \cite[2.10.2, p.\ 171]{hF69}).  The $s$-dimensional Hausdorff measure of $A$ is then
	\begin{equation}  \label{E:Hs.defn}
		\Hm^{s}( A ) = \lim_{\delta \downarrow 0} \Hm^{s}_{\delta}( A ) 
			= \sup_{\delta > 0} \Hm^{s}_{\delta}( A ).
	\end{equation}
Note that $\Hm^{0}( A )$ is the cardinality of $A$ if it is finite.  
Otherwise, $\Hm^{0}( A ) = +\infty$.  At the other extreme,
	\begin{equation}  \label{E:A.empty.iff.H0.A.=.0}
		A = \varnothing \text{ if and only if } \Hm^{0}( A ) = 0.
	\end{equation}

For every $s \geq 0$, $\Hm^{s}$ is an outer measure on $X$ and the Borel subsets of $X$ are $\Hm^{s}$-measurable (Federer \cite[2.10.2, p.\ 171]{hF69} and Hardt and Simon \cite[p.\ 10]{rHlS86.GMT}).  Note that if $X$ is a subset of a Euclidean space (and inherits the Euclidean metric) and we rescale $X$ by multiplying each vector in $X$ by $\lambda > 0$, then for every $A \subset X$ the measure $\Hm^{s}( A )$ will be replaced by $\lambda^{s} \, \Hm^{s}( A )$.

For $A \subset X$ nonempty there will be a number $s_{0} \in [0, + \infty]$ s.t.\ $0 \leq s < s_{0}$ implies $\Hm^{s}( A ) = + \infty$ and $s > s_{0}$ implies 
$\Hm^{s}( A ) = 0$.  That number $s_{0}$ is the ``Hausdorff dimension'', $\dim A$, of $A$ (Falconer \cite[p.\ 28]{kF90}).  (In particular, $\dim \varnothing = 0$.  In appendix \ref{S:basics.of.simp.comps} we already defined $\dim \sigma$, where $\sigma$ is a simplex, and $\dim P$, where $P$ is a simplicial complex.  These dimensions are the same as the respective Hausdorff dimensions, at least if $P$ is finite.) But $\Hm^{s}( A ) = 0$ is a stronger statement than $\dim A \leq s$ 
(Falconer \cite[p.\ 29]{kF90}).  It is easy to see that 
	\begin{multline}  \label{E:dim.of.whole.=.max.dim.of.parts}
		\text{if } A \text{ is a finite union of Borel measurable sets } 
		      A_{1}, \ldots, A_{k}    \\
			\text{ then } \dim A = \max \{ \dim A_{1}, \ldots, \dim A_{k} \}
	\end{multline}
(Falconer \cite[p.\ 29]{kF90}).

Let $Y$ be a metric space with metric $d_{Y}$ and let $f : X \to Y$.  Recall that $f$ is 
\linebreak
``Lipschitz(ian)'' (Giaquinta \emph{et al} \cite[p.\ 202, Volume I]{mGgMjS98.cart.currents}, Falconer \cite[p.\ 8]{kF90}, Federer \cite[pp.\ 63 -- 64]{hF69}) if there exists $K < \infty$ (called a ``Lipschitz constant'' for $f$) s.t.\
	\[
		d_{Y} \bigl[ f(x), f(y) \bigr] \leq K \, d_{X}(x, y), \quad \text{ for every } x, y \in X.
	\]

\begin{example}   \label{E:dist.is.Lip}
If $\Ss \subset X$ is compact then the function $y \mapsto dist(y, \Ss) \in \RR$ is Lipschitz with Lipschitz constant 1.
\end{example}

Further recall the following.  Let $k = 1, 2, \ldots $ and let $\mcl{L}^{k}$ denote $k$-dimensional Lebesgue measure.  Suppose $T$ is a linear operator on $\RR^{k}$ and $v \in \RR^{k}$.  Then by Rudin \cite[Theorems 8.26(a) and 8.28, pp.\ 173--174]{wR66.realcmplx} if $A \subset \RR^{k}$ is Borel measurable then $T(A) + v$ is Lebesgue measurable and
	\begin{equation}  \label{E:Leb.meas.of.affine.trans}
		\mcl{L}^{k} \bigl[ T(A) + v \bigr] = |\det T| \, \mcl{L}^{k}(A).
	\end{equation}
This motivates the following basic fact about Hausdorff measure and dimension
(Falconer \cite[p.\ 28]{kF90}, Hardt and Simon \cite[1.3, p.\ 11]{rHlS86.GMT}).  Let $f : X \to Y$ be Lipschitz with Lipschitz constant $K$.  Then for $s \geq 0$,
   \begin{equation}  \label{E:Lip.magnification.of.Hm}
      \Hm^{s} \bigl[ f(X) \bigr] \leq K^{s} \, \Hm^{s}(X). 
          \text{ Therefore, } \dim f(X) \leq \dim X.  
   \end{equation}

$f : X \to Y$ is ``locally Lipschitz'' (Federer \cite[pp.\ 64]{hF69}) if each $x \in X$ has a neighborhood, $V$, s.t.\ the restriction $f \vert_{V}$ is Lipschitz.  So any Lipschitz map is locally Lipschitz and, conversely, any locally Lipschitz function on $X$ is Lipschitz on any compact subset of $X$.  Moreover,
	\begin{equation}  \label{E:comp.of.Lips.is.Lip}
		\text{The composition of (locally) Lipschitz maps is (resp., locally) Lipschitz.}
	\end{equation}
An easy consequence of \eqref{E:Lip.magnification.of.Hm} 
is the following.

	\begin{lemma}  \label{L:loc.Lip.image.of.null.set.is.null}
Let $X \subset \RR^{k}$.  Suppose $f : X \to Y$ is locally Lipschitz.  If $s \geq 0$ and 
$\Hm^{s}(X) = 0$, then $\Hm^{s} \bigl[ f(X) \bigr] = 0$.  In particular, $\dim f(X) \leq \dim X$.  We also have $\Hm^{0} \bigl[ f(X) \bigr] \leq \Hm^{0}(X)$.
	\end{lemma}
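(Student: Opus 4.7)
The plan is to reduce to the global Lipschitz case by a countable covering argument. Since $X \subset \RR^{k}$, it is second countable in its subspace metric, and hence so is any subspace of it. Using the hypothesis that $f$ is locally Lipschitz, for each $x \in X$ I pick an open (in $X$) neighborhood $V_{x}$ of $x$ on which $f \vert_{V_{x}}$ is Lipschitz with some constant $K_{x} < \infty$. The open cover $\{ V_{x} : x \in X \}$ of $X$ then admits a countable subcover $\{ V_{n} \}_{n=1}^{\infty}$ (Lindel\"of) with corresponding Lipschitz constants $K_{n}$.

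Given this setup, the main step is to apply \eqref{E:Lip.magnification.of.Hm} on each piece. For each $n$, since $f \vert_{V_{n}}$ is Lipschitz with constant $K_{n}$ and $\Hm^{s}(V_{n}) \leq \Hm^{s}(X) = 0$, I get
	\[
		\Hm^{s} \bigl[ f(V_{n}) \bigr]  \leq K_{n}^{s} \, \Hm^{s}(V_{n}) = 0.
	\]
Since $f(X) = \bigcup_{n} f(V_{n})$ and $\Hm^{s}$ is countably subadditive (being an outer measure, as noted in the excerpt), I conclude $\Hm^{s}\bigl[ f(X) \bigr] = 0$.

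The dimension statement then drops out easily: if $s > \dim X$, then $\Hm^{s}(X) = 0$ by definition of Hausdorff dimension, so by what was just shown $\Hm^{s}\bigl[ f(X) \bigr] = 0$, whence $\dim f(X) \leq s$; taking the infimum over such $s$ gives $\dim f(X) \leq \dim X$. Finally, the bound $\Hm^{0}\bigl[ f(X) \bigr] \leq \Hm^{0}(X)$ is nearly automatic because $\Hm^{0}$ is the counting measure (extended to $+\infty$ for infinite sets), and any map is surjective onto its image so cannot increase cardinality; strictly, one can just observe that the case $s=0$ is already handled by the countable covering argument when $\Hm^{0}(X) = 0$ (then $X$ is empty by \eqref{E:A.empty.iff.H0.A.=.0}), and the inequality $\Hm^{0}\bigl[ f(X) \bigr] \leq \Hm^{0}(X)$ for the general case follows from $\lvert f(X) \rvert \leq \lvert X \rvert$.

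No step here looks genuinely difficult; the only subtlety is making sure the countable subcover exists, which is why the Euclidean ambient hypothesis $X \subset \RR^{k}$ (giving second countability, and hence Lindel\"of) is invoked rather than pure local Lipschitz on an arbitrary metric space.
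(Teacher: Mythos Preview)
Your proof is correct and follows essentially the same approach as the paper: cover $X$ by countably many pieces on which $f$ is Lipschitz (via Lindel\"of, using $X \subset \RR^{k}$), apply \eqref{E:Lip.magnification.of.Hm} on each piece, and use countable subadditivity of $\Hm^{s}$. The only cosmetic difference is that the paper takes a disjoint Borel partition rather than an open cover, which lets it write a single inequality $\Hm^{s}[f(X)] \leq \sum_{i} K_{i}^{s} \Hm^{s}(A_{i})$ covering the $\Hm^{0}$ case as well, whereas you handle $\Hm^{0}$ separately via cardinality; both are fine.
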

  \begin{proof}
By Lindel\"of's theorem (Simmons \cite[Theorem A, p.\ 100]{gfS63}) $X$ can be partitioned into a countable number of disjoint Borel sets $A_{1}, A_{2}, \ldots$ on each of which $f$ is Lipschitz with respective Lipschitz constant $K_{i}$. By  \eqref{E:Lip.magnification.of.Hm}, we have
	\begin{equation*}
		\Hm^{s} \bigl[ f(X) \bigr] \leq \sum_{i} \Hm^{s} \bigl[ f(A_{i}) \bigr] 
			\leq \sum_{i} K_{i}^{s} \Hm^{s}(A_{i}).
	\end{equation*}
  \end{proof}

Another generalization of \eqref{E:Lip.magnification.of.Hm} is the following.

\begin{lemma}   \label{L:bound.on.Haus.meas.h.A}
Let $k$ and $m$ be positive integers.  Let $U \subset \RR^{k}$ be open and suppose $h = (h_{1}, \ldots, h_{m}) : U \to \RR^{m}$ is continuously differentiable.  For $x = (x_{1}, \ldots, x_{k}) \in U$, let $Dh(x)$ be the $m \times k$ Jacobian matrix
   \[
      Dh(x) = \left( \frac{\partial h_{i}(y)}{\partial y_{j}} \right)_{y = x}.
   \]
At each $x \in U$, let $\lambda(x)^{2}$ be the largest eigenvalue of $Dh(x)^{T} Dh(x)$ (with $\lambda(x) \geq 0$; ``${}^{T}$'' indicates matrix transposition.) Then $\lambda$ is continuous.  Furthermore, let $a \geq 0$ and let $A \subset U$ be Borel with $\Hm^{a}(A) < \infty$.   Then 
   \begin{equation}  \label{E:lwr.bnd.on.meas.of.image}
      \Hm^{a} \bigl[ h(A) \bigr]  \leq \int_{A} \lambda(x)^{a} \, \Hm^{a}(dx).
   \end{equation}
\end{lemma}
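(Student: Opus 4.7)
The plan is to first establish continuity of $\lambda$, then use the mean value inequality to convert pointwise eigenvalue bounds into local Lipschitz constants, and finally use a countable Borel partition of $A$ adapted to the size of $\lambda$ to apply \eqref{E:Lip.magnification.of.Hm} piecewise and sum.

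For continuity of $\lambda$, note that the matrix-valued map $x \mapsto Dh(x)^{T} Dh(x)$ is continuous on $U$ (since $h$ is $C^{1}$) and takes values in symmetric $k \times k$ matrices. By lemma \ref{L:Eigen.cont.} applied to this map, the ordered eigenvalues depend continuously on $x$, and in particular the largest, $\lambda(x)^{2}$, is continuous; taking nonnegative square roots gives continuity of $\lambda$.

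The next step is the local Lipschitz bound. Let $y \in U$. Since $\lambda$ is continuous, given $\epsilon > 0$ one can choose an open Euclidean ball $B_{y} \subset U$ centered at $y$ with $\sup_{z \in B_{y}} \lambda(z) \leq \lambda(y) + \epsilon$. The operator norm of the linear map $Dh(z)$ (with Euclidean norms on $\RR^{k}$ and $\RR^{m}$) is the largest singular value, which equals $\lambda(z)$. Since $B_{y}$ is convex, the standard vector-valued mean value inequality (apply the ordinary MVT to $t \mapsto v \cdot h\bigl((1-t)x_{1} + t x_{2}\bigr)$ for an arbitrary unit vector $v \in \RR^{m}$ and use Schwarz) yields
\[
    |h(x_{2}) - h(x_{1})| \leq \bigl(\sup_{z \in B_{y}} \lambda(z)\bigr) |x_{2} - x_{1}| \leq \bigl(\lambda(y) + \epsilon\bigr) |x_{2} - x_{1}|
\]
for every $x_{1}, x_{2} \in B_{y}$. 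Thus $h$ is Lipschitz on $B_{y}$ with constant $\lambda(y) + \epsilon$.

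Now fix $\eta > 0$. For integers $i \geq 1$ let $c_{i} := (1+\eta)^{i}$ and set
\[
    A_{i} := \bigl\{ x \in A : c_{i-1} \leq \lambda(x) < c_{i} \bigr\},
    \qquad A_{0} := \{x \in A : \lambda(x) < \eta\}.
\]
Each $A_{i}$ is Borel (by continuity of $\lambda$) and $A = A_{0} \cup \bigcup_{i \geq 1} A_{i}$. On $A_{i}$ with $i \geq 1$, the preceding paragraph provides, for each $y \in A_{i}$, an open ball $B_{y}$ on which $h$ is Lipschitz with constant $c_{i-1}(1+\eta) \cdot (\text{something close to } 1)$; more cleanly, by choosing $\epsilon = \eta c_{i}$ we get Lipschitz constant at most $c_{i}(1+\eta) = c_{i+1}$. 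By the Lindel\"of property of $\RR^{k}$, countably many such balls $B_{y_{i,j}}$ cover $A_{i}$; disjointifying by setting $A_{i,j} := A_{i} \cap \bigl(B_{y_{i,j}} \setminus \bigcup_{j' < j} B_{y_{i,j'}}\bigr)$ gives a countable Borel partition of $A_{i}$ on each piece of which $h$ is Lipschitz with constant $c_{i+1}$. Similarly cover $A_{0}$ by balls on which $h$ is Lipschitz with constant $2\eta$.

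Applying \eqref{E:Lip.magnification.of.Hm} to each piece and summing,
\begin{align*}
    \Hm^{a}\bigl[h(A)\bigr] &\leq \Hm^{a}\bigl[h(A_{0})\bigr] + \sum_{i \geq 1} \Hm^{a}\bigl[h(A_{i})\bigr] \\
    &\leq (2\eta)^{a} \Hm^{a}(A_{0}) + \sum_{i \geq 1} c_{i+1}^{a} \Hm^{a}(A_{i}) \\
    &\leq (2\eta)^{a} \Hm^{a}(A) + (1+\eta)^{2a} \sum_{i \geq 1} c_{i-1}^{a} \Hm^{a}(A_{i}) \\
    &\leq (2\eta)^{a} \Hm^{a}(A) + (1+\eta)^{2a} \int_{A} \lambda(x)^{a} \, \Hm^{a}(dx),
\end{align*}
where the last inequality uses $c_{i-1} \leq \lambda(x)$ on $A_{i}$. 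Since $\Hm^{a}(A) < \infty$ by hypothesis, letting $\eta \downarrow 0$ gives \eqref{E:lwr.bnd.on.meas.of.image}.

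The main obstacle is purely bookkeeping: one must be careful that the partition pieces are Borel (so that \eqref{E:Lip.magnification.of.Hm} applies to the Hausdorff measure of their images) and that the ``$\lambda$ small'' pieces are controlled by the hypothesis $\Hm^{a}(A) < \infty$ rather than by the integral of $\lambda^{a}$, since the integrand vanishes there. The finiteness assumption is what makes the $(2\eta)^{a} \Hm^{a}(A)$ term disappear in the limit.
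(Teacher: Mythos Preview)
Your approach is essentially the same as the paper's: use continuity of $\lambda$ (via Lemma~\ref{L:Eigen.cont.}), cover $A$ by countably many Borel pieces on which $\lambda$ is nearly constant, apply \eqref{E:Lip.magnification.of.Hm} on each piece, sum, and pass to the limit. The paper uses an \emph{additive} partition (open convex sets on which $\lambda^{a}$ oscillates by less than $\epsilon$), while you use a \emph{multiplicative} binning of $\lambda$-values; both lead to the same conclusion.

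There is, however, an indexing slip in your partition. With $c_{i} = (1+\eta)^{i}$ for $i \geq 1$ and $c_{0} = 1$, the sets $A_{i}$ for $i \geq 1$ cover only $\{x : \lambda(x) \geq 1\}$, while $A_{0}$ covers $\{x : \lambda(x) < \eta\}$; the range $\eta \leq \lambda(x) < 1$ is omitted, so the claimed identity $A = A_{0} \cup \bigcup_{i \geq 1} A_{i}$ fails. The fix is immediate: either let $i$ range over all integers (so the $A_{i}$ with $i \leq 0$ fill the gap, and $A_{0}$ becomes the null-derivative set $\{\lambda = 0\}$, handled exactly as you handle your present $A_{0}$), or simply redefine $A_{0} := \{x \in A : \lambda(x) < 1\}$ and bound $\Hm^{a}[h(A_{0})]$ by $(1+\eta)^{a}\Hm^{a}(A_{0})$ via balls with Lipschitz constant $1+\eta$. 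The paper's additive partition sidesteps this issue entirely, since one can take the first bin to be $\{x : \lambda(x)^{a} < \epsilon\}$ with no gap.
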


\begin{proof} 
By lemma \ref{L:Eigen.cont.} and  and continuity of $Dh$, $\lambda$ is continuous.  Let $\epsilon > 0$.  
Since $\lambda$ is continuous, by Lindel\"of's theorem (Simmons \cite[Theorem A, p.\ 100]{gfS63}), 
there exists an at most countable cover, $C_{1}, C_{2}, \ldots $, of $U$ by open convex sets with the property
   \begin{equation}   \label{E:narrow.rnge.lambda.on.Ci}
         x, x' \in C_{i} \Rightarrow \bigl| \lambda(x)^{a} - \lambda(x')^{a} \bigr| < \epsilon, 
            \quad (i = 1, 2, \ldots).
   \end{equation}

For each $i = 1, 2, \ldots$ let $\Lambda_{i} = \sup_{x \in C_{i}} \lambda(x)$.  
We prove the \emph{claim:} on each $C_{i}$, the function $h$ is Lipschitz with Lipschitz constant 
$\Lambda_{i}$. 
(See Giaquinta \emph{et al} \cite[Theorem 2, p.\ 202, Vol. I]{mGgMjS98.cart.currents}.)  
Let $x,y \in C_{i}$.  Think of $x,y$ as row vectors. Since $C_{i}$ is open and convex there is an open interval $I \supset [0,1]$ s.t.\ for every $u \in I$ we have $(1-u)x + u y \in C_{i}$.  
The function $f: u \mapsto h \bigl[ (1-u)x + u y \bigr] \in \RR^{m}$ ($u \in I$) is defined and differentiable.  It defines an arc in $\RR^{m}$.  By the area formula (Hardt and Simon \cite[p.\ 13]{rHlS86.GMT})
   \begin{equation}   \label{E:bound.incr.by.arc.lngth}
      \bigl| h(y) - h(x) \bigr| \leq \text{length of arc } f = \int_{0}^{1} \bigl| f'(u) \bigr| \, du 
        = \int_{0}^{1} \Bigl| Dh[ (1-u)x + u y \bigr] (y - x)^{T} \Bigr| \, du.
   \end{equation}
Let $u \in [0,1]$ and let $w = (1-u)x + u y \in C_{i} \subset U \subset \RR^{k}$.  Let 
$\lambda^{2}_{1} \geq \lambda^{2}_{2} \geq \cdots \geq \lambda^{2}_{k} \geq 0$ be the eigenvalues of $Dh(w)^{T} Dh(w)$, so 
$\lambda^{2}(w) = \lambda^{2}_{1}$.  
Let $z_{1}, \ldots, z_{k} \in \RR^{k}$ be corresponding orthonormal eigenvectors, thought of as row vectors.  Write
$y - x = \sum_{j=1}^{k} \alpha_{j} z_{j}$.  Then
\begin{align*}
\bigl| Dh(w) (y - x)^{T} \bigr|^{2} &= (y - x)  \bigl( Dh(w)^{T} Dh(w) \bigr) (y - x)^{T}  \\
  &= \left( \sum_{j=1}^{k} \alpha_{j} z_{j} \right) \bigl( Dh(w)^{T} Dh(w) \bigr) \left( \sum_{j=1}^{k} \alpha_{j} z_{j}^{T} \right)   \\
  &= \left( \sum_{j=1}^{k} \alpha_{j} z_{j} \right) \left( \sum_{j=1}^{k} \alpha_{j} \lambda^{2}_{j} z_{j}^{T} \right)   \\
  &= \sum_{j=1}^{k} \lambda^{2}_{j} \alpha_{j}^{2}  \\
  &\leq \lambda^{2}(w) \sum_{j=1}^{k} \alpha_{j}^{2} \\
  &\leq \Lambda_{i}^{2} | y - x |^{2}.
\end{align*}
I.e., 
	\[
		\bigl| Dh(w) (y - x)^{T} \bigr| \leq \Lambda_{i} | y - x |.
	\]
Substituting this into \eqref{E:bound.incr.by.arc.lngth} proves the claim.

Let $A_{1} = A \cap C_{1}$.  Having defined $A_{1}, \ldots, A_{n}$, let
   \[
      A_{n+1} = (A \cap C_{n+1}) \setminus \left( \bigcup_{i=1}^{n} A_{i} \right).
   \]
Then $A_{1}, A_{2}, \ldots$ is a Borel partition of $A$.
By \eqref{E:Lip.magnification.of.Hm} and \eqref{E:narrow.rnge.lambda.on.Ci},
   \[
      \Hm^{a} \bigl[ h(A) \bigr] \leq \sum_{i} \Hm^{a} \bigl[ h(A_{i}) \bigr] 
         \leq \sum_{i} \Lambda_{i}^{a} \Hm^{a} (A_{i}) 
         \leq \int_{A} \lambda(x)^{a} \Hm^{a}(dx) + \Hm(A) \epsilon.
   \]
Since $\epsilon > 0$ is arbitrary and $\Hm^{a}(A) < \infty$, the lemma follows.
\end{proof}

The following fact is useful.  For the proof see the preceding proof of lemma \ref{L:bound.on.Haus.meas.h.A}.

\begin{corly}  \label{C:cont.diff.=.loc.Lip}
Let $k, m > 0$ be integers.  Let $U \subset \RR^{k}$ be open and let $h : U \to \RR^{m}$ be continuously differentiable.  Then $h$ is locally Lipschitz.  In particular, if $A \subset U$ is compact then $h$ is Lipschitz on $A$.
\end{corly}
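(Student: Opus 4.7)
The plan is to reuse the argument already carried out inside the proof of Lemma~\ref{L:bound.on.Haus.meas.h.A}: on any convex subset $C \subset U$ on which $\lambda(\cdot)$ (the top singular value of $Dh$) is bounded by some $\Lambda$, the map $h$ is Lipschitz on $C$ with constant $\Lambda$. That estimate comes from the area/arc-length formula applied to the differentiable arc $t \mapsto h\bigl((1-t)x + ty\bigr)$, $t \in [0,1]$. Everything in the corollary follows by choosing the convex set appropriately and invoking continuity of $\lambda$ (which was proved as part of Lemma~\ref{L:bound.on.Haus.meas.h.A}) together with compactness.

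For the local Lipschitz claim, I would fix an arbitrary $x_{0} \in U$. Since $U$ is open, there is $r > 0$ with $\overline{B(x_{0},r)} \subset U$. By Lemma~\ref{L:bound.on.Haus.meas.h.A} the function $\lambda$ is continuous on $U$, so it is bounded on the compact ball $\overline{B(x_{0},r)}$ by some $M < \infty$. The open ball $B(x_{0},r)$ is convex, so for any $x,y \in B(x_{0},r)$ the segment joining them stays inside $B(x_{0},r)$, and the arc-length bound gives $|h(y) - h(x)| \leq M|y - x|$. Thus $B(x_{0},r)$ is a neighborhood of $x_{0}$ on which $h$ is Lipschitz, establishing local Lipschitz continuity at every point of $U$.

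For the ``in particular'' assertion, I would combine local Lipschitz with a two-case estimate. Assume $A \subset U$ is compact. Since $\RR^{k} \setminus U$ is closed and disjoint from $A$ (or empty, if $U = \RR^{k}$), the number $\delta := \tfrac{1}{2}\,\mathrm{dist}(A, \RR^{k} \setminus U) > 0$ is well defined (or take $\delta := 1$ if $U = \RR^{k}$), and the closed $\delta$-neighborhood $A_{\delta} := \{ x : \mathrm{dist}(x,A) \leq \delta \}$ is compact and lies in $U$. By continuity $\lambda$ attains a finite maximum $M$ on $A_{\delta}$. For $x,y \in A$ with $|y - x| < \delta$, the segment from $x$ to $y$ lies in $A_{\delta}$, and although $A_{\delta}$ itself need not be convex the segment is, so the same arc-length computation gives $|h(y) - h(x)| \leq M|y - x|$. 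For $x,y \in A$ with $|y - x| \geq \delta$, use that $h$ is continuous on the compact set $A$ and hence bounded: $|h(y) - h(x)| \leq 2 \sup_{A} |h| \leq \tfrac{2 \sup_{A} |h|}{\delta}\,|y - x|$. Taking $K := \max\bigl\{ M,\; 2 \sup_{A}|h|/\delta \bigr\}$ gives the global Lipschitz bound on $A$.

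There is no real obstacle here; the only mildly delicate point is that being locally Lipschitz does not immediately give Lipschitz on a compact set, which is why one needs the $\delta$-fattening trick to ensure straight-line segments between nearby points in $A$ still lie inside a region where $\lambda$ is controlled, and then the boundedness of $|h|$ on $A$ to handle the finitely many pairs at distance $\geq \delta$.
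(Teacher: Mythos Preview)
Your proof is correct and follows essentially the same approach as the paper. The paper's ``proof'' simply refers back to the arc-length estimate established inside the proof of Lemma~\ref{L:bound.on.Haus.meas.h.A}, which is exactly what you reuse for the local Lipschitz claim; for the ``in particular'' part the paper relies on the general fact (stated without proof just before \eqref{E:comp.of.Lips.is.Lip}) that a locally Lipschitz map is Lipschitz on any compact set, whereas you spell out a direct $\delta$-fattening argument that amounts to a proof of that fact in this setting.
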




\end{document}